\documentclass[a4paper,12pt]{amsart}
\usepackage[utf8]{inputenc}
\usepackage{geometry}
\geometry{left=2.5cm, right=2.5cm, top=2.7cm, bottom=2.7cm}
\usepackage{amsmath}
\usepackage{amsfonts,amssymb}
\usepackage{appendix}
\usepackage{tikz}
\usepackage{epsfig}
\usepackage{amssymb}
\usepackage{mathrsfs}
\usepackage{amscd} 
\usepackage[all]{xy}
\usetikzlibrary{cd}

\newtheorem{theorem}{Theorem}[section]
\newtheorem{lemma}[theorem]{Lemma}
\newtheorem{proposition}[theorem]{Proposition}

\theoremstyle{definition}
\newtheorem{definition}[theorem]{Definition}

\newtheorem{remark}[theorem]{Remark}

\newcommand{\vol}{\mathrm{Vol}}
\newcommand{\sign}{\mathrm{sign}}
\newcommand{\PD}{\mathrm{PD}}
\newcommand{\Ind}{\mathrm{Ind}}
\newcommand{\sgn}{\mathrm{sgn}}
\newcommand{\GL}{\mathrm{GL}}
\newcommand{\End}{\mathrm{End}}
\newcommand{\Hom}{\mathrm{Hom}}
\newcommand{\rank}{\mathrm{rank}}

\newcommand{\R}{\mathbb{R}}

\newcommand{\Ker}{\text{Ker}}

\newcommand{\Id}{\text{Id}}

\newcommand{\ch}{\mathrm{ch}}
\newcommand{\diff}{\mathrm{diff}}
\newcommand{\op}{\mathrm{Op}}
\newcommand{\im}{\mathrm{Im}}
\newcommand{\Str}{\mathrm{Str}}
\newcommand{\Pf}{\mathrm{Pf}}
\newcommand{\chg}{\mathrm{ch_{G}}}
\newcommand{\supp}{\mathrm{supp}}
\newcommand{\Tr}{\mathrm{Tr}}
\newcommand{\spin}{\mathrm{Spin}}
\newcommand{\tr}{\mathrm{tr}}
\newcommand{\ev}{\mathrm{ev}}
\begin{document}

\title{Index theorem for homogeneous spaces of Lie groups}
\date{}

\author[Hang Wang]{Hang Wang} 
\address{\normalfont{Research Center for Operator Algebras, School of Mathematical Sciences\\
East China Normal University, 3663 North Zhongshan Rd,\\
Shanghai 200062, P.R.China}}
\email{wanghang@math.ecnu.edu.cn}

\author[Zijing Wang]{Zijing Wang}
\address{\normalfont{Research Center for Operator Algebras, School of Mathematical Sciences\\
East China Normal University, 3663 North Zhongshan Rd,\\
Shanghai 200062, P.R.China}}
\email{52205500014@stu.ecnu.edu.cn}

\begin{abstract}
We study index theory on homogeneous spaces associated to an almost connected Lie group in terms of the topological aspect and the analytic aspect. On the topological aspect, we obtain a topological formula as a result of the Riemann-Roch formula for proper cocompact actions of the Lie group, inspired by the work of Paradan and Vergne. On the analytic aspect, we apply heat kernel methods to obtain a local index formula representing the higher indices of equivariant elliptic operators with respect to a proper cocompact actions of the Lie group.  
%Equivariant index theory is an important research topic of noncommutative geometry and the equivariant Riemann-Roch formula plays a role in obtaining the topological index formula. In this paper, we prove an equivariant Riemann-Roch formula and a higher index formula for higher indices of elliptic operators that are equivariant concerning an action by an almost-connected Lie group.
\end{abstract}
\dedicatory{Dedicated to  Professor Mich\`ele Vergne}
\thanks{H. Wang gratefully acknowledge the support from the National Natural Science Foundation of China (No. 12271165) and from Science and Technology Commission of Shanghai Municipality (No.
22DZ2229014)}

\subjclass{Primary 19K56; Secondary: 58B34 58J20, 46L80}
\keywords{Riemann-Roch formula, local index formula, heat kernel method, higher index theory, homogeneous space, Dirac operator}
\maketitle

\tableofcontents
\section{Introduction}

In this paper, we investigate the higher index theory of an equivariant elliptic operator on a complete Riemannian manifold equipped with a proper, cocompact action of an almost connected Lie group. 
Motivated by the classic Atiyah-Singer index theorem~\cite{ref1}, our study consists both the topological and analytic perspectives.  
The two perspectives are believed to be encoded in the Baum-Connes assembly map, formulated in \cite{Baum-Connes, Baum-Connes-Higson}. 
For example, in the case of a \emph{discrete} 
group, the left-hand side of the Baum-Connes assembly map, known as the topological $K$-theory of the group, can be identified with its Chern character whose formulation involves a version of Riemann-Roch formula and the formula of which gives rise to a local formula for all equivariant elliptic operators with respect to a proper cocompact action~\cite{CRWW}. On the other hand, one can formulate suitable pairings with the higher index in right hand side of the Baum-Connes assembly map and obtain local index formula using geometric and analytic approaches, see~\cite{ref10} for example. 
The aim of the paper is to understand index theory for almost connected Lie groups, both from the topological perspective via a Riemann-Roch formula and from the analytic perspective via heat kernel methods . 

The Riemann-Roch theorem was born in complex analysis and algebraic geometry, in the process of computing the dimension of the space of meromorphic functions with prescribed zeros and allowed poles. In 1954, Hirzebruch generalized the classical Riemann-Roch theorem from Riemann surfaces to all complex algebraic varieties of higher dimensions.  
Three years later, Grothendieck generalized the Hirzebruch-Riemann-Roch theorem with respect to a map $f: Y\rightarrow X$ of algebraic varieties. 
This so-called Grothendieck-Riemann-Roch theorem reduces to the previous Riemann-Roch Theorem by taking $X$ to be a point.
Grothendieck's theorem requires many conditions on characteristic classes, and so it is natural to ask if these conditions hold for almost complex or, more generally, for differentiable manifolds. 
This question has been solved by Atiyah and Hirzebruch in \cite{ah}. 
As an application of the Riemann-Roch theorem for differentiable manifolds, Atiyah and Singer established their Riemann-Roch formula in \cite{ref2}. 
At that time, this formula was at the level of cohomology. Subsequently, Mathai and Quillen gave a beautiful proof of the Riemann-Roch formula in \cite{ref3} using the powerful tools of superconnections. This work transforms the formula to the level of differential forms. Based on that, Paradan and Vergne defined Quillen's relative Chern character and obtained the Riemann-Roch formula on the level of relative cohomology in \cite{ref5} and generalized to the equivariant case for compact group actions in \cite{ref6}.

In this paper, we formulate the equivariant Chern character and the Riemann-Roch formula in the context of proper actions by an almost-connected Lie group. This is the first main result of the paper. 

\begin{theorem}[Theorem \ref{err}]
Let $G$ be an almost-connected Lie group, $M$ be a proper $G$-compact space satisfying conditions in Theorem~\ref{sk}, and $V$ be a $G$-vector bundle (even rank) over $M$ with an equivariant spin structure. The following diagram commutes:
    \begin{equation*}
\xymatrix{   K^{0}_{G}(V)\ar[r]^{\pi!}\ar[d]_{\chg} &   K^{0}_{G}(M)  \ar[d]^{(-2\pi i)^{n/2}\cdot\chg\wedge\Hat{A}_{g}^{-1}(V) }  \\
           H^{\infty}_{cv}(\mathfrak{g},V) \ar[r]^-{\pi!} & H^{\infty}(\mathfrak{g},M),
                      }
\end{equation*}
where $\Hat{A}_{g}(V)^{-1}$ is the inverse of the equivariant $\hat{A}$-genus in Definition~\ref{Agenus}, $\chg$ on the left is defined in (\ref{ch2}), $\chg$ on the right is defined in (\ref{ch1}) and $\pi!:H_{cv}^{\infty}(\mathfrak{g},V)\rightarrow H^{\infty}(\mathfrak{g},M)$ is the homomorphism induced by integration over the fiber.
\end{theorem}

The highlight of this theorem is the introduction of the Chern character map associated to a smooth version of the equivariant $K$-theory in the sense of Phillips~\cite{ref7}, and the formulation of the Riemann-Roch formula using techniques from the Mathai-Quillen formalism. Both constructions are inspired by the work of Paradan and Vergne in the case of a compact group action~\cite{ref6}.

Moving to the analytic perspective, heat kernel methods play a role in obtaining the cohomology formula. Another goal of this paper is to use heat kernel methods to obtain local index formulas for the $C^{*}$-higher indices of equivariant Dirac-type operators.

Consider a Lie group $G$, acting properly on a manifold $M$, with compact quotient. Let $D$ be a $G$-equivariant Dirac operator on a $\mathbb{Z}_{2}$-graded, $G$-equivariant vector bundle over $M$. 
In the special case when $G$ is discrete and acts freely and properly, the quotient $N$ of $M$ by $G$ is a closed manifold. Thus, $G$ is the fundamental group of $N$ with Deck transformation on the universal cover $M$. In their seminal paper \cite{ref4}, Connes and Moscovici extended the $L^2$-index theorem of Atiyah \cite{at} from the $G$-trace case to higher $G$-cocycles. Using  Getzler’s symbolic calculus, they found the topological expression of the localized analytic indices of elliptic operators.  
In 2015,  Pflaum,  Posthuma, and Tang unified several famous equivariant index theorems in the framework of Lie groupoids in \cite{ppt2}, including the equivariant index formula in \cite{at} and \cite{ref4}. In the notation as above, if $G$ is almost connected and satisfies the rapid decay property, Piazza and Posthuma establish an index formula for the $C^{*}$-higher indices of a G-equivariant Dirac-type operator on $M$, building on the algebraic index theorem established in \cite{ppt2}.

The second main result of this paper is a local formula for the higher index of a Dirac type operator on a complete Riemannian manifold carrying a proper cocompact action of an almost connected Lie group, via heat kernel methods:
\begin{theorem}[Theorem \ref{mthm2}]
\label{thm:1.2}
    Let the dimension $n$ of $M$ be even. For every $f\in C^{2q}_{v,anti}(M)^{G}$ (c.f. Definition~\ref{def1}) and $\Ind_{t}(D)\in S^{u}_{G}(M,E)$ (c.f. Definition~\ref{def2}), we have 
    $$\lim_{t\rightarrow0}\tau(f)(\Ind_{t}(D))=\frac{(-1)^{n/2-q}}{(2\pi i)^{2q-n/2}}\frac{q!}{(2q)!}\int_{M}c_{0}\Hat{{A}}_{G}(M)\wedge\ch_{G}^{0}(\pi!([\pi^{*}(E^{+}),\pi^{*}(E^{-}),\sigma(D)]))\wedge \omega_{f}$$
    where $$\omega_{f}:=(d_{1}...d_{2q}f)|_{\Delta}.
$$
Here $d_{i}$ stands for the differential concerning the $i$-th variable of the function $f$ and $\Delta: M\rightarrow M^{\times (2q+1)}$ is the diagonal embedding.
\end{theorem}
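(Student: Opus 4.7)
The plan is to reduce the statement to a short-time heat kernel asymptotic analysis, localized to the diagonal of $M^{\times(2q+1)}$, and then apply a Getzler-rescaling argument adapted to the proper cocompact equivariant setting. First I would unfold the definitions of the cyclic cocycle $\tau(f)$ built from $f\in C^{2q}_{v,anti}(M)^G$ and the index cycle $\mathrm{Ind}_t(D)\in S^u_G(M,E)$ introduced earlier in the paper. This should yield an explicit integral expression of the form
\begin{equation*}
\tau(f)(\mathrm{Ind}_t(D)) = \int_{M^{\times(2q+1)}} f(x_0,\ldots,x_{2q})\, c_0(x_0)\,\Str\bigl(K_t(x_0,x_1)\cdots K_t(x_{2q},x_0)\bigr)\,dx_0\cdots dx_{2q},
\end{equation*}
where $K_t$ stands for the heat kernel of (a Connes--Moscovici-type combination built from) $D^2$, $c_0$ is the cutoff coming from the $G$-compactness, and $\Str$ denotes the supertrace on the fibers of $E$. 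The antisymmetry of $f$ in the last $2q$ arguments will make the combinatorial factor $q!/(2q)!$ appear cleanly.

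Second, I would localize to the diagonal. Since $K_t(x,y)$ has off-diagonal Gaussian decay of order $e^{-d(x,y)^2/Ct}$, the integrand concentrates on $\Delta$ as $t\to 0$. Writing $y_i = x_0 + t^{1/2} u_i$ in normal coordinates and Taylor expanding $f$ around $\Delta$, the antisymmetry forces the surviving term to be exactly the top-order differential
\begin{equation*}
\omega_f(x) = (d_1\cdots d_{2q} f)\bigl|_{\Delta}(x),
\end{equation*}
multiplied by a polynomial in the rescaled variables $u_1,\ldots,u_{2q}$. This step gives the factor $\omega_f$ in the formula and, after the Gaussian Wick-type integration in the $u_i$, also accounts for the appearance of the power of $(2\pi i)$ and the sign $(-1)^{n/2-q}$.

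Third, I would apply Getzler's rescaling to the remaining iterated heat kernel supertrace on the diagonal. In the classical Atiyah--Singer proof, the rescaled supertrace of $e^{-tD^2}$ converges to the local index density $\hat{A}(M)\wedge\ch(E)$ as $t\to 0$; here, in the equivariant proper cocompact setting, the same rescaling yields the equivariant density $\hat{A}_G(M)\wedge\ch_G^0\bigl(\pi!([\pi^*E^+,\pi^*E^-,\sigma(D)])\bigr)$, with the cutoff $c_0$ ensuring the integral over $M$ is finite and $G$-invariant independent of the choice of $c_0$ in cohomology. Combining the combinatorial factor from the antisymmetrization, the Gaussian normalization, and the local index density produces the asserted formula.

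The main obstacle I anticipate is controlling the limit rigorously away from the diagonal in the non-compact ambient manifold: one needs uniform Gaussian off-diagonal estimates for the $G$-equivariant heat kernel, together with an equivariant Sobolev/elliptic regularity argument, to justify interchanging the limit $t\to 0$ with the iterated integral. A secondary technical point is verifying that the Getzler rescaling is compatible with the Connes--Moscovici-style idempotent representing $\mathrm{Ind}_t(D)$, so that only the degree-$n$ part of $\hat{A}_G\wedge\ch_G^0(\cdot)$ survives against the degree-$(2q-n)$ form $\omega_f$, yielding the exponent $2q-n/2$ of $(2\pi i)$ in the denominator.
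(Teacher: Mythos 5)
Your overall strategy --- write $\tau(f)(\mathrm{Ind}_t(D))$ as an iterated kernel integral, use off-diagonal decay of the kernel to localize to a compactly supported neighborhood of the diagonal, and then compute the $t\to 0$ limit by Getzler's rescaled symbolic calculus --- is the same as the paper's, and you correctly identify the main obstacle (justifying the localization on a non-compact manifold against a cocycle $f$ of exponential growth). However, there is one concrete step in your plan that fails as stated. You assert that the kernel $K_t(x,y)$ of the index cycle has \emph{Gaussian} off-diagonal decay $e^{-d(x,y)^2/Ct}$. This is true for the diagonal blocks $e^{-t^2D^{\mp}D^{\pm}}$ of the Wassermann projector, but not for the off-diagonal blocks, which are of the form $g_t(D)$ with $g_t(x)=e^{-t^2x^2/2}\bigl(\frac{1-e^{-t^2x^2}}{x^2}\bigr)^{1/2}x$; this function is not Gaussian in $D$, and its kernel only admits an exponential bound of the form $\frac{C}{t^N}\exp\bigl(-w\,d(x,y)/(100t)\bigr)$. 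Establishing even that weaker bound is the main analytic content of the paper (Theorems \ref{g1}--\ref{g3}): it is obtained from unit propagation speed of $e^{isD}$ together with a holomorphic-strip estimate on $\hat g$ (Lemmas \ref{a1}, \ref{a2}), in the spirit of Cheeger--Gromov--Taylor, plus local elliptic/Sobolev estimates (Lemmas \ref{c1}, \ref{c2}). The exponential rate $O(1/t)$ still dominates the exponential growth of $f$ for small $t$, so the localization (the paper's Theorem \ref{rc}) goes through, but your argument must be run with this estimate rather than the Gaussian one you assume.

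A secondary difference is the mechanism producing $\omega_f$ and the constant $\frac{q!}{(2q)!}$. You propose a $t^{1/2}$-rescaling of the integration variables plus Taylor expansion of $f$ and Gaussian Wick integration, attributing the powers of $2\pi i$ and the sign to that step. The paper instead follows Connes--Moscovici: after localization it reduces $f$ to elementary tensors $f_0\otimes\cdots\otimes f_{2q}$, rewrites the pairing as $\Str\bigl(c_{2q}W'(t)c_0 f_0[c_0W'(t)c_1,f_1]\cdots\bigr)$, and extracts the forms $df_i$ from the asymptotic Getzler symbols of the commutators $[t\mathbb{D},f_i]$; the factor $\frac{q!}{(2q)!}$ then arises from the contributions of the three families of terms $T$, $T_j$, $Z_j$ via the integrals $\beta_q$ and $\delta_q$, not from antisymmetrization of $f$ alone. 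Your route is plausible in outline but leaves the constants unverified, whereas the commutator decomposition is where the paper actually pins them down.
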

This theorem implies the formula for $C^{*}$-higher indices of a $G$-equivariant Dirac-type operator on $M$. See Theorem~\ref{thm7.6}. 
Piazza and Posthuma obtained a similar formula in \cite{pp1}. See more details in Section~6.

The heat kernel proof of Theorem~\ref{thm:1.2} follows closely that of Connes and Moscovici~\cite{ref4} but with significant analytical complications due to the more general setting being considered. 
In \cite{ref4}, because of the assumption of free actions by a discrete group, it is sufficient to consider compactly supported functions $f$ in the index pairing, and hence the index pairing problem can be reduced to the case of a compact manifold. But in our setting, we need to consider equivariant $f$ with suitable growth conditions, so that in order to have a well-defined index pairing, a careful estimate on the Schwartz kernel of the Wasserman class representing the higher index $\Ind_{t}(D)$ is needed.   
We achieved this aim by making use of the spirit of the fantastic work \cite{gromv} by Cheeger, Gromov, and Taylor on the Schwartz kernel estimate regarding scalar Laplacian on complete manifolds.

In the last section of the paper, we combine the equivariant Riemann-Roch formula and the local formula for the higher index, and identify the topological index and analytic index in the special case of homogeneous spaces, that is:
\begin{theorem}[Theorem \ref{mth}]
The following diagram commutes:
\begin{equation*}
    \xymatrix{   K_{G}^{0}(T^{*}(G/H))\ar[r]^{\Ind_{A}}\ar[d]_{\Ind_{T}} &  K_{0}(C_{r}^{*}(G)) \ar[d]^{\tilde{\ch} }\\
         H^{even}_{DR}(G/H)^{G}\ar[r]^{PD}&H_{DR,even}(G/H)^{G}
        }
\end{equation*}
where $\tilde{\ch}:K_{0}(C^{*}_{r}(G))\rightarrow H_{DR,even}(G/H)^{G}$ is determined by
$$\tilde{\ch}([p]-[q])(\alpha):=\langle\alpha,[p]-[q]\rangle_{DR,G}$$
for every $\alpha\in  H^{even}_{DR}(G/H)^{G}$ and $[p]-[q]\in K_{0}(C^{*}_{r}(G))$. Here $G$ has finitely many connected components and satisfies the RD condition, $H$ is a maximal compact subgroup of $G$ and $G/H$ is of non-positive sectional curvature.

\end{theorem}
Although homogeneous spaces are examples of spaces with proper actions, they include the Riemannian symmetric space of noncompact type $G/K$, where $K$ is a maximal compact subgroup. The homogeneous space $G/K$ is known as a model for the universal example of proper actions by an almost connected group. 

This paper is organized as follows. In Section $2$, we recall the definitions of equivariant $K$-theory in \cite{ref7} and equivariant cohomology in \cite{ref7} and \cite{ref1}. Then, we introduce the smooth equivariant $K$ theory. In Section $3$, we define the equivariant Chern character and prove some basic properties of this homomorphism. In Section $4$, we deduce the equivariant Riemann-Roch formula. In Section $5$, we work on the special case of homogeneous spaces and reduce the equivariant Riemann-Roch formula to the $G$-invariant case. In Section $6$, we use heat kernel methods to prove the local index formula. In Section $7$, we show that following the local index formula proved in the previous section, we obtain an index formula for $C^{*}$-higher indices of a $G$-equivariant Dirac-type operator on $M$. In Section $8$, we show the coincidence between the topological index and analytic index when $M$ is a homogeneous space.
 
\section{Preliminaries}
\subsection{Equivariant $K$-theory for proper actions}
In this section, we recall some basic definitions of equivariant $K$ theory for proper action which can be found in \cite{ref7} and \cite{p1}, and define the smooth equivariant $K$ theory.

First, we recall the definition of Hilbert bundle and $G$-Hilbert bundle. Let $M$ be a $C^{\infty}$-manifold with a smooth action of a Lie group $G$. 

\begin{definition}[\cite{ref7} $G$-Hilbert bundle] A Hilbert bundle over $M$ is a locally trivial fiber bundle $E$ whose fiber is a Hilbert space.
A $G$-Hilbert bundle is a Hilbert bundle $E$ with a continuous linear isometric action of $G$ such that the projection $\pi: E\rightarrow{M}$ is equivariant.
\end{definition}

Next, we turn to the definition of the morphisms.
\begin{definition}[\cite{ref7}]
    Let $\pi_{E}:E\rightarrow M$ and $\pi_{F}:F\rightarrow M$ be $G$-Hilbert bundles over $M$. A morphism $t$ is a continuous function $t:E\rightarrow{F}$ which maps $E_{m}$ linearly to $F_{m}$ for every $m\in M$ where $E_{m}:=\pi_{E}^{-1}(m)\in E$ and $F_{m}:=\pi_{F}^{-1}(m)\in F$. We say that $t$ is equivariant if $t(g\xi)=g\cdot t(\xi)$, for $g\in G$ and $\xi \in E$.
\end{definition}

Then, we recall the definition of the $K$-cocycles.
\begin{definition}[\cite{p1}]
    A $K$-cocycle for $(G,M)$ is a triple $(E,F,t)$ where $E$ and $F$ are $G$-Hilbert bundles over $M$ and $t$ is an equivariant morphism from $E$ to $F$ which is inverible outside a $G$-compact subset of $M$. 
    Here $G$-compact means that $M/G$ is compact.
\end{definition}

 After that, we recall the definition of a finite dimensional $K$-cocycle which plays an important role in the definition of the equivariant $K$-theory $K^{0}_{G}(M)$ when $G$ is almost-connected and $M$ is a proper $G$-space.

\begin{definition}[Finite dimensional (smooth) $K$-cocycle]
A finite dimensional (smooth) $K$-cocycle for $(G, M)$ is a triple $( E, F, t)$, consisting of two finite dimensional (smooth) $G$-vector bundles $E$ and $F$ and an (a smooth) equivariant 
morphism $t:E\rightarrow F$  whose restriction to the complement of some $G$-compact subset of $M$ is an (a smooth) isomorphism. 
\end{definition}
Two finite-dimensional (smooth) $K$-cocycles $(E_{1},F_{1},\sigma_{1})$, $(E_{2},F_{2},\sigma_{2})$, are said to 
be equivalent, if there exist finite-dimensional (smooth) G-vector bundles $H_{1}$ and $H_{2}$ and (smooth) $G$-equivariant isomorphisms:
\begin{align*}
    a:E_{1}\oplus H_{1}\rightarrow E_{2}\oplus H_{2}\\
    b:F_{1}\oplus H_{1}\rightarrow F_{2}\oplus H_{2}
\end{align*}
such that $b^{-1}\circ ({\sigma_{2}\oplus \Id_{H_{2}}})\circ{a}=\sigma_{1}\oplus \Id_{H_{1}}$ outside a $G$-compact subset $B$ of $M$. Denote 
by $[ E, F, t]$  the equivalence class of a finite-dimensional (smooth) $K$-cocycle $(E, F,t)$ and define an addition on the set of equivalence classes of  finite-dimensional (smooth) $K$-cocycles  by:
$$[E_{0},F_{0},t_{0}]+[E_{1},F_{1},t_{1}]=[E_{0}\oplus{E_{1}},F_{0}\oplus{F_{1}},t_{0}\oplus{t_{1}}]$$
which makes the set of equivalence classes of finite-dimensional (smooth) $G$-vector bundles over $M$ a  semigroup.
\begin{definition}[equivariant (smooth) $K$ theory]
     Let $M$ be a $C^{\infty}$- manifold with an action of an almost-connected Lie group $G$ such that the action is proper and smooth. Then the equivariant (smooth) $K$ theory ($K^{0}_{G, sm}(M)$ ) $K^{0}_{G}(M)$ is the Grothendieck group of the equivalence class of finite dimensional (smooth) $K$-cocycle for $(G, M)$.
\end{definition}
\begin{remark}
    In \cite{ref7}, Phillips defined the equivariant $K$-theory for proper actions of second countable locally 
    compact groups on second countable locally compact spaces. This definition involves infinite-dimensional Hilbert bundles and Fredholm morphisms between them. In \cite{p1}, Phillips showed that finite-dimensional vector 
    bundles would suffice for proper actions of almost-connected groups. For this reason, we used the Grothendieck group of the equivalence classes of finite dimensional (smooth) $K$-cocycle for $(G, M)$ to define the equivariant (smooth) $K$-theory here.
\end{remark}

If $M$ is G-compact, then the morphism of the $K$-cocycle plays no role, so that $K_{G, sm}^{0}(M)$ is just the Grothendieck group of the semigroup of finite-dimensional smooth $G$-vector bundles over $M$. Moreover, if $\pi:V\rightarrow M$ is a smooth  $G$-vector bundles over $M$ such that $M$ is G-compact, $K_{G,sm}^{0}(V)$ consists of the  equivalence class of finite dimensional smooth $K$-cocycle for $(G, V)$. To prove the latter point, we need the following:
\begin{lemma}
    Let $\pi: V\rightarrow M$ be a smooth  $G$-vector bundle over $M$ such that $M$ is G-compact. Then for any $[E,F,\sigma]\in K_{G,sm}^{0}(V)$, there exist $[F,E,s]\in K_{G,sm}^{0}(V)$ such that $[E,F,\sigma]=-[F,E,s]$.
\end{lemma}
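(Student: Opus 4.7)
The plan is to produce the inverse explicitly by taking $s = \sigma^{*}$, the pointwise adjoint of $\sigma$ with respect to $G$-invariant Hermitian metrics on $E$ and $F$, and to verify that $[E,F,\sigma]+[F,E,\sigma^{*}]=0$ in $K^{0}_{G,sm}(V)$ via a rotation argument. First, I would fix $G$-invariant smooth Hermitian metrics on $E$ and $F$; these exist because the proper $G$-action on $V$ (induced from the proper action on the $G$-compact base $M$) admits a $G$-invariant cutoff function, so averaging produces the desired metrics. The adjoint $s:=\sigma^{*}:F\to E$ is then smooth, $G$-equivariant, and invertible precisely where $\sigma$ is, so $(F,E,\sigma^{*})$ is a smooth $K$-cocycle, and the sum $[E,F,\sigma]+[F,E,\sigma^{*}]$ is represented by the direct-sum cocycle $(E\oplus F, F\oplus E, \sigma\oplus\sigma^{*})$.

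To see that this cocycle vanishes, I would introduce the smooth family of $G$-equivariant morphisms
\[
T(\theta)=\begin{pmatrix}\cos\theta\cdot\sigma & -\sin\theta\cdot\Id_{F}\\ \sin\theta\cdot\Id_{E} & \cos\theta\cdot\sigma^{*}\end{pmatrix}:E\oplus F\longrightarrow F\oplus E,\qquad \theta\in[0,\pi/2],
\]
for which a direct calculation gives
\[
T(\theta)T(\theta)^{*}=\begin{pmatrix}\cos^{2}\theta\cdot\sigma\sigma^{*}+\sin^{2}\theta\cdot\Id_{F} & 0\\ 0 & \cos^{2}\theta\cdot\sigma^{*}\sigma+\sin^{2}\theta\cdot\Id_{E}\end{pmatrix},
\]
which is positive-definite at every point of $V$ whenever $\sin\theta>0$. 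Hence $T(\theta)$ is a globally defined $G$-equivariant isomorphism for every $\theta\in(0,\pi/2]$. In particular $T(\pi/2)=\left(\begin{smallmatrix}0 & -\Id_{F}\\ \Id_{E} & 0\end{smallmatrix}\right)$ is a global isomorphism, so the cocycle $(E\oplus F, F\oplus E, T(\pi/2))$ is equivalent to the trivial cocycle $(E\oplus F, E\oplus F, \Id)$ via $H_{1}=H_{2}=0$, $a=\Id$, $b=T(\pi/2)^{-1}$, and thus represents $0\in K^{0}_{G,sm}(V)$.

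The two cocycles $(E\oplus F, F\oplus E, \sigma\oplus\sigma^{*})$ and $(E\oplus F, F\oplus E, T(\pi/2))$ are connected by the family $T(\theta)$ of smooth $K$-cocycles, all of whose morphisms are invertible outside the fixed $G$-compact subset $B\subset V$ where $\sigma$ fails to be invertible. By homotopy invariance of smooth equivariant $K$-theory, applied to the pulled-back cocycle on $V\times[0,1]$ and the restrictions to $V\times\{0\}$ and $V\times\{1\}$, the two endpoints represent the same class. Combined with the previous step this yields $[E\oplus F, F\oplus E, \sigma\oplus\sigma^{*}]=[E\oplus F, F\oplus E, T(\pi/2)]=0$, and hence $[E,F,\sigma]=-[F,E,\sigma^{*}]$.

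The main technical obstacle is justifying the homotopy-invariance step in the specific smooth equivariant $K$-theory framework of Phillips used here. If this is not established as a prior lemma, I would instead carry out a direct, stabilized verification: set $H_{1}=H_{2}=F\oplus E$ and use $T(\pi/4)$ as a building block to construct explicit global $G$-equivariant isomorphisms $a:(E\oplus F)\oplus H_{1}\to(E\oplus F)\oplus H_{2}$ and $b:(F\oplus E)\oplus H_{1}\to(F\oplus E)\oplus H_{2}$ satisfying the required compatibility outside the $G$-compact set $B$. This approach bypasses homotopy invariance at the cost of more elaborate bookkeeping but produces the same conclusion.
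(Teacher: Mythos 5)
Your overall strategy---produce an explicit candidate inverse and kill the sum $[E\oplus F,\,F\oplus E,\,\sigma\oplus s]$ by a rotation---is the same as the paper's, and your choice $s=\sigma^{*}$ is a legitimate alternative to the paper's $s=\chi\cdot\sigma^{-1}$. The computation that $T(\theta)$ is a global isomorphism for $\theta\in(0,\pi/2]$ is correct, as is the observation that a cocycle whose morphism is a global isomorphism (such as $T(\pi/2)$) represents $0$.

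The gap is the step connecting $(E\oplus F,F\oplus E,\sigma\oplus\sigma^{*})$ to $(E\oplus F,F\oplus E,T(\pi/2))$. Your primary route invokes homotopy invariance of $K^{0}_{G,sm}$, but the paper defines this group by a bare stable-equivalence relation (existence of \emph{global} isomorphisms $a,b$ with $b^{-1}(\sigma_{2}\oplus\Id)a=\sigma_{1}\oplus\Id$ outside a $G$-compact set) and never establishes homotopy invariance for it; proving that property in this framework amounts to the very clutching construction you are trying to avoid, so the appeal is essentially circular. Your fallback is the right idea but is not executed, and the missing ingredient is exactly that no \emph{constant}-angle $T(\theta)$ can serve as the isomorphism $b$: for $\theta>0$ the off-diagonal terms $\pm\sin\theta$ prevent $T(\theta)$ from agreeing with $\sigma\oplus\sigma^{*}$ outside any $G$-compact set, while $T(0)=\sigma\oplus\sigma^{*}$ is not globally invertible. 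What is needed---and what the paper does---is to make the angle depend on the point: choosing a $G$-invariant function $\chi$ equal to $0$ on a neighborhood of $\supp(\sigma)$ and to $1$ at infinity, the morphism $b_{v}=T\bigl(\tfrac{\pi}{2}(1-\chi(v))\bigr)$ equals $\sigma\oplus\sigma^{*}$ where $\chi=1$, equals the constant symplectic matrix on $\supp(\sigma)$, and is invertible in between (there $\sin\neq 0$, and $\sigma_{v}$ is invertible anyway); this single global isomorphism, with $a=\Id$ and $H_{1}=H_{2}=0$, realizes the equivalence with the trivial cocycle. With that modification your argument closes; without it, neither of your two routes is complete.
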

\begin{proof}
      When $M$ is a $G$-compact manifold and the group action is proper, every $G$-vector bundle $V$ over $M$ has a $G$-invariant Hermitian metrics $\langle, \rangle$.  For any $[E,F,\sigma]\in K_{G,sm}^{0}(V)$, since $\supp(\sigma):=\{x\in V|\sigma_{x}~is~not~invertible\}$ is $G$-compact, there exists $a>0$ such that $\supp(\sigma)\subset \{v|v\in V, \langle v,v \rangle \leq a\}$. 
      
      Thus, let $f$ be a smooth function over  $\mathbb{R}$ valued in $[0,1]$ satisfying $f(x)=1$ when $|x|\geq a+2$ and $f(x)=0$ when $|x|\leq a+1$. Set $\chi(v):=f(\langle v,v \rangle)$, and then $\chi \in C^{\infty}(V)^{G}$.
      
      Let $\tilde{s}: E\rightarrow F$ be a $G$-equivariant bundle homomorphism such that $\tilde{s}$ is equal to the inverse of $\sigma$ restricted to the complement of $\{v|v\in V, \langle v,v \rangle \leq a\}$. Here $\tilde{s}$ may not be smooth but $s=\chi\tilde{s}$ is a  smooth equivariant bundle homomorphism from $E$ to $F$.

      We claim that $[ E, F, \sigma] = -[ F, E, s ]$. This is equivalent to claim that:
      $$[E\oplus F, F\oplus E,\sigma\oplus s]=[E\oplus F,E\oplus F, \Id_{E\oplus F}]$$
      since the right-hand of the above equation is the identity element of $K_{G, sm}^{0}(V)$. To prove the claim above, it is sufficient to find smooth $G$-equivariant isomorphisms $a: E\oplus F\rightarrow E\oplus F$ and $b: E\oplus F\rightarrow F\oplus E$ such that $b^{-1}(\sigma\oplus s)a=\Id_{E\oplus F}$ outside a $G$-compact subset of $V$. Take $a=\Id_{E\oplus F}$ and 
      $$ b_{v}=\begin{pmatrix}
\cos{(\frac{\pi}{2}(1-\chi(v))}\sigma_{v}&\sin(\frac{\pi}{2}(1-\chi(v))\Id_{F_{v}}\\
-\sin(\frac{\pi}{2}(1-\chi(v)))\Id_{E_{v}}&\cos{(\frac{\pi}{2}(1-\chi(v))}\sigma_{v}^{-1}
\end{pmatrix}
      $$
      for any $v\in V$. Therefore, We have $[ E, F, \sigma] = -[ F, E, s ]$ which completes the proof.
\end{proof}

Now we discuss the relationship between the smooth and ordinarily equivariant $K$-theory. There exists an obvious homomorphism $i_{*}$ from $K^{0}_{G, sm}(M)$ to $K^{0}_{G}(M)$:
\begin{align*}
    i_{*}:&K^{0}_{G,sm}(M)\rightarrow K^{0}_{G}(M)\\
    &[E,F,\sigma]\mapsto [E,F,\sigma].
\end{align*}

A natural problem is: when is $i_{*}$ an isomorphism? When $G$ and $M$ are compact, Proposition A.4 in \cite{bo} shows that $ i_{*}$ is an isomorphism. When $G$ and $M$ are not compact but the action is proper and cocompact, we have the following:
\begin{theorem}
    \label{sk}
    Let $M$ be a smooth manifold with a smooth proper action of a Lie group $G$. If there exists $H$, a compact subgroup of $G$, and $S\subset M$, a compact $H$-invariant submanifold without boundary, such that the action $[g,s]\mapsto gs$, for $g\in G$ and $s\in S$ defines a $G$-equivariant diffeomorphism from $G\times_{H}S$ to $M$, then we have the isomorphism:
    \begin{align*}
    i_{*}:&K^{0}_{G,sm}(M)\rightarrow K^{0}_{G}(M)\\
    &[E,F,\sigma]\mapsto [E,F,\sigma].
\end{align*}
\end{theorem}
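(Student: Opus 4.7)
The plan is to reduce the statement to the compact case already handled by Proposition~A.4 of \cite{bo}, by using the hypothesis $M\cong G\times_H S$ to set up a natural induction isomorphism between $G$-equivariant (smooth) $K$-theory on $M$ and $H$-equivariant (smooth) $K$-theory on the compact manifold $S$.

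First I would construct the induction homomorphisms at both the smooth and continuous levels. Given an $H$-equivariant (smooth) $K$-cocycle $(E,F,\sigma)$ over $S$, the assignment
\[
(E,F,\sigma)\longmapsto (G\times_H E,\; G\times_H F,\; \Id_G \times_H \sigma)
\]
produces a $G$-equivariant (smooth) $K$-cocycle over $G\times_H S\cong M$. Since $S$ is itself compact, the support condition on $\sigma$ is automatic, and the induced support in $M$ is manifestly $G$-compact. A two-sided inverse is given by restriction along the embedding $S\hookrightarrow M$, $s\mapsto [e,s]$: a $G$-equivariant bundle over $M$ restricts to an $H$-equivariant bundle over $S$, and a $G$-equivariant morphism restricts to an $H$-equivariant one. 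Smoothness is preserved in both directions because $G\times S\to G\times_H S$ is a smooth principal $H$-bundle quotient. This produces natural isomorphisms
\[
\alpha_{\mathrm{sm}}\colon K^0_{H,\mathrm{sm}}(S)\longrightarrow K^0_{G,\mathrm{sm}}(M), \qquad \alpha\colon K^0_H(S)\longrightarrow K^0_G(M).
\]

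The induction maps intertwine the forgetful maps $i_{*,H}$ and $i_*$ tautologically, since the induction formula does not distinguish between smooth and continuous morphisms. This yields the commutative diagram
\[
\xymatrix{
K^0_{H,\mathrm{sm}}(S) \ar[r]^-{\alpha_{\mathrm{sm}}} \ar[d]_-{i_{*,H}} & K^0_{G,\mathrm{sm}}(M) \ar[d]^-{i_*} \\
K^0_H(S) \ar[r]^-{\alpha} & K^0_G(M).
}
\]
By Proposition~A.4 of \cite{bo}, applied to the compact group $H$ acting on the compact manifold $S$, the map $i_{*,H}$ is an isomorphism. A diagram chase then shows that $i_*$ is an isomorphism as well.

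The main obstacle is verifying that the induction/restriction pair defines mutually inverse bijections on equivalence classes of $K$-cocycles, in both the smooth and continuous categories. The key point is that a $G$-equivariant vector bundle on $G\times_H S$ is completely determined, up to canonical isomorphism, by its restriction to $S$ together with the residual $H$-action on that restriction, the full $G$-action being reconstructed by translation along the principal $H$-bundle $G\to G/H$. One must also check that the stabilizing bundles $H_1,H_2$ appearing in the equivalence relation on $K$-cocycles can be chosen compatibly under induction and restriction, and that the intertwining isomorphisms $a,b$ remain smooth when required; both verifications reduce to the same induction/restriction principle applied to vector bundles alone.
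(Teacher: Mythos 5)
Your proposal is correct and follows essentially the same route as the paper: reduce to the compact pair $(H,S)$ via the induction/restriction equivalence $K^0_{G,(\mathrm{sm})}(M)\cong K^0_{H,(\mathrm{sm})}(S)$, invoke Proposition~A.4 of \cite{bo} for the compact case, and conclude by a commutative diagram. The paper writes the horizontal arrows as restriction $r_{*}$ rather than induction $\alpha$ (proving the smooth case in Lemma~\ref{k2} and citing Phillips for the continuous case), but these are mutual inverses, so the argument is the same.
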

For the proof, we need:
\begin{lemma}
\label{k2}
    Let $M$ and $S$ satisfy the condition in Theorem \ref{sk}. Let $[E]\in K_{G,sm}^{0}(M)$, where $E$ is a $G$-equivariant smooth vector bundle over $M$. There is an isomorphism from $K_{G,sm}^{0}(M)$ to $K_{H,sm}^{0}(S)$:
\begin{align*}
    r_{*}:&K_{G,sm}^{0}(M)\longrightarrow K_{H,sm}^{0}(S)\\
    &[E]\longmapsto [E|_{S}].
    \end{align*}

\end{lemma}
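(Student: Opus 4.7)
The plan is to construct an explicit two-sided inverse to $r_*$ using the classical induction procedure: send an $H$-equivariant smooth vector bundle $F \to S$ to the associated bundle $G \times_H F \to G \times_H S = M$, with $G$-action by left multiplication on the first factor. More precisely, I would define
\begin{equation*}
\iota_* : K^0_{H,sm}(S) \longrightarrow K^0_{G,sm}(M), \qquad [F] \longmapsto [G \times_H F].
\end{equation*}
The first step is to verify that $G \times_H F$ really is a smooth $G$-equivariant vector bundle on $M$; this follows from the fact that the diagonal $H$-action on $G \times F$ (given by $h \cdot (g,v) = (gh^{-1}, hv)$) is free, smooth, and proper, so the quotient is a smooth manifold and the projection to $G \times_H S = M$ is a smooth $G$-equivariant vector bundle. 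Additivity of the construction under direct sum is immediate, so $\iota_*$ descends to the Grothendieck group.

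Next, I would check the identity $r_* \circ \iota_* = \mathrm{id}_{K^0_{H,sm}(S)}$. Identifying $S \hookrightarrow M$ via $s \mapsto [e,s]$, the fiber of $G \times_H F$ above $[e,s]$ consists of classes $[e,v]$ with $v \in F_s$, and the map $v \mapsto [e,v]$ is an $H$-equivariant smooth isomorphism $F \cong (G \times_H F)|_S$. The equivariance uses that $H$ acts on $\{[e,s] : s \in S\}$ by $h \cdot [e,s] = [h,s] = [e, h s]$ and correspondingly on fibers by $h \cdot [e,v] = [e, hv]$.

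For the opposite composition $\iota_* \circ r_* = \mathrm{id}_{K^0_{G,sm}(M)}$, given a smooth $G$-vector bundle $E \to M$ I would introduce the map
\begin{equation*}
\Phi : G \times_H (E|_S) \longrightarrow E, \qquad [g,v] \longmapsto g \cdot v,
\end{equation*}
which is well defined since for $h \in H$ we have $g h^{-1} \cdot (h v) = g \cdot v$. The map is $G$-equivariant by construction, smooth because it descends from the smooth map $(g,v) \mapsto g \cdot v$, and it is a fiberwise isomorphism: over $[g,s] \in M$, the class $[g,\cdot]$ identifies the fiber of the source with $E_s$, and $\Phi$ sends this isomorphically onto $g \cdot E_s = E_{gs}$ via the $G$-action on $E$. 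Because $M$ is $G$-compact and $S$ is compact, no morphism data on $K$-cocycles needs to be tracked; all elements are represented by actual bundles.

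The main obstacle is the smoothness and good geometric behavior of the quotient $G \times_H F$ as a $G$-vector bundle, which relies crucially on the slice-type hypothesis from Theorem~\ref{sk} ensuring that $M \cong G \times_H S$ as a smooth $G$-manifold with $S$ a compact $H$-slice; everything else reduces to canonical natural isomorphisms of vector bundles.
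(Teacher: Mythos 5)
Your proof is correct and rests on the same key idea as the paper's: the induction functor $[F]\mapsto[G\times_H F]$ is inverse to restriction. The paper packages the two composite identities ($(G\times_H F)|_S\cong F$ and $G\times_H(E|_S)\cong E$) into a diagram chase with the pullbacks along the action maps $G\times_H S\to M$ and $H\times_H S\to S$, whereas you verify them directly; this is a presentational difference only.
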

\begin{proof}
    It is clear that $r_{*}$ is a homomorphism from $K_{G,sm}^{0}(M)$ to $K_{H,sm}^{0}(S)$. For any $H$-equivariant bundle $F$ over $S$, the balanced product of $G$ with $F$ over $H$ is the space $G\times_{H} F=(G\times F)/H$, where $H$ acts on $G\times F$ via $(g,f)\cdot h=(gh,h^{-1}f)$. The space $G\times_{H} F$ is a smooth vector bundle over $G\times_{H}S$. Let $G$ act on $G\times F$ via $g_{0}\cdot(g,f)=(g_{0}g,f)$ for $g_{0},g\in G$ and $f\in F$ and this action descends to an action of  $G$ on $G\times_{H} F$ since it commutes with the action of $H$ on $G\times F$, which makes $G\times_{H} F$ a $G$-equivariant vector bundle over $G\times_{H}S$. Thus, we have:
    \begin{align*}
    \epsilon_{*}:&K_{H,sm}^{0}(S)\rightarrow K_{G,sm}^{0}(G\times_{H}S) \\
    &[F]\longmapsto [G\times_{H} F].
\end{align*}

Let $\phi:G\times_{H}S\rightarrow M$ be the action map. Since it is a $G$-equivariant diffeomorphism, it induces an isomorphism:
$$\phi^{*}: K_{G,sm}^{0}(M)\rightarrow K_{G,sm}^{0}(G\times_{H}S).$$

Similarly, $\phi:H\times_{H}S\rightarrow S$ is an $H$-equivariant diffeomorphism. Thus, it induces an isomorphism:
$$\phi^{*}: K_{H,sm}^{0}(S)\rightarrow K_{H,sm}^{0}(H\times_{H}S).$$
Since the following diagram commutes:
$$\begin{tikzcd}
 K_{G,sm}^{0}(M)\arrow[r, "r_{*}"] \arrow[d,"\phi^{*}" ] & K_{H,sm}^{0}(S) \arrow[ld, "\epsilon_{*}"]  \arrow[d, "\phi^{*}"]              \\
K_{G,sm}^{0}(G\times_{H}S)\arrow[r, "r_{*}"]                                & K_{H,sm}^{0}(H\times_{H}S),
\end{tikzcd}$$
the left vertical arrow being isomorphic implies that $\epsilon_{*}$ is surjective. Since the right vertical arrow is also isomorphic, $\epsilon_{*}$ is injective. Thus, $\epsilon_{*}$ is a isomorphism. Then, $r_{*}$ is an isomorphism, which completes the proof.
\end{proof}

Then, we will prove Theorem \ref{sk}.
\begin{proof}[Proof of Theorem \ref{sk}]
The following diagram commutes:
\begin{equation*}
\xymatrix{   K^{0}_{G,sm}(M)\ar[r]^{r_{*}}\ar[d]_{i_{*}} &   K^{0}_{H,sm}(S)  \ar[d]^{i_{*}}   \\
           K^{0}_{G}(M) \ar[r]^{r_{*}} &K_{H}^{0}(S) .
                      }
\end{equation*}
    The right-hand vertical arrow is isomorphic because of Proposition A.4 in~\cite{bo} and horizontal arrows are isomorphisms by Lemma~\ref{k2} and Corollary~8.5 in \cite{ref8}. Thus, $i_{*}:K^{0}_{G,sm}(M)\rightarrow K^{0}_{G}(M)$ is an isomorphism.
    \end{proof}

\subsection{Equivariant cohomology} 
In this subsection, we recall some basic definitions and theorems of equivariant cohomology and equivariant characteristic classes which can be found in \cite{ref6} and \cite{bgv}.

First, we recall the definition of equivariant cohomology. Let $M$ be a $C^{\infty}$-manifold with an action of a Lie group $G$, and let $\mathfrak{g}$ be the Lie algebra of $G$. For any $g\in G$, the conjugation map $C_{g}:G\rightarrow G$ given by $C_{g}(h)=ghg^{-1}$ is a Lie group homomorphism and denote $g_{*}={(C_{g})}_{*}:\mathfrak{g}\rightarrow\mathfrak{g}$ the induced Lie algebra homomorphism.
\begin{definition}
 Let $\Lambda T^{*}M$ be the bundle of exterior differentials over $M$. The space of sections $\Gamma(M,\Lambda T^{*}M)$ is called the space of differential forms, denoted by $\mathcal{A}(M)$. Let $\mathcal{A}^{\infty}(\mathfrak{g},M)$ be the algebra of $G$-invariant smooth maps $\alpha:\mathfrak{g}\rightarrow \mathcal{A}(M)$. For any $\alpha,\beta\in \mathcal{A}^{\infty}(\mathfrak{g},M)$ and $X\in \mathfrak{g}$, the multiplication is given by $\alpha\cdot\beta(X)=\alpha(X)\wedge\beta(X)$. The group $G$ acts on an element $\alpha\in \mathcal{A}^{\infty}(\mathfrak{g},M)$ by the formula 
 $$(g\cdot{\alpha})(X)=g\cdot(\alpha({(g^{-1})}_{*}(X))$$
 for any $g\in G$ and $X\in \mathfrak{g}$. The map $\alpha$ is $G$-invariant if it satisfies: $({\alpha})(g_{*}(X))=g\cdot\alpha(X)$ for any $X\in\mathfrak{g}$. The $\mathbb{Z}_{2}$-grading of $\mathcal{A}^{\infty}(\mathfrak{g},M)$ is the grading induced by the $\mathbb{Z}_{2}$-grading of $\mathcal{A}(M)$. 
\end{definition}
%\paragraph{Definition 2.1.1}

Thus, the equivariant differential can be defined as follows:
\begin{definition}[\cite{ref6}]
The group $G$ acts on $C^{\infty}(M)$ by the formula $(g\cdot\phi)(x)=\phi(g^{-1}x)$. For $X\in \mathfrak{g}$, we denote by $X_{M}$ the vector field on $M$ given by:
$$(X_{M}\cdot\phi)(x)=\frac{d}{dt}\phi(\exp(-tX)x)|_{t=0}.$$
Let the equivariant differential $d_{\mathfrak{g}}$ be given by $d_{\mathfrak{g}}(\alpha)(X)=d(\alpha(X))-\iota(X)(\alpha(X))$ for any $\alpha\in \mathcal{A}^{\infty}(\mathfrak{g},M)$ and $X\in \mathfrak{g}$, where $\iota(X)$ denotes contraction by the vector field $X_{M}$.
\end{definition}
Since $d_{\mathfrak{g}}^{2}(\alpha)=0$ for all $\alpha\in \mathcal{A}^{\infty}(\mathfrak{g},M)$ which makes $(A^{\infty}(\mathfrak{g},M),d_{\mathfrak{g}})$ a cochain complex, we obtain the following:
\begin{definition}[\cite{ref6}]
Let $H^{\infty}(\mathfrak{g}, M):=\Ker(d_{\mathfrak{g}})/\im(d_{\mathfrak{g}})$ be the cohomology of the complex $(\mathcal{A}^{\infty}(\mathfrak{g},M),d_{\mathfrak{g}})$, called the equivariant cohomology of the $G$-manifold $M$.
\end{definition}
%\paragraph{Definition 2.1.2} 

Next, we recall the notion of equivariant superconnections which is useful for describing equivariant characteristic classes. A $G$-equviariant smooth vector bundle $E=E^{+}\bigoplus E^{-}$ is a $G$-equivariant smooth superbundle if $E^{+}$ and $E^{-}$ are $G$-equivariant vector bundles. Let $\mathcal{A}(M, E)$ denote the space of differential
forms on $M$ with values in $E$. In other words, $\mathcal{A}(M,E)=\Gamma(M,\Lambda T^{*}M\otimes E)$.
 Similarly, we denote $\mathcal{A}^{\infty}(\mathfrak{g},M,E)$ to be  the $\mathbb{Z}_{2}$-graded algebra of $G$-invariant smooth maps $\alpha:\mathfrak{g}\rightarrow \mathcal{A}(M,E)$.
\begin{definition}
 A superconnection on $E$ is an odd-parity first-order differential operator $\mathbb{A}:\mathcal{A}^{\pm}(M,E)\rightarrow \mathcal{A}^{\mp}(M,E)$
which satisfies Leibniz's rule in the $\mathbb{Z}_{2}$-graded sense.
\end{definition}
A superconnection $\mathbb{A}$ on the $G$-equivariant vector bundle $E$ is a $G$-invariant superconnection if it commutes with the action of $G$ on $\mathcal{A}(M, E)$. Such $G$-invariant superconnections induce equivariant superconnections:
\begin{definition}
The equivariant superconnection $\mathbb{A}_\mathfrak{g}$ associated to a $G$-invariant superconnection $\mathbb{A}$ is the operator on $\mathcal{A}^{\infty}(\mathfrak{g},M,E)$ defined by 
$$ (\mathbb{A}_\mathfrak{g}\alpha)(X):=(\mathbb{A}-\iota(X))(\alpha(X)), \qquad X\in \mathfrak{g}$$
where $\iota(X)$ denotes the contraction operator $\iota(X_M)$ on $\mathcal{A}(M,E)$.
\end{definition}
Then, an equivariant superconnection induces an equivariant (super)curvature:

\begin{definition}
Let $\mathbb{A}_\mathfrak{g}$ be an equivariant superconnection on a $G$-equivariant vector bundle $E$. We denote the equivariant curvature $\Omega_\mathfrak{g}$ by the formula
$$\Omega_\mathfrak{g}(X):=\mathbb{A}_\mathfrak{g}(X)^2+\mathcal{L}^{E}(X),\qquad X\in\mathfrak{g}$$
where $\mathbb{A}_\mathfrak{g}(X)=\mathbb{A}-\iota(X)$ and for any $s\in \Gamma(M,E)$, $\mathcal{L}^{E}(X)s=\frac{d}{dt}|_{t=0}\exp(t\cdot X)\cdot s$.
\end{definition}
\begin{remark}
    Every $G$-vector bundle $E$ is $\mathbb{Z}_{2}$-graded where $E^{+}\cong E$ and $E^{-}\cong\{0\}$. Because of this, any $G$-invariant connection is a $G$-invariant superconnection under this special $\mathbb{Z}_{2}$-grading. Thus, every $G$-invariant connection induces a $G$-equivariant curvature.
\end{remark}

In the special case when the invariant superconnection $\mathbb{A}$ on the equivariant bundle $E$ is just a connection $\nabla$, the inverse of an equivariant $\Hat{A}$-genus $\hat{A}_\mathfrak{g}(\nabla)$ can be defined as follows:
 
\begin{definition}[\cite{bgv}]
\label{Agenus}
Let $E$ be a $G$-equivariant vector bundle with a $G$-invariant metric and a $G$-invariant connection $\nabla$ compatible with the metric. Denote the inverse of an equivariant $\Hat{A}$-genus by
$$\Hat{A}_\mathfrak{g}^{-1}(E)(X)={\det}^{1/2}\left(\frac{\sinh(\Omega_\mathfrak{g}(X)/2)}{\Omega_\mathfrak{g}(X)/2}\right)$$
and the inverse of an invariant $\Hat{A}$-genus by:
$$\hat{A}_{G}^{-1}(E)=\Hat{A}_\mathfrak{g}^{-1}(E)(0).$$
\end{definition}
The equivariant cohomology will be used to define the equivariant Chern character from $K_{G, sm}^{0}(M)$ to $H^{\infty}(\mathfrak{g}, M)$, where $M$ is a proper $G$-compact manifold. Next, we recall the relative equivariant cohomology, which will be used to define the equivariant Chern character from $K_{G, sm}^{0}(V)$ to $H^{\infty}_{cv}(\mathfrak{g}, V)$, where $V$ is a $G$-equivariant vector bundle over a proper $G$-compact manifold $M$, and $H^{\infty}_{cv}(\mathfrak{g}, V)$ is the equivariant cohomology with compact support in the fiber direction.

Let $M$ be a $G$-manifold and $K$  be a closed  $G$-invariant subset of $M$. The relative equivariant complex can be given as follows:
\begin{definition}[\cite{ref6}]
Consider the complex $(\mathcal{A}^{\infty}(\mathfrak{g},M,M\backslash K), D_{rel})$ where 
\[
\mathcal{A}^{\infty}(\mathfrak{g},M,M\backslash K)=\mathcal{A}^{\infty}(\mathfrak{g},M)\oplus{\mathcal{A}^{\infty}(\mathfrak{g},M\backslash K)}
\]
and differential is given by $D_{rel}(\alpha,\beta)=(d_{\mathfrak{g}}\alpha,\alpha|_{M\backslash K}-d_{\mathfrak{g}}\beta)$.
\end{definition}
As before, $(\mathcal{A}^{\infty}(\mathfrak{g}, M, M\backslash K), D_{rel})$ is a cochain complex so that we can define the relative equivariant cohomology:
\begin{definition}[\cite{ref6}]
The cohomology associated to the complex \\ $(\mathcal{A}^{\infty}(\mathfrak{g},M,M\backslash K),D_{rel})$ is the relative equivariant cohomology space $H^{\infty}(\mathfrak{g},M,M\backslash K)$.
\end{definition}
In order to define an equivariant Chern character later, we give a homomorphism from $H^{\infty}(\mathfrak{g}, V, V\backslash F)$ to $H^{\infty}_{cv}(\mathfrak{g}, V)$ as in \cite{ref6}.
As before, let $M$ be a $G$-compact manifold by a proper action of a Lie group $G$ and $V$ is a $G$-vector bundle over $M$.
Let $F$ be a $G$-invariant closed subset of $V$ such that $F$ is a subset of some $G$-compact subset of $V$. We say that $\chi$ is an $F$-cutoff function if:

\begin{enumerate}
\item $\chi\in C^{\infty}(V)^{G}$ where $ C^{\infty}(V)^{G}$ is the space of $G$ equivariant smooth function on $V$,
\item ${1}\geq{\chi}\geq{0},{\chi}\mid_{F}=1$,
\item $\supp(\chi)$ is G-compact.
\end{enumerate}
\begin{remark}
    When $M$ is a $G$-compact manifold and the group action is proper, every $G$-vector bundle $V$ over $M$ has a $G$-invariant Hermitian metrics $\langle, \rangle$. For any smooth function $f$ on $\mathbb{R}$ with compact support satisfying ${1}\geq{f}\geq{0} ,f(0)=0$, $\chi (v)=f(\langle v,v\rangle)$ for any $v\in V$ is an $M$-cutoff function.
    \end{remark}

\begin{definition}
For an $F$-cutoff function $\chi$, define the map 
\begin{align*}
    P_{\chi}:&A^{\infty}(\mathfrak{g},V,V\backslash F)\longrightarrow A^{\infty}_{cv}(\mathfrak{g},V)\\
    &(\alpha,\beta)\longmapsto \chi\alpha+d\chi\wedge{\beta},
\end{align*}
called a cutoff map.
\end{definition}

\begin{proposition}
The cutoff map induces a homomorphism from $H^{\infty}(\mathfrak{g},V,V\backslash F)$ to $H^{\infty}_{cv}(\mathfrak{g},V)$.
\end{proposition}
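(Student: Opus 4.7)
The plan is to verify that $P_\chi$ is a morphism of cochain complexes from $(\mathcal{A}^{\infty}(\mathfrak{g},V,V\setminus F), D_{rel})$ to $(\mathcal{A}^{\infty}_{cv}(\mathfrak{g},V), d_{\mathfrak{g}})$; a chain map then automatically descends to a homomorphism on cohomology. Two points must be checked: (i) the image $P_\chi(\alpha,\beta) = \chi\alpha + d\chi \wedge \beta$ actually lies in $\mathcal{A}^{\infty}_{cv}(\mathfrak{g},V)$, and (ii) the intertwining identity $d_{\mathfrak{g}} \circ P_\chi = P_\chi \circ D_{rel}$ holds.

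For (i), the term $\chi\alpha$ is smooth and $G$-invariant on $V$ with support inside the $G$-compact set $\supp(\chi)$; combined with the $G$-compactness of $M$, this is precisely vertical compactness after passing to $V/G$. The term $d\chi \wedge \beta$ is only a priori defined on $V \setminus F$; it is extended by zero across $F$ using the standard convention that $\chi \equiv 1$ on an open $G$-invariant neighborhood $U$ of $F$, whence $d\chi \equiv 0$ on $U$ and the extension is smooth. The support of the extended form is contained in $\supp(\chi)$, which is $G$-compact. Hence $P_\chi(\alpha,\beta)$ is a well-defined element of $\mathcal{A}^{\infty}_{cv}(\mathfrak{g},V)$.

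For (ii), the chain map property is a short direct computation using the Cartan-type Leibniz rule for $d_{\mathfrak{g}}$. Because $\chi$ is a $G$-invariant $0$-form, $\iota(X)\chi = 0$ and $X_M(\chi) = 0$, so $d_{\mathfrak{g}}\chi = d\chi$ and $d_{\mathfrak{g}}(d\chi) = d(d\chi) - \iota(X) d\chi = -X_M(\chi) = 0$. Expanding gives
\begin{align*}
d_{\mathfrak{g}}\bigl(\chi\alpha + d\chi \wedge \beta\bigr) &= d\chi \wedge \alpha + \chi \cdot d_{\mathfrak{g}}\alpha - d\chi \wedge d_{\mathfrak{g}}\beta \\
&= \chi \cdot d_{\mathfrak{g}}\alpha + d\chi \wedge \bigl(\alpha|_{V\setminus F} - d_{\mathfrak{g}}\beta\bigr) = P_\chi\bigl(D_{rel}(\alpha,\beta)\bigr),
\end{align*}
which is the required identity.

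The main obstacle is the technical smoothness point in step (i): extending $d\chi \wedge \beta$ smoothly across $F$ when $\beta$ is allowed to have arbitrary behavior there. The definition of an $F$-cutoff function literally only requires $\chi|_F = 1$, so to make the formula meaningful one must either adopt the stronger (and standard) convention that $\chi$ is identically $1$ on a $G$-invariant neighborhood of $F$, or restrict attention to $(\alpha,\beta)$ for which $d\chi \wedge \beta$ has such an extension. Once this is arranged the remaining steps are routine Cartan calculus and the proposition follows.
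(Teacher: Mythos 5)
Your proof is correct, and it is organized somewhat more cleanly than the paper's. The paper verifies two separate facts: that $P_{\chi}$ sends $D_{rel}$-closed pairs to $d_{\mathfrak{g}}$-closed forms, and that cohomologous pairs have cohomologous images (the latter step in the paper is in fact mis-stated, since it writes the condition $D_{rel}(\alpha,\beta)=(0,0)$ for the \emph{difference} of two cohomologous cocycles, which expresses closedness rather than exactness, although the subsequent computation is really the one for an exact difference $(\alpha,\beta)=D_{rel}(\gamma,\delta)$). You instead prove the single stronger identity $d_{\mathfrak{g}}\circ P_{\chi}=P_{\chi}\circ D_{rel}$, using $d_{\mathfrak{g}}(d\chi)=0$ (which holds because $\chi$ is $G$-invariant, so $\iota(X_{M})d\chi=X_{M}(\chi)=0$); the induced map on cohomology is then automatic, and both of the paper's checks fall out as corollaries. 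The underlying algebra is the same Leibniz computation, so the two arguments buy essentially the same thing, but your chain-map formulation is the more robust packaging. Your point (i) about extending $d\chi\wedge\beta$ by zero across $F$ is also well taken: the definition of an $F$-cutoff function as written only demands $\chi|_{F}=1$, and one does need $\chi\equiv 1$ on a $G$-invariant neighborhood of $F$ (as in the standard Paradan--Vergne convention, and as in the paper's own example $\chi(v)=f(\langle v,v\rangle)$) for $P_{\chi}(\alpha,\beta)$ to be a smooth form on all of $V$; the paper passes over this in silence, so flagging it is a genuine, if minor, improvement.
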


\begin{proof}
First, we show that if $(\alpha,\beta)$ is closed, then $P_{\chi}(\alpha,\beta)$ is closed.

We have
\begin{align*}
   d_{\mathfrak{g}}(\chi\alpha+d\chi\wedge{\beta})&=d_{\mathfrak{g}}(\chi\alpha)-d\chi\wedge d_{\mathfrak{g}}\beta \\
&=(d\chi)\alpha+\chi d_{\mathfrak{g}}\alpha-d\chi\wedge{d_{\mathfrak{g}}\beta}\\
&=(d\chi)(\alpha-d_{\mathfrak{g}}\beta)+\chi d_{\mathfrak{g}}\alpha\\
&=0.   
\end{align*}

The last equality follows from $(\alpha,\beta)$ being closed, which means $$(0,0)=D_{rel}(\alpha,\beta)=(d_{\mathfrak{g}}\alpha,\alpha\mid_{V-F}-d_{\mathfrak{g}}\beta).$$

Next we show that if $(\alpha_{1},\beta_{1}),(\alpha_{2},\beta_{2}) \in H^{*}(\mathfrak{g},V,V\backslash F)$, then $P_{\chi}(\alpha_{1},\beta_{1})=P_{\chi}(\alpha_{2},\beta_{2}).$

If $[(\alpha_{1},\beta_{1})]=[(\alpha_{2},\beta_{2})]$, then setting $\alpha=\alpha_{1}-\alpha_{2}$ and $\beta=\beta_{1}-\beta_{2}$, we have: $$(0,0)=D_{rel}(\alpha,\beta)=(d_{\mathfrak{g}}\alpha,\alpha\mid_{V-F}-d_{\mathfrak{g}}\beta).$$
Then,
\begin{align*}
  P_{\chi}(\alpha_{1},\beta_{1})-P_{\chi}(\alpha_{2},\beta_{2})&=\chi(\alpha_{1}-\alpha_{2})+d\chi\wedge{(\beta_{1}-\beta_{2})}\\
&=\chi(d_{\mathfrak{g}}\alpha)+d\chi\wedge{(\alpha-d_{\mathfrak{g}}\beta)}\\
&=d_{\mathfrak{g}}(\chi\alpha)-d\chi\wedge{\alpha}+d\chi\wedge{\alpha}-d_{\mathfrak{g}}(\chi d_{\mathfrak{g}}\beta)\\
&=d_{\mathfrak{g}}(\chi(\alpha-d_{\mathfrak{g}}\beta)).   
\end{align*}

The cutoff map is additive. Hence the cutoff map induces a homomorphism from $H^{*}(\mathfrak{g}, V,V\backslash F)$ to $H^{*}_{cv}(\mathfrak{g}, V)$.
\end{proof}

\begin{remark}
\label{rem:indofchi}
The cohomology class of $P_{\chi}(\alpha,\beta)$ does not depend on the choice of $\chi$. 
That is because for two different choices $\chi_{1}$ and $\chi_{2}$,
we have 
\[
P_{\chi_{1}}(\alpha,\beta)-P_{\chi_{2}}(\alpha,\beta)=d_{\mathfrak{g}}((\chi_{1}-\chi_{2})\beta).
\]
\end{remark}
Because of Remark~\ref{rem:indofchi}, we denote the cutoff map from $H^{*}(\mathfrak{g},V,V\backslash F)$ to $H_{cv}^{*}(\mathfrak{g},V)$ as $P_{F}$.
\section{Equivariant Chern character}
In this section, we introduce equivariant Chern characters. Let $G$ be an almost connected Lie group, i.e., its component group is compact. Let $M$ be a $G$-compact proper $G$-space. For any $G$-equivariant superbundle $E=E^{+}\oplus{E^{-}}$ and an equivariant supercurvature $\Omega_\mathfrak{g}$ on $E$, the supertrace on $\End(E)$ extends to a map $\Str:\mathcal{A}(M,E)\rightarrow\mathcal{A}(M)$. Then we can define a map from $K^{0}_{G,sm}(M)$ to the equivariant cohomology $H^{\infty}(\mathfrak{g},M)$ as follows:
\begin{definition}There is a homomorphism from $K^{0}_{G,sm}(M)$ to $H^{\infty}(\mathfrak{g},M)$:
$$\chg: K^{0}_{G,sm}(M) \longrightarrow H^{\infty}(\mathfrak{g},M)$$
$$[E^{+},E^{-},\sigma] \longmapsto \Str(\exp{\Omega_\mathfrak{g}}),$$
where $\sigma$ is a $G$-equivariant isomorphism from $E^{+}$ to $E^{-}$ outside a $G$-compact subset of $M$ and for any $X\in \mathfrak{g}$, $\Str(\exp{\Omega_\mathfrak{g}})(X)=\Str(\exp{\Omega_\mathfrak{g}(X)})$.
\end{definition}
\begin{remark}
In the case that $M$ is $G$-compact, $\sigma$ can be chosen to be a $G$-equivariant isomorphism with empty support.  Any pair of $G$-equivariant vector bundles $E$, $F$ gives rise to a $K$-theory element $[E,F,\sigma]$. Therefore, $\chg[E,F,\sigma]$ is independent of the choice of $\sigma$. The cohomology class of $\Str(\exp{\Omega_\mathfrak{g}})$ is independent of the choice of the $G$-invariant superconnection $\mathbb{A}$, which is proved in \cite{bgv} (Theorem $7.7$).
\end{remark}
\begin{proof}
    We will show that the equivariant Chern character is well-defined. Suppose  $[E_{1},F_{1},\sigma_{1}]$ and $[E_{2},F_{2},\sigma_{2}]$ define the same class in $K^{0}_{G,sm}(M)$. There exist smooth G-vector bundles $H_{1}$, $H_{2}$ and smooth $G$-equivariant isomorphisms $a$, $b$ such that:
\begin{align*}
    a:E_{1}\oplus H_{1}\rightarrow E_{2}\oplus H_{2}\\
    b:F_{1}\oplus H_{1}\rightarrow F_{2}\oplus H_{2}.
\end{align*}
Let $\mathbb{A}=\begin{pmatrix}
\nabla^{E_{1}}&0\\
0&\nabla^{F_{1}}
\end{pmatrix}$, where $\nabla^{E_{1}}$ and $\nabla^{F_{1}}$ are $G$-invariant connections on vector bundles $E_{1}$ and $F_{1}$ respectively. Thus, $\mathbb{A}$ becomes a $G$-invariant superconnection. Let $\Omega_{\mathfrak{g}}$ denote the equivariant curvature induced by $\mathbb{A}$ and $\Omega^{E_{1}}_{\mathfrak{g}}$, $\Omega^{F_{1}}_{{\mathfrak{g}}}$ induced by  $\nabla^{E_{1}}$ and $\nabla^{F_{1}}$ respectively. Thus, for any $X\in \mathfrak{g}$ we have:
\begin{align*}
    \chg[E_{1},F_{1},\sigma_{1}](X)&=\Str(\exp(\Omega_{\mathfrak{g}}(X)))\\
    &=\Str\left(\exp\left(\begin{pmatrix}
\Omega^{E_{1}}_{\mathfrak{g}}(X)&0\\
0&\Omega^{F_{1}}_{\mathfrak{g}}(X)
\end{pmatrix}\right)\right).
\end{align*}
Let $\nabla^{H_{1}}$ be the $G$-invariant connection of $H_{1}$ and $\Omega^{H_{1}}_{\mathfrak{g}}$ be the $G$-equivariant curvature induced by $\nabla^{H_{1}}$. Similarly, we have:
\begin{align*}
    &\chg[E_{1}\oplus H_{1},F_{1}\oplus H_{1},\sigma_{1}\oplus \Id_{H_{1}}](X)\\
&=\Str\left(\exp\left(\begin{pmatrix}
\Omega^{E_{1}}_{\mathfrak{g}}(X)&0&0&0\\
0&\Omega^{H_{1}}_{\mathfrak{g}}(X)&0&0\\
0&0&\Omega^{F_{1}}_{\mathfrak{g}}(X)&0\\
0&0&0&\Omega^{H_{1}}_{\mathfrak{g}}(X)
\end{pmatrix}\right)\right)\\
&=\Str\left(\exp\left(\begin{pmatrix}
\Omega^{E_{1}}_{\mathfrak{g}}(X)&0\\
0&\Omega^{F_{1}}_{\mathfrak{g}}(X)
\end{pmatrix}\right)\right)
\end{align*}
for any $X\in \mathfrak{g}$.
Thus, we obtain $$\chg[E_{1},F_{1},\sigma_{1}]=\chg[E_{1}\oplus H_{1},F_{1}\oplus H_{1},\sigma_{1}\oplus \Id_{H_{1}}].$$ 
Similarly, we have:
$$\chg[E_{2},F_{2},\sigma_{2}]=\chg[E_{2}\oplus H_{2},F_{2}\oplus H_{2},\sigma_{2}\oplus \Id_{H_{2}}].$$
The smooth $G$-equivariant isomorphisms $a$ and $b$ induce a $G$-equivariant isomorphism:
$$\begin{pmatrix}
a&0\\
0&b
\end{pmatrix}:E_{1}\oplus H_{1}\oplus F_{1}\oplus H_{1}\rightarrow E_{2}\oplus H_{2}\oplus F_{2}\oplus H_{2}.
$$
Because $M$ is $G$-compact, $\sigma_{1}\oplus \Id_{H_{1}}$ and $\sigma_{2}\oplus \Id_{H_{2}}$ play no role, we have:
$$\chg[E_{1}\oplus H_{1},F_{1}\oplus H_{1},\sigma_{1}\oplus \Id_{H_{1}}]=\chg[E_{2}\oplus H_{2},F_{2}\oplus H_{2},\sigma_{2}\oplus \Id_{H_{2}}].$$
Therefore, $\ch_{G}[E_{1},F_{1},\sigma_{1}]=\ch_{G}[E_{2},F_{2},\sigma_{2}]$.
\end{proof}
Then, we define an equivariant Chern character from $K_{G,sm}^{0}(V)$ to $H^{\infty}_{cv}(\mathfrak{g},V)$ where $V$ is a $G$-equivariant vector bundle over a $G$-compact manifold $M$ such that the action of $G$ on $M$ is proper. It is a little different from the above case. First, we recall Quillen's relative equivariant Chern character defined by Paradan and Vergne in \cite{ref6}.
\begin{definition}[\cite{ref6}
Quillen's equivariant relative Chern character]
Let $(E,F,\sigma)$ be a smooth $K$-cocycle for $(G,V)$, $\mathbb{A}$ be a $G$-invariant  superconnection of $E\oplus F$ and $\Omega_{\mathfrak{g}}$ be  the equivariant curvature induced by $\mathbb{A}$. There exists a cohomology class $\ch_{Q}(E,F,\sigma):=[(\ch(\mathbb{A}),\beta)]\in H^{*}(\mathfrak{g},V,V\backslash \supp(\sigma))$ which is independent of the choice of $\mathbb{A}$ and the $G$-invariant Hermite metric of $E\oplus F$ such that:
\begin{align*}
&\ch(\mathbb{A})=\Str(\exp(\Omega_{\mathfrak{g}}))\\  &\beta=\int_{0}^{+\infty}\Str\left(i\begin{pmatrix}
0&\sigma^{*}\\
\sigma&0
\end{pmatrix}\exp\left(-t^{2}+it[\mathbb{A},\begin{pmatrix}
0&\sigma^{*}\\
\sigma&0
\end{pmatrix}]+\Omega_{\mathfrak{g}}\right)\right)dt.
\end{align*}
\end{definition}
Then, we use Quillen's relative equivariant Chern character to define the equivariant Chern character from $K_{G, sm}^{0}(V)$ to $H^{\infty}_{cv}(\mathfrak{g}, V)$:
\begin{definition}[equivariant Chern character]
\label{def.eq.Chern}
The equivariant Chern character is given by:
\begin{align*}
    \chg: & K_{G,sm}^{0}(V)\longrightarrow H^{\infty}_{cv}(\mathfrak{g},V) \\
    & [E,F,\sigma]\longmapsto P_{\supp(\sigma)}\circ \ch_{Q}(E,F,\sigma).
\end{align*}
\end{definition}

To show that the equivariant Chern character is well-defined, we need to prove that if
$[E_{1},F_{1},\sigma_{1}]=[E_{2},F_{2},\sigma_{2}]\in K_{G}^{0}(V)$, then:
$$P_{\supp(\sigma_{1})}\circ{\ch_{Q}(E_{1}, F_{1},\sigma_{1})}=P_{\supp(\sigma_{2})}\circ{\ch_{Q}(E_{2}, F_{2},\sigma_{2})}.$$ 

For the proof, and in later arguments, we need two lemmas: 
\begin{lemma}
    \label{l0}
    For any $[E,F,\sigma]\in K^{0}_{G,sm}(V)$, let the restriction of $\sigma:E\rightarrow F$  to $V\backslash\supp(\sigma)$ be unitary. Then for any $\alpha,\gamma\in \mathcal{A}(V)$, $A,C\in \End(E)$ and $n\in \mathbb{N}^{+}$, we have:
    $$\Str\left(i\begin{pmatrix}
0&\sigma^{*}\\
\sigma &0
\end{pmatrix}\left(\alpha\otimes\begin{pmatrix}
0&A\sigma^{*}\\
\sigma A&0
\end{pmatrix}+\gamma\otimes\begin{pmatrix}
C&0\\
0&\sigma C\sigma^{*}
\end{pmatrix}\right)^{n}\right)=0
    $$
 outside $\supp(\sigma)$.
\end{lemma}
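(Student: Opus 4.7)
The plan is to exploit the unitarity of $\sigma$ on $V \setminus \supp(\sigma)$ to conjugate the three matrix building blocks into a common coordinate frame on which the supertrace computation reduces to a simple parity argument. Throughout we work pointwise on $V \setminus \supp(\sigma)$, where $\sigma^{*}\sigma = \Id_{E}$ and $\sigma\sigma^{*} = \Id_{F}$.

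First, introduce the block-diagonal (hence even) unitary
$$U := \begin{pmatrix} \Id_{E} & 0 \\ 0 & \sigma \end{pmatrix} : E \oplus E \longrightarrow E \oplus F,$$
with inverse $U^{-1} = \bigl(\begin{smallmatrix} \Id_{E} & 0 \\ 0 & \sigma^{*} \end{smallmatrix}\bigr)$ on $V\setminus\supp(\sigma)$. A short direct calculation, using $\sigma^{*}\sigma = \Id_{E}$, gives
\begin{align*}
U^{-1}\, i\begin{pmatrix} 0 & \sigma^{*} \\ \sigma & 0 \end{pmatrix} U &= i\begin{pmatrix} 0 & \Id \\ \Id & 0 \end{pmatrix} =: m, \\
U^{-1} \begin{pmatrix} 0 & A\sigma^{*} \\ \sigma A & 0 \end{pmatrix} U &= \begin{pmatrix} 0 & A \\ A & 0 \end{pmatrix} =: p, \\
U^{-1} \begin{pmatrix} C & 0 \\ 0 & \sigma C \sigma^{*} \end{pmatrix} U &= \begin{pmatrix} C & 0 \\ 0 & C \end{pmatrix} =: q.
\end{align*}
Since conjugation by an even invertible operator preserves the supertrace, it suffices to prove
$$\Str\bigl(m(\alpha \otimes p + \gamma \otimes q)^{n}\bigr) = 0$$
on $V \setminus \supp(\sigma)$.

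Expanding the $n$-th power in the $\mathbb{Z}_{2}$-graded algebra $\mathcal{A}(V, \End(E \oplus E))$ yields
$$(\alpha \otimes p + \gamma \otimes q)^{n} = \sum_{\mu} \omega_{\mu} \otimes M_{\mu},$$
where $\mu$ ranges over length-$n$ words in $\{p,q\}$, $M_{\mu}$ is the product of matrices in the order prescribed by $\mu$, and $\omega_{\mu} \in \mathcal{A}(V)$ is the corresponding signed wedge product of $\alpha$'s and $\gamma$'s. By $\mathcal{A}(V)$-linearity of the extended supertrace, it suffices to show $\Str(mM_{\mu}) = 0$ for each monomial. An easy induction on $|\mu|$ via direct $2\times 2$ block multiplication with $p$ and $q$ shows that, depending on the parity of the number of $p$'s in $\mu$,
$$M_{\mu} = \begin{pmatrix} S & 0 \\ 0 & S \end{pmatrix} \quad (\text{even}) \qquad \text{or} \qquad M_{\mu} = \begin{pmatrix} 0 & S \\ S & 0 \end{pmatrix} \quad (\text{odd})$$
for some $S \in \End(E)$ that depends on $\mu$. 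Left multiplication by $m$ interchanges the two shapes: in the even case $mM_{\mu}$ becomes off-diagonal, with supertrace zero; in the odd case $mM_{\mu}$ becomes diagonal with both diagonal entries equal to $iS$, giving supertrace $i\tr(S) - i\tr(S) = 0$. Summing over monomials yields the claim.

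The main obstacle here is essentially administrative, namely tracking the block decompositions and the $\mathbb{Z}_{2}$-graded signs coming from the wedge products. These signs in fact play no role in the final answer since each monomial's supertrace vanishes individually; the crucial structural input is the unitarity of $\sigma$ outside its support, which is exactly what lets the conjugation by $U$ ``rotate away'' the $\sigma$ and $\sigma^{*}$ factors in the original matrices.
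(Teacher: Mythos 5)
Your proof is correct and is essentially the same argument as the paper's: both show by induction that every word in the two building blocks retains one of two shapes (block-diagonal versus block-off-diagonal, with the second diagonal entry $\sigma$-conjugate to the first), and that each shape, after multiplication by the prefactor $i\bigl(\begin{smallmatrix}0&\sigma^{*}\\ \sigma&0\end{smallmatrix}\bigr)$, has vanishing supertrace, with the unitarity $\sigma^{*}\sigma=\Id$ outside $\supp(\sigma)$ supplying the cancellation of the two diagonal traces. Your preliminary conjugation by $U$ is just a clean way of packaging that unitarity up front; it does not change the substance of the induction or the final parity argument.
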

\begin{proof}
By induction, it can be concluded that $$i\begin{pmatrix}
0&\sigma^{*}\\
\sigma &0
\end{pmatrix}\left(\alpha\otimes\begin{pmatrix}
0&A\sigma^{*}\\
\sigma A&0
\end{pmatrix}+\gamma\otimes\begin{pmatrix}
C&0\\
0&\sigma C\sigma^{*}
\end{pmatrix}\right)^{n}$$  can be represented as a linear combination of $\alpha_{i}\otimes{\begin{pmatrix}
0&A_{i}\sigma^{*}\\
\sigma A_{i}&0
\end{pmatrix}}$ and $\gamma_{i}\otimes\begin{pmatrix}
C_{i}&0\\
0&\sigma C_{i}\sigma^{*}
\end{pmatrix}$ where $\alpha_{i},\gamma_{i}\in\mathcal{A}(M)$, $A_{i},C_{i}\in \End(E)$ for  some $i\in \mathbb{N}^{+}$.

The lemma is then proved because$$\Str\left(\alpha_{i}\otimes{\begin{pmatrix}
0&A_{i}\sigma^{*}\\
\sigma A_{i}&0
\end{pmatrix}}\right)=\Str\left(\gamma_{i}\otimes\begin{pmatrix}
C_{i}&0\\
0&\sigma C_{i}\sigma^{*}
\end{pmatrix}\right)=0$$
holds outside $\supp(\sigma).$
\end{proof}

\begin{lemma}
\label{ch is classical}
For any $K$-cocycle $[E,F,\sigma]\in K_{G,sm}^{0}(V)$, there exist  a suitable superconnection $\mathbb{A}$ and a suitable metric such that
$$\chg[E,F,\sigma]=\chi \ch(\mathbb{A}).$$
\end{lemma}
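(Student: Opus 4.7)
The plan is to choose the metric and superconnection so carefully that Quillen's "correction form" $\beta$ vanishes identically on the region where $d\chi\neq 0$, whence $P_{\supp(\sigma)}\circ\ch_{Q}(E,F,\sigma)=\chi\ch(\mathbb{A})+d\chi\wedge\beta$ collapses to $\chi\ch(\mathbb{A})$. The strategy is to arrange, outside $\supp(\sigma)$, that $\sigma$ is unitary and that the connection on $F$ is exactly the one induced from the connection on $E$ via $\sigma$, so that the integrand appearing in $\beta$ falls under the algebraic identity of Lemma~\ref{l0}.

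First I would fix $G$-invariant Hermitian metrics on $E$ and $F$ (which exist since the $G$-action on the $G$-compact base is proper). Outside $\supp(\sigma)$, pull back the metric on $E$ by $\sigma^{-1}$ to obtain a new $G$-invariant metric on $F$ that agrees with the original on a $G$-compact set; patch the two metrics on $F$ with a $G$-invariant partition of unity so that $\sigma:E\to F$ is unitary outside $\supp(\sigma)$. Then choose a $G$-invariant connection $\nabla^{E}$ on $E$ compatible with the metric, and let $\nabla^{F}$ be any $G$-invariant metric-compatible connection on $F$ whose restriction to the complement of (a small neighborhood of) $\supp(\sigma)$ equals $\sigma\nabla^{E}\sigma^{-1}$; extend using a $G$-invariant partition of unity. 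Take $\mathbb{A}=\nabla^{E}\oplus\nabla^{F}$ as the $G$-invariant superconnection on $E\oplus F$.

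With this choice, set $\tilde\sigma:=\begin{pmatrix}0 & \sigma^{*}\\ \sigma & 0\end{pmatrix}$. Outside $\supp(\sigma)$ the intertwining $\nabla^{F}\sigma=\sigma\nabla^{E}$ gives $[\mathbb{A},\tilde\sigma]=0$, and the equivariant curvature takes the form $\Omega_{\mathfrak{g}}=\begin{pmatrix}\Omega^{E}_{\mathfrak{g}} & 0\\ 0 & \sigma\Omega^{E}_{\mathfrak{g}}\sigma^{*}\end{pmatrix}$, i.e.\ precisely a matrix of the type $\gamma\otimes\begin{pmatrix}C & 0\\ 0 & \sigma C\sigma^{*}\end{pmatrix}$ appearing in Lemma~\ref{l0}. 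Since the $-t^{2}$ term is scalar and commutes with everything, expanding the exponential in $\beta$ outside $\supp(\sigma)$ yields a power series in $\Omega_{\mathfrak{g}}$, each monomial of which is again of that same block-diagonal type. Applying Lemma~\ref{l0} term by term shows $\Str\!\bigl(i\tilde\sigma\exp(-t^{2}+it[\mathbb{A},\tilde\sigma]+\Omega_{\mathfrak{g}})\bigr)=0$ pointwise outside $\supp(\sigma)$; integrating in $t$ gives $\beta|_{V\setminus\supp(\sigma)}=0$.

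Finally, because $\chi$ is an $\supp(\sigma)$-cutoff function, $\chi\equiv 1$ on a neighborhood of $\supp(\sigma)$, so $d\chi$ is supported in $V\setminus\supp(\sigma)$, where $\beta$ vanishes. Thus $d\chi\wedge\beta=0$ and
\[
\chg[E,F,\sigma]=P_{\supp(\sigma)}\bigl(\ch(\mathbb{A}),\beta\bigr)=\chi\ch(\mathbb{A})+d\chi\wedge\beta=\chi\ch(\mathbb{A}),
\]
as required. The main technical point to be careful about is the simultaneous arrangement in Step~1--2: one must check that the metric on $F$ and the connection $\nabla^{F}$ can genuinely be chosen $G$-invariantly so as to agree with the $\sigma$-pullback data on the complement of a $G$-invariant neighborhood of $\supp(\sigma)$; this is where properness and $G$-compactness (to ensure $G$-invariant partitions of unity and the smooth extension) enter essentially.
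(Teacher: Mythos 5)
Your proposal is correct and follows essentially the same route as the paper: choose a $G$-invariant metric making $\sigma$ unitary off $\supp(\sigma)$ and connections intertwined by $\sigma$ there, observe that $[\mathbb{A},\tilde\sigma]=0$ and that the equivariant curvature is block-diagonal of the conjugate form handled by Lemma~\ref{l0}, and conclude $\beta=0$ outside $\supp(\sigma)$ so that the correction term $d\chi\wedge\beta$ drops out. Your added care about patching the metric and connection $G$-invariantly via invariant partitions of unity is a point the paper leaves implicit, but it is not a substantive difference.
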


\begin{proof}
First we choose a $G$-invariant metric on $E\oplus{F}$, which makes
$\sigma:E\rightarrow F$ unitary outside $\supp(\sigma)$, and a $G$-invariant superconnection $\mathbb{A}=\begin{pmatrix}
\nabla^{E}&0\\
0&\nabla^{F}
\end{pmatrix}$, where $\nabla^{E}$ and $\nabla^{F}$ are $G$-invariant connections on vector bundles $E$ and $F$ respectively and satisfy that $\nabla^{E}=\sigma \circ{\nabla^{F}}\circ{\sigma^{*}}$ outside $\supp(\sigma)$.

Then,we claim that $\chg[E,F,\sigma]=\chi \ch(\mathbb{A})$. 
Indeed, we have 
\[
\chg[E,F,\sigma]=\chi \ch(\mathbb{A})+d(\chi)\wedge{\beta},
\]
where $$\beta(X)=\int_{0}^{+\infty}\Str\left(i\begin{pmatrix}
0&\sigma^{*}\\
\sigma&0
\end{pmatrix}\exp\left(-t^{2}+it\left[\mathbb{A},\begin{pmatrix}
0&\sigma^{*}\\
\sigma&0
\end{pmatrix}\right]+\mathbb{A}^{2}+\mu^{\mathbb{A}}(X)\right)\right)$$
for any $X\in \mathfrak{g}$.

To prove our claim, we only need to show $\beta(X)=0$ outside $\supp(\sigma)$.

Without loss of generality, let  $\nabla^{E}=d+\omega^{E}$, where  
 $\omega^{E}\in \mathcal{A}^{1}(V,\End(E))$. Thus $\mathbb{A}=\begin{pmatrix}
d+\omega^{E}&0\\
\sigma&\sigma(d+\omega^{E})\sigma^{*}
\end{pmatrix}$.
For any $\alpha\otimes s\in \mathcal{A}(V,E)$ where $\alpha$ is homogeneous (which means there exists $i\in \mathbb{N}^{+}$ such that $\alpha\in \mathcal{A}^{i}(V)$), we have:
\begin{align*}
    &\left[\mathbb{A},\begin{pmatrix}
0&\sigma^{*}\\
\sigma&0
\end{pmatrix}\right](\alpha\otimes s)=\mathbb{A}\circ\begin{pmatrix}
0&\sigma^{*}\\
\sigma&0
\end{pmatrix}(\alpha\otimes s)+\begin{pmatrix}
0&\sigma^{*}\\
\sigma&0
\end{pmatrix}\circ\mathbb{A}(\alpha\otimes s)\\
=&\mathbb{A}((-1)^{\deg(\alpha)}\alpha\otimes\begin{pmatrix}
0&\sigma^{*}\\
\sigma&0
\end{pmatrix}s)+\begin{pmatrix}
0&\sigma^{*}\\
\sigma&0
\end{pmatrix}(d\alpha\otimes s+(-1)^{\deg(\alpha)}\alpha\wedge \mathbb{A}s)\\
=&0.
\end{align*}
Here $$\left[\mathbb{A},\begin{pmatrix}
0&\sigma^{*}\\
\sigma&0
\end{pmatrix}\right]=\mathbb{A}\circ\begin{pmatrix}
0&\sigma^{*}\\
\sigma&0
\end{pmatrix}+\begin{pmatrix}
0&\sigma^{*}\\
\sigma&0
\end{pmatrix}\circ\mathbb{A},$$
and for any $\alpha\otimes s\in \mathcal{A}(V,E)$ where $\alpha$ is homogenous, 
$$\begin{pmatrix}
0&\sigma^{*}\\
\sigma&0
\end{pmatrix}(\alpha\otimes s)=(-1)^{\deg(\alpha)}\alpha\times\begin{pmatrix}
0&\sigma^{*}\\
\sigma&0
\end{pmatrix}s.$$
Now, restrict $\beta$ outside $\supp(\sigma)$ then for any $X\in\mathfrak{g}$ ,we have $$\beta(X)=\int_{0}^{\infty}\Str\left(i\begin{pmatrix}
0&\sigma^{*}\\
\sigma&0
\end{pmatrix}\exp\begin{pmatrix}
\Omega^{E}(X)-t^{2}&0\\
0&\sigma(\Omega^{E}(X)-t^{2})\sigma^{*}
\end{pmatrix}\right)dt$$
 where $\Omega^E(X)=(\nabla^{E})^{2}+\mathcal{L}^{E}(X)-\nabla_{X_{M}}$. Here
$$\begin{pmatrix}
\Omega^{E}(X)-t^{2}&0\\
0&\sigma(\Omega^{E}(X)-t^{2})\sigma^{*}
\end{pmatrix}
$$
can be written in the form of $\gamma\otimes\begin{pmatrix}
C&0\\
0&\sigma C\sigma^{*}
\end{pmatrix}$
where $\gamma\in \mathcal{A}(M)$ and $C\in \End(E)$.

Because of Lemma $\ref{l0}$, we have:
\begin{align*}
&\beta(X)\\
&=\int_{0}^{\infty}\Str\left(i\begin{pmatrix}
0&\sigma^{*}\\
\sigma&0
\end{pmatrix}\exp\begin{pmatrix}
\Omega^{E}(X)-t^{2}&0\\
0&\sigma(\Omega^{E}(X)-t^{2})\sigma^{*}
\end{pmatrix}\right)dt\\
&=\int_{0}^{\infty}\Str\left(i\begin{pmatrix}
0&\sigma^{*}\\
\sigma&0
\end{pmatrix}\exp\left(\gamma\otimes\begin{pmatrix}
C&0\\
0&\sigma C\sigma^{*}
\end{pmatrix}\right)\right)dt\\
&=\sum_{k=0}^{+\infty}\int_{0}^{\infty}\Str\left(i\begin{pmatrix}
0&\sigma^{*}\\
\sigma&0
\end{pmatrix}\frac{1}{k!}\left(\gamma\otimes\begin{pmatrix}
C&0\\
0&\sigma C\sigma^{*}
\end{pmatrix}\right)^{k}\right)dt\\
&=0.
\end{align*}
\end{proof}

To show the equivariant Chern character is well-defined, we need to prove that if
$[E_{1},F_{1},\sigma_{1}]=[E_{2},F_{2},\sigma_{2}]\in K_{G,sm}^{0}(V)$, then:
$$P_{\supp(\sigma_{1})}\circ{\ch_{Q}(E_{1},F_{1},\sigma_{1})}=P_{\supp(\sigma_{2})}\circ{\ch_{Q}(E_{2},F_{2},\sigma_{2})}.$$

First, we claim that:
\begin{proposition}
\label{equivariant Chern character}
For two finite dimensional $K$-cocycles $[E_{1},F_{1},\sigma_{1}]$, $[E_{2},F_{2},\sigma_{2}]$, 
if there exist two $G$-equivariant isomorphisms:
\begin{align*}
    a:E_{1}\rightarrow E_{2}\\
    b:F_{1}\rightarrow F_{2}
\end{align*}
such that $b^{-1}\circ {\sigma_{2}}\circ{a}=\sigma_{1}$ outside a $G$-compact subset $B$ of $V$, then:
$$P_{\supp(\sigma_{1})}\circ{\ch_{Q}(E_{1},F_{1},\sigma_{1})}=P_{\supp(\sigma_{2})}\circ{\ch_{Q}(E_{2},F_{2},\sigma_{2})}.$$
\end{proposition}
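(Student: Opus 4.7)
The plan is to reduce the equality to one of differential forms, using Lemma~\ref{ch is classical} to represent each $\chg[E_i,F_i,\sigma_i]$ as $\chi\cdot\ch(\mathbb{A}_i)$ for carefully chosen metrics and superconnections, and then exhibiting $\mathbb{A}_2$ as the conjugate of $\mathbb{A}_1$ by $a\oplus b$. Since $\chg = P_{\supp(\sigma)}\circ\ch_{Q}$ by Definition~\ref{def.eq.Chern}, and the cohomology class of $P_{\chi}$ is independent of the cutoff function by Remark~\ref{rem:indofchi}, and since $\supp(\sigma_1)\cup\supp(\sigma_2)\subseteq B$, I can fix a single $B$-cutoff function $\chi$ (which is automatically a $\supp(\sigma_i)$-cutoff function for both $i$) to represent both sides.

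I would begin on the first cocycle: pick $G$-invariant Hermitian metrics on $E_1, F_1$ making $\sigma_1$ unitary outside $\supp(\sigma_1)$, and a $G$-invariant superconnection $\mathbb{A}_1 = \nabla^{E_1}\oplus\nabla^{F_1}$ satisfying the compatibility relation of Lemma~\ref{ch is classical} outside $\supp(\sigma_1)$. Then transport everything to the second cocycle via the global isomorphisms $a, b$: put the pushed-forward metrics on $E_2, F_2$ (making $a, b$ isometries everywhere) and set $\nabla^{E_2} := a\nabla^{E_1}a^{-1}$, $\nabla^{F_2} := b\nabla^{F_1}b^{-1}$. These connections are $G$-invariant because $a, b$ are $G$-equivariant. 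A short direct computation using $\sigma_2 = b\sigma_1 a^{-1}$ on $V\setminus B$ then verifies that $(E_2, F_2, \sigma_2)$ together with this metric and connection also satisfies the hypotheses of Lemma~\ref{ch is classical} (with $\sigma_2$ unitary and the transported connection compatible via $\sigma_2$ outside $\supp(\sigma_2)\subseteq B$), yielding $\chg[E_2,F_2,\sigma_2] = \chi\cdot\ch(\mathbb{A}_2)$ for the superconnection $\mathbb{A}_2 = (a\oplus b)\,\mathbb{A}_1\,(a\oplus b)^{-1}$ defined globally on $V$.

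To close the argument, observe that because the conjugation $\mathbb{A}_2 = (a\oplus b)\,\mathbb{A}_1\,(a\oplus b)^{-1}$ holds everywhere on $V$ (not just outside $B$), the equivariant curvatures are globally conjugate: $\Omega^{\mathbb{A}_2}_{\mathfrak{g}}(X) = (a\oplus b)\,\Omega^{\mathbb{A}_1}_{\mathfrak{g}}(X)\,(a\oplus b)^{-1}$ for every $X\in\mathfrak{g}$, where the $G$-equivariance of $a, b$ is what handles the Lie-derivative contribution $\mathcal{L}^{E}(X)$ in the equivariant curvature. Conjugation invariance of the supertrace then gives $\Str(\exp\Omega^{\mathbb{A}_2}_{\mathfrak{g}}) = \Str(\exp\Omega^{\mathbb{A}_1}_{\mathfrak{g}})$ as differential forms on $V$, and multiplying by $\chi$ yields $\chg[E_2,F_2,\sigma_2] = \chi\cdot\ch(\mathbb{A}_2) = \chi\cdot\ch(\mathbb{A}_1) = \chg[E_1,F_1,\sigma_1]$. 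The main subtlety I anticipate is ensuring that the compatibility conditions of Lemma~\ref{ch is classical} can be imposed on \emph{both} cocycles at once; this hinges on $a$ and $b$ being globally defined isomorphisms rather than isomorphisms only outside $B$, which is what lets the transported metric and connection on $(E_2,F_2)$ be defined on all of $V$ while automatically matching the prescribed structure on $V\setminus B$.
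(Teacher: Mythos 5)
Your proposal is correct and follows essentially the same route as the paper: transport the metric so that $a,b$ are unitary, set $\mathbb{A}_2=(a\oplus b)\,\mathbb{A}_1\,(a\oplus b)^{-1}$, and use conjugation-invariance of the supertrace together with the cutoff to dispose of the transgression term (the paper shows $d\chi\wedge(\beta_1-\beta_2)=0$ directly, while you kill each $\beta_i$ via Lemma~\ref{ch is classical}; the mechanism is the same). One small correction: $\supp(\sigma_1)\cup\supp(\sigma_2)\subseteq B$ is not part of the hypothesis, so you should first enlarge $B$ to $F=\supp(\sigma_1)\cup\supp(\sigma_2)\cup B$ (as the paper does) before choosing the cutoff function and checking the compatibility conditions outside $F$.
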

\begin{proof}
Let $F$ denote the union of $\supp(\sigma_{1}),\supp(\sigma_{2})$ and $B$.

Using the $G$-compact set $F$, we construct an $F$-cutoff function which we denote as $\chi$.
By choosing an appropriate metric, one can assume $a$ and $b$ to be unitary. In fact, fix a $G$-invariant metric on $E_{1}$ we can construct a $G$-invariant metric on $E_{2}$ by pulling back the one on $E_1$ through $a^{-1}$. It is easy to check that $a$ is unitary with respect to these two metrics.

Let $\mathbb{A}_{1}=\begin{pmatrix}
\nabla^{E_{1}}&0\\
0&\nabla^{F_{1}}
\end{pmatrix}, \mathbb{A}_{2}=\begin{pmatrix}
a\nabla^{E_{1}}a^{*}&0\\
0&b\nabla^{F_{1}}b^{*}
\end{pmatrix}$ 
be superconnections on $E_{1}\oplus{F_{1}}$ and $E_{2}\oplus{F_{2}}$ respectively. We have:
\begin{align*}
\chg[E_{1},F_{1},\sigma_{1}]-\chg[E_{2},F_{2},\sigma_{2}]&=\chi(\ch(\mathbb{A}_{1})-\ch(\mathbb{A}_{2}))+d\chi(\beta_{1}-\beta_{2})\\
(\text{by conjugation})&=d\chi(\beta_{1}-\beta_{2}).
\end{align*}

Since $\beta_{1}=\beta_{2}$ outside of $F$ up to conjugation, 
we have $d\chi(\beta_{1}-\beta_{2})=0$. Therefore, $\chg[E_{1},F_{1},\sigma_{1}]=\chg[E_{2},F_{2},\sigma_{2}]$, which completes the proof.
\end{proof}

Therefore, we have:
\begin{theorem}
The equivariant Chern character in Definition~\ref{def.eq.Chern} is well-defined.
\end{theorem}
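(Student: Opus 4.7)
The plan is to reduce the general equivalence of $K$-cocycles, which involves stabilization by auxiliary bundles $H_1$ and $H_2$, to the already-established Proposition~\ref{equivariant Chern character}. Suppose $[E_1,F_1,\sigma_1]=[E_2,F_2,\sigma_2]$ in $K^0_{G,sm}(V)$; by definition there exist $G$-equivariant smooth bundles $H_1,H_2$ and isomorphisms $a:E_1\oplus H_1\to E_2\oplus H_2$, $b:F_1\oplus H_1\to F_2\oplus H_2$ such that $b^{-1}\circ(\sigma_2\oplus\Id_{H_2})\circ a=\sigma_1\oplus\Id_{H_1}$ outside a $G$-compact set $B$. Applying Proposition~\ref{equivariant Chern character} directly to the stabilized triples gives
\[
\chg[E_1\oplus H_1,F_1\oplus H_1,\sigma_1\oplus\Id_{H_1}]=\chg[E_2\oplus H_2,F_2\oplus H_2,\sigma_2\oplus\Id_{H_2}].
\]

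The remaining task, and the main technical step, is a \emph{stabilization invariance}: for any $G$-equivariant smooth bundle $H$ over $V$,
\[
\chg[E\oplus H,F\oplus H,\sigma\oplus\Id_H]=\chg[E,F,\sigma].
\]
I would pick a $G$-invariant metric and $G$-invariant connection $\nabla^H$ on $H$, use the block-diagonal $G$-invariant superconnection $\mathbb{A}'=\mathbb{A}\oplus\nabla^H\oplus\nabla^H$ on $(E\oplus H)\oplus(F\oplus H)$, and invoke Lemma~\ref{ch is classical} with compatible choices so that both sides are represented by $\chi\cdot\ch(\mathbb{A}')$ and $\chi\cdot\ch(\mathbb{A})$ respectively. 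The supertrace over $H\oplus H$ vanishes identically because the induced equivariant curvatures on the two copies of $H$ agree and occupy opposite grading components; hence $\Str(\exp\Omega^{\mathbb{A}'}_\mathfrak{g})=\Str(\exp\Omega^{\mathbb{A}}_\mathfrak{g})$ and the two Chern characters coincide.

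The subtle point is handling the Quillen transgression term $\beta$ appearing in $\ch_Q$, since a priori $\chg[E,F,\sigma]$ is not equal to $\chi\ch(\mathbb{A})$ but only cohomologous to it. I would circumvent this by choosing the metric on $E\oplus H\oplus F\oplus H$ so that $\sigma\oplus\Id_H$ is unitary outside $\supp(\sigma)$ and choosing $\nabla^E,\nabla^F,\nabla^H$ so that the compatibility $\nabla^{E\oplus H}=(\sigma\oplus\Id_H)\circ\nabla^{F\oplus H}\circ(\sigma\oplus\Id_H)^*$ holds there; Lemma~\ref{ch is classical} then applies and the $\beta'$-integral for the stabilized cocycle vanishes outside $\supp(\sigma\oplus\Id_H)=\supp(\sigma)$ by exactly the argument of Lemma~\ref{l0}. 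Combining the stabilization invariance for $i=1,2$ with the equality obtained from Proposition~\ref{equivariant Chern character} yields $\chg[E_1,F_1,\sigma_1]=\chg[E_2,F_2,\sigma_2]$, completing the well-definedness.

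I expect the genuine obstacle to be the stabilization step rather than its application, because one must verify that the block-diagonal choices of metric and connection are simultaneously compatible with both the unitarity requirement outside $\supp(\sigma)$ and the supertrace cancellation on the added $H\oplus H$ summand; once this geometric bookkeeping is done, the rest is a formal consequence of Proposition~\ref{equivariant Chern character} and Lemma~\ref{ch is classical}.
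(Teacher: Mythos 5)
Your proposal is correct and follows essentially the same route as the paper: the paper also first establishes the stabilization invariance $\chg[E\oplus H,F\oplus H,\sigma\oplus\Id_H]=\chg[E,F,\sigma]$ by invoking Lemma~\ref{ch is classical} to kill the transgression term and then cancelling the two $\Omega^{H}$ blocks in the supertrace, and then concludes by applying Proposition~\ref{equivariant Chern character} to the stabilized triples. The ``subtle point'' you flag about the Quillen $\beta$-term is resolved exactly as you propose, via the compatible block-diagonal metric and connection choices that make Lemma~\ref{ch is classical} applicable.
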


\begin{proof}
Because of Lemma~\ref{ch is classical}, for every $X\in\mathfrak{g}$, we have:
\begin{multline*}
\chg[E\oplus{H},F\oplus{H},\sigma\oplus{\Id_{H}}](X)\\
=\chi \Str\left(\exp\begin{pmatrix}
\Omega^{E}(X)&0&0&0\\
0&\Omega^{H}(X)&0&0\\
0&0&\Omega^{F}(X)&0\\
0&0&0&\Omega^{H}(X)
\end{pmatrix}\right)\\
=\chi \Str\left(\exp\begin{pmatrix}
\Omega^{E}(X)&0\\
0&\Omega^{F}(X)
\end{pmatrix}\right)=\chg[E,F,\sigma](X).
\end{multline*}
  Suppose $[E_{1},F_{1},\sigma_{1}]=[E_{2},F_{2},\sigma_{2}]\in K_{G}^{0}(V)$. This means that there exist two $G$-equivariant vector bundles $H_{1}$,$H_{2}$, and two $G$-equivariant isomorphisms:
\begin{equation*}
      a:E_{1}\oplus{H_{1}}\rightarrow E_{2}\oplus{H_{2}} \quad\quad\quad
   b:F_{1}\oplus{H_{1}}\rightarrow F_{2}\oplus{{H_{2}}},
\end{equation*}
such that $b^{-1}\circ{(\sigma_{2}\oplus{\Id_{H_{2}}})}\circ{a}=\sigma_{1}\oplus{\Id_{H_{1}}}$ outside a $G$-compact set of $V$. Because of Lemma~\ref{ch is classical} and Proposition~\ref{equivariant Chern character}, we have:
\begin{align*}
\chg[E_{1},F_{1},\sigma_{1}]&=\chg[E_{1}\oplus{H_{1}},F_{1}\oplus{H_{1}},\sigma_{1}\oplus{\Id_{H_{1}}}]\\
&=\chg[E_{2}\oplus{H_{2}},F_{2}\oplus{H_{2}},\sigma_{2}\oplus{\Id_{H_{2}}}]\\
&=\chg[E_{2},F_{2},\sigma_{2}].
\end{align*}
The theorem is then proved.
\end{proof}

Because of Lemma~\ref{ch is classical}, we claim that the equivariant Chern character is additive.
\begin{proposition}
The equivariant Chern character in Definition~\ref{def.eq.Chern} is additive.
\begin{proof}
For $[E_{0},F_{0},\sigma_{0}]$, $[E_{1},F_{1},\sigma_{1}]\in K_{G}^{0}(V)$, we need to prove that $\chg([E_{0},F_{0},\sigma_{0}]\oplus{[E_{1},F_{1},\sigma_{1}]})=\chg[E_{0},F_{0},\sigma_{0}]+\chg[E_{1},F_{1},\sigma_{1}]$. We choose $\chi$ to be a $\supp({\sigma_{0}})\cup\supp({\sigma_{1}})$-cutoff function. Because of Lemma~\ref{ch is classical}, we have suitable superconnections $\mathbb{A}_{0}=\begin{pmatrix}
\nabla_{E_{0}}&0\\
0&\nabla_{F_{0}}
\end{pmatrix}$ and $\mathbb{A}_{1}=\begin{pmatrix}
\nabla_{E_{1}}&0\\
0&\nabla_{F_{1}}
\end{pmatrix}$ where $\nabla_{E_{0}}=\sigma_{0}\circ\nabla_{F_{0}}\circ{\sigma_{0}^{*}}$ outside $\supp({\sigma_{0}})$ and $\nabla_{E_{1}}=\sigma_{1}\circ\nabla_{F_{1}}\circ{\sigma_{1}^{*}}$ outside $\supp({\sigma_{1}})$ such that $\chg[E_{0},F_{0},\sigma_{0}]=\chi\ch(
\mathbb{A}_{0})$ and $\chg[E_{1},F_{1},\sigma_{1}]=\chi\ch(
\mathbb{A}_{1})$.

Therefore, for any $X\in\mathfrak{g}$, we have:
\begin{align*}
&\chg([E_{0},F_{0},\sigma_{0}]\oplus{[E_{1},F_{1},\sigma_{1}]})(X)\\
=&\chg[E_{0}\oplus E_{1},F_{0}\oplus{F_{1}},\sigma_{0}\oplus{\sigma_{1}}](X)\\
=&\chi\ch\begin{pmatrix}
\Omega_{E_{0}}&0&0&0\\
0&\Omega_{E_{1}}&0&0\\
0&0&\Omega_{F_{0}}&0\\
0&0&0&\Omega_{F_{1}}
\end{pmatrix}(X)\\
=&\chi\Str\left(\exp\begin{pmatrix}
\Omega_{E_{0}}(X)&0&0&0\\
0&\Omega_{E_{1}(X)}&0&0\\
0&0&\Omega_{F_{0}}(X)&0\\
0&0&0&\Omega_{F_{1}}(X)
\end{pmatrix}\right)\\
=&\chi\ch(\mathbb{A}_{0})(X)+\chi\ch(\mathbb{A}_{1})(X)\\
=&\chg[E_{0},F_{0},\sigma_{0}](X)+\chg[E_{1},F_{1},\sigma_{1}](X).
\end{align*}
The proposition is then proved.
\end{proof}
\end{proposition}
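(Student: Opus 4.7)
The plan is to reduce the problem to a classical Chern character computation via Lemma~\ref{ch is classical}, and then exploit the block-diagonal structure of the curvature on a direct sum bundle. The essential observation is that the lemma has already done the hard work: it shows that the term $d\chi \wedge \beta$ in the cutoff map $P_{\supp(\sigma)}$ vanishes provided one chooses the superconnection carefully, so that the equivariant Chern character is represented by the honest differential form $\chi \ch(\mathbb{A})$ on the nose.

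First, I would fix a cutoff function $\chi$ associated to $F := \supp(\sigma_0) \cup \supp(\sigma_1)$. This set is $G$-compact since each $\supp(\sigma_i)$ is $G$-compact, and the same $\chi$ serves simultaneously as a $\supp(\sigma_i)$-cutoff function for $i=0,1$ and as a $\supp(\sigma_0 \oplus \sigma_1)$-cutoff function. Then, applying Lemma~\ref{ch is classical} to each cocycle separately, I would obtain $G$-invariant metrics and $G$-invariant superconnections $\mathbb{A}_i = \begin{pmatrix} \nabla^{E_i} & 0 \\ 0 & \nabla^{F_i}\end{pmatrix}$ satisfying $\nabla^{E_i} = \sigma_i \circ \nabla^{F_i} \circ \sigma_i^{*}$ outside $\supp(\sigma_i)$, yielding the equalities $\chg[E_i,F_i,\sigma_i] = \chi\, \ch(\mathbb{A}_i)$.

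Next, I would form the direct sum superconnection $\mathbb{A} = \mathbb{A}_0 \oplus \mathbb{A}_1$ on $(E_0 \oplus E_1) \oplus (F_0 \oplus F_1)$ and note that it again satisfies the hypothesis of Lemma~\ref{ch is classical} for the direct sum cocycle: outside $F$, the map $\sigma_0 \oplus \sigma_1$ is unitary and the direct-sum connection intertwines correctly in blocks. Hence $\chg([E_0,F_0,\sigma_0] \oplus [E_1,F_1,\sigma_1]) = \chi\, \ch(\mathbb{A}_0 \oplus \mathbb{A}_1)$. The equivariant curvature $\Omega_\mathfrak{g}$ of $\mathbb{A}_0 \oplus \mathbb{A}_1$ is block diagonal, so $\exp(\Omega_\mathfrak{g}(X))$ is block diagonal, and the supertrace splits as a sum over the two blocks. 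Combining these steps gives $\chg([E_0,F_0,\sigma_0] \oplus [E_1,F_1,\sigma_1]) = \chi\, \ch(\mathbb{A}_0) + \chi\, \ch(\mathbb{A}_1)$, which is what we want.

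The only point requiring care, and thus the main (mild) obstacle, is verifying that the supertrace of the exponential of the block-diagonal curvature really does split additively with respect to the correct $\mathbb{Z}_2$-grading: the grading on $(E_0 \oplus E_1) \oplus (F_0 \oplus F_1)$ must be arranged so that the even part is $E_0 \oplus E_1$ and the odd part is $F_0 \oplus F_1$, rather than the naive $(E_0 \oplus F_0) \oplus (E_1 \oplus F_1)$ grading. Once this identification is made, the additivity is a straightforward consequence of the block structure, so no further analytic input is needed beyond Lemma~\ref{ch is classical}.
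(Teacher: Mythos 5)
Your proposal is correct and follows essentially the same route as the paper: choose a single cutoff function for $\supp(\sigma_0)\cup\supp(\sigma_1)$, invoke Lemma~\ref{ch is classical} to represent each Chern character as $\chi\,\ch(\mathbb{A}_i)$, and then use the block-diagonal structure of the equivariant curvature of $\mathbb{A}_0\oplus\mathbb{A}_1$ to split the supertrace. Your added remark about arranging the $\mathbb{Z}_2$-grading so that the even part is $E_0\oplus E_1$ is a correct (and worthwhile) point of care, but it does not change the argument.
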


We can also deduce that the equivariant Chern character is multiplicative:
\begin{proposition}
The equivariant Chern character in Definition~\ref{def.eq.Chern} is multiplicative.
\begin{proof}
For $[E_{0},F_{0},\sigma_{0}]$, $[E_{1},F_{1},\sigma_{1}]\in K_{G}^{0}(V)$, we need to prove that $\chg([E_{0},F_{0},\sigma_{0}]\cdot{[E_{1},F_{1},\sigma_{1}]})=\chg[E_{0},F_{0},\sigma_{0}]\wedge{\chg[E_{1},F_{1},\sigma_{1}]}$. We choose $\chi_{i}$ to be a $\supp(\sigma_{i})$-cutoff function, $i=0,1$. Thus we can see $\chi=\chi_{0}\cdot\chi_{1}$ is a  $\supp(\sigma_{0})\cap\supp(\sigma_{1})$-cutoff function.
Indeed, if we choose $\mathbb{A}_{0}=\begin{pmatrix}
\nabla_{E_{0}}&0\\
0&\nabla_{F_{0}}
\end{pmatrix}$ and $\mathbb{A}_{1}=\begin{pmatrix}
\nabla_{E_{1}}&0\\
0&\nabla_{F_{1}}
\end{pmatrix}$ where $\nabla_{E_{0}}=\sigma_{0}\circ\nabla_{F_{0}}\circ{\sigma_{0}^{*}}$ outside $\supp({\sigma_{0}})$ and $\nabla_{E_{1}}=\sigma_{1}\circ\nabla_{F_{1}}\circ{\sigma_{1}^{*}}$ outside $\supp({\sigma_{1}})$ such that $\chg[E_{0},F_{0},\sigma_{0}]=\chi_{0}\ch(
\mathbb{A}_{0})$ and $\chg[E_{1},F_{1},\sigma_{1}]=\chi_{1}\ch(
\mathbb{A}_{1})$, we can define a superconnection $\mathbb{A}_{0}\otimes{\Id_{E_{0}\oplus{F_{0}}}}+\Id_{E_{1}\oplus{{F_{1}}}}\otimes{\mathbb{A}_{1}}$ on the superbundle $(E_{0}\oplus{F_{0}})\otimes({E_{1}\oplus{{F_{1}}}})$. Therefore, we have 
\begin{align*}
&\chg([E_{0},F_{0},\sigma_{0}]\cdot{[E_{1},F_{1},\sigma_{1}]})\\
=&\chg[E_{0}\otimes{E_{1}}\oplus{F_{0}\otimes{F_{1}}},F_{0}\otimes{E_{1}}\oplus{E_{0}\otimes{F_{1}}},\sigma]\\
=&\chi\ch(\mathbb{A}_{0}\otimes{\Id_{E_{0}\oplus{F_{0}}}}+\Id_{E_{1}\oplus{{F_{1}}}}\otimes{\mathbb{A}_{1}})\\
=&\chi_{0}\ch(\mathbb{A}_{0})\wedge\chi_{1}\ch(\mathbb{A}_{1})\\
=&\chg[E_{0},F_{0},\sigma_{0}]\wedge{\chg[E_{1},F_{1},\sigma_{1}]}.
\end{align*}
The proposition is then proved.
\end{proof}
\end{proposition}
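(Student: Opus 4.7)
My plan is to reduce the statement to the classical calculation for Chern characters of tensor product superconnections, exploiting Lemma~\ref{ch is classical} to replace $\chg$ with a single explicit representative of the form $\chi\ch(\mathbb{A})$.

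First I would spell out the multiplication in $K_{G,sm}^{0}(V)$: given $[E_{0},F_{0},\sigma_{0}]$ and $[E_{1},F_{1},\sigma_{1}]$, the product is realized as the $K$-cocycle
\[
[E_{0}\otimes E_{1}\oplus F_{0}\otimes F_{1},\;F_{0}\otimes E_{1}\oplus E_{0}\otimes F_{1},\;\sigma],
\]
where $\sigma$ is built diagonally from $\sigma_{0}\otimes\Id$ and $\Id\otimes\sigma_{1}$. A quick check shows that $\supp(\sigma)=\supp(\sigma_{0})\cap\supp(\sigma_{1})$, since outside this intersection at least one $\sigma_{i}$ is invertible and the standard Koszul-type formula for $\sigma$ becomes an isomorphism. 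Consequently, if $\chi_{i}$ is a $\supp(\sigma_{i})$-cutoff function, then $\chi:=\chi_{0}\chi_{1}$ is a $\supp(\sigma)$-cutoff function.

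Next I would invoke Lemma~\ref{ch is classical} to choose, for each $i=0,1$, a $G$-invariant Hermitian metric on $E_{i}\oplus F_{i}$ with respect to which $\sigma_{i}$ is unitary off $\supp(\sigma_{i})$, together with a diagonal $G$-invariant superconnection
\[
\mathbb{A}_{i}=\begin{pmatrix}\nabla^{E_{i}} & 0\\ 0 & \nabla^{F_{i}}\end{pmatrix}
\]
satisfying $\nabla^{E_{i}}=\sigma_{i}\nabla^{F_{i}}\sigma_{i}^{*}$ off $\supp(\sigma_{i})$. The lemma then yields $\chg[E_{i},F_{i},\sigma_{i}]=\chi_{i}\ch(\mathbb{A}_{i})$. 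I would then equip the product superbundle with the tensor product metric and the tensor product superconnection
\[
\mathbb{A}=\mathbb{A}_{0}\otimes\Id+\Id\otimes\mathbb{A}_{1},
\]
whose equivariant curvature satisfies $\Omega_{\mathfrak{g}}(X)=\Omega_{\mathfrak{g}}^{0}(X)\otimes\Id+\Id\otimes\Omega_{\mathfrak{g}}^{1}(X)$. The multiplicativity of the supertrace together with the fact that these two summands commute as endomorphisms in the $\mathbb{Z}_{2}$-graded sense gives
\[
\Str\bigl(\exp\Omega_{\mathfrak{g}}(X)\bigr)=\Str\bigl(\exp\Omega_{\mathfrak{g}}^{0}(X)\bigr)\wedge\Str\bigl(\exp\Omega_{\mathfrak{g}}^{1}(X)\bigr),
\]
which is the classical multiplicativity identity for Chern characters of tensor products.

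Finally I would verify that $\mathbb{A}$ still satisfies the compatibility hypothesis of Lemma~\ref{ch is classical} with respect to the product morphism $\sigma$: off $\supp(\sigma)=\supp(\sigma_{0})\cap\supp(\sigma_{1})$, the tensor product connection on the $+$-summand and the $-$-summand are conjugate by $\sigma$. Applying Lemma~\ref{ch is classical} once more gives $\chg([E_{0},F_{0},\sigma_{0}]\cdot[E_{1},F_{1},\sigma_{1}])=\chi\ch(\mathbb{A})=\chi_{0}\chi_{1}\,\ch(\mathbb{A}_{0})\wedge\ch(\mathbb{A}_{1})$, which is exactly $\chg[E_{0},F_{0},\sigma_{0}]\wedge\chg[E_{1},F_{1},\sigma_{1}]$.

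The main obstacle I anticipate is the verification step in the last paragraph: confirming that the compatibility condition $\nabla^{E_{i}}=\sigma_{i}\nabla^{F_{i}}\sigma_{i}^{*}$ off $\supp(\sigma_{i})$ descends to the matching condition for the tensor product superconnection and the product morphism $\sigma$ off $\supp(\sigma_{0})\cap\supp(\sigma_{1})$. This will require a short case analysis at points where only one of $\sigma_{0},\sigma_{1}$ is invertible, using the block decomposition of $\sigma$. Everything else is a formal manipulation of superconnections and a standard application of supertrace multiplicativity for tensor products.
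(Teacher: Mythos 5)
Your proposal is correct and follows essentially the same route as the paper's proof: both reduce via Lemma~\ref{ch is classical} to the explicit representatives $\chi_{i}\ch(\mathbb{A}_{i})$, form the tensor product superconnection $\mathbb{A}_{0}\otimes\Id+\Id\otimes\mathbb{A}_{1}$, and conclude by multiplicativity of the supertrace, using that $\chi_{0}\chi_{1}$ is a $\supp(\sigma_{0})\cap\supp(\sigma_{1})$-cutoff function. The only difference is that you explicitly flag (and propose to verify) the two points the paper leaves implicit — that $\supp(\sigma)=\supp(\sigma_{0})\cap\supp(\sigma_{1})$ for the Koszul-type product morphism, and that the tensor product data still satisfy the compatibility hypothesis of Lemma~\ref{ch is classical} off that intersection — which is a worthwhile refinement but not a different argument.
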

So far, we have verified the well-definedness of equivariant Chern characters and proved that they are additive and multiplicative. Next, we will explain that in special cases, these maps give rise to homomorphisms from equivariant $K$-theory to equivariant cohomology.

If $M$ satisfies the conditions in Theorem \ref{sk}, then $i_{*}$ is an isomorphism from $K_{G,sm}^{0}(M)$ to $K_{G}^{0}(M)$. Then we can define the equivariant Chern character as follows:
\begin{equation}
    \label{ch1}
    \ch_{G}\circ (i_{*})^{-1}:K_{G}^{0}(M)\rightarrow H^{\infty}(\mathfrak{g},M).
\end{equation}
We also denote this map by $\ch_{G}$.

Next, when the $G$-equivariant bundle $\pi:V\rightarrow M$  has a $G$-$Spin$ structure and even rank. (Here $M$ need to satisfy the conditions in Theorem \ref{sk}). According to Theorem~$8.11$ in \cite{ref7}, there is a Thom isomorphism:
\begin{align*}
    i!:&K_{G}^{0}(M)\rightarrow K_{G}^{0}(V)\\
    &[E]\rightarrow \pi^{*}(E)\cdot[\pi^{*}(S^{+}_{V}),\pi^{*}(S^{-}_{V}),\sigma_{\mathbb{C}}]
\end{align*}
where $S_{V}=S_{V}^{+}\oplus S_{V}^{-}$ is the equivariant spinor bundle over $M$. We denote by $\pi!$ the inverse of $i!$.

Then, we can define the Thom homomorphism $i!$ from $K_{G, sm}^{0}(M)$ to $K_{G, sm}^{0}(V)$ in the same way. Unfortunately, this map is not necessarily isomorphic. But it is injective when $M$ satisfies the conditions in Theorem~\ref{sk}. Note that the following diagram commutes:
\begin{equation*}
\xymatrix{   K^{0}_{G,sm}(M)\ar[r]^{i!}\ar[d]_{i_{*}} &   K^{0}_{G,sm}(V)  \ar[d]^{i_{*}} \\
           K^{0}_{G}(M) \ar[r]^-{i!} & K^{0}_{G}(V).
                      }
\end{equation*}
Since the left vertical arrow and the lower horizontal arrow are both isomorphic, the higher horizontal arrow is injective and the right vertical arrow is surjective.

Because of that, $i_{*}$  restricted to the image $\im(i!)$ of $i!:K_{G, sm}(M)\rightarrow K_{G, sm}(V)$ is an isomorphism. Then we can define the equivariant Chern character as follows:
\begin{equation}
    \label{ch2}
    \ch_{G}\circ (i_{*})^{-1}:K_{G}^{0}(V)\rightarrow H^{\infty}_{cv}(\mathfrak{g},V).
\end{equation}
Similarly, we denote it by $\ch_{G}$.

\section{Equivariant Riemann-Roch formula}

In this section, we aim to prove the equivariant Riemann-Roch formula, the first main result of the paper:
\begin{theorem}
\label{err}
Let $G$ be an almost-connected Lie group, $M$ be a proper $G$-compact space satisfying conditions in Theorem~\ref{sk}, and $V$ be a $G$-vector bundle over $M$ of even
rank with an equivariant spin structure. The following diagram commutes:
    \begin{equation*}
\xymatrix{   K^{0}_{G}(V)\ar[r]^{\pi!}\ar[d]_{\chg} &   K^{0}_{G}(M)  \ar[d]^{(-2\pi i)^{n/2}\cdot\chg\wedge\Hat{A}_{g}^{-1}(V)}   \\
           H^{\infty}_{cv}(\mathfrak{g},V) \ar[r]^-{\pi!} & H^{\infty}(\mathfrak{g},M),
                      }
\end{equation*}
where $\Hat{A}_{g}(V)^{-1}$ is the inverse of the equivariant $\hat{A}$-genus in Definition~\ref{Agenus}, $\rank(V)=n$, $\chg$ on the left is defined in (\ref{ch2}), $\chg$ on the right is defined in (\ref{ch1}) and $\pi!:H_{cv}^{\infty}(\mathfrak{g},V)\rightarrow H^{\infty}(\mathfrak{g},M)$ is the homomorphism induced by integration over the fiber.
\end{theorem}

In \cite{ref6}, Paradan and Vergne proved the equivariant Riemann-Roch formula when $G$ is compact. Here, we apply the spirit of their proof and combine methods established by Mathai and Quillen in \cite{ref3} to prove Theorem \ref{err}.

Recall that $V$ is an equivariant bundle over $M$ with an equivariant spin structure.
%If $V$ is also an equivariant $\spin$-bundle. 
We denote by $S_{V}$ the equivariant $\spin$-bundle over $M$ associated to $V$. 

First, we recall basic facts about Clifford algebras, spinors, and supertraces which can be found in \cite{ref3}.

Let the natural number $n$ be even. The Clifford algebra $C_{n}$, may be defined as the superalgebra with odd generators 
$\gamma_{1},...,\gamma_{n}$ subject to the relations:
$$[\gamma_{j},\gamma_{k}]=2\delta_{jk}.
$$
Clearly $C_{n}$ has a basis 
consisting of $2^{n}$ monomials  
$$\gamma_{I}=\gamma_{i_{1}}...\gamma_{i_{p}},~I=\{i_{1},...i_{p}\},~i_{1}<...<i_{p}$$
where $I$ runs over the subsets of $\{ 1, . . . , n\}$ and  $|I|$ is the cardinality of $I$. For all subsets of $\{ 1, . . . , n\}$ that appear in this article, we assume that the elements of it are arranged in increasing order.

There is a supertrace on $C_{n}$ such that for any $I\subset\{ 1, . . . , n\}$, 
$$\Str(\gamma_{I})=\begin{cases} 
        ~0 &I< \{ 1, . . . , n\}\\
        (2i)^{n/2}&I=\{ 1, . . . , n\}.
\end{cases}$$
Let $\gamma=(\gamma_{i})_{i=1}^{n}$ be the column vector with entries in $C_{n}$. Then we have a map:
$$c:\mathbb{C}^{n}\rightarrow C_{n},~c(z)=iz^{t}\gamma.$$

According to Proposition 2.10 in \cite{ref3}: 
\begin{lemma}[\cite{ref3}]
For any skew-symmetric matrix $\omega$, we denote $\frac{1}{2}\gamma^{t}\omega\gamma=\sum_{i<j}w_{ij}\gamma_{i}\gamma_{j}$. Thus, we have:
\label{l1}
    $$\Str(\exp(\frac{1}{2}\gamma^{t}\omega\gamma))=(2i)^{\frac{n}{2}}{\det}^{1/2}\left(\frac{\sinh(\omega)}{\omega}\right)\Pf(\omega).$$
\end{lemma}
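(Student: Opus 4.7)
The plan is to exploit $SO(n)$-invariance of both sides and reduce the identity to a block-diagonal normal form for $\omega$, then verify by direct computation. Both sides are invariant under $\omega\mapsto R^t\omega R$ for $R\in SO(n)$: on the right, $\Pf(R^t\omega R)=\det(R)\Pf(\omega)=\Pf(\omega)$, and $\det^{1/2}(\sinh(\omega)/\omega)$ is unchanged because $R^t\omega R$ is conjugate to $\omega$; on the left, lifting $R$ to $\tilde R\in\spin(n)\subset C_n$ with $\tilde R\gamma_i\tilde R^{-1}=\sum_j R_{ij}\gamma_j$, one verifies
$$\tilde R\cdot\tfrac12\gamma^t\omega\gamma\cdot\tilde R^{-1}=\tfrac12\sum_{k,l}(R^t\omega R)_{kl}\gamma_k\gamma_l=\tfrac12\gamma^t(R^t\omega R)\gamma,$$
and conjugation-invariance of the supertrace concludes.

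By the normal form theorem for real skew-symmetric matrices, one may then assume $\omega$ is block-diagonal with $n/2$ blocks $B_k=\begin{pmatrix}0&-\theta_k\\ \theta_k&0\end{pmatrix}$. Then $\tfrac12\gamma^t\omega\gamma=-\sum_k\theta_k\gamma_{2k-1}\gamma_{2k}$, and the summands pairwise commute (each commutation swaps two pairs of anticommuting $\gamma$'s, giving sign $(-1)^4=1$). Since $(\gamma_{2k-1}\gamma_{2k})^2=-1$, each exponential becomes $\cos\theta_k-\sin\theta_k\gamma_{2k-1}\gamma_{2k}$, and expanding the product only the monomial producing $\gamma_1\gamma_2\cdots\gamma_n$ survives the supertrace by the formula stated just above Lemma~\ref{l1}, yielding
$$\Str\!\bigl(\exp(\tfrac12\gamma^t\omega\gamma)\bigr)=(2i)^{n/2}\prod_k(-\sin\theta_k)=(-2i)^{n/2}\prod_k\sin\theta_k.$$

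On the right, in the same normal form, $\Pf(\omega)=\prod_k(-\theta_k)=(-1)^{n/2}\prod_k\theta_k$; and since $B_k^2=-\theta_k^2 I_2$, the functional calculus identity $\sinh(B_k)/B_k=(\sin\theta_k/\theta_k)I_2$ gives $\det^{1/2}(\sinh(\omega)/\omega)=\prod_k\sin\theta_k/\theta_k$. Multiplying,
$$(2i)^{n/2}\prod_k\frac{\sin\theta_k}{\theta_k}\cdot(-1)^{n/2}\prod_k\theta_k=(-2i)^{n/2}\prod_k\sin\theta_k,$$
which matches the left-hand side. The main subtlety is a consistent choice of branch for $\det^{1/2}(\sinh(\omega)/\omega)$, which I would fix by the branch equal to $1$ at $\omega=0$, the convention implicit in Definition~\ref{Agenus}. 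The one nontrivial ingredient is the $\spin(n)$-covariance of $\tfrac12\gamma^t\omega\gamma$ used in the invariance step, a standard fact about the action of the spin double cover on the Clifford algebra.
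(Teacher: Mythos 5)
Your proof is correct. The paper does not actually prove this lemma --- it simply imports it as Proposition 2.10 of Mathai--Quillen \cite{ref3} --- and your argument (reduction to the $2\times2$ block normal form via $\spin(n)$-equivariance of $\tfrac12\gamma^{t}\omega\gamma$ and conjugation-invariance of the supertrace, followed by the explicit computation $\exp(-\theta\gamma_{1}\gamma_{2})=\cos\theta-\sin\theta\,\gamma_{1}\gamma_{2}$) is precisely the standard proof of that result; all the sign bookkeeping on both sides checks out. The only point worth making explicit is that the normal-form theorem a priori gives $R\in O(n)$, and one needs $R\in SO(n)$ for the Pfaffian invariance; this is repaired by absorbing a determinant $-1$ factor into a sign flip of one $\theta_{k}$, under which both sides of your block computation change sign consistently.
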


And for later arguments, we need:
\begin{lemma}
\label{l2}
    For any $z\in \mathbb{C}^{n}$, we have: 
    $$\Str(c(z)\exp(\frac{1}{2}\gamma^{t}\omega\gamma))=0.
    $$
\end{lemma}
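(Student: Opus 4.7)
The plan is to reduce the identity to a simple parity count on the Clifford algebra, using only the definition of the supertrace recalled in the lines just above the lemma and the fact that $n$ is even. The Clifford algebra $C_n$ carries its canonical $\mathbb{Z}_2$-grading with each $\gamma_i$ declared odd, so the monomial $\gamma_I$ lies in $C_n^+$ or $C_n^-$ according to the parity of $|I|$.

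First I would record the following consequence of the definition of $\Str$: since $\Str(\gamma_I)=0$ whenever $|I|<n$, the only basis monomial with nonzero supertrace is $\gamma_1\cdots\gamma_n$, which has length $n$. Because $n$ is assumed to be even, this monomial lies in $C_n^+$, and hence $\Str$ vanishes identically on the odd subspace $C_n^-$. This reduces the problem to showing that the element $c(z)\exp(\tfrac{1}{2}\gamma^t\omega\gamma)$ is odd.

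Next I would analyze the parity of $c(z)\exp(\tfrac{1}{2}\gamma^t\omega\gamma)$. The element $c(z)=iz^t\gamma=i\sum_j z_j\gamma_j$ is a linear combination of the $\gamma_j$'s and hence lies in $C_n^-$. On the other hand, $\tfrac{1}{2}\gamma^t\omega\gamma=\sum_{i<j}\omega_{ij}\gamma_i\gamma_j$ is a sum of degree-two monomials and therefore lies in the even subalgebra $C_n^+$; since $C_n^+$ is closed under multiplication, its exponential also lies in $C_n^+$. Multiplying an odd element by an even element yields an odd element, so $c(z)\exp(\tfrac{1}{2}\gamma^t\omega\gamma)\in C_n^-$, and combining with the previous paragraph gives $\Str\bigl(c(z)\exp(\tfrac{1}{2}\gamma^t\omega\gamma)\bigr)=0$, as required.

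There is no genuine obstacle in this argument; the entire content is the parity bookkeeping, with the hypothesis that $n$ is even entering precisely to ensure that the only monomial not killed by $\Str$ is itself even. I would make sure to present the parity claim explicitly so that the role of the evenness of $n$ is visible, since the analogous identity would fail for odd $n$ (where $\gamma_1\cdots\gamma_n$ would itself be odd and nothing would force the supertrace to vanish).
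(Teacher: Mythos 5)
Your proposal is correct and follows essentially the same route as the paper: expand or note that $\exp(\tfrac12\gamma^t\omega\gamma)$ is even, that $c(z)$ is odd, and that the supertrace kills odd elements since the only monomial with nonzero supertrace is $\gamma_1\cdots\gamma_n$, which is even because $n$ is even. Your write-up is in fact slightly more careful than the paper's, which leaves the vanishing of $\Str$ on the odd subspace implicit.
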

\begin{proof}
    Note that $\exp(\frac{1}{2}\gamma^{t}\omega\gamma)=\sum_{m=1}^{\infty}\frac{1}{m!}(\sum_{i<j}\omega_{ij}\gamma_{i}\gamma_{j})^{m}$. Since $\frac{1}{m!}(\sum_{i<j}\omega_{ij}\gamma_{i}\gamma_{j})^{m}$ is even in the Clifford algebra, $c(z)\frac{1}{m!}(\sum_{i<j}\omega_{ij}\gamma_{i}\gamma_{j})^{m}$ is odd.
    
    Because of this, we have: $$\Str(c(z)\exp(\frac{1}{2}\gamma^{t}\omega\gamma))=\sum_{m=1}^{\infty}\Str(c(z)\frac{1}{m!}(\sum_{i<j}\omega_{ij}\gamma_{i}\gamma_{j})^{m})=0.$$
\end{proof}

Next, we will study more general superalgebras. Let $A=\wedge[J_{1},....J_{n}]$, denote the exterior algebra generated by $J_{1},...,J_{n}$. It is the free commutative superalgebra with the odd generators $J_{i}$. When $I$ runs over all subsets of $\{1,2,...,n\}$, $$J^{I}:=\gamma_{i_{1}}...\gamma_{i_{|I|}},~I=\{i_{1},...i_{|I|}\},~i_{1}<...<i_{|I|}$$ constitute a basis of $A$. If we work in the tensor product superalgebra $A\otimes C_{n}$, where $A$ is a commutative superalgebra, then the supertrace of $C_{n}$ can be extended to an $A$-linear map 
\begin{align*}
    \Str: &A\otimes C_{n}\rightarrow A\\
    & \omega\otimes\alpha\mapsto \omega\Str(\alpha).
\end{align*}

Because $A$ is commutative, this ‘relative’ supertrace satisfies the basic property that for any homogeneous elements $a,b\in A\otimes C_{n}$, $\Str(ab)=(-1)^{|a||b|}\Str(ba)$. 

Then, we can prove the following theorem:
\begin{theorem}
\label{l4} 
Let $\gamma=(\gamma_{i})_{i=1}^{n}$ and $J=(J_{i})_{i}^{n}$ be vectors with entries in $C_{n}$ and $A$ respectively. Then:
\begin{align*}
    &\Str\left((\sum_{i=1}^{n}\gamma_{i})\cdot \exp(\frac{1}{2}\gamma^{t}\omega\gamma+J^{t}\gamma)\right)\\
    &=(2i)^{\frac{n}{2}}\cdot\det\left(\frac{\sinh{\omega}}{\omega}\right)^{\frac{1}{2}}\cdot\left(\sum_{k=1}^{n}\sum_{I^{'}}\sum_{I}(-1)^{\frac{|I|+1}{2}}c_{k}\epsilon(I\cup\{k\},I^{'})\epsilon(\{k\},I)\Pf(\omega_{I^{'}})J^{I}\right)
\end{align*}
where $I$ and $I'$ run over all subset of $\{1,2,3...,n\}$. For any $A, B\subset \{1,2,3...,n\}$, if $A\cap B\neq\emptyset$ or $A\cup B\neq \{1,2,3...,n\}$, then $\epsilon(A,B)=0$. If $A\cup B=\{1,2,3...,n\}$, $\epsilon(A,B)$ is defined by:
$$J^{A}J^{B}=\epsilon(A,B)J^{\{1,2,3...,n\}}.$$
\end{theorem}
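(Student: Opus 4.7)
The plan is to reduce the identity to the scalar-coefficient evaluations already available in Lemmas~\ref{l1} and \ref{l2} by isolating the $J$-dependence inside the exponential. First I verify that $\tfrac{1}{2}\gamma^{t}\omega\gamma$ and $J^{t}\gamma$ supercommute in $A\otimes C_{n}$: both are even, and a direct check using the sign rule $(u\otimes v)(u'\otimes v')=(-1)^{|v||u'|}(uu'\otimes vv')$ gives $[\gamma_{i}\gamma_{j},\,J_{k}\gamma_{k}]=0$, so as even (hence ordinarily commuting) elements they satisfy
\[
\exp\!\Bigl(\tfrac{1}{2}\gamma^{t}\omega\gamma+J^{t}\gamma\Bigr)=\exp\!\Bigl(\tfrac{1}{2}\gamma^{t}\omega\gamma\Bigr)\cdot\exp(J^{t}\gamma).
\]
Since each $J_{i}$ squares to zero and the $J_{i}\gamma_{i}$'s mutually commute,
\[
\exp(J^{t}\gamma)=\prod_{i=1}^{n}(1+J_{i}\gamma_{i})=\sum_{I\subset\{1,\dots,n\}}(-1)^{|I|(|I|-1)/2}\,J^{I}\gamma^{I},
\]
the sign being produced by sorting all $J$'s to the left of all $\gamma$'s. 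Inserting this into the supertrace and using $A$-linearity, I pull each $J^{I}$ (of parity $|I|$) past the odd factor $(\sum_{k}\gamma_{k})\exp(\tfrac{1}{2}\gamma^{t}\omega\gamma)$, picking up a further sign $(-1)^{|I|}$. The problem reduces to evaluating
\[
\sum_{k,I}(-1)^{|I|(|I|+1)/2}\,c_{k}\,J^{I}\;\Str\!\Bigl(\gamma_{k}\,\exp\!\bigl(\tfrac{1}{2}\gamma^{t}\omega\gamma\bigr)\,\gamma^{I}\Bigr).
\]

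To evaluate each summand I expand $\exp(\tfrac{1}{2}\gamma^{t}\omega\gamma)$ in the Clifford monomial basis $\{\gamma^{S}\}$ and observe that, after using $\gamma_{i}^{2}=1$ and anticommutation to rearrange $\gamma_{k}\gamma^{S}\gamma^{I}$, only the choice $S=I':=\{1,\dots,n\}\setminus(I\cup\{k\})$ can produce the top monomial $\gamma_{1}\cdots\gamma_{n}$ whose supertrace is nonzero; Lemma~\ref{l2} rules out the potential contributions from odd factors. The coefficient of $\gamma^{I'}$ in $\exp(\tfrac{1}{2}\gamma^{t}\omega\gamma)$ is then produced by applying Lemma~\ref{l1} to the restricted skew form $\omega_{I'}$, yielding the prefactor $(2i)^{n/2}\det^{1/2}(\sinh\omega/\omega)\,\Pf(\omega_{I'})$. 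The two signs $\epsilon(\{k\},I)$ and $\epsilon(I\cup\{k\},I')$ appearing in the statement record, respectively, the reorderings required to bring $\gamma_{k}\gamma^{I}$ and $\gamma^{I\cup\{k\}}\gamma^{I'}$ into the standard increasing order; combined with the global sign $(-1)^{|I|(|I|+1)/2}$ above (which equals $(-1)^{(|I|+1)/2}$ whenever $|I|$ is odd, and only odd $|I|$ yield a nonzero supertrace for $n$ even) this reconstructs the exponent $(-1)^{(|I|+1)/2}$ in the stated formula.

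The main hurdle is Step~4: the bookkeeping of signs under the super tensor product $A\otimes C_{n}$ is delicate, as each anticommutation through the Koszul sign rule has to be tracked against the chosen orderings of $I$, $I'$, and $\{k\}$. The cleanest way I would carry out the verification is to first bring $\omega$ into its skew-normal form $\bigoplus_{j=1}^{n/2}\bigl(\begin{smallmatrix}0 & \theta_{j} \\ -\theta_{j} & 0\end{smallmatrix}\bigr)$, in which $\exp(\tfrac{1}{2}\gamma^{t}\omega\gamma)=\prod_{j}\bigl(\cos\theta_{j}+\sin\theta_{j}\,\gamma_{2j-1}\gamma_{2j}\bigr)$ decomposes into commuting two-generator blocks and the Pfaffian $\Pf(\omega_{I'})$ vanishes unless $I'$ is a union of such blocks. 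In this diagonal form the identity reduces to a finite case analysis on whether each block is contained in $I\cup\{k\}$ or in $I'$; one then transports the equality to arbitrary $\omega$ using the $O(n)$-equivariance of both $\Pf(\omega_{I'})$ and $\det^{1/2}(\sinh\omega/\omega)$.
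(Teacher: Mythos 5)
Your proof breaks at the very first step. The elements $\tfrac{1}{2}\gamma^{t}\omega\gamma$ and $J^{t}\gamma$ are both even in $A\otimes C_{n}$, but even elements of a superalgebra need not commute, and here they do not: for $k\in\{i,j\}$ one has $\gamma_{i}\gamma_{j}\,(J_{k}\gamma_{k})-(J_{k}\gamma_{k})\,\gamma_{i}\gamma_{j}\neq 0$ (e.g.\ $\gamma_{i}\gamma_{j}\gamma_{i}=-\gamma_{j}$ while $\gamma_{i}\gamma_{i}\gamma_{j}=+\gamma_{j}$, so $[\gamma_{i}\gamma_{j},J_{i}\gamma_{i}]=-2J_{i}\gamma_{j}$). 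Hence $\exp(\tfrac{1}{2}\gamma^{t}\omega\gamma+J^{t}\gamma)\neq\exp(\tfrac{1}{2}\gamma^{t}\omega\gamma)\exp(J^{t}\gamma)$ whenever $\omega\neq 0$, and everything downstream of this factorization is unfounded. Handling exactly this non-commutativity is the whole point of the Mathai--Quillen mechanism the paper uses: one completes the square, $\tfrac{1}{2}\gamma^{t}\omega\gamma+J^{t}\gamma=\tfrac{1}{2}(\gamma-\omega^{-1}J)^{t}\omega(\gamma-\omega^{-1}J)+\tfrac{1}{2}J^{t}\omega^{-1}J$, realizes the shift $\gamma\mapsto\gamma-\omega^{-1}J$ as conjugation by $\exp(\tfrac{1}{2}K^{t}\gamma)$ via Lemma~\ref{l3}, uses cyclicity of the relative supertrace and Lemma~\ref{l2} to kill one term, and only then invokes Lemma~\ref{l1} together with the combinatorial Pfaffian expansion of $(\sum_{k}c_{k}\omega(J)_{k})\exp(\tfrac{1}{2}J^{t}\omega J)$ proved in Appendix~B.

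A second, independent error sits in your Step~4: the coefficient of $\gamma^{I'}$ in the Clifford expansion of $\exp(\tfrac{1}{2}\gamma^{t}\omega\gamma)$ is \emph{not} $(2i)^{n/2}\det^{1/2}(\sinh\omega/\omega)\,\Pf(\omega_{I'})$. Lemma~\ref{l1} only computes the supertrace, i.e.\ the top-degree coefficient; applied "to the restricted form $\omega_{I'}$" it would at best produce $\det^{1/2}(\sinh\omega_{I'}/\omega_{I'})$ over the indices of $I'$, not the global factor $\det^{1/2}(\sinh\omega/\omega)$ over all of $\{1,\dots,n\}$ that appears in the theorem. (For generic $\omega$ the lower Clifford coefficients of the exponential involve hyperbolic-tangent--type Pfaffians, not $\Pf(\omega_{I'})$ itself.) The clean product structure $\det^{1/2}(\sinh\omega/\omega)\cdot\Pf(\omega_{I'})$ is an output of the Gaussian-shift computation, not of a term-by-term Clifford expansion. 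Your suggested skew-normal-form check on a single $2\times 2$ block would already expose both problems; I recommend carrying it out for $n=2$ and then following the completing-the-square route.
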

The proof of the above theorem splits into two lemmas. The first one comes from Lemma 2.21 in \cite{ref3}.
\begin{lemma}\cite{ref3}
\label{l3}
    If $K_{1}, K_{2},...,K_{n}$ are odd elements of $A$, then for any $s\in \mathbb{R}$ we have:
    $$\exp(sK^{t}\gamma)\gamma_{i}\exp(-sK^{t}\gamma)=\gamma_{i}+2sK_{i}.$$
\end{lemma}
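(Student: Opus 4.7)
The plan is to prove the identity by differentiating both sides with respect to $s$ and solving the resulting first-order ordinary differential equation. Setting $f(s) := \exp(sK^{t}\gamma)\gamma_{i}\exp(-sK^{t}\gamma)$, I would aim to show that $f$ satisfies the ODE $f'(s) = 2K_{i}$ with $f(0) = \gamma_{i}$, from which $f(s) = \gamma_{i} + 2sK_{i}$ follows immediately.

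The crucial preliminary observation is that each summand $K_{j}\gamma_{j}$ of $K^{t}\gamma = \sum_{j}K_{j}\gamma_{j}$ is \emph{even} in the total $\mathbb{Z}_{2}$-grading on $A\otimes C_{n}$, because $K_{j}$ is odd in $A$ and $\gamma_{j}$ is odd in $C_{n}$. Since $K^{t}\gamma$ is even, conjugation by $\exp(sK^{t}\gamma)$ is a one-parameter family of even automorphisms, and differentiating in $s$ gives $f'(s) = [K^{t}\gamma, f(s)]$ where $[\cdot,\cdot]$ denotes the ordinary commutator.

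The core calculation is $[K^{t}\gamma, \gamma_{i}]$. Applying the Koszul sign rule to the pair of odd elements $(\gamma_{i}, K_{j})$ in $A\otimes C_{n}$ gives $\gamma_{i}K_{j} = -K_{j}\gamma_{i}$, so
\[
[K^{t}\gamma, \gamma_{i}] = \sum_{j}\bigl(K_{j}\gamma_{j}\gamma_{i} - \gamma_{i}K_{j}\gamma_{j}\bigr) = \sum_{j}K_{j}(\gamma_{j}\gamma_{i} + \gamma_{i}\gamma_{j}) = 2K_{i}
\]
by the Clifford relation $\gamma_{i}\gamma_{j} + \gamma_{j}\gamma_{i} = 2\delta_{ij}$. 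A parallel sign computation should show $[K^{t}\gamma, K_{i}] = 0$: indeed $\gamma_{j}K_{i} = -K_{i}\gamma_{j}$ and $K_{j}K_{i} = -K_{i}K_{j}$, so $K_{j}\gamma_{j}K_{i} = K_{i}K_{j}\gamma_{j}$, and the two terms in the commutator will cancel.

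Combining these, the candidate $g(s) := \gamma_{i} + 2sK_{i}$ satisfies $g(0) = \gamma_{i}$ and $[K^{t}\gamma, g(s)] = 2K_{i} + 2s\cdot 0 = g'(s)$, which is exactly the ODE characterising $f$; uniqueness of solutions then forces $f = g$. The only point requiring real care is consistent bookkeeping of the Koszul signs in the tensor superalgebra $A \otimes C_{n}$ (in particular, remembering that $\gamma_{j}$ and $K_{i}$ anticommute across the tensor factors), and this is the step I expect to demand the most attention.
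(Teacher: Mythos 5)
Your argument is correct. The paper itself gives no proof of this lemma (it is quoted verbatim from Lemma~2.21 of Mathai--Quillen), so there is nothing to compare against; your verification — observing that $K^{t}\gamma$ is even in $A\otimes C_{n}$, computing $[K^{t}\gamma,\gamma_{i}]=2K_{i}$ and $[K^{t}\gamma,K_{i}]=0$ with the Koszul signs handled as you do, and concluding via the uniqueness of the solution of $f'(s)=[K^{t}\gamma,f(s)]$ (or equivalently via the terminating $\mathrm{ad}$-expansion of the conjugation, since the second iterated commutator vanishes) — is exactly the standard argument and is sound.
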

The second lemma is a technical lemma to be proved in Appendix B.
\begin{lemma}
    For any $n\times n$ skew-symmetric matrix $\omega=\{\omega_{i}^{j}\}$, we have:
    $$(\sum_{k=1}^{n}c_{k}\omega(J)_{k})\exp(\frac{1}{2}J^{t}\omega{J})=\sum_{k=1}^{n}\sum_{I}c_{k}\epsilon(\{k\},I)\Pf(\omega_{I\cup\{k\}})J^{I}
    $$
    where $\omega(J)_{k}=\omega^{l}_{k}J_{l}$.
\end{lemma}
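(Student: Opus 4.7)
The plan is to introduce an auxiliary odd (Grassmann) variable $\theta$ satisfying $\theta^2 = 0$ and anticommuting with every $J_i$, and then to compute $\exp(\tfrac{1}{2}\tilde J^t\omega\tilde J)$ in two different ways, where $\tilde J := J + \theta c$ is the shifted vector with $c = (c_1,\dots,c_n)$ the scalar coefficients from the lemma. The first task is to establish the identity
\[
\tfrac{1}{2}\tilde J^t\omega\tilde J \;=\; \tfrac{1}{2}J^t\omega J + \theta \sum_{k=1}^{n} c_k\,\omega(J)_k.
\]
This follows by expanding $(J+\theta c)^t\omega(J+\theta c)$, observing that the quadratic-in-$\theta$ contribution vanishes because $\theta^2=0$, and checking that the two ``mixed'' cross terms, $\sum_{ij}J_i\omega_{ij}\theta c_j$ and $\sum_{ij}\theta c_i\omega_{ij}J_j$, are equal---an equality that uses $J_l\theta=-\theta J_l$ together with the skew-symmetry $\omega_{lk}=-\omega_{kl}$ (so that the two sign flips compensate rather than cancel).

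Since $\tfrac{1}{2}J^t\omega J$ and $\theta\sum_k c_k\omega(J)_k$ are both even in the total $\mathbb{Z}_2$-grading, they super-commute, and the second is nilpotent of square zero (its square contains $\theta^2=0$). Therefore the exponential factorizes as
\[
\exp\bigl(\tfrac{1}{2}\tilde J^t\omega\tilde J\bigr) \;=\; \exp\bigl(\tfrac{1}{2}J^t\omega J\bigr)\cdot\Bigl(1 + \theta\sum_{k}c_k\,\omega(J)_k\Bigr).
\]
Reading off the coefficient of $\theta$ (and using that the odd element $\sum_k c_k\omega(J)_k$ super-commutes with the even exponential) identifies this coefficient with the left-hand side of the lemma.

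For the second computation, I observe that the components $\tilde J_1,\dots,\tilde J_n$ are themselves odd and mutually anticommuting, so the standard Pfaffian generating identity applies and yields
\[
\exp\bigl(\tfrac{1}{2}\tilde J^t\omega\tilde J\bigr) \;=\; \sum_{A:\,|A|\text{ even}}\Pf(\omega_A)\,\tilde J^A,
\]
with $\tilde J^A = \prod_{i\in A}\tilde J_i$ taken in increasing order of indices. Expanding each factor $\tilde J_{a_i} = J_{a_i}+\theta c_{a_i}$, only linear-in-$\theta$ contributions survive (higher powers again vanish via $\theta^2=0$), and bringing $\theta$ past the preceding $J$'s in the product produces the shuffle sign. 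After reindexing the resulting sum by setting $I := A\setminus\{k\}$ with $k\in A$, the coefficient of $\theta$ becomes $\sum_k c_k\sum_{I}\epsilon(\{k\},I)\Pf(\omega_{I\cup\{k\}})J^{I}$, precisely the right-hand side of the lemma. Equating the two expressions for the $\theta$-coefficient then concludes the proof.

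The principal technical obstacle is the careful tracking of super-signs. In particular, one must verify that the two mixed cross terms in the initial expansion add rather than cancel---a nontrivial interplay between the anticommutation of $\theta$ with the $J_l$'s and the skew-symmetry of $\omega$---and one must check that the Koszul sign acquired when extracting $\theta$ from a given position inside the product $\tilde J^A$ matches the combinatorial sign $\epsilon(\{k\},A\setminus\{k\})$ appearing in the statement.
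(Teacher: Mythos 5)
Your proof is correct, but it takes a genuinely different route from the paper's. The paper argues directly: it expands $\exp(\tfrac{1}{2}J^{t}\omega J)$ into monomials, multiplies by $\sum_{k}c_{k}\omega(J)_{k}$, shows by an explicit pairwise cancellation (pairing the term with indices $(k,m)$ against the one with $(m,k)$ and using skew-symmetry) that every contribution in which $k$ or $m$ already occurs among the exponential's indices drops out, and then identifies each surviving sum with a Pfaffian by left-wedging with $J_{k}$ — which is where the sign $\epsilon(\{k\},I)$ enters. Your fermionic-source argument — shifting $J\mapsto\tilde J=J+\theta c$ with an auxiliary odd $\theta$, evaluating $\exp(\tfrac{1}{2}\tilde J^{t}\omega\tilde J)$ once by splitting off the nilpotent $\theta$-part and once via the Pfaffian generating identity $\exp(\tfrac{1}{2}\tilde J^{t}\omega\tilde J)=\sum_{|A|\ \mathrm{even}}\Pf(\omega_{A})\tilde J^{A}$, then matching $\theta$-coefficients — absorbs both the cancellation step and the wedging step into the single identity $\tfrac{1}{2}\tilde J^{t}\omega\tilde J=\tfrac{1}{2}J^{t}\omega J+\theta\sum_{k}c_{k}\omega(J)_{k}$, whose verification you correctly reduce to the compensation between $\theta J_{l}=-J_{l}\theta$ and $\omega_{lk}=-\omega_{kl}$; and the Koszul sign $(-1)^{r-1}$ acquired when extracting $\theta$ from position $r$ of $\tilde J^{A}$ is precisely $\epsilon(\{k\},A\setminus\{k\})$. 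Your route is shorter and closer in spirit to the Mathai--Quillen shift used elsewhere in the paper (the conjugation in Lemma \ref{l3}), at the cost of invoking the full Pfaffian generating identity, which the paper's more elementary computation in effect re-derives degree by degree. One small point applying to both arguments: $\epsilon(\{k\},I)$ must be read in the shuffle-sign sense $J^{\{k\}}J^{I}=\epsilon(\{k\},I)\,J^{I\cup\{k\}}$, as in the paper's Appendix B, rather than via the literal definition in Theorem \ref{l4} (which would force $\{k\}\cup I=\{1,\dots,n\}$); your sign convention is consistent with the former.
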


Now, we will prove Theorem \ref{l4}:

\begin{proof}[Proof of Theorem \ref{l4}]
Let $K=-\omega^{-1}(J)$, we have:
 {\small
    \begin{align*}
        &\Str\left((\sum_{i=1}^{n}\gamma_{i})\cdot \exp(\frac{1}{2}\gamma^{t}\omega\gamma+J^{t}\gamma)\right)\\
        &=\Str\left((\sum_{i=1}^{n}\gamma_{i})\cdot \exp(\frac{1}{2}(\gamma-\omega^{-1}(J))^{t}\omega(\gamma-\omega^{-1}(J)))\right)\cdot\exp(\frac{1}{2}J^{t}\omega^{-1}J)\\
        &=\Str\left((\sum_{i=1}^{n}\gamma_{i})\cdot \exp(\frac{1}{2}K^{t}\gamma)\cdot\exp(\frac{1}{2}\gamma^{t}\omega\gamma)\cdot\exp(-\frac{1}{2}K^{t}\gamma)\right)\cdot\exp(\frac{1}{2}J^{t}\omega^{-1}J)\\
        &=\Str\left(\exp(-\frac{1}{2}K^{t}\gamma)\cdot(\sum_{i=1}^{n}\gamma_{i})\cdot \exp(\frac{1}{2}K^{t}\gamma)\cdot\exp(\frac{1}{2}\gamma^{t}\omega\gamma)\right)\cdot\exp(\frac{1}{2}J^{t}\omega^{-1}J)\\
        &=\Str\left((\sum_{i=1}^{n}\gamma_{i})\exp(\frac{1}{2}\gamma^{t}\omega\gamma)\right)\cdot\exp(\frac{1}{2}J^{t}\omega^{-1}J)+\Str\left((\sum_{i=1}^{n})\omega^{-1}(J)_{i}\cdot \exp(\frac{1}{2}\gamma^{t}\omega\gamma)\right)\cdot\exp(\frac{1}{2}J^{t}\omega^{-1}J)\\
        &=\Str\left(\exp(\frac{1}{2}\gamma^{t}\omega\gamma)\right)\cdot(\sum_{i=1}^{n}\omega^{-1}(J)_{i})\cdot\exp(\frac{1}{2}J^{t}\omega^{-1}J)\\
        &=(2i)^{\frac{n}{2}}\cdot\det\left(\frac{\sinh{\omega}}{\omega}\right)^{\frac{1}{2}}\cdot\left(\sum_{k=1}^{n}\sum_{I^{'}}\sum_{I}(-1)^{\frac{|I|+1}{2}}c_{k}\epsilon(I\cup\{k\},I^{'})\epsilon(\{k\},I)\Pf(\omega_{I^{'}})J^{I}\right).
    \end{align*}
    }
    The first equality holds because $\frac{1}{2}\gamma^{t}\omega\gamma+J^{t}\gamma=\frac{1}{2}(\gamma-\omega^{-1}(J))^{t}\omega(\gamma-\omega^{-1}(J))+\frac{1}{2}J^{t}\omega^{-1}J$, where the second term on the right is an even element so it commutes (strictly) with the first term.
    By Lemma \ref{l3}, we have:
    $$\frac{1}{2}(\gamma-\omega^{-1}(J))^{t}\omega(\gamma-\omega^{-1}(J))=\exp(\frac{1}{2}K^{t}\gamma)\cdot\exp(\frac{1}{2}\gamma^{t}\omega\gamma)\cdot\exp(-\frac{1}{2}K^{t}\gamma),$$
    and then the second equality holds.

     Because $\exp(\frac{1}{2}K^{t}\gamma)$ has an even degree, the third equality holds by the property of the supertrace.

     The fourth equality holds because of Lemma \ref{l3}. Since Lemma \ref{l2} implies that $\Str\left((\sum_{i=1}^{n}\gamma_{i})\exp(\frac{1}{2}\gamma^{t}\omega\gamma)\right)\cdot\exp(\frac{1}{2}J^{t}\omega^{-1}J)=0$, the fifth equality holds.

     Finally, the last equality is obtained by Lemmas \ref{l1} and \ref{l4}.
\end{proof}
\begin{remark}
In \cite{ref3}, Mathai and Quillen have proved that:

$$\Str(\exp(\frac{1}{2}\gamma^{t}\omega\gamma+J^{t}\gamma))=(2i)^{n/2}\cdot\det\left(\frac{\sinh{\omega}}{\omega}\right)^{\frac{1}{2}}\cdot\left(\sum_{I~even}\epsilon(I,I')(-1)^{\frac{1}{2}|I'|}\Pf(\omega_{I})J^{I'}\right)$$
    still holds when $\omega$ is a skew-symmetric matrix with entries in the even part of $A$ and $J$ is a vector with entries in the odd part of $A$, where either the entries of $\omega$ are nilpotent, or $A$ is a Banach algebra. After a similar argument, Theorem \ref{l4} still holds when  $\omega$ is a skew-symmetric matrix with entries in the even part of $\wedge J$ since $\wedge J$ is a Banach algebra.
\end{remark}
%\paragraph{Theorem 4.1}
Finally, we will prove Theorem \ref{err}.
\begin{proof}[Proof of Theorem \ref{err}]
We only need to show that for $[\pi^{*}S^{+}_{V},\pi^{*}S^{-}_{V},\pi^{*}\sigma]\in K^{0}_{G}(V)$, we have 
$$\chg([\pi^{*}S^{+}_{V},\pi^{*}S^{-}_{V},\pi^{*}\sigma])=(-2\pi i)^{n/2}\pi^{*}(\hat{A}_{g}(V)^{-1})U_{g}(V),$$ 
 where $U_{g}(V)$ is an equivariant Thom form.

Let $\nabla_{S_{V}}:=d+\omega$ be the equivariant connection on $S_{V}$ associated with a connection on $V$. Consider the equivariant morphism $\sigma:S^{+}_{V}\rightarrow S^{-}_{V}$ defined by: 
$$\sigma(x):=-ic(x):S^{+}\rightarrow S^{-} $$
for $x\in$ V.

Let $\mathbb{A}=\pi^{*}\nabla_{S_{V}}$ be the invariant superconnection on $\pi^{*}S_{V}$.
Then for every $X\in\mathfrak{g}$,we have
$$\chg[\pi^{*}S^{ +}_{V},\pi^{*}S^{-}_{V},\pi^{*}\sigma](X)=\chi \ch(\mathbb{A})(X)+d\chi\beta(X)$$
where 
\begin{equation}
\label{eq:beta}
\beta(X)=\int_{0}^{\infty}\Str(c(x)\exp(-t^{2}||x||^{2}+[\mathbb{A},c(x)]+\mathbb{A}^{2}+\mu^{\mathbb{A}}(X)).
\end{equation}

For every $m\in M$, choose a local frame $e_{1},e_{2}......e_{n}$ near $m$. To simplify the notations, we denote $\tilde{\Omega}(X)=\mathbb{A}^{2}+\mu^{\mathbb{A}}(X)$ and $\gamma=(c(e_{1}),...,c(e_{n}))^{t}$. Thus, let $\Omega(X)$ be the equivariant curvature of $V$, we have: $\tilde{\Omega}(X)=\frac{1}{4}\gamma^{t}\Omega(X)\gamma$.  

Because of Proposition 2.10 in \cite{ref3} for a skew-symmetric matrix $\omega$, $\Str(e^{\frac{1}{2}\gamma^{t}\omega\gamma})=(2i)^{\frac{n}{2}}\det^{\frac{1}{2}}(\frac{\sinh\omega}{\omega})\Pf(\omega)$, we have:
\begin{align*}
    \chi \ch(\mathbb{A})(X)&=\chi \Str(\exp(\tilde{\Omega}(X))\\
    &=\Str((\exp(\frac{1}{4}\gamma^{t}\Omega(X)\gamma)\\
    &=\chi(-2\pi i)^{\frac{n}{2}}\hat{A}(\Omega(X))^{-1}\Pf\left(-\frac{\Omega(X)}{2\pi}\right).
\end{align*}
Using the  local frame, $\beta(X)$ in (\ref{eq:beta}) can be rewritten as 
$$\int_{0}^{\infty}\Str(\Sigma(x_{i}c(e_{i})\exp(-t^{2}||x||^{2}+t\Sigma J_{i}c(e_{i})+\frac{1}{4}\gamma^{t}\Omega(X)\gamma))$$
where $x=\sum_{i=1}^{n}x_{i}e_{i}$ and $J_{i}=dx_{i}+\pi^{*}(\omega^{l}_{i})x_{l}$. Here $\omega=\{\omega_{i}^{l}\}$ and $\omega_{i}^{l}\in \mathcal{A}^{1}(M)$.

By Theorem~\ref{l4}, we have:
\begin{align*}
\beta(X)=&\int_{0}^{\infty}\Str(\Sigma(x_{i}c(e_{i})\exp(-t^{2}||x||^{2}+t\Sigma J_{i}c(e_{i})+\frac{1}{4}\gamma^{t}\Omega(X)\gamma))\\
=&(2i)^{\frac{n}{2}}\det\left(\frac{\sinh{\Omega(X)/2}}{\Omega(X)/2}\right)^{\frac{1}{2}} \Gamma(X)
\end{align*}
where
$$\Gamma(X):=\int_{0}^{\infty}e^{-t||x||^{2}}\left(\sum_{k=1}^{n}\sum_{I^{'}}\sum_{I}(-1)^{\frac{|I|+1}{2}}x_{k}\epsilon(I\cup\{k\},I^{'})\epsilon(\{k\},I)\Pf(\Omega_{I'}(X)/2)J^{I}t^{|I|+1}\right)dt.$$
Thus we have:
\begin{multline*}
\chg[\pi^{*}S^{ +}_{V},\pi^{*}S^{-}_{V},\pi^{*}\sigma](X)=\chi \ch(\mathbb{A})(X)+d\chi\beta(X)\\
  =(-2\pi i)^{\frac{n}{2}}\hat{A}(\Omega(X))^{-1}\cdot\left(\chi\Pf\left(-\frac{\Omega(X)}{2\pi}\right)+(-\frac{1}{\pi})^{\frac{n}{2}}d\chi\cdot\Gamma(X)\right).
\end{multline*}

Finally  we show that $\chi \Pf(-\frac{\Omega(X)}{2\pi})+({-\frac{1}{\pi })^{\frac{n}{2}}}d\chi\Gamma(X)$ is a Thom form. 

Since $\Omega(X)$ is pulled back from the base space $M$, it implies that $\int _{fiber}\chi\Pf(-\frac{\Omega(X)}{2\pi})=0$. Then we choose $\chi=f(||x||^{2})$, where $f\in C^{\infty}(\mathbb{R})$ has compact support and is equal to 1 in a neighborhood of 0. Thus, we have 
$$\int _{fiber}\chi \Pf(\frac{\Omega(X)}{2})+(-\frac{1}{\pi} )^{n/2}d\chi\Gamma(X))=\int _{fiber}(-\frac{1}{\pi })^{n/2}d\chi\Gamma(X).$$

Since we consider the integration over the fiber, we focus on the component of maximal degree along the fiber of the differential form $d\chi\Gamma(X)$. Indeed, we have 

\begin{align*}
 &\int_{fiber}(-\frac{1}{\pi})^{n/2}d\chi\Gamma(X)\\
 =&\int_{fiber}\int_{0}^{\infty}\frac{1}{(\pi )^{n/2}}(-2)t^{n-1}f'(||x||^{2})||x||^{2}e^{-t||x||^{2}}dt\wedge{dx_{1}}\wedge{dx_{2}}\wedge{...}\wedge{dx_{n}}\\
=&\int_{fiber}\int_{0}^{\infty}-2t^{n-1}f'(||x||^{2})||x||^{2}e^{-t||x||^{2}}\wedge{dx_{2}}\wedge{...}\wedge{dx_{n}}\\
=&\frac{1}{\pi^{n/2}}\int_{fiber}e^{-||x||^{2}}\\
=&1.   
\end{align*}
This means that $\chi \Pf(-\frac{\Omega(X)}{2\pi})+({-\frac{1}{\pi })^{\frac{n}{2}}}d\chi\Gamma(X)$  is a Thom form, so we complete the proof.
 \end{proof}
\section{$G$-invariant Riemann-Roch formula on homogeneous spaces}
In this section, we discuss an application of the equivariant Riemann-Roch formula for the special case of homogeneous spaces.
We will first recall the Chern character in this special case defined by Connes and Moscovici \cite{ref8} and then rewrite it in the language of the equivariant Chern character. 
Then, we will prove the main theorem of the section:
\begin{theorem}
\label{thm6.1}
Let $G$ be an almost-connected unimodular Lie group, $H$ a maximal compact subgroup of $G$, and $G/H$ the corresponding homogeneous space. We assume that $G/H$  is even dimensional with a $G$-equivariant spin structure. The following diagram commutes:
\begin{equation*}
\xymatrix{   K_{G}^{0}(T^{*}(G/H))\ar[r]^{\pi!}\ar[d]_{\ch_{G}^{0}\wedge\hat{A}_{G}(G/H)} &  K_{G}^{0}(G/H) \ar[r]^{r_{*}}\ar[d]_{(-2\pi i)^{n/2}\ch_{G}^{0}} &R(H)\ar[d]_{(-2\pi i)^{n/2}\ch} \\
         H^{*}_{DR,cv}(T^{*}(G/H))^{G}\ar[r]^{\pi!}& H^{*}_{DR}(G/H)^{G}\ar[r]^{Id}&H^{*}(\mathfrak{g},H).
                      }
\end{equation*}
\end{theorem}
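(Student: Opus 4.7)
The proof splits the diagram into a left square and a right square. The left square will follow from specializing the equivariant Riemann--Roch formula (Theorem~\ref{err}) to $M = G/H$, while the right square amounts to the Frobenius-type isomorphism $K_G^0(G/H) \cong R(H)$ together with the definition of the Connes--Moscovici Chern character on the representation ring.

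For the left square, the hypotheses of Theorem~\ref{sk} are satisfied by the pair $(H, S = \{eH\})$: the action map $G \times_H \{eH\} \to G/H$, $[g, eH] \mapsto gH$, is a $G$-equivariant diffeomorphism. A $G$-invariant metric on $T(G/H)$ induces an isomorphism of real $G$-bundles $T(G/H) \cong T^*(G/H)$, so the $G$-equivariant spin structure on $G/H$ induces one on $T^*(G/H)$, and $T^*(G/H)$ has even rank. Applying Theorem~\ref{err} with $V = T^*(G/H)$ and evaluating at $X = 0 \in \mathfrak{g}$ then yields the left square, once one identifies $\hat{A}_{\mathfrak{g}}(T^*(G/H))^{-1}|_{X=0}$ appearing in Theorem~\ref{err} with $\hat{A}_G(G/H)$ by Definition~\ref{Agenus} and the isomorphism above.

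For the right square, Lemma~\ref{k2} with $S = \{eH\}$ identifies $K_G^0(G/H)$ with $R(H)$ via $r_*$; the inverse sends $[W] \in R(H)$ to $[G \times_H W]$. The bottom horizontal map identifies $H^*_{DR}(G/H)^G$ with $H^*(\mathfrak{g}, H)$ by evaluating a $G$-invariant form at $eH$ and viewing the result as an $H$-invariant element of $\wedge^*(\mathfrak{g}/\mathfrak{h})^*$. Commutativity reduces to the assertion that for any $[W] \in R(H)$, the Chern--Weil form $\ch_G^0(G \times_H W)$ computed using the canonical $G$-invariant connection arising from a reductive decomposition $\mathfrak{g} = \mathfrak{h} \oplus \mathfrak{m}$ agrees, at the identity coset, with the Connes--Moscovici Chern character $\ch([W])$ in $H^*(\mathfrak{g}, H)$. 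This will be essentially tautological once one identifies the curvature of the invariant connection at $eH$ with the $\mathfrak{h}$-component of the bracket $\mathfrak{m} \times \mathfrak{m} \to \mathfrak{h}$ acting on $W$, since this is exactly the datum from which the algebraic Chern character on $R(H)$ is built in~\cite{ref8}. The main technical point is the careful bookkeeping of normalizations and signs so that the geometric Chern--Weil class matches the algebraic definition on the nose; once this is settled, the whole argument is a composition of Theorem~\ref{err} with a formal identification on the representation-theoretic side.
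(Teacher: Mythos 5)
Your proposal follows essentially the same route as the paper: the left square is the $X=0$ specialization of Theorem~\ref{err} (the paper's Proposition~\ref{thm 5.12}), and the right square is handled by restricting to $eH$ and matching the Chern--Weil form of the canonical invariant connection with the algebraic Chern character on $R(H)$. The ``bookkeeping'' you defer is exactly what the paper's Propositions~\ref{prop 5.9} and~\ref{thm 5.10} carry out, including the verification that the Chern--Weil map $W$ is inverse to $r_{*}$ on equivariant cohomology and that the degree-zero component of the equivariant curvature at $eH$ is the moment map $\mu(X)=\epsilon|_{H}(X)$.
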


Assuming the assumptions in Theorem~\ref{thm6.1}, we first introduce an isomorphism from $K_{G}^{0}(G/H)$ to $K_{H}^{0}(pt)$:

\begin{definition}
\label{rk}
Let $[E]\in K_{G}^{0}(G/H)$, where $E$ is a $G$-equivariant vector bundle over $G/H$. There is an isomorphism from $K_{G}^{0}(G/H)$ to $K_{H}^{0}(pt)$ given by the restriction map:
\begin{align*}
    r_{*}:&K_{G}^{0}(G/H)\longrightarrow K_{H}^{0}(pt)\\
    &[E]\longmapsto [E|_{eH}]
\end{align*}
where $e$ is the identity of $G$ and $eH$ the left coset of $e$ in the homogeneous space $G/H.$
\end{definition}

Analogously, there is an isomorphism on the cohomology level from $H^{\infty}(\mathfrak{g}, G/H)$ to ${H^{\infty}(\mathfrak{{h}}, pt)}$, still denoted as $r_{*}$ :
\begin{definition}
There is an isomorphism from $H^{\infty}(\mathfrak{g},G/H)$ to ${H^{\infty}(\mathfrak{{h}},pt)}$ given by:
\label{rh}
\begin{align*}
    r_{*}:H^{\infty}(\mathfrak{g},G/H&)\rightarrow {H^{\infty}(\mathfrak{{h}},pt)}\cong{C^{\infty}(\mathfrak{h})^{H}}\\
    &\alpha\mapsto \alpha_{[0]}|_{eH}
\end{align*}
where $\alpha_{[0]}$ is the $0$-degree component of $\alpha$.
\end{definition}
\begin{remark}
    The claim that restriction maps in Definitions \ref{rk} and~\ref{rh} are isomorphisms have been proved in {\cite[Corollary 8.5, p131]{ref7}} and {\cite[Theorem 24, p33]{eq}} respectively.
\end{remark}

Then, we recall the Chern character $\ch_{CM}: R(H)\rightarrow H^{*}(\mathfrak{g}, H)$ defined by Connes and Moscovici in \cite{ref8}. Let $\mathfrak{g}$ be the Lie algebra of the Lie group $G$. Fix an $Ad(H)$-invariant splitting $\mathfrak{g}=\mathfrak{h}\oplus{\mathfrak{m}}$ of $\mathfrak{g}$. 
Then the projection $\theta:\mathfrak{g}\rightarrow\mathfrak{h}$ induces a $G$-invariant connection on the principal bundle $H\rightarrow G\rightarrow G/H$ which we denote by $\Tilde{\theta}$. We denote the corresponding curvature by $\Tilde{\Theta}$. When $\Tilde{\Theta}$ restricts to $eH$, for $X,Y\in \mathfrak{m}$, we have $\Tilde{\Theta}|_{eH}(X,Y)=-\frac{1}{2}\theta([X,Y]) $. 
Composing $\Tilde{\Theta}$ with $\epsilon:\mathfrak{h}\rightarrow \mathfrak{gl}(E)$, the differential of a unitary representation of $H$ on some vector space $E$, we have :
\begin{align*}
    \ch_{CM}:&R(H)\rightarrow H^{*}(\mathfrak{g},H)\\
    &\epsilon\mapsto\Tr(\exp(\frac{1}{2\pi i}\epsilon(\Theta))).
\end{align*}

Based on this special Chern character, we introduce the Chern character without normalization as follows:
\begin{definition}
\label{chn}
    \begin{align*}
    \ch:&R(H)\rightarrow H^{*}(\mathfrak{g},H)\\
    &\epsilon\mapsto\Tr(\exp(\epsilon(\Theta))).
\end{align*}
\end{definition}
This Chern character is very similar to $\ch_{CM}$ and we will demonstrate in the following that it is compatible with the equivariant Chern character.

In order to explain this viewpoint, we recall the equivariant Chern-Weil homomorphism defined by Duflo
and Vergne in {\cite[p20]{eq}}.

\begin{definition}[\cite{eq}]
Let $\Omega$ be a $G$-invariant curvature on the principle $H$ bundle $G$, the Chern-Weil map is given by:
\begin{align*}
    W_{\Omega}:C^{\infty}(\mathfrak{h})&^{H}\longrightarrow A^{\infty}_{G}(\mathfrak{g},G/H)\\
    &\phi\longmapsto\phi(\Omega).
    \end{align*}
    For any $X\in\mathfrak{g}$, we have $\phi(\Omega)(X):=\phi(\Omega(X))$.
\end{definition}

If we choose a different $G$-invariant curvature $\Omega_{1}$, one can check that for any $\phi\in C^{\infty}(\mathfrak{h})^{H}$, $[W_{\Omega}(\phi)]=[W_{\Omega_{1}}(\phi)]\in H^{\infty}(\mathfrak{g},G/H)$. Because of this, the Chern-Weil homomorphism induces a homomorphism from $H^{\infty}(\mathfrak{h}, pt)$ to $H^{\infty}(\mathfrak{g}, G/H)$, since $H^{\infty}(\mathfrak{h},pt)\cong C^{\infty}(\mathfrak{h})^{H}$.

\begin{definition}
Let $\Omega$ be a $G$-invariant curvature on the principle $H$ bundle $G$. One has the Chern-Weil homomorphism:
\begin{align*}
    W:H^{\infty}(\mathfrak{h},pt)&\longrightarrow H^{\infty}(\mathfrak{g},G/H)\\
    &\phi\longmapsto[\phi(\Omega)].
    \end{align*}
\end{definition}
\begin{remark}
This Chern-Weil homomorphism is an isomorphism. We will prove it later.
\end{remark}

After reviewing the special Chern character defined by Connes and Moscovici,  we will describe the non-normalized version in the language of the equivariant Chern character. First, we show the relation between the equivariant cohomology and the invariant cohomology. Let $M$ be a proper $G$ manifold, and $\alpha\in H^{\infty}(\mathfrak{g},M)$ be described as a smooth function valued in differential forms, compatible with the group action. With such a perspective, $\alpha(0)$ is a $G$-invariant differential form. (For $X\in\mathfrak{g}$, $\alpha(X)$ may not be $G$-invariant. But when $X=0$, $\alpha(0)$ is $G$-invariant.) We define the evaluation map as follows:
\begin{definition}
\label{ev}
The evaluation map is given by:
\begin{align*}
   \ev:A^{\infty}_{G}(\mathfrak{g},M&)\rightarrow A(M)^{G}\\
   &\alpha\mapsto\alpha(0),
\end{align*}
where $A(M)^{G}$ is the complex of $G$-invariant  differential form.
\end{definition}

Indeed, $\ev$ induces a homomorphism from $H^{\infty}(\mathfrak{g},M)$ to $H^{*}_{DR}(M)^{G}$. 
\begin{proposition}
    The evaluation map induces the  homomorphism:
\begin{align*}
    \ev_{*}:H^{\infty}(\mathfrak{g},&M)\rightarrow H^{*}_{DR}(M)^{G}\\
    &[\alpha]\mapsto [\alpha(0)].
\end{align*}
\end{proposition}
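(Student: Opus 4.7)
The plan is to verify directly that the evaluation $\alpha \mapsto \alpha(0)$ descends to a well-defined map on cohomology classes. There are three things to check: (i) if $\alpha \in \mathcal{A}^{\infty}(\mathfrak{g}, M)$ is equivariantly closed then $\alpha(0)$ is a $G$-invariant closed form; (ii) if $\alpha$ is equivariantly exact then $\alpha(0)$ is $d$-exact within the complex of $G$-invariant forms; (iii) $\alpha \mapsto \alpha(0)$ is linear, which is immediate.

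For (i), I would unpack the definition of the equivariant differential: for any $\alpha \in \mathcal{A}^{\infty}(\mathfrak{g},M)$ and $X \in \mathfrak{g}$,
\[
(d_{\mathfrak{g}}\alpha)(X) = d(\alpha(X)) - \iota(X)(\alpha(X)).
\]
Specialising at $X = 0$, the contraction term vanishes and one obtains $(d_{\mathfrak{g}}\alpha)(0) = d(\alpha(0))$. Hence if $d_{\mathfrak{g}}\alpha = 0$ then $d(\alpha(0)) = 0$. For $G$-invariance, the defining property $\alpha(g_{*}X) = g \cdot \alpha(X)$ evaluated at $X = 0$ yields $\alpha(0) = g \cdot \alpha(0)$ for every $g \in G$, so $\alpha(0) \in \mathcal{A}(M)^{G}$.

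For (ii), suppose $\alpha = d_{\mathfrak{g}}\beta$ for some $\beta \in \mathcal{A}^{\infty}(\mathfrak{g},M)$. Then $\alpha(0) = d(\beta(0))$ by the same evaluation argument, and $\beta(0)$ itself is $G$-invariant by the argument in the previous paragraph. Therefore $\alpha(0)$ represents the zero class in $H^{*}_{DR}(M)^{G}$. Combining (i) and (ii), the assignment $[\alpha] \mapsto [\alpha(0)]$ is well defined as a map from $H^{\infty}(\mathfrak{g},M)$ to $H^{*}_{DR}(M)^{G}$, and linearity is inherited from the pointwise linearity of evaluation.

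I do not expect any real obstacle here: the whole content is the observation that the contraction term $\iota(X)$ in $d_{\mathfrak{g}}$ is annihilated at $X = 0$, so that $d_{\mathfrak{g}}$ restricts to the ordinary de Rham differential on the subspace of forms living over $0 \in \mathfrak{g}$. The only subtlety worth a remark is that while $\alpha(X)$ need not be $G$-invariant for arbitrary $X \in \mathfrak{g}$, the invariance condition applied at the fixed point $0 \in \mathfrak{g}$ of the adjoint action forces $\alpha(0)$ to be $G$-invariant, which is precisely what is needed to land in $H^{*}_{DR}(M)^{G}$ rather than in plain $H^{*}_{DR}(M)$.
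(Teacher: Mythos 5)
Your proof is correct and follows essentially the same route as the paper: the key observation in both is that the contraction term in $d_{\mathfrak{g}}$ vanishes at $X=0$, so $\ev(d_{\mathfrak{g}}\gamma)=d(\gamma(0))$, which gives well-definedness on cohomology. Your additional checks (closedness and $G$-invariance of $\alpha(0)$, using that $0$ is fixed by the adjoint action) are correct and slightly more complete than the paper's write-up, but contain the same idea.
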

\begin{proof}
Suppose $[\alpha]=[\beta]\in H^{\infty}(\mathfrak{g},M)$. This means that exists an equivariant form $\gamma\in \mathcal{A}^{\infty}(\mathfrak{g},M)$ satisfying $\alpha-\beta=d_{\mathfrak{g}}(\gamma)$. Then we have:
$\ev(\alpha-\beta)=\ev(d_{\mathfrak{g}}(\gamma))=d_{\mathfrak{g}}(\gamma)(0)=d(\gamma(0))-\iota(0)(\gamma(0))=d(\gamma(0))$. Therefore, $\ev_{*}([\alpha])=\ev_{*}([\beta])$.
\end{proof}

Since the $G$-invariant differential form on $G/H$ is determined by its value at $eH$, we can identify $H^{*}_{DR}(G/H)^{G}$ with $H^{*}(\mathfrak{g},H)$. 
(In fact, we have $H^{*}_{DR}(G/H)^{G}\cong{H^{*}(\mathfrak{g},H)}$.) For this reason, we can describe $\ch$ in Definition \ref{chn} as the composition of equivariant Chern character, Chern-Weil homomorphism, and evaluation map:
\begin{proposition}
The following diagram commutes:%(The arrow at the bottom is in the wrong direction)
\begin{equation*}
\label{prop 5.9}
\xymatrix{   K^{0}_{H}(pt)\cong{R(H)}\ar[r]^{\ch_{H}}\ar[d]_{\ch} &    C^{\infty}(\mathfrak{h})^{H} \ar[d]^{W} \\
         H^{*}(\mathfrak{g},H) &H^{\infty}(\mathfrak{g},G/H) \ar[l]^{\ev_{*}}.
                      }
\end{equation*}
\end{proposition}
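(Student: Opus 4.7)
The plan is to chase an element $\epsilon\in R(H)$, viewed as a representation $\epsilon:\mathfrak{h}\to\mathfrak{gl}(V)$, through the two compositions in the diagram and verify that the outputs coincide in $H^{*}(\mathfrak{g},H)$. Going along the top and right edges, I would first apply $\ch_{H}$: since $R(H)\cong K_{H}^{0}(pt)$ and over a point the only nontrivial piece of the equivariant curvature is the moment map $\mathcal{L}^{V}(X)=\epsilon(X)$, the equivariant Chern character of Definition~\ref{def.eq.Chern} produces the $H$-invariant smooth function $X\mapsto\Tr(\exp(\epsilon(X)))$ on $\mathfrak{h}$, lying in $C^{\infty}(\mathfrak{h})^{H}\cong H^{\infty}(\mathfrak{h},pt)$.

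Next, invoking the Chern-Weil homomorphism $W$ with respect to the $G$-invariant connection $\tilde\theta$ coming from the $Ad(H)$-invariant splitting $\mathfrak{g}=\mathfrak{h}\oplus\mathfrak{m}$, with $G$-invariant equivariant curvature $\Omega$, the image is represented by the equivariant form
\begin{equation*}
X\longmapsto \Tr\bigl(\exp(\epsilon(\Omega(X)))\bigr)\in \mathcal{A}^{\infty}(\mathfrak{g},G/H),
\end{equation*}
where $\epsilon$ is extended by linearity to $\mathfrak{h}$-valued forms and the exponential is interpreted through its Taylor expansion. Applying the evaluation map $\ev_{*}$ of Definition~\ref{ev} then sets $X=0$, producing the $G$-invariant closed differential form $\Tr(\exp(\epsilon(\tilde\Theta)))$ on $G/H$.

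On the other hand, under the identification $H^{*}_{DR}(G/H)^{G}\cong H^{*}(\mathfrak{g},H)$ given by restriction to $eH$ (cf.\ Definition~\ref{rh}), the class just obtained is represented by $\Tr(\exp(\epsilon(\tilde\Theta|_{eH})))=\Tr(\exp(\epsilon(\Theta)))$, which is precisely $\ch(\epsilon)$ by Definition~\ref{chn}. Commutativity then follows directly by comparing the two descriptions.

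The only nontrivial input, which I would verify carefully but not grind through, is that the Chern-Weil map $W$ is genuinely well-defined on cohomology, i.e.\ independent of the choice of $G$-invariant equivariant connection on $G\to G/H$; this is the content of the remark preceding the proposition and is standard in equivariant Chern-Weil theory. Once this is in place, the proof reduces to observing that evaluation at $X=0$ commutes with the operations $\phi\mapsto\phi(\Omega)$ and with the trace, so that the two paths around the square give representatives of the same cohomology class. In particular, no estimates or heat-kernel-style arguments are needed here; the whole content is the compatibility of the Connes-Moscovici construction with the Mathai-Quillen/Paradan-Vergne equivariant formalism introduced earlier in the paper.
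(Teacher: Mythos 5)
Your proposal is correct and follows essentially the same route as the paper's own proof: both compute the composite $\ev_{*}\circ W\circ\ch_{H}$ on a representation $\epsilon$, observe that over a point the equivariant curvature reduces to the moment map so that $\ch_{H}$ gives $X\mapsto\Tr(\exp(\epsilon(X)))$, and then note that Chern--Weil followed by evaluation at $X=0$ yields $\Tr(\exp(\epsilon(\Omega)))$, which is the definition of $\ch$. The only cosmetic difference is that you make explicit the restriction-to-$eH$ identification $H^{*}_{DR}(G/H)^{G}\cong H^{*}(\mathfrak{g},H)$, which the paper establishes in the discussion immediately preceding the proposition.
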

\begin{proof}
Let $E$ be an $H$-equivariant vector bundle over a point $pt$ and $\Tilde{\epsilon}:H\rightarrow \GL(E)$ be the corresponding representation of $H$. Here $\GL(E)$ means the group of bundle isomorphisms from $E$ to $E$. (For $s\in \Gamma(E)$ $h\in H$, we have $h\cdot{s}(pt)=\Tilde{\epsilon}(h)(s(pt))$.) Since $pt$ is a $0$-dimensional manifold, any $H$-invariant curvature $\Omega$ vanishes. Therefore, $\Omega(X)=\Omega+\mu(X)=\mu(X)$ where $X\in \mathfrak{h}$ and $\mu$ is the moment associated to the $H$-invariant connection $\omega$. 
Since $\mu(X)=\epsilon(X)$ where $\epsilon:\mathfrak{h}\rightarrow \mathfrak{gl}(E)$ is the differential of $\Tilde{\epsilon}$, we have $\ch_{H}[E,\Tilde{\epsilon}](X)=\Tr(\exp(\epsilon(\Omega(X)))=\Tr(\exp(\epsilon(X)))$ where $[E,\Tilde{\epsilon}]\in K_{H}^{0}(pt)$ and $X\in \mathfrak{h}$. For this reason, choosing a $G$-invariant curvature $\Omega\in A^{2}(G)\otimes{\mathfrak{h}}$, we have 
\[
\ev_{*}\circ W\circ\ch_{H}[E,\Tilde{\epsilon}]=\Tr(\exp(\epsilon(\Omega(0)))=\Tr(\exp(\epsilon(\Omega))\in H^{*}(\mathfrak{g},H).
\] 
So we obtain $\ch=\ev_{*}\circ W\circ\ch_{H}$, which completes the proof.
\end{proof}

Next, we show a result which will imply the commutativity of part of the diagram in Theorem~\ref{thm6.1}:
\begin{proposition}
The following diagram commutes:
\begin{equation*}
\label{thm 5.10}
\xymatrix{   K_{G}^{0}(G/H)\ar[r]^{r_{*}}\ar[d]_{\ch_{G}} &  K_{H}(pt) \ar[d]^{\ch_{H}} \\
         H^{\infty}(\mathfrak{g},G/H)\ar[r]^{r_{*}}&C^{\infty}(\mathfrak{h})^{H}.
                      }
\end{equation*}
\end{proposition}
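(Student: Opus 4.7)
The plan is to verify commutativity on a class $[E] \in K_{G}^{0}(G/H)$ represented by a single $G$-equivariant vector bundle $E$ (this suffices because $G/H$ is $G$-compact, so the $K$-cocycle morphism plays no role, and both $r_{*}$ and the Chern characters are additive). Let $\epsilon: \mathfrak{h} \to \End(E|_{eH})$ denote the differential of the $H$-representation on the fiber at $eH$. I will show that both composites in the square send $[E]$ to the function $X \mapsto \tr(\exp(\epsilon(X)))$ on $\mathfrak{h}$.

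For the clockwise route, $r_{*}[E] = [E|_{eH}] \in K_{H}^{0}(pt) \cong R(H)$, and over a point the unique connection is trivial, so the equivariant curvature reduces to $\mathcal{L}^{E|_{eH}}(X) = \epsilon(X)$, whence $\ch_{H}[E|_{eH}](X) = \tr(\exp(\epsilon(X)))$. For the counterclockwise route, fix a $G$-invariant connection $\nabla^{E}$ on $E$ (available by averaging a Hermitian metric, using properness of the action) and write the induced equivariant curvature as $\Omega_{\mathfrak{g}}(X) = \Omega + \mu(X)$ with $\Omega \in \mathcal{A}^{2}(G/H, \End(E))$ the ordinary curvature and $\mu(X) = \mathcal{L}^{E}(X) - \nabla^{E}_{X_{M}}$ the moment endomorphism. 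Then $\chg[E](X) = \tr(\exp(\Omega + \mu(X)))$, and applying $r_{*}$ from Definition~\ref{rh} amounts to extracting the form-degree-zero component of this and restricting to $eH$, which produces $\tr(\exp(\mu(X)|_{eH}))$.

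It remains to identify $\mu(X)|_{eH}$ with $\epsilon(X)$ for $X \in \mathfrak{h}$. Since $H$ is the stabilizer of $eH$, the vector field $X_{M}$ vanishes at $eH$ whenever $X \in \mathfrak{h}$; by $C^{\infty}$-linearity of the connection in the vector-field slot, $\nabla^{E}_{X_{M}}|_{eH} = 0$, so $\mu(X)|_{eH} = \mathcal{L}^{E}(X)|_{eH}$. A short computation using $(g \cdot s)(p) = g \cdot s(g^{-1} p)$ together with $\exp(-tX) \cdot eH = eH$ for $X \in \mathfrak{h}$ then identifies $\mathcal{L}^{E}(X)|_{eH}$ with $\epsilon(X)$ on $E|_{eH}$, completing the comparison. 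The only delicate point is the form-degree-zero extraction: although $\Omega$ and $\mu(X)$ need not commute as endomorphisms, in any monomial contributing to the expansion of $(\Omega + \mu(X))^{k}$ each factor of $\Omega$ carries form degree at least $2$, so the form-degree-zero part is exactly $\mu(X)^{k}$. Independence from the choice of $G$-invariant connection follows from the already-established well-definedness of the equivariant Chern character.
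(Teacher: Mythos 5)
Your proof is correct and follows essentially the same route as the paper's: both composites are computed on a class $[E]$ and shown to equal $X\mapsto\tr(\exp(\epsilon(X)))$, with the counterclockwise route passing through the form-degree-zero part $\tr(\exp(\mu(X)))$ of the equivariant Chern character and the identification $\mu(X)|_{eH}=\epsilon(X)$ for $X\in\mathfrak h$. You simply supply the details (vanishing of $X_{M}$ at $eH$, the degree-zero extraction argument) that the paper asserts without elaboration.
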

\begin{proof}
For $[E,\Tilde{\epsilon}]\in K_{G}^{0}(G/H)$ where $E$ is a $G$-equivariant vector bundle over $G/H$ and $\Tilde{\epsilon}:G\rightarrow\GL(E)$ is the representation associated to $E$, we have $r_{*}([E,\Tilde{\epsilon}])=[E|_{eH},\Tilde{\epsilon}|_{H}]$. Besides, for any $X\in \mathfrak{h}$ we have $\ch_{H}[E|_{eH},\Tilde{\epsilon}|_{H}](X)=\Tr(\exp(\epsilon|_{H}(X)))|_{eH}$ where $\epsilon|_{H}:\mathfrak{h}\rightarrow \mathfrak{gl}(E)$ is the differential of $\Tilde{\epsilon}|_{H}$. Thus, we have $\ch_{H}\circ r_{*}[E,\Tilde{\epsilon}](X)=\Tr(\exp(\epsilon|_{H}(X)))$ for any $X\in \mathfrak{h}$. On the other hand, we know that $\ch_{G}[E,\Tilde{\epsilon}](Y)=\Tr(\exp(\Omega(Y)))$ where $\Omega$ is a $G$-invariant curvature of $E$ and $Y\in \mathfrak{g}$. Since the $0$-degree component of $\Tr(\exp(\Omega(Y)))$ is $\Tr(\exp(\mu(Y)))$ where $\mu$ is the moment map, we have $r_{*}\circ\ch_{G}[E,\Tilde{\epsilon}](X)=\Tr(\exp(\mu(X)))$. Because $X\in \mathfrak{h}$, we have $\mu(X)=\epsilon|_{H}(X)$. Therefore, we have $r_{*}\circ\ch_{G}[E,\Tilde{\epsilon}](X)=\Tr(\exp(\epsilon|_{H}(X)))$. So far, we have completed the proof.
\end{proof}

Next, we compose the equivariant Chern character with the evaluation map:
\begin{definition}The $G$-invariant Chern character is given by:
\begin{align*}
    \ch_{G}^{0}:&K_{G}^{0}(M)\rightarrow H^{*}_{DR}(M)^{G}\\
    &[E,F,\sigma]\mapsto \ev_{*}\circ\ch_{G}[[E,F,\sigma]]
\end{align*}
where $M$ is a proper $G$-manifold and $ev_{*}$ is defined in Definition \ref{ev} and $\chg$ is defined in (\ref{ch1}).
\end{definition}
Then we get a special case of Theorem \ref{err} which is also the commutativity of the other part of the diagram in Theorem \ref{thm6.1}:
\begin{proposition}
The following diagram commutes:
\begin{equation*}
\label{thm 5.12}
\xymatrix{   K_{G}^{0}(V)\ar[r]^{\pi!}\ar[d]_{\ch_{G}^{0}\wedge{\hat{A}_{G}(V)}} &  K_{G}^{0}(M) \ar[d]^{(-2\pi i)^{n/2}\ch_{G}^{0}} \\
         H^{*}_{DR,cv}(V)^{G}\ar[r]^{\pi!}&H^{*}_{DR}(M)^{G}
                      }
\end{equation*}
where $V$ is a $G$-equivariant vector bundle over $M$ and $M$ satisfies conditions in Theorem~\ref{sk}.
\end{proposition}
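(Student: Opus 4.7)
The strategy is to derive this commutative diagram as a direct consequence of Theorem~\ref{err}, applied with the evaluation map $\ev_{*}$ of Definition~\ref{ev} on the cohomological side. Since $\ev_{*}[\alpha] = [\alpha(0)]$ amounts to substituting $X = 0 \in \mathfrak{g}$, the equivariant Chern character $\chg$ specializes to the $G$-invariant Chern character $\ch_{G}^{0}$ by its very definition, and the equivariant $\hat{A}$-class $\hat{A}_{\mathfrak{g}}^{-1}(V)(X)$ of Definition~\ref{Agenus} specializes at $X = 0$ to the $G$-invariant class written as $\hat{A}_{G}(V)$ in the proposition. The scalar factor $(-2\pi i)^{n/2}$ on the right is preserved verbatim under $\ev_{*}$.

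The first step is to verify that post-composition with $\ev_{*}$ carries both vertical arrows of the diagram in Theorem~\ref{err} to the vertical arrows of the proposition. This reduces to the fact that evaluation at $X = 0$ is an algebra homomorphism and commutes with the pullback $\pi^{*}$, both of which are immediate from the pointwise (in $X$) definition of $\ev_{*}$, so one obtains
\[
\ev_{*}\bigl(\pi^{*}(\hat{A}_{\mathfrak{g}}(V))^{-1}\wedge \chg([E,F,\sigma])\bigr) = \pi^{*}\hat{A}_{G}(V) \wedge \ch_{G}^{0}([E,F,\sigma])
\]
on the left, and similarly the right-hand vertical arrow becomes $(-2\pi i)^{n/2}\ch_{G}^{0}$.

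The second and more substantive step is the commutativity of the square
\[
\xymatrix{
H^{\infty}_{cv}(\mathfrak{g}, V) \ar[r]^{\pi!} \ar[d]_{\ev_{*}} & H^{\infty}(\mathfrak{g}, M) \ar[d]^{\ev_{*}} \\
H^{*}_{DR, cv}(V)^{G} \ar[r]^{\pi!} & H^{*}_{DR}(M)^{G},
}
\]
i.e.\ that fiber integration commutes with evaluation at $0$. This is a Fubini-type statement: for each fixed $X$, the equivariant form $\alpha(X) \in \mathcal{A}_{cv}(V)$ is compactly supported along the fibers and fiber integration is applied pointwise in $X$, so specializing $X$ to $0$ before or after integrating yields the same $G$-invariant form on $M$. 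The main obstacle, mild in character, is making this interchange rigorous at the cochain level and checking that $\ev_{*}$ descends to cohomology; but the latter follows from the identity $d_{\mathfrak{g}}(\gamma)(0) = d(\gamma(0))$ already exploited earlier in the paper, and the former from the smooth dependence of $\alpha(X)$ on $X$. Stacking this square with the $\ev_{*}$-image of the diagram of Theorem~\ref{err} then yields the asserted diagram and completes the proof.
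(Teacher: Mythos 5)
Your proposal is correct and follows exactly the route the paper intends: the paper states this proposition without proof as ``a special case of Theorem~\ref{err},'' i.e.\ obtained by post-composing the diagram of Theorem~\ref{err} with the evaluation map $\ev_{*}$ at $X=0$, which is precisely what you carry out (including the only nontrivial point, that $\ev_{*}$ commutes with fiber integration and descends to cohomology via $d_{\mathfrak{g}}(\gamma)(0)=d(\gamma(0))$). Your reading of $\hat{A}_{G}(V)$ as the evaluation at $0$ of the equivariant class from Definition~\ref{Agenus} matches the paper's own (slightly loose) notational conventions.
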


At the end of this section, we summarize the main result of the section by combining Proposition~\ref{prop 5.9}, Proposition~\ref{thm 5.10}, and Proposition~\ref{thm 5.12}:

\begin{proof}[Proof of Theorem \ref{thm6.1}]
From Proposition~\ref{thm 5.10} we have the commutative diagram:
\begin{equation*}
    \xymatrix{   K_{G}^{0}(G/H)\ar[r]^{r_{*}}\ar[d]_{\ch_{G}} &  K_{H}(pt) \ar[d]^{\ch_{H}} \\
         H^{\infty}(\mathfrak{g},G/H)\ar[r]^{r_{*}}&C^{\infty}(\mathfrak{h})^{H}.
                      }
\end{equation*}

First, we show that $W:C^{\infty}(\mathfrak{h})^{H}\rightarrow  H^{\infty}(\mathfrak{g},G/H)$ is the inverse of $r_{*}:H^{\infty}(\mathfrak{g},G/H)\rightarrow C^{\infty}(\mathfrak{h})^{H}$. Since $r_{*}$ is an isomorphism, just verifying $r_{*}\circ{W}=\Id$ is enough to prove our claim. For $\phi\in C^{\infty}(\mathfrak{h})^{H}$ and $X\in \mathfrak{g}$, we have $W(\phi)(X)=\phi(\Omega)(X)=\Sigma_{I}\frac{1}{|I|!}\Omega^{I}\partial_{I}(\phi)|_{\tilde{\mu}(X)}$ where $\Omega$ is the $G$-invariant curvature over the principle bundle $G$ and $\tilde{\mu}$ is the moment map associated with $\Omega$. Therefore, we have $r_{*}\circ W(\phi)(X)=r_{*}(\Sigma_{I}\frac{1}{|I|!}\Omega^{I}\partial_{I}(\phi)|_{\tilde{\mu}(X)})=\frac{1}{0!}\Omega^{0}\partial_{0}(\phi)|_{\tilde{\mu}(X)}=\phi(X)$ where $X\in \mathfrak{h}$.(Because $\tilde{\mu}(X)=X$ when $X\in \mathfrak{h}$.) Then we have proved $r_{*}\circ{W}=\Id$.

Because of this, we have: %(The arrow at the bottom is in the wrong direction)
\begin{equation*}
\xymatrix{   K_{G}^{0}(G/H)\ar[r]^{r_{*}}\ar[d]_{\ch_{G}} &  K_{H}^0(pt) \ar[d]^{\ch_{H}} \\
         H^{\infty}(\mathfrak{g},G/H)&C^{\infty}(\mathfrak{h})^{H} \ar[l]^{W}.
                      }
\end{equation*}
Then composing the  evaluation map with the equivariant Chern character map, we have: %(The arrow at the bottom is in the wrong direction)
\begin{equation*}
\xymatrix{   K_{G}^{0}(G/H)\ar[r]^{r_{*}}\ar[d]_{\ch_{G}^{0}} &  K_{H}^0 (pt) \ar[d]^{\ch_{H}} \\
         H^{*}_{DR}(G/H)^{G} &C^{\infty}(\mathfrak{h})^{H} \ar[l]^{\ev_{*}\circ W}.
                      }
\end{equation*}

Since $\ch=\ev_{*}\circ W\circ\ch_{H}$ (Proposition \ref{prop 5.9}), we have:
\begin{equation*}
\xymatrix{    K_{G}^{0}(G/H) \ar[r]^{r_{*}}\ar[d]_{\ch_{G}^{0}} &R(H)\ar[d]_{\ch} \\
          H^{*}_{DR}(G/H)^{G}\ar[r]^{Id}&H^{*}(\mathfrak{g},H).
                      }
\end{equation*}

Because of this and Proposition \ref{thm 5.12}, we have:
\begin{equation*}
\xymatrix{   K_{G}^{0}(T^{*}(G/H))\ar[r]^{\pi!}\ar[d]_{\ch_{G}^{0}\wedge\hat{A}_{G}(G/H)} &  K_{G}^0(G/H) \ar[r]^{r_{*}}\ar[d]_{(-2\pi i)^{n/2}\ch_{G}^{0}} &R(H)\ar[d]_{(-2\pi i)^{n/2}\ch} \\
         H^{*}_{DR,cv}(T^{*}(G/H))^{G}\ar[r]^{\pi!}& H^{*}_{DR}(G/H)^{G}\ar[r]^{Id}&H^{*}(\mathfrak{g},H).
                      }
\end{equation*}
Therefore, the proof is complete.
\end{proof}

\section{Local index formula and Getzler’s symbolic calculus}

In this section, we shall use the heat kernel approach to prove the following local index formula:
\begin{theorem}
\label{mthm2}
    For every $f\in C^{2q}_{v,anti}(M)^{G}$ (c.f. Definition \ref{def1}) and $\Ind_{t}(D)\in S^{u}_{G}(M,E)$ (c.f. Definition \ref{def2}), we have 
    $$\lim_{t\rightarrow0}\tau(f)(\Ind_{t}(D))=\frac{(-1)^{n/2-q}}{(2\pi i)^{2q-n/2}}\frac{q!}{(2q)!}\int_{M}c_{0}\Hat{{A}}_{G}(M)\wedge\ch_{G}^{0}(\pi!([\pi^{*}(E^{+}),\pi^{*}(E^{-}),\sigma(D)]))\wedge \omega_{f}$$
    where $$\omega_{f}:=(d_{1}...d_{2q}f)|_{\Delta}.
$$
Here $d_{i}$ stands for the differential concerning the $i$-th variable of the function $f$, $\dim(M)=n$($n$ is even) and $\Delta: M\rightarrow M^{\times (2q+1)}$ is the diagonal embedding.
\end{theorem}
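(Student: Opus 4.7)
The plan is to follow the heat-kernel/Getzler-rescaling strategy of Connes--Moscovici~\cite{ref4}, upgraded to handle the proper cocompact action of an almost connected Lie group rather than a free discrete one. The first step is to unfold the cyclic-type pairing $\tau(f)(\mathrm{Ind}_t(D))$ as a concrete integral on $M^{\times(2q+1)}$. Using the Wassermann-type representative of $\mathrm{Ind}_t(D)$ built out of the heat operator $e^{-tD^2}$, the pairing becomes a supertrace-weighted integral of a product of Schwartz kernels $k_t(x_i,x_{i+1})$ against the iterated differential $(d_1\cdots d_{2q}f)$; evaluating at the diagonal $\Delta$ after differentiation produces the form $\omega_f$ naturally. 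This reduces the theorem to analysing the $t\to 0$ behaviour of a local integral $\int_M c_0\,\mathrm{Str}\bigl(K_t(x,x)\bigr)\wedge\omega_f$, where $c_0$ is the cutoff function implementing the passage from $M$ to $M/G$ and $K_t$ is the appropriate heat kernel-type operator.

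The second step is to justify that the $t\to 0$ limit exists and commutes with the integration against $f$. Since $f$ is only $G$-invariant with anti-symmetry and suitable decay (not compactly supported), one cannot simply localise to a compact region as in the original Connes--Moscovici setup. To overcome this, I would establish uniform-in-$t$ off-diagonal Gaussian decay for $k_t$ on the complete manifold $M$, of the form $|k_t(x,y)|\lesssim t^{-n/2}\exp(-d(x,y)^2/(Ct))$, together with analogous derivative estimates; the model is the scalar-Laplacian argument of Cheeger--Gromov--Taylor~\cite{gromv}, adapted to the Dirac Laplacian $D^2$. Combined with the growth condition encoded in $C^{2q}_{v,anti}(M)^{G}$, these estimates suffice to apply dominated convergence and, moreover, localise the $t\to 0$ analysis to an arbitrarily small tubular neighbourhood of the diagonal.

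Having localised, the third step is the standard Getzler rescaling on a normal neighbourhood. Rescaling the Clifford generators, the spatial coordinates and the time variable in the usual way converts $tD^2$ into a generalised harmonic oscillator whose rescaled heat kernel admits the Mehler-type closed form. Taking $t\to 0$ extracts the top Clifford symbol, which in the supertrace yields precisely $\widehat{A}_G(M)\wedge\mathrm{ch}_G^{0}(\pi!([\pi^{*}E^{+},\pi^{*}E^{-},\sigma(D)]))$ by the equivariant Riemann-Roch formula of Theorem~\ref{err}. Multiplying by the cutoff $c_0$ gives the integrand over $M$. The combinatorial prefactor $\tfrac{(-1)^{n/2-q}}{(2\pi i)^{2q-n/2}}\tfrac{q!}{(2q)!}$ then arises by bookkeeping: the factor $q!/(2q)!$ comes from the ordering of the $2q$ kernel factors and the iterated differential, the powers of $2\pi i$ from normalising the curvature in the Chern character, and the sign from the Clifford-supertrace conventions together with the parity of $n/2-q$.

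The principal difficulty is the second step. In the Connes--Moscovici setting, compact support in $f$ reduces everything to a compact manifold where heat-kernel estimates are standard; here neither $M$ nor the support of the cochain is compact, and one must instead exploit the $G$-equivariance of both $D$ and $f$ together with fine Schwartz-kernel bounds on $e^{-tD^2}$, uniform as $t\downarrow 0$. Once these estimates are in place, the algebraic identification with the topological side reduces to the Getzler computation combined with Theorem~\ref{err}, and the remaining work is the combinatorial bookkeeping for the prefactor.
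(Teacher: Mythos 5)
Your proposal follows essentially the same route as the paper: unfold the pairing as an integral of products of Schwartz kernels, establish off-diagonal decay estimates in the spirit of Cheeger--Gromov--Taylor to localize despite the non-compactness of $M$ and of $\supp f$, and then run Getzler's rescaling as in Connes--Moscovici. One concrete caveat: the uniform Gaussian bound $|k_t(x,y)|\lesssim t^{-n/2}\exp(-d(x,y)^2/(Ct))$ you posit in Step~2 is not available for the full Wassermann representative. Its off-diagonal entries involve $g_t(D)$ with $g_t(x)=e^{-t^2x^2/2}\bigl((1-e^{-t^2x^2})/x^2\bigr)^{1/2}x$, and $g$ extends holomorphically only to a strip (the square root has branch points at $z^2=2\pi i k$), so its Fourier transform decays only exponentially; finite propagation speed then yields only $\|K(x,y)\|\le Ct^{-N}\exp(-w\,d(x,y)/(100t))$, which is what the paper proves. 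This weaker bound still dominates the fixed-rate exponential growth of $f\in C^{2q}_{v}(M)^G$ once $t$ is small, so your localization step survives after replacing the Gaussian estimate by the exponential one; the paper implements this localization by inserting nested cutoffs $c_0,\dots,c_{2q}$ and showing the remainder vanishes as $t\to0$ (its Theorem~\ref{rc}), which also supplies the compact support needed for the Getzler trace formula. Finally, the paper identifies the local density directly via $\Str(e^{Q})=\ch_G^{0}(\pi!([\pi^*E^+,\pi^*E^-,\sigma(D)]))$ rather than by invoking Theorem~\ref{err}; your appeal to the Riemann--Roch theorem there is harmless but not needed.
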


  The local index formula of this form was proved first by Connes and Moscovici in \cite{ref4}. In their case, the group is assumed to be discrete and the action is free. When the group is a Lie group having finitely many components and when the action is proper and cocompact, Piazza and Posthuma give a brief proof to a similar index formula in \cite{pp2} as Theorem \ref{mthm2}. Their proof is based on the results of Moscovici and Wu \cite{wu}. However, in this paper we use the index class represented by the Wassermann projector as in \cite{ref4}, which is not the idempotent of finite propagation speed representing the index class, from which Moscovici and Wu obtained the higher index formula \cite{wu}. Our proof of Theorem~\ref{mthm2} then follows by adapting the original proof of the localized index formula in \cite{ref4}.

  \subsection{Pairing between $C^{*}_{v}(M)^{G}$ and $S^{u}_{G}(M,E)$}In this subsection, we will define $C^{*}_{v}(M)^{G}$ and $S^{u}_{G}(M,E)$ and their pairing.

   Choosing an complete invariant Riemannian metric, we denote the associated distance function by $d(x,y)$ for $x, y \in  M$. Let the Riemannian volume form associated with the invariant complete Riemannian metric be denoted by $dx$ and let $d\mu(x)$ be the  Riemannian densities induced by $dx$ i.e. $d\mu(x)=|dx|$. Because the Riemannian metric is $G$-invariant, $d\mu(x)$ is also $G$-invariant.

\begin{definition}
\label{def1}
For $v\geq 0$ and $q\in \mathbb{N}^{+}$, let $C_{v}^{q}(M)^{G}$ be the vector space consisting of $G$-equivariant smoothing functions $\psi$ from $M^{q+1}$ to $\mathbb{C}$ satisfying $$\sup(|\psi(x^{0},...,x^{q})\cdot \exp(-v\cdot(\sum_{i=0}^{q}d(x_{i},z_{0})))|)< +\infty$$
and let a coboundary homomorphism $d:C_{v}^{q}(M)^{G}\rightarrow{C_{v}^{q+1}(M)^{G}}$ be given by 
$$d(\psi)(x^{0},...,x^{p+1}):=\sum_{i=0}^{q+1}(-1)^{i}\psi(x^{0},...,x^{i-1},x^{i+1},...,x^{q+1}),$$
where $\psi\in C_{v}^{q}(M)^{G}$. The corresponding cohomology is denoted by $H_{v}^{*}(M)^{G}$. 
\end{definition}
\begin{remark}
     Under the notation above, let 
 \begin{align*}
     C^{q}_{v,\lambda}(M)^{G}:=\{&\psi\in C^{q}_{v}(M)^{G}|\\   
     &\psi(x_{0},...,x_{q})=(-1)^{q}\psi(x_{q},x_{0},...,x_{q-1}),\text{ for all }x_{0},...,x_{q}\in M\}
 \end{align*}
 and
  \begin{align*}
     C^{q}_{\diff,anti}(M)^{G}:=\{&\psi\in C^{q}_{v}(M)^{G}|\\
     &\psi(x_{0},...,g_{k})=\sign(\sigma)\psi(x_{\sigma(0)},x_{\sigma(1)},...,x_{\sigma(q)}),\\
     &\text{ for all }x_{0},...,x_{q}\in M \text{ and } \sigma\in S_{q+1}\}.
     \end{align*}
\end{remark}
\begin{definition}
     Let $E$ be a $G$-bundle over $M$. For every $x,y\in M$, we have $k(x,y)\in \Hom(E_{y},E_{x})$, and it carries the matrix norm, i.e., 
     %选择E_{x}与E_{y}处的两组标准正交基，使得\Hom(E_{x},E_{y})表示为M_{n}(\mathbb{C}).
     $\|k(x,y)\|=\|k(x,y)\|_{M_{n}(\mathbb{C})}$.%||k(x,y)||_{M_{n}(\mathbb{C})}是指矩阵范数。
\end{definition}

\begin{definition}
\label{def2}
For $u\geq 0$, define $S^{u}_{G}(M;E)$ to be
    \begin{align*}
\{k\in C^{\infty}(M\times M,E\boxtimes E^{*})^{G}|&\sup_{x,y\in M}||\exp{(u\cdot d(x,y))}\nabla_{x}^{m}\nabla_{y}^{n}k(x,y)||<C_{m,n}\\
& \text{ for all }m,n\in \mathbb{N}\}.
\end{align*}
\end{definition}
Since  $M$ is a $C^{\infty}$-manifold acted properly and cocompactly by an almost-connected Lie group $G$, there exist $M_{1}$ and $r_{M}>0$, such that for any $x\in M$ and $r\leq 0$, we have $\vol (B_{r}(x))\leq M_{1}e^{r_{M}\cdot r}$,  where $\vol(B_{r}(x))$ is the volume of $B_{r}(x)$. Thus, we can define a pairing between $S^{u}_{G}(M;E)$ and $ C_{v}^{q}(M)^{G}$ by choosing suitable $u$ and $v$:
\begin{proposition}
Let $A_{0},...,A_{q}\in S^{u}_{G}(M;E)$.  For each $\psi\in C_{v}^{q}(M)^{G}$ we denote:
$$\tau(\psi)(A_{0},...,A_{q}):=\int_{M^{q+1}}c(x_{0})\psi(x_{0},...,x_{q})\tr(A_{0}(x_{0},x_{1})A_{1}(x_{1},x_{2})...A_{q}(x_{q},x_{0}))d\mu(x_{0})...d\mu(x_{q})$$
and when $A_{0}=A_{1}=...=A_{q}$, we denote $\tau(\psi)(A_{0},...,A_{q})$ by $\tau(\psi)(A_{0})$.
We claim that if $qv-u<-r_{m}$, then $\tau(\psi)(A_{0},...,A_{q})$ is well defined.
\end{proposition}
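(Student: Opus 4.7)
The plan is to establish absolute convergence of the $(q{+}1)$-fold integral via a pointwise exponential bound on the integrand, followed by an iterated integration that exploits the volume growth hypothesis $\vol(B_{r}(x)) \leq M_{1} e^{r_{M} r}$. The condition $qv - u < -r_{M}$ will be exactly what is required so that the net exponential decay rate per integration variable strictly exceeds the volume growth rate.

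First I would bound the trace factor using the defining estimates of $S^{u}_{G}(M;E)$: with the cyclic convention $x_{q+1} := x_{0}$, submultiplicativity of the matrix norm gives
$$|\tr(A_{0}(x_{0},x_{1})A_{1}(x_{1},x_{2})\cdots A_{q}(x_{q},x_{0}))| \leq C \exp\!\left(-u \sum_{j=0}^{q} d(x_{j}, x_{j+1})\right).$$
Next, since the cutoff function $c$ has compact support $K$, set $D := \sup_{x\in K} d(x, z_{0}) < \infty$. For $i = 1, \ldots, q$, the triangle inequality applied to the path $x_{0} \to x_{1} \to \cdots \to x_{i}$ yields
$$d(x_{i}, z_{0}) \leq D + d(x_{i}, x_{0}) \leq D + \sum_{j=0}^{q} d(x_{j}, x_{j+1}),$$
so $\sum_{i=0}^{q} d(x_{i}, z_{0}) \leq (q+1)D + q \sum_{j=0}^{q} d(x_{j}, x_{j+1})$. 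Combined with the estimate $|\psi| \leq C_{\psi} \exp(v \sum_{i} d(x_{i}, z_{0}))$ coming from $\psi \in C^{q}_{v}(M)^{G}$, this produces the key pointwise bound
$$c(x_{0})\,|\psi|\,|\tr(A_{0}\cdots A_{q})| \leq C' \mathbf{1}_{K}(x_{0})\, \exp\!\left((qv - u) \sum_{j=0}^{q} d(x_{j}, x_{j+1})\right).$$

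Setting $\alpha := u - qv$, the hypothesis reads $\alpha > r_{M}$. Dropping the cyclic-closing factor $e^{-\alpha d(x_{q}, x_{0})} \leq 1$ and applying Fubini reduces matters to the bound
$$\int_{M^{q+1}} \mathbf{1}_{K}(x_{0}) \prod_{j=0}^{q-1} e^{-\alpha d(x_{j}, x_{j+1})}\, d\mu(x_{0})\cdots d\mu(x_{q}) \leq \vol(K) \cdot \left(\sup_{a\in M} \int_{M} e^{-\alpha d(a,y)}\, d\mu(y)\right)^{\!q}.$$
The inner supremum is finite by a standard co-area/integration-by-parts argument:
$$\int_{M} e^{-\alpha d(a,y)}\, d\mu(y) = \alpha \int_{0}^{\infty} e^{-\alpha r}\, \vol(B_{r}(a))\, dr \leq \alpha M_{1} \int_{0}^{\infty} e^{-(\alpha - r_{M}) r}\, dr = \frac{\alpha M_{1}}{\alpha - r_{M}} < \infty,$$
using precisely $\alpha > r_{M}$. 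Hence the full integrand is absolutely integrable, and $\tau(\psi)(A_{0},\ldots,A_{q})$ is well-defined.

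The only nontrivial bookkeeping is the triangle-inequality estimate $\sum_{i} d(x_{i}, z_{0}) \leq (q{+}1)D + q \sum_{j} d(x_{j}, x_{j+1})$. The factor $q$ in front of the cyclic sum is what degrades the operator decay rate from $u$ to $u - qv$ in the pointwise bound; this is the structural reason the hypothesis involves $qv - u$ rather than simply $v - u$.
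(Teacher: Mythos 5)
Your proof is correct and follows essentially the same route as the paper's: bound the trace factor by $\exp(-u\sum_j d(x_j,x_{j+1}))$, use the triangle inequality and the compact support of $c$ to convert $v\sum_i d(x_i,z_0)$ into $qv\sum_j d(x_j,x_{j+1})$ plus a bounded term, and invoke $qv-u<-r_M$ against the exponential volume growth. In fact you carry out the final convergence step (iterated integration plus the layer-cake estimate $\int_M e^{-\alpha d(a,y)}\,d\mu(y)=\alpha\int_0^\infty e^{-\alpha r}\vol(B_r(a))\,dr<\infty$ for $\alpha>r_M$) more carefully than the paper, which simply asserts finiteness after reaching the pointwise bound.
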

\begin{proof}
    We show that $\tau(\psi)$ is well defined. 
    Because $A_{i}\in S^{u}_{G}(M;E)$ and $\psi\in C_{v}^{q}(M)^{G}$, we have $||A_{i}(x,y)||\leq \exp(-u\cdot(d(x,y)))$ and $|\psi(x^{0},...,x^{q})|\leq \exp(v\cdot(\sum_{i=0}^{q}d(x_{i},z_{0})))$. Thus, 
\begin{align*}
    &|\tau(\psi)(A_{0},...,A_{q})|\\
    &=|\int_{M^{q+1}}c(x_{0})\psi(x_{0},...,x_{q})\tr(A_{0}(x_{0},x_{1})A_{1}(x_{1},x_{2})...A_{q}(x_{q},x_{0}))d\mu(x_{0})...d\mu(x_{q})|\\
    &\leq \int_{M^{q+1}}c(x_{0})\exp(v\cdot(\sum_{i=0}^{q}d(x_{i},z_{0}))))\cdot\exp(-u(\cdot d(x_{0},x_{1})+...+d(x_{q},x_{0})))d\mu(x_{0})...d\mu(x_{q})\\
&\leq\int_{M^{q+1}}c(x_{0})\exp(nv\cdot d(x_{0},z_{0})-r_{m}\cdot(d(x_{0},x_{1})+...+d(x_{q},x_{0})))d\mu(x_{0})...d\mu(x_{q})\\
    &<+\infty.
\end{align*} 
\end{proof}

\subsection{Schwartz kernel estimates for smoothing operators}
Let $D:C^{\infty}_{c}(M,E)\rightarrow C^{\infty}_{c}(M,E)$ be the $G$-equivariant Dirac operator. Following Proposition 2.10 in \cite{roe}, for any $f\in \mathcal{S}(\mathbb{R})$, $f(D): L^{2}(M, E)\rightarrow L^{2}(M, E)$ is a bounded operator with a smooth Schwartz kernel. In this subsection, we will estimate the Schwartz kernel for such $f(D)$.
\begin{definition}
    Let $E$ be a $G$-bundle over $M$. For any $s\in L^{2}(M,E)$, we denote:
    $$||s||_{0,B_{r}(x)}:=\int_{B_{r}(x)}|s(y)|^{2}d\mu(y)
    $$
    where $|s(y)|^{2}=(s(y),s(y))$.
\end{definition}

To estimate the Schwartz kernel for $f(D)$, we need the following two lemmas:
\begin{lemma}
\label{c1}
    Let $E$ be a $G$-bundle over $M$, and $D:C^{\infty}_{c}(M,E)\rightarrow C^{\infty}_{c}(M,E)$ be the $G$-equivariant Dirac operator . There exist $C>0$ and $N\in\mathbb{N}^{+}$ depending on $n=\dim E$, such that for any $0<r\leq 1$ and $x\in M$, we have:
    $$|s(x)|\leq \frac{C}{r^{N}}\left(\sum_{p=0}^{n}||D^{p}s||_{0,B_{r}(x)}\right).
    $$
    \end{lemma}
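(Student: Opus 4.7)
The plan is to prove this as a standard combination of Sobolev embedding, elliptic regularity for the Dirac operator, and a scaling argument, with uniformity across $M$ coming from $G$-cocompactness.

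First I would establish the estimate at unit scale. Since $M/G$ is compact, there is a finite collection of $G$-translates of coordinate charts covering $M$, on each of which the bundle $E$ is trivialized, and on each of which $D$ is an elliptic first-order operator with uniformly controlled coefficients. Therefore, by local elliptic regularity (G\aa{}rding's inequality) combined with Sobolev embedding $H^{k}\hookrightarrow C^{0}$ for $k>\dim(M)/2$, one obtains a uniform unit-scale estimate of the form
\begin{equation*}
|s(x_{0})|\leq C_{0}\sum_{p=0}^{k}\|D^{p}s\|_{0,B_{1}(x_{0})}
\end{equation*}
for all $x_0\in M$ and all smooth compactly supported $s$, where $k$ is a fixed integer depending only on $\dim(M)$ and the rank of $E$, and where $C_{0}$ is independent of $x_0$ because the action is cocompact and $D$ is $G$-equivariant.

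Next I would run a scaling argument to produce the power $r^{-N}$. Fix $x\in M$ and work in normal (exponential) coordinates centered at $x$, which makes sense on $B_{r}(x)$ for $0<r\leq 1$ (shrinking the injectivity-radius type constant if necessary, again uniformly in $x$ by $G$-invariance). Define the rescaled section $\tilde s(y):=s(ry)$ on the unit ball, and note that $\partial_{y}=r\,\partial_{x}$ so that $\tilde D^{p}\tilde s(y)=r^{p}(D^{p}s)(ry)$ up to lower-order corrections coming from the metric expansion that are uniformly bounded in $x$ and $r\leq 1$. The Jacobian change of variables gives $\|\tilde D^{p}\tilde s\|_{0,B_{1}(0)}\leq C\, r^{p-\dim(M)/2}\|D^{p}s\|_{0,B_{r}(x)}$. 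Applying the unit-scale estimate to $\tilde s$ at the origin and using $r\leq 1$ to absorb the factors $r^{p}$ into the worst factor $r^{-\dim(M)/2}$, one obtains
\begin{equation*}
|s(x)|=|\tilde s(0)|\leq C_{0}\sum_{p=0}^{k}\|\tilde D^{p}\tilde s\|_{0,B_{1}(0)}\leq \frac{C}{r^{N}}\sum_{p=0}^{k}\|D^{p}s\|_{0,B_{r}(x)},
\end{equation*}
with $N=\dim(M)/2$ (or the next integer, depending on the convention used for Sobolev embedding).

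The main technical point is making sure the constants $C_0$, $C$ and the radius of validity of normal coordinates can be chosen independently of the basepoint $x\in M$. This is precisely where the $G$-equivariance of $D$ and the $G$-cocompactness of $M$ enter: it suffices to establish the estimate on a compact fundamental domain, and then the $G$-invariance of the Riemannian distance and measure, together with the $G$-equivariance of $D$ and $E$, transports the estimate uniformly to all of $M$. Tracking the indices, one sees that taking $k=n$ (in the notation of the statement) is enough once the constants $C$ and $N$ are chosen appropriately depending on $\dim(M)$ and the geometric bounds coming from the fundamental domain, which completes the proof.
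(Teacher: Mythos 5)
Your proposal is correct in outline, but it replaces the paper's key mechanism with a genuinely different one. Both arguments start from the same unit-scale estimate $|s(x_{0})|\leq C_{0}\sum_{p}\|D^{p}s\|_{0,B_{1}(x_{0})}$ obtained from elliptic regularity plus Sobolev embedding, with uniformity in $x_{0}$ coming from cocompactness. To pass from the unit ball to $B_{r}(x_{0})$, the paper does \emph{not} rescale: it multiplies $s$ by an explicit cutoff $f=\exp(-\tfrac{r}{r-d(x,x_{0})^{2}})$ supported in $B_{r}(x_{0})$, applies the unit-scale estimate to $fs$, and then proves by induction on $m$ the commutator bound $\|D^{m}(\psi s)\|_{0,B_{1}(x_{0})}\leq C_{m}\,w_{m}(\psi)\sum_{i=0}^{m}\|D^{i}s\|_{0,B_{r}(x_{0})}$, using that $[D,\psi]$ is Clifford multiplication by $d\psi$; the factor $r^{-N}$ then comes from the derivative bounds $w_{k}(f)\leq C_{1}r^{-k}$ on the cutoff. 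You instead generate $r^{-N}$ by dilation in normal coordinates, where it arises from the Jacobian factor $r^{p-\dim(M)/2}$ in the change of variables. Your route avoids the commutator induction entirely, at the cost of needing (i) a uniform lower bound on the injectivity radius (and a separate, trivial treatment of $r$ between that bound and $1$), and (ii) uniform ellipticity constants for the whole family of rescaled operators $\tilde D_{r}$ as $r\to 0$ — both standard, but you should say explicitly that the rescaled operators converge in $C^{\infty}$ on the unit ball to the flat model, uniformly over a fundamental domain, so that G\aa rding's inequality holds with $r$-independent constants. The paper's cutoff argument buys independence from normal coordinates and from any rescaled family; your argument buys a cleaner bookkeeping of where the power $N$ comes from. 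Neither has a gap that would invalidate the lemma.
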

    \begin{lemma}
    \label{c2}
      For any $k\in C^{\infty}(M\times M,E\boxtimes E^{*})^{G}$, $y_{0}\in M$ and $r>0$, we have:
      $$\frac{1}{n}||k(x,y)||_{0,B_{r}(y_{0})}\leq \sup_{s=1,~\supp(s)\subset B_{r}(y_{0})}|\int_{B_{r}(y_{0})}k(x,y)s(y)dy|.$$
    \end{lemma}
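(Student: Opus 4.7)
The plan is to recognize the right-hand side as the operator norm of an integral operator with kernel $k(x,\cdot)$ and then exploit the fact that the target fibre $E_{x}$ has dimension $n$, which forces this operator to have rank at most $n$. It is this finite-rank constraint that will produce the factor $1/n$.

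First, I would fix $x\in M$ and introduce the bounded linear map
$$T_{x}\colon L^{2}(B_{r}(y_{0}),E)\longrightarrow E_{x},\qquad T_{x}(s):=\int_{B_{r}(y_{0})}k(x,y)\,s(y)\,d\mu(y).$$
By definition, the supremum on the right-hand side of the desired inequality is precisely the operator norm $\|T_{x}\|_{op}$, so the task reduces to controlling $\|k(x,\cdot)\|_{0,B_{r}(y_{0})}$ by $\|T_{x}\|_{op}$.

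The key is to dualise using the fibre structure. I would choose an orthonormal basis $\{e_{1},\dots,e_{n}\}$ of $E_{x}$ and define sections $s_{i}(y):=k(x,y)^{*}e_{i}\in E_{y}$ on $B_{r}(y_{0})$. Since $|k(x,y)|^{2}$ is the Hilbert--Schmidt norm squared of $k(x,y)\in \Hom(E_{y},E_{x})$, one has the pointwise identity
$$|k(x,y)|^{2}=\sum_{i=1}^{n}|k(x,y)^{*}e_{i}|^{2}_{E_{y}},$$
hence $\|k(x,\cdot)\|_{0,B_{r}(y_{0})}=\sum_{i=1}^{n}\|s_{i}\|_{L^{2}}^{2}$. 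The fibrewise adjoint relation $\langle k(x,y)^{*}e_{i},s_{i}(y)\rangle_{E_{y}}=\langle e_{i},k(x,y)s_{i}(y)\rangle_{E_{x}}$, integrated over $B_{r}(y_{0})$, gives $\|s_{i}\|_{L^{2}}^{2}=\langle e_{i},T_{x}(s_{i})\rangle_{E_{x}}$.

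A single Cauchy--Schwarz step in $E_{x}$ then yields $\|s_{i}\|_{L^{2}}^{2}\leq |T_{x}(s_{i})|\leq \|T_{x}\|_{op}\cdot\|s_{i}\|_{L^{2}}$, hence $\|s_{i}\|_{L^{2}}\leq \|T_{x}\|_{op}$ for each $i$; summing over $i=1,\dots,n$ delivers $\|k(x,\cdot)\|_{0,B_{r}(y_{0})}\leq n\,\|T_{x}\|_{op}^{2}$, which is equivalent to the claimed bound. The underlying estimate is the classical inequality $\|T\|_{HS}^{2}\leq (\rank T)\|T\|_{op}^{2}$ specialised to the rank-$n$ operator $T_{x}$. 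The argument is purely linear-algebraic and I see no real obstacle; the only point requiring care is bookkeeping the conventions so that the Hilbert--Schmidt-type quantity on the left and the operator-type quantity on the right are correctly matched against the factor of $n=\dim E$.
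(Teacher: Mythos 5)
Your argument is correct, and it is a genuinely different route from the paper's. The paper fixes $x$, takes for each $y$ a pointwise maximizer $u(y)\in E_{y}$ of the operator norm of $k(x,y)$, writes the resulting unit output vector $\tilde{u}(y)\in E_{x}$ in a basis, and pigeonholes over its $n$ coordinates to find one index $i_{0}$ carrying at least $\tfrac{1}{n}$ of $\int\|k(x,y)\|^{2}dy$; the test section is then the sign-corrected $\hat{u}=\sgn(\tilde{u}_{i_{0}})u$. Your proof instead dualizes: the sections $s_{i}=k(x,\cdot)^{*}e_{i}$ are exactly $T_{x}^{*}e_{i}$, and your chain $\|s_{i}\|_{L^{2}}^{2}=\langle e_{i},T_{x}s_{i}\rangle\le\|T_{x}\|_{op}\|s_{i}\|_{L^{2}}$ is the standard proof of $\|T_{x}\|_{HS}^{2}\le n\|T_{x}\|_{op}^{2}$ for an operator with $n$-dimensional range. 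This buys two things: a better constant, namely $\bigl(\int\|k(x,y)\|^{2}dy\bigr)^{1/2}\le\sqrt{n}\,\|T_{x}\|_{op}$ rather than the paper's factor $n$ (and since the pointwise Hilbert--Schmidt norm dominates the matrix/operator norm used in the paper's Definition, your inequality implies the stated one a fortiori); and manifestly smooth test sections, avoiding the paper's somewhat delicate claim that the pointwise maximizer $u(y)$ can be chosen continuously and its discontinuous $\sgn$ correction. Two small bookkeeping points: the paper's $\|\cdot\|_{0,B_{r}(y_{0})}$ is defined without a square root but used with one (your final display should be read in the square-root convention, where your bound is strictly stronger than, not ``equivalent to,'' the claimed one), and, as in the paper's own proof, one should note that the supremum is effectively over $L^{2}$ sections supported in $B_{r}(y_{0})$ (or approximate your $s_{i}$ by cutoffs), since $s_{i}$ restricted to the ball need not extend smoothly by zero.
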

They have both been proved in Appendix A.
\begin{theorem}
\label{g1}
Let $D$ be the equivariant Dirac operator. There exist $C>0$ and $N\in\mathbb{N}^{+}$, so that for any distinct $x,y\in M$ and  $f\in S(\mathbb{R})$, we have:
    $$||K_{f(D)}(x,y)||\leq \frac{C}{d(x,y)^{N}}\sum_{k=0}^{n+n^2}\int_{|s|\geq \frac{d(x,y)}{100}}|\hat{f}^{(k)}(s)|ds.$$
Here $n=\dim E.$    
\end{theorem}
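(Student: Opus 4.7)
\textit{Plan.} Set $R=d(x,y)$. The argument combines three ingredients: (a) the Fourier representation $f(D)=(2\pi)^{-1}\int_{\mathbb R}\hat f(s)e^{isD}\,ds$ from the spectral calculus of the self-adjoint operator $D$; (b) finite propagation speed for the wave group $e^{isD}$, which gives $K_{e^{isD}}(x,y)=0$ whenever $d(x,y)>|s|$; and (c) the Sobolev-type estimates Lemmas~\ref{c1}--\ref{c2}, applied iteratively in both variables, following the spirit of Cheeger--Gromov--Taylor.

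Introduce a smooth cutoff $\chi(s)\in[0,1]$ with $\chi=0$ on $|s|\le R/100$ and $\chi=1$ on $|s|\ge R/50$. By finite propagation speed, replacing $\hat f$ by $\chi\hat f$ in the Fourier representation does not change $K_{f(D)}(x,y)$: the $(1-\chi)\hat f$ piece produces a kernel supported in $\{d(z,w)\le R/50\}$ and so vanishes at our fixed pair. Fix a Sobolev radius $r\sim R$ (small), and apply Lemma~\ref{c1} in the first variable to bound
\[
|K_{f(D)}(x,y)v|\le \frac{C}{r^{N}}\sum_{p=0}^{n}\|D_{x}^{p}K_{f(D)}(\cdot,y)v\|_{L^{2}(B_{r}(x))}
\]
for any unit $v\in E_{y}$. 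By the analogue of Lemma~\ref{c2} for the first variable (via duality, taking the transpose kernel), each $L^{2}$-norm is controlled by $n\sup_{\phi}|(D^{p}f(D))^{*}\phi(y)|$, where $\phi$ runs over unit $L^{2}$-sections supported in $B_{r}(x)$; since $D$ is self-adjoint and $\bar f\in\mathcal S(\mathbb R)$, $(D^{p}f(D))^{*}\phi=D^{p}\bar f(D)\phi$ is a smooth section in $y$.

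Next apply Lemma~\ref{c1} in the $y$-variable to $|D^{p}\bar f(D)\phi(y)|$, producing $L^{2}(B_{r}(y))$ norms of $g_{p,q}(D)\phi$ with $g_{p,q}(\lambda)=\lambda^{p+q}\bar f(\lambda)\in\mathcal S(\mathbb R)$. Using the Fourier representation of $g_{p,q}(D)$, the Fourier--derivative correspondence $|\hat g_{p,q}(s)|=|\hat f^{(p+q)}(\pm s)|$, finite propagation speed (restricting the $s$-integral to $|s|\ge R-2r\ge R/100$ once $\phi$ is supported in $B_{r}(x)$), and the $L^{2}$-unitarity $\|e^{isD}\phi\|_{L^{2}}\le 1$, we arrive at
\[
\|g_{p,q}(D)\phi\|_{L^{2}(B_{r}(y))}\le C\int_{|s|\ge R/100}|\hat f^{(p+q)}(s)|\,ds.
\]
Chaining the two Sobolev estimates with $r\sim R$ yields the claimed bound, and the upper limit $k\le n+n^{2}$ on the derivative index follows from careful bookkeeping of the fiber-dimension factors that enter Lemma~\ref{c1} when one applies it successively to sections of $E$ (fiber dimension $n$) and to the kernel itself, which takes values in $E\boxtimes E^{*}$ (fiber dimension $n^{2}$).

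The main obstacle is the middle step: the kernel sections $K_{D^{p}f(D)}(\cdot,y)v$ need not be globally $L^{2}$ on the noncompact manifold $M$, so operator-norm bounds do not apply directly; Lemma~\ref{c2} is essential to localize to balls and dualize against unit $L^{2}$ test sections. A further delicate point is carrying the Clifford/vector-bundle structure through the iterated differentiation of the kernel, and absorbing the contributions arising from derivatives of the cutoff $\chi$ (supported in $\{R/100\le|s|\le R/50\}$) into terms of the same form as in the final bound.
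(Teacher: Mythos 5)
Your proposal follows essentially the same route as the paper's proof: iterate the local Sobolev estimate (Lemma~\ref{c1}) in each variable together with the duality estimate (Lemma~\ref{c2}) to reduce the pointwise kernel bound to localized operator norms of the form $\|\chi_{B_{r}(y)}D^{p+k}f(D)\chi_{B_{r}(x)}\|$, and then control these by the Fourier representation of $f(D)$ combined with unit propagation speed and the unitarity of $e^{isD}$. The order in which you treat the two variables and the $n+n^{2}$ bookkeeping (fiber dimension $n$ for sections of $E$, fiber dimension $n^{2}$ for the kernel valued in $\Hom(E,E_{x})$) both match the paper.

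Two points need attention. First, your uniform choice of Sobolev radius $r\sim R=d(x,y)$ is incompatible with Lemma~\ref{c1}, which is stated (and proved, via the compactly supported bump $\exp(-r/(r-d(x,x_{0})^{2}))$ and the cocompactness argument) only for $0<r\le 1$; for $d(x,y)$ large your $r$ leaves that range and the constants are no longer controlled. The paper resolves this by splitting into the cases $d(x,y)\le 1$ (take $r=d(x,y)/100$) and $d(x,y)\ge 1$ (fix $r=1/100$); in the second case finite propagation still restricts the $s$-integral to $|s|\ge d(x,y)-2/100\ge d(x,y)/100$, so the stated bound survives with the harmless prefactor $C/(1/100)^{N}$. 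You should add this case split. Second, the smooth Fourier cutoff $\chi(s)$ you introduce at the outset is superfluous and is the source of the ``derivatives of the cutoff'' difficulty you flag but do not resolve: since you in any case apply finite propagation speed directly to $\chi_{B_{r}(y)}e^{isD}\chi_{B_{r}(x)}$ at the final stage (which already annihilates the contribution of $|s|<R/100$ exactly, with no error terms), the cutoff can simply be deleted, as in the paper. With those two repairs the argument is complete.
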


\begin{proof}
    Let $x_{0},y_{0}\in M$ where $x_{0}\neq y_{0}$. First we consider the case when $d(x_{0},y_{0})\leq 1$. Let $r=\frac{d(x_0,y_0)}{100}$. 

    For any $x_{0}\in M$, let $\Hom(E,E_{x_0})=E_{x_{0}}\otimes{E^{*}}$ be a $G$-equivariant bundle over $M$ and $D_{y}:\Gamma(M,\Hom(E,E_{x_0}))\rightarrow \Gamma(M,\Hom(E,E_{x_0}))$ be the $G$-equivariant Dirac operator. Thus, $k(x_{0},y)$ can be viewed as a smooth section of $\Hom(E,E_{x_0})$ for any $k(x,y)\in \Gamma(M\times M, E\boxtimes E^{*})$.

    It follows from Lemma \ref{c1} that exist $C_{1}>0$ and $N_{1}\in \mathbb{N}^{+} $ such that 
    $$||K_{f(D)}(x_{0},y_{0})||\leq \frac{C_{1}}{r^{N_{1}}}\left(\sum_{p=0}^{n^2}||D_{y}^{p}K_{f(D)}(x_{0},y)||_{0,B_{r}(y_{0})}\right). $$
  %  So we have $||K_{f(D)}(x_{0},y_{0})||\leq \frac{C_{1}}{r^{N_{1}}}\left(\sum_{p=0}^{n^2}||D_{y}^{p}K_{f(D)}(x_{0},y)||_{0,B_{r}(y_{0})}\right)$.
  The numbers $C_{1}$ and $N_{1}$ are independent of $x_{0}$ and $y_{0}$ because $\Hom(E, E_{x})\cong \Hom(E, E_{y})$ for any $x,y\in M$ and the bundle isomorphism can be chosen to be an isometry.
    It follows from Lemma \ref{c2} that
    \begin{align*}
        ||D_{y}^{p}K_{f(D)}(x_{0},y)||_{0,B_{r}(y_{0})}\leq n^{2}\sup_{||s||=1, \supp(s)\subset B_{r}(y_{0})}|D^{p}_{y}f(D)s(x_{0})|.\\
    \end{align*}
    Here $D^{p}_{y}f(D)$ is a linear operator defined as follows:
    \begin{align*}
        &D^{p}_{y}f(D): C^{\infty}_{c}(M,E)\rightarrow \Gamma(M,E)\\
        &s\mapsto \int_{M}D_{y}^{p}K_{f(D)}(x,y)s(y)d\mu(y).
    \end{align*}
     We need to show that $D^{p}_{y}f(D)s\in \Gamma(M,E)$. Indeed, we have $D^{p}_{y}f(D)=f(D)(D^{*})^{p}$. Since for  any $s\in C^{\infty}_{c}(M,E)$ and $x\in M$, without loss of generality, let $k_{f(D)}(x,y)=f(x,y)u(x)\otimes v(y)^{*}\in \Gamma(M\times M, E\boxtimes E^{*})$ where $u(x), v(y)\in \Gamma(M,E)$ and $f(x,y)\in C^{\infty}(M\times M)$, we have:
    \begin{align*}
        \int_{M}D_{y}(f(x,y)u(x)\otimes v(y)^{*})s(y)d\mu(y)&=\int_{M}u(x)\otimes (D(f(x,y)v)(y))^{*}s(y)d\mu(y)\\
        &=\int_{M}u(x)(D(f(x,y)v)(y),s(y))d\mu(y)\\
        &=\int_{M}u(x)(f(x,y)v(y),D^{*}s(y))d\mu(y)\\
        &=\int_{M}f(x,y)u(x)\otimes{v(y)^{*}}(D^{*}(s))(y)d\mu(y).
    \end{align*}

    Thus, we have $D^{p}_{y}K_{f(D)}(x,y)=K_{f(D)(D^{*})^{p}}(x,y)$ for any $x,y\in M$. Because of this, we have $D^{p}_{y}f(D)=f(D)(D^{*})^{p}$.
    
    Then applying Lemma \ref{c1} again, there exist $C_{2}>0$ and $N_{2}\in\mathbb{N}^{+}$ such that
    $$|D^{p}_{y}f(D)s(x_{0})|\leq \frac{C_{2}}{r^{N_{2}}}
\left(\sum_{k=0}^{n}||D^{k}D_{y}^{p}f(D)s||_{0,B_{r}(x_{0})}\right).
    $$
    Then, we have 
    $$||K_{f(D)}(x_{0},y_{0})||\leq \frac{C_{1}C_{2}}{r^{N_{1}+N_{2}}}\left(\sum_{0\le k\le n, 0\le p\le n^{2}}
    \sup_{||s||=1, \supp(s)\subset B_{r}(y_{0})}||D^{k}D_{y}^{p}f(D)s||_{0,B_{r}(x_{0})}\right).
    $$
    For a subset $A$ of $M$, let $\chi_{A}:M\rightarrow [0,1]$ be the indicator function of $A$. Thus, we have 
   $$\sup_{||s||=1, \supp(s)\subset B_{r}(y_{0})}||D^{k}D_{y}^{p}f(D)s||_{0,B_{r}(x_{0})}=||\chi_{B_{r}(x_{0})}D^{k}D_{y}^{p}f(D)\chi_{B_{r}(y_{0})}||_{0}.$$
   Note that 
   \begin{align*}
       &||\chi_{B_{r}(x_{0})}D^{k}D_{y}^{p}f(D)\chi_{B_{r}(y_{0})}||_{0}\\
       &=\sup_{||s_{1}||=1, ||s_{2}||=1}|\int_{M}(\chi_{B_{r}(x_{0})}D^{k}D_{y}^{p}f(D)\chi_{B_{r}(y_{0})}s_{1}(x),s_{2}(x))d\mu(x)|\\
       &=\sup_{||s_{1}||=1, ||s_{2}||=1}|\int_{M}(\chi_{B_{r}(x_{0})}D_{y}^{p}D^{k}f(D)\chi_{B_{r}(y_{0})}s_{1}(x),s_{2}(x))d\mu(x)|\\
       &=\sup_{||s_{1}||=1, ||s_{2}||=1}|\int_{M}(s_{1}(x),\chi_{B_{r}(y_{0})}D^{p}(D^{*})^{k}f(D^{*})\chi_{B_{r}(x_{0})}s_{2}(x))d\mu(x)|\\
       &=||\chi_{B_{r}(y_{0})}D^{p}(D^{*})^{k}f(D^{*})\chi_{B_{r}(x_{0})}||_{0}
   \end{align*}
and because $M$ is complete and $D$ is essentially self-adjoint, we have:
$$||\chi_{B_{r}(y_{0})}D^{p}(D^{*})^{k}f(D^{*})\chi_{B_{r}(x_{0})}||_{0}=||\chi_{B_{r}(y_{0})}D^{p}D^{k}f(D)\chi_{B_{r}(x_{0})}||_{0}.$$

 Then, we have:
   \begin{align*}
       ||K_{f(D)}(x_{0},y_{0})||&\leq \frac{C_{1}C_{2}}{r^{N_{1}+N_{2}}}(\sum_{0\leq k\leq n, 0\leq p\leq n^{2}}
    \sup_{||s||=1, \supp(s)\subset B_{r}(y_{0})}||D^{k}D_{y}^{p}f(D)s||_{0,B_{r}(x_{0})})\\
    &=\frac{C_{1}C_{2}}{r^{N_{1}+N_{2}}}(\sum_{0\leq k\leq n, 0\leq p\leq n^{2}}||\chi_{B_{r}(y_{0})}D^{p}(D^{*})^{k}f(D^{*})\chi_{B_{r}(x_{0})}||_{0})\\
    &=\frac{C_{1}C_{2}}{r^{N_{1}+N_{2}}}\left(\sum_{0\leq k\leq n, 0\leq p\leq n^{2}}||\chi_{B_{r}(y_{0})}D^{p+k}f(D)\chi_{B_{r}(x_{0})}||_{0}\right).
   \end{align*}
   Next, we will prove:
   $$||\chi_{B_{r}(y_{0})}D^{p+k}f(D)\chi_{B_{r}(x_{0})}||_{0}\leq \frac{1}{2\pi} \int_{|s|>r}|\hat{f}^{(p+k)}|(s)ds.$$

Since $D$ has unit propagation speed, for any $u\in L^{2}(M,E)$ with $\supp(u)\subset B_{r}(x_{0})$, we have $\supp(e^{isD}u)\subset B_{r+|s|}(x_{0})$. Thus, when $|s|<r$, for any $u\in L^{2}(M,E)$ with $\supp(u)\subset B_{r}(x_{0})$ and $y\in B_{r}(y_{0})$ we have $e^{isD}(u)(y)=0$.($r=\frac{d(x_{0},y_{0})}{100})$.

Then for any $u\in L^{2}(M,E)$, $\supp(u)\subset B_{r}(x_{0})$ and $y\in B_{r}(y_{0})$, we have:
\begin{align*}
    D^{p}D^{k}f(D)(u)(y)&=\left(\frac{(i)^{p+k}}{2\pi}\int_{\R}e^{isD}\hat{f}^{(p+k)}(s)ds\right)(u)(y)\\
    &=\left(\frac{(i)^{p+k}}{2\pi}\int_{|s|>r}e^{isD}\hat{f}^{(p+k)}(s)ds\right)(u)(y).
\end{align*}
Therefore, we have \[
\chi_{B_{r}(y_{0})}D^{p}D^{k}f(D)\chi_{B_{r}(x_{0})}=\chi_{B_{r}(y_{0})}\left(\frac{(i)^{p+k}}{2\pi}\int_{|s|>r}e^{isD}\hat{f}^{(p+k)}(s)ds\right)\chi(B_{r}(x_{0})).
\]

Then we have:
\begin{align*}
    &||\chi_{B_{r}(y_{0})}D^{p}D^{k}f(D)\chi_{B_{r}(x_{0})}||_{0}\\
    &=||\chi_{B_{r}(y_{0})}\frac{(i)^{p+k}}{2\pi}(\int_{|s|>r}e^{isD}\hat{f}^{(p+k)}(s)ds)\chi_{B_{r}(x_{0})}||_{0}\\
    &\leq ||\chi_{B_{r}(y_{0})}||_{0}||(\frac{1}{2\pi}\int_{|s|>r}e^{isD}\hat{f}^{(p+k)}(s)ds)||_{0}||\chi_{B_{r}(x_{0})}||_{0}\\
    &\leq \frac{1}{2\pi}\int_{|s|>r}|\hat{f}^{(p+k)}|(s)ds.
\end{align*}
Therefore, for any $x,y\in M$, $x\neq y$ satisfying $d(x,y)\leq 1$, we obtain:
    $$||K_{f(D)}(x,y)||\leq \frac{C_{1}C_{2}}{(\frac{d(x,y)}{100})^{N_{1}+N_{2}}}\sum_{k=0}^{n+n^2}\int_{|s|\geq \frac{d(x,y)}{100}}|\hat{f}^{(k)}(s)|ds.$$
    Next, consider the case when  $d(x,y)\geq 1$.

    When $d(x,y)\geq 1$, we fix $r=\frac{1}{100}$. Similarly, we have:
    $$||K_{f(D)}(x,y)||\leq \frac{C_{1}C_{2}}{(\frac{1}{100})^{N_{1}+N_{2}}}\left(\sum_{0\leq k\leq n, 0\leq k\leq n^{2}}||\chi_{B_{\frac{1}{100}}(y)}D^{p+k}f(D)\chi_{B_{\frac{1}{100}}(x)}||_{0}\right).
    $$
    Finally, we prove 
    $$||\chi_{B_{\frac{1}{100}}(y)}D^{p+k}f(D)\chi_{B_{\frac{1}{100}}(x)}||_{0}\leq\int_{|s|>\frac{d(x,y)}{100}}|\hat{f}^{(p+k)}(s)|ds.$$

    Since $d(x,y)\geq 1$,  we have $B_{\frac{1}{100}+\frac{d(x,y)}{100}}(x)\subseteq B_{\frac{2d(x,y)}{100}}(x)\cap  B_{\frac{1}{100}}(y)=\emptyset$.

    Then we have: when $|s|<\frac{d(x,y)}{100}$, for any $u\in L^{2}(M,E), \supp(u)\subset B_{\frac{1}{100}}(x)$ and $z\in B_{\frac{1}{100}}(y)$ we have $e^{isD}(u)(z)=0$. 

    Therefore, for any $u\in L^{2}(M,E), \supp(u)\subset  B_{\frac{1}{100}}(x)$ and $z\in  B_{\frac{1}{100}}(y)$, we have:
\begin{align*}
    D^{p}D^{k}f(D)(u)(z)&=\left(\frac{(i)^{p+k}}{2\pi}\int_{\R}e^{isD}\hat{f}^{(p+k)}(s)ds\right)(u)(z)\\
    &=\left(\frac{(i)^{p+k}}{2\pi}\int_{|s|>\frac{d(x,y)}{100}}e^{isD}\hat{f}^{(p+k)}(s)ds\right)(u)(z).
\end{align*}
    Similarly, we have  
    $$||\chi_{B_{\frac{1}{100}}(y)}D^{p+k}f(D)\chi_{B_{\frac{1}{100}}(x)}||_{0}\leq\frac{1}{2\pi}\int_{|s|>\frac{d(x,y)}{100}}|\hat{f}^{(p+k)}(s)|ds,$$ 
    which completes the proof.
\end{proof}

When $x=y$, Theorem \ref{g1} still holds because the right hand side is infinite. In the next theorem, we have a finer estimate when $x=y.$
 The proof is similar (we just need to fix $r=1$ in Theorem \ref{g1}). 
\begin{theorem}
\label{g2}
    Let $D$ be the equivariant Dirac operator. There exists $C>0$ , such that for any $x,y\in M$ and $f\in S(\R)$, we have:
    $$||K_{f(D)}(x,y)||\leq C\sum_{k=0}^{n+n^2}\int_{\mathbb{R}}|\hat{f}^{(k)}(s)|ds.$$
\end{theorem}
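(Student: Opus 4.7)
The plan is to mimic the proof of Theorem~\ref{g1} line by line, but to fix the cutoff radius at $r=1$ independently of $x$ and $y$. Doing so removes the polynomial factor $r^{-(N_1+N_2)}$ that caused the $d(x,y)^{-N}$ blowup, and it also removes the need to exploit finite propagation speed in order to restrict the $s$-integration to $|s| \geq d(x,y)/100$.

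Concretely, I would first fix arbitrary $x_0, y_0 \in M$ and apply Lemma~\ref{c1} in the $y$-variable to the section $y \mapsto K_{f(D)}(x_0, y)$ of $\Hom(E, E_{x_0})$ on the ball $B_1(y_0)$. This yields constants $C_1 > 0$ and $N_1 \in \mathbb{N}^+$ depending only on $n$ such that
\[
\|K_{f(D)}(x_0,y_0)\| \leq C_1 \sum_{p=0}^{n^2} \|D_y^p K_{f(D)}(x_0,\cdot)\|_{0, B_1(y_0)},
\]
with $C_1, N_1$ independent of $x_0, y_0$ thanks to the $G$-equivariance of $E$ and the completeness of $M$. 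Then Lemma~\ref{c2} bounds each $\|D_y^p K_{f(D)}(x_0,\cdot)\|_{0,B_1(y_0)}$ by $n^2 \sup_{\|s\|=1,\, \supp(s)\subset B_1(y_0)} |D_y^p f(D)s(x_0)|$, and exactly as in the proof of Theorem~\ref{g1} we rewrite $D_y^p f(D) = f(D)(D^*)^p$. A second application of Lemma~\ref{c1}, now in the $x$-variable on $B_1(x_0)$, and essential self-adjointness of $D$ combine to give
\[
\|K_{f(D)}(x_0,y_0)\| \leq C_1 C_2 \sum_{0 \leq k \leq n,\, 0 \leq p \leq n^2} \|\chi_{B_1(y_0)} D^{p+k} f(D) \chi_{B_1(x_0)}\|_0.
\]

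The final step is where the argument genuinely simplifies compared to Theorem~\ref{g1}. Using the Fourier inversion formula $D^{p+k} f(D) = \frac{i^{p+k}}{2\pi} \int_{\mathbb{R}} e^{isD} \hat{f}^{(p+k)}(s)\, ds$ together with $\|e^{isD}\|_0 = 1$ and $\|\chi_{B_1(x_0)}\|_0 = \|\chi_{B_1(y_0)}\|_0 = 1$, we obtain
\[
\|\chi_{B_1(y_0)} D^{p+k} f(D) \chi_{B_1(x_0)}\|_0 \leq \frac{1}{2\pi} \int_{\mathbb{R}} |\hat{f}^{(p+k)}(s)|\, ds.
\]
Summing over the finitely many pairs $(k,p)$ with $0 \leq k \leq n$ and $0 \leq p \leq n^2$, and absorbing $C_1 C_2$ and $(2\pi)^{-1}$ into a single constant $C > 0$, yields the desired estimate.

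There is no real obstacle here beyond bookkeeping: the proof is a straightforward specialization of that of Theorem~\ref{g1} obtained by the single substitution $r=1$. The mild point to verify carefully is that the constants $C_1$, $N_1$, $C_2$, $N_2$ provided by Lemma~\ref{c1} can be chosen independently of the base points $x_0$, $y_0$, which uses that the bundle $E$ is $G$-equivariant and that the $G$-action on $M$ is proper and cocompact so that the local Sobolev estimates are uniform in the base point.
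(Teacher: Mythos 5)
Your proposal is correct and is exactly the argument the paper intends: the paper itself only remarks that the proof of Theorem~\ref{g2} is obtained from that of Theorem~\ref{g1} by fixing $r=1$, which is precisely the substitution you carry out, with the consequence that the finite-propagation-speed localization of the $s$-integral is lost and the integral runs over all of $\mathbb{R}$. The details you supply (two applications of Lemma~\ref{c1}, Lemma~\ref{c2}, the identity $D_y^p f(D)=f(D)(D^*)^p$, essential self-adjointness, and the Fourier inversion bound with $\|e^{isD}\|_0=1$) match the paper's proof of Theorem~\ref{g1} step for step.
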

\begin{remark}
    Note that the integral is over $\mathbb{R}$. This is because we have fixed $r=1$ and the method of finite propagation speed cannot be applied.
\end{remark}

For any $f\in S(\mathbb{R})$ and $t>0$, define $f_{t}(x):=f(tx)$. Then we have $f_{t}\in S(\mathbb{R})$, and $\hat{f_{t}}(s)=t^{-1}\hat{f}(t^{-1}s)$ for any $s\in \mathbb{R}$. Then we have:
\begin{theorem}
\label{g3}
    Suppose $f\in S(\R)$ satisfies the following condition: There exist $M\geq0$ and $w>0$, such that for every $k\in \mathbb{N}$ where $0\leq k \leq n+n^2$ and $r\geq 0$, one has ${\int_{|s|\geq r}|\hat{f}^{(k)}(s)|ds}\leq M\exp(-wr)$. Then there exist $C\geq 0$ and $N\in \mathbb{N}^{+}$ such that for any $0<t\leq 1$ and $x,y\in M$, we have:
    $$||K_{f_{t}(D)}(x,y)||\leq \frac{C}{t^{N}}\exp\left(-w\frac{d(x,y)}{100t}\right).$$
\end{theorem}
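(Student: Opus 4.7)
The plan is to combine the two Schwartz kernel estimates Theorem \ref{g1} and Theorem \ref{g2} with the elementary Fourier scaling for $f_t$, and then split into two regimes according to the relative sizes of $d(x,y)$ and $t$.

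First, I would establish the scaling identity for the derivatives of the Fourier transform. Since $f_t(x)=f(tx)$, differentiating the given rule $\hat{f_t}(s)=t^{-1}\hat{f}(t^{-1}s)$ yields $\hat{f_t}^{(k)}(s)=t^{-1-k}\hat{f}^{(k)}(t^{-1}s)$. The substitution $u=t^{-1}s$ then produces, for every $a\geq 0$ and every $0\leq k\leq n+n^{2}$,
$$\int_{|s|\geq a}|\hat{f_t}^{(k)}(s)|\,ds \;=\; t^{-k}\int_{|u|\geq a/t}|\hat{f}^{(k)}(u)|\,du \;\leq\; M\,t^{-k}\,e^{-wa/t},$$
by the hypothesis imposed on $f$.

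Next, I would split the estimate into two regimes. In the regime $d(x,y)\geq t$, apply Theorem~\ref{g1} to $f_t$ with $a=d(x,y)/100$, and combine with the scaling identity to obtain
$$\|K_{f_t(D)}(x,y)\| \;\leq\; \frac{C_1}{d(x,y)^{N_1}}\sum_{k=0}^{n+n^{2}} M\,t^{-k}\,e^{-wd(x,y)/(100t)} \;\leq\; \frac{C_2}{t^{N_1+n+n^{2}}}\,e^{-wd(x,y)/(100t)},$$
where $N_1$ is the integer furnished by Theorem~\ref{g1} and the last step uses $d(x,y)^{-N_1}\leq t^{-N_1}$. In the complementary regime $d(x,y)<t$ (which includes the on-diagonal case $x=y$), the singular factor $d(x,y)^{-N_1}$ in Theorem~\ref{g1} destroys the estimate, so I would instead invoke Theorem~\ref{g2} for $f_t$ and use the scaling identity with $a=0$ to produce the uniform bound $\|K_{f_t(D)}(x,y)\|\leq C_3\,M\,t^{-(n+n^{2})}$. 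Since $d(x,y)/(100t)<1/100$ in this regime, one has $e^{-wd(x,y)/(100t)}\geq e^{-w/100}$, so this uniform bound can be absorbed into a bound of the desired shape at the cost of the harmless multiplicative constant $e^{w/100}$, while the constraint $t\leq 1$ lets me pass from $t^{-(n+n^{2})}$ to $t^{-(N_1+n+n^{2})}$.

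Setting $N:=N_1+n+n^{2}$ and taking $C$ to be the larger of the constants produced by the two regimes then yields the asserted estimate uniformly in $x,y\in M$ and $0<t\leq 1$. There is no genuine analytic obstacle beyond what Theorems~\ref{g1} and~\ref{g2} already supply: the argument is essentially bookkeeping of the Fourier scaling under $f\mapsto f_t$ together with the routine gluing of the two regimes in which the singular and regular portions of Theorem~\ref{g1} are respectively controlled.
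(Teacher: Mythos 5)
Your proposal is correct and follows essentially the same route as the paper's proof: rescale the Fourier estimates via $\hat{f_t}^{(k)}(s)=t^{-1-k}\hat{f}^{(k)}(t^{-1}s)$, apply Theorem~\ref{g1} in the regime $d(x,y)\geq t$ (bounding $d(x,y)^{-N_1}$ by $t^{-N_1}$) and Theorem~\ref{g2} in the regime $d(x,y)<t$ (absorbing the constant $e^{w/100}$), then take the maximum of the resulting constants. Your bookkeeping of the powers of $t$ is in fact slightly cleaner than the paper's, but the argument is the same.
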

\begin{proof}
    Because of Theorem \ref{g1}, there exist $C_{1}, C_{2}\geq 0$ and $N_{1}, N_{2}\in \mathbb{N}$ such that:
    \begin{align*}
     ||K_{f_{t}(D)}(x,y)||&\leq \frac{C_{1}}{d(x,y)^{N_{1}}}\sum_{k=0}^{n+n^2}{\int_{|s|\geq \frac{d(x,y)}{100}}|\hat{f_{t}}^{(k)}(s)|ds}\\
     &\leq\frac{C_{1}}{d(x,y)^{N_{1}}}\sum_{k=0}^{n+n^2}\left(t^{-(k+1)}\int_{|s|\geq \frac{d(x,y)}{100t}}|\hat{f^{(k)}}(s)|ds\right)\\
        &\leq\frac{C_{2}}{d(x,y)^{N_{1}}t^{N_{2}}}\exp\left(-w\frac{d(x,y)}{100t}\right)
       \end{align*}
    for any $0<t\leq 1$ and $x,y\in M$.

    Similarly by Theorem \ref{g2},  there exist $C_{3}, C_{4}\geq 0$ and $N_{3}\in \mathbb{N}$ such that:
    \begin{align*}
        ||K_{f_{t}(D)}(x,y)||&\leq C_{3}\sum_{k=0}^{n+n^2}\int_{\mathbb{R}}|\hat{f_{t}}^{(k)}(s)|ds\\
        &\leq C_{3}\sum_{k=0}^{n+n^2}\left(t^{-(k+1)}\int_{\mathbb{R}}|\hat{f}^{(k)}(s)|ds\right)\\
        &\leq C_{4}t^{-N_{3}}.
         \end{align*}
        When $d(x,y)\geq t$, we have:
         \begin{align*}
             ||K_{f_{t}(D)}(x,y)|&\leq\frac{C_{2}}{d(x,y)^{N_{1}}t^{N_{2}}}\exp\left(-w\frac{d(x,y)}{100t}\right)\\
             &\leq \frac{C_{2}}{t^{N_{2}+N_{1}}}\exp\left(-w\frac{d(x,y)}{100t}\right).
         \end{align*}
         When $d(x,y)\leq t$, we have:
         \begin{align*}
             ||K_{f_{t}(D)}(x,y)||&\leq C_{4}t^{-N_{3}}\\
             &\leq C_{4}\exp(\frac{w}{100})t^{-N_{3}}\exp\left(-w\frac{d(x,y)}{100t}\right).
         \end{align*}
    Then we can choose  suitable constant $C$ and $N$, such that:\\
    $$\frac{C_{2}}{t^{N_{2}+N_{1}}}\exp\left(-w\frac{d(x,y)}{100t}\right)\leq  \frac{C}{t^{N}}\exp\left(-w\frac{d(x,y)}{100t}\right),$$ and\\
    $$ C_{4}\exp(\frac{w}{100})t^{-N_{3}}\exp\left(-w\frac{d(x,y)}{100t}\right)\leq \frac{C}{t^{N}}\exp\left(-w\frac{d(x,y)}{100t}\right).$$ 
    Then the proof is complete.
\end{proof}
Let $D$ be the equivariant Dirac operator on a section of $E$ over $M$. The higher index $\Ind_{t}(D)$ is represented by:
\begin{align*}
    \Ind_{t}(D)&=W(t)-\begin{pmatrix}
0&0\\
0&\Id
\end{pmatrix}\\
&=\begin{pmatrix}
e^{-t^{2}D^{-}D^{+}}&e^{-\frac{t^{2}}{2}D^{-}D^{+}}(\frac{1-e^{-t^{2}D^{-}D^{+}}}{D^{-}D^{+}})^{\frac{1}{2}}D^{-}\\
e^{-\frac{t^{2}}{2}D^{+}D^{-}}(\frac{1-e^{-t^{2}D^{+}D^{-}}}{D^{+}D^{-}})^{\frac{1}{2}}D^{+}&-e^{-t^{2}D^{+}D^{-}}
\end{pmatrix}
\end{align*}
and we split it into $\gamma f_{t}(D)+g_{t}(D)$ where $f_{t}(x)=e^{-t^{2}x^{2}}$, $g_{t}(x)=e^{-\frac{t^{2}}{2}x^{2}}(\frac{1-e^{-t^{2}x^{2}}}{x^2})^{\frac{1}{2}}x$ and $\gamma$ is the $\mathbb{Z}_{2}$-grading operator.

We claim that: 
\begin{proposition}
    For any $t>0$, $f_{t}(x)=e^{-t^{2}x^{2}}$, $g_{t}(x)=e^{-\frac{t^{2}}{2}x^{2}}(\frac{1-e^{-t^{2}x^{2}}}{x^2})^{\frac{1}{2}}x$ satisfy the conditions of Theorem \ref{g3}.
\end{proposition}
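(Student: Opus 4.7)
The plan is to verify the Fourier-decay hypothesis for $f_t$ and $g_t$ separately; the Gaussian $f_t$ is straightforward, while $g_t$ calls for a Paley--Wiener contour shift after a holomorphic extension.

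For $f_t(x)=e^{-t^2x^2}$, I will compute the Fourier transform explicitly as $\hat{f_t}(s)=(\sqrt{\pi}/t)\,e^{-s^2/(4t^2)}$, so each derivative $\hat{f_t}^{(k)}$ is a Hermite polynomial of degree $k$ in $s$ times the same Gaussian. Standard Gaussian tail estimates then yield $\int_{|s|\geq r}|\hat{f_t}^{(k)}(s)|\,ds \leq M_k(t)\,e^{-r^2/(5t^2)}$, and since $e^{-r^2/(5t^2)}\leq C\,e^{-wr}$ for any preassigned $w>0$ (with $C$ depending on $t$ and $w$), this dominates the desired exponential bound uniformly for $0\leq k\leq n+n^2$.

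For $g_t$, the approach begins with the observation that $(1-e^{-y})/y$ extends to an entire function $u(y)$ with $u(0)=1$, so $g_t(x)=t\sqrt{u(t^2x^2)}\,e^{-t^2x^2/2}\,x$ is smooth, odd, and Schwartz on $\R$. The central step is to extend $g_t$ to a holomorphic function on a horizontal strip. Since $g_t(z)^2=e^{-t^2z^2}-e^{-2t^2z^2}$ is entire and its zero set consists of a double zero at $z=0$ together with the simple zeros where $t^2z^2=2\pi i k$ for $k\in \Z\setminus\{0\}$, the non-real zeros nearest to $\R$ sit at $z=\pm(1\pm i)\sqrt{\pi}/t$. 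One can therefore pull back a single-valued holomorphic square root to the open strip $\Sigma_t=\{z:|\operatorname{Im}(z)|<\sqrt{\pi}/t\}$, pinned down by the requirement that it agree with the real $g_t$ on $\R$; the double zero at the origin poses no branch obstruction because $g_t(z)^2/z^2$ is nonvanishing and analytic there. On any closed substrip $|\operatorname{Im}(z)|\leq a<\sqrt{\pi}/t$, the estimate $|e^{-t^2z^2}|\leq e^{t^2a^2}e^{-t^2x^2}$ yields $|g_t(z)|\leq C_a\,e^{-t^2x^2/2}$ uniformly in $z=x+iy$.

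Given holomorphy and Gaussian decay in the strip, the functions $(-iz)^k g_t(z)$ share the same properties, so shifting the Fourier integration contour to $\R+iy$ with $y$ opposite in sign to $s$ and $|y|=a$ produces $|\hat{g_t}^{(k)}(s)|\leq C_{a,k}\,e^{-a|s|}$ for every $a<\sqrt{\pi}/t$ and $0\leq k\leq n+n^2$. Integrating gives $\int_{|s|\geq r}|\hat{g_t}^{(k)}(s)|\,ds\leq (2C_{a,k}/a)\,e^{-ar}$, and taking the maximum of $C_{a,k}$ over the finite range of $k$ supplies constants $M$ and $w=a$ satisfying the hypothesis of Theorem~\ref{g3}. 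I expect the main obstacle to be the holomorphic extension: one must verify globally on $\Sigma_t$ that the branch of the square root chosen along $\R$ extends single-valuedly across the double zero at the origin and remains well-defined up to the branch points at $\pm(1\pm i)\sqrt{\pi}/t$, which is a local complex-analytic check that has to be executed with care.
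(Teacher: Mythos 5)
Your proposal is correct and follows essentially the same route as the paper: the Gaussian case is immediate, and for $g_t$ both arguments reduce to producing a holomorphic extension with Gaussian decay on a horizontal strip and then shifting the Fourier contour (the paper's Lemma~\ref{a1}). The only difference is cosmetic: you justify the extension by locating the zeros of the entire function $g_t(z)^2=e^{-t^2z^2}-e^{-2t^2z^2}$ and taking a single-valued square root on the simply connected strip $|\operatorname{Im}z|<\sqrt{\pi}/t$, whereas the paper writes $g=e^{-z^2/2}\,w(u(z))\,z$ with $w$ the principal square root and checks that $u(z)=(1-e^{-z^2})/z^2$ avoids the cut $(-\infty,0]$ on a (possibly narrower) strip.
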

We give a proof of this proposition in Appendix A (see Lemma \ref{a1} and \ref{a2}). Because of this, we have:
\begin{theorem}
\label{g3}
    Let $f_{t}(x)=e^{-t^{2}x^{2}}$, $g_{t}(x)=e^{-\frac{t^{2}}{2}x^{2}}(\frac{1-e^{-t^{2}X^{2}}}{x^2})^{\frac{1}{2}}x$ and $\gamma$ be the $\mathbb{Z}_{2}$-grading operator. Then there exist $C\geq 0$ and $N\in \mathbb{N}$ such that for any $0<t\leq 1$ and $x,y\in M$, we have:
    $$||K_{\gamma f_{t}(D)+g_{t}(D)}(x,y)||\leq \frac{C}{t^{N}}\exp(-w\frac{d(x,y)}{100t}).$$
\end{theorem}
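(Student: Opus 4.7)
The plan is to deduce this theorem as a direct consequence of the earlier kernel estimate (the previous Theorem~\ref{g3}, which takes a general Schwartz function $f$ satisfying a decay hypothesis on $\int_{|s|\ge r}|\hat f^{(k)}(s)|\,ds$) together with the Proposition just above that asserts that $f_t(x)=e^{-t^2x^2}$ and $g_t(x)=e^{-t^2x^2/2}\bigl(\tfrac{1-e^{-t^2x^2}}{x^2}\bigr)^{1/2}x$ satisfy the required hypothesis. So the theorem is essentially an assembly step rather than a new analytic input.

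Concretely, I would first apply the preceding Theorem~\ref{g3} to the function $f_t$. By the Proposition there exist constants $M_f,w_f>0$ (independent of $t$ in the appropriate sense) so that $\int_{|s|\ge r}|\hat{f}_t^{(k)}(s)|\,ds\le M_f e^{-w_f r}$ for $0\le k\le n+n^2$, and the conclusion gives constants $C_f$, $N_f$ with
\[
\|K_{f_t(D)}(x,y)\|\le \frac{C_f}{t^{N_f}}\exp\!\Bigl(-w_f\frac{d(x,y)}{100t}\Bigr).
\]
The second application is to $g_t$ in exactly the same way, producing constants $C_g$, $N_g$, $w_g$.  Because $\gamma$ is a pointwise bundle endomorphism (the $\Z_2$-grading operator) with operator norm $1$, we have $K_{\gamma f_t(D)}(x,y)=\gamma\cdot K_{f_t(D)}(x,y)$, so $\|K_{\gamma f_t(D)}(x,y)\|\le \|K_{f_t(D)}(x,y)\|$.

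The remaining step is a bookkeeping one: by the triangle inequality applied to the kernels,
\[
\|K_{\gamma f_t(D)+g_t(D)}(x,y)\|\le \|K_{f_t(D)}(x,y)\|+\|K_{g_t(D)}(x,y)\|,
\]
and I set $w:=\min(w_f,w_g)$, $N:=\max(N_f,N_g)$, and $C:=C_f+C_g$ (absorbing the $t^{-N_f},t^{-N_g}$ factors into $t^{-N}$ using $0<t\le 1$, and weakening both exponentials to the slower decay rate $w$). This yields the claimed bound.

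The only nontrivial ingredient is the decay of $\int_{|s|\ge r}|\hat f_t^{(k)}(s)|\,ds$ and $\int_{|s|\ge r}|\hat g_t^{(k)}(s)|\,ds$ used to invoke Theorem~\ref{g3}; this is exactly the content of the cited Proposition, whose verification is deferred to Lemmas~\ref{a1} and~\ref{a2} in Appendix~A. For $f_t$ this is a transparent Gaussian computation since $\hat f_t(s)=\tfrac{\sqrt\pi}{t}e^{-s^2/(4t^2)}$, so the main technical obstacle — and the one the appendix is designed to handle — is the analogous exponential tail control for $\hat g_t$ and its derivatives, given that $g_t$ is not a pure Gaussian but a Gaussian times an analytic function of $t^2x^2$. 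Once those lemmas are granted, the proof reduces, as above, to a triangle inequality and a choice of the worst constants.
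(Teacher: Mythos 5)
Your proposal is correct and follows essentially the same route as the paper: the paper also obtains this statement by invoking the preceding general kernel estimate for $f_t$ and $g_t$ separately (with the decay hypothesis on $\int_{|s|\ge r}|\hat f^{(k)}(s)|\,ds$ verified in Appendix~A, Lemmas~\ref{a1} and~\ref{a2}) and then combining, treating $\gamma$ as a norm-one bundle endomorphism. The only cosmetic point is that the hypothesis of the general theorem is placed on the unscaled functions $f=f_1$, $g=g_1$ (the $t$-scaling being handled inside that theorem), so the decay bound should be stated for $\hat f^{(k)}$ rather than $\hat f_t^{(k)}$; otherwise your assembly via the triangle inequality and worst-case constants is exactly what the paper leaves implicit.
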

\begin{remark}
    For any $u>0$, if we include derivatives in the estimates, through a similar discussion as above, we can find a number $0<t_{u}<1$, such that for any $0<t\leq t_{u}$, there is:
$$\sup_{x,y\in M}||\exp(u\cdot d(x,y))\nabla_{x}^{m}\nabla_{y}^{n}K_{\gamma f_{t}(D)+g_{t}(D)}(x,y)||<C_{m,n} \quad \forall q,m,n\in \mathbb{N}.$$
This implies that
$$\Ind_{t}(D)\in S^{u}_{G}(M,E).$$
\end{remark}
Therefore, for any $f\in C^{2q}_{v}(M)^{G}$ such that $2qv-u<-r_{M}$, we have:
{\small $$\tau(f)(K_{w'(t)})=\int_{M^{2q+1}}c_{0}(x_{0})f(x_{0},...,x_{2q})\Tr(K_{w'(t)}(x_{0},x_{1})...K_{w'(t)}(x_{2q},x_{0}))d\mu(x_{0})...d\mu(x_{2q}),
$$}
where $w'(t)=\Ind_{t}(D)=\gamma f_{t}(D)+g_{t}(D)$.

Next, we shall use the heat equation approach to find the topological expression of the localized analytic indices of elliptic operators as in \cite{ref10}, that is, find the topological formula of $\lim_{t\rightarrow 0}\tau(f)(\Ind_{t}(D))$. But in our case, $M$ is non-compact, so we cannot use the method in \cite{ref10} to get the local index formula directly and we need the following theorem:
\begin{theorem}
\label{rc}
    Let $c_{i}(x)\in C_{c}^{\infty}(M)$, such that $0\leq c_{i}(x)\leq 1$ and $c_{i}|_{B_{1}(\supp(c_{i-1}))}=1$, where $B_{1}(\supp(c_{i-1}))=\{x\in M| d(x,\supp(c_{i-1}))\leq 1\}$. Then we have:
    \begin{align*}
         &\tau(f)(\Ind_{t}(D))\\
         &=\int_{M^{2q+1}}c_{0}(x_{0})c_{1}(x_{1})...c_{2q}(x_{2q})f(x_{0},...,x_{2q})\Tr(K_{w'(t)}(x_{0},x_{1})...K_{w'(t)}(x_{2q},x_{0}))dx_{0}...dx_{2q}\\
         &+R(t)
    \end{align*}
    where $\lim_{t\rightarrow 0}R(t)=0$.
\end{theorem}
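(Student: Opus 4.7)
The plan is to prove that the difference
$$R(t) = \int_{M^{2q+1}} c_0(x_0)\bigl(1 - c_1(x_1)\cdots c_{2q}(x_{2q})\bigr) f(x_0,\ldots,x_{2q}) \Tr\bigl(K_{w'(t)}(x_0,x_1)\cdots K_{w'(t)}(x_{2q},x_0)\bigr)\, d\mu$$
tends to $0$ as $t\to 0^+$ by extracting from the integrand an exponentially small prefactor of the form $\exp(-w/(200t))$, which overwhelms the polynomial blow-up $t^{-N}$ coming from the Schwartz kernel estimate of Theorem~\ref{g3}.

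First I would expand the mismatch via the telescoping identity
$$1 - c_1(x_1)\cdots c_{2q}(x_{2q}) = \sum_{i=1}^{2q} c_1(x_1)\cdots c_{i-1}(x_{i-1})\bigl(1 - c_i(x_i)\bigr),$$
interpreting the empty product as $1$ when $i=1$, which decomposes $R(t) = \sum_{i=1}^{2q} R_i(t)$. On the support of the $i$-th summand, the conditions $c_{i-1}(x_{i-1})\neq 0$ and $c_i(x_i)<1$ combined with the nesting hypothesis $c_i|_{B_1(\supp c_{i-1})}=1$ force $x_{i-1}\in \supp c_{i-1}$ and $x_i\notin B_1(\supp c_{i-1})$ (with $c_0$ in place of $c_{i-1}$ when $i=1$), so that $d(x_{i-1}, x_i)>1$. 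This is the key geometric consequence of the nested cutoff hypothesis.

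Next I would apply Theorem~\ref{g3} to every kernel $K_{w'(t)}(x_k, x_{k+1})$, obtaining pointwise bounds $\|K_{w'(t)}(x_k,x_{k+1})\|\leq Ct^{-N}\exp(-wd(x_k,x_{k+1})/(100t))$, and then split the special factor $K_{w'(t)}(x_{i-1}, x_i)$ using $d(x_{i-1}, x_i)>1$ to peel off the fixed factor $\exp(-w/(200t))$ while retaining $\exp(-wd(x_{i-1},x_i)/(200t))$ under the integral sign. Combining this with the exponential bound $|f|\leq C_f\exp\bigl(v\sum_k d(x_k,z_0)\bigr)$ guaranteed by $f\in C^{2q}_{v,anti}(M)^G$, using the $G$-invariance of the kernels, functions, and measure to restrict $x_0$ to a fundamental domain for the cocompact action, and invoking the volume growth estimate $\vol(B_r(x))\leq M_1 e^{r_M r}$, the remaining integrand is of exactly the form already shown to be integrable when verifying that the pairing $\tau(f)(\Ind_t(D))$ is well-defined. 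This produces a uniform bound $|R_i(t)|\leq C' t^{-N'}\exp(-w/(200t))$, and summing over $i=1,\ldots,2q$ yields $R(t)\to 0$.

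The main obstacle is the careful bookkeeping of exponential factors: one must choose $t$ small enough that the residual decay rate $w/(200t)$ remaining inside the integral still exceeds the threshold $u$ used in the integrability condition $2qv-u<-r_M$ of the pairing $\tau$, so that both the extracted prefactor $\exp(-w/(200t))$ \emph{and} the integrability of the remaining expression persist simultaneously. Once this is arranged, the prefactor dominates any power $t^{-N'}$, and the limit $R(t)\to 0$ follows as $t\to 0^+$.
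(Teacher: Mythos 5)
Your proposal is correct and takes essentially the same approach as the paper: in each remainder term the nested cutoffs force a consecutive pair to satisfy $d(x_{i-1},x_i)>1$, and the Schwartz kernel estimate of Theorem~\ref{g3} then yields a factor $e^{-w/(Ct)}$ that dominates the polynomial blow-up $t^{-N}$, together with the volume-growth and exponential-growth bounds that already guaranteed the pairing is well defined. The only difference is cosmetic: you decompose $1-c_1\cdots c_{2q}$ as a telescoping sum of $2q$ terms, whereas the paper expands $\prod_i\bigl(c_i+(1-c_i)\bigr)$ over all nonempty subsets $I\subset\{1,\dots,2q\}$; both decompositions isolate the same factor $c_{i-1}(x_{i-1})\bigl(1-c_i(x_i)\bigr)$ and the subsequent estimates coincide.
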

%\begin{remark}
    Because of this theorem, we have 
   \begin{align*}
       &\lim_{t\rightarrow 0}\tau(f)(\Ind_{t}(D))\\
       &=\lim_{t\rightarrow 0}\int_{M^{2q+1}}c_{0}(x_{0})...c_{2q}(x_{2q})f(x_{0},...,x_{2q})\Tr(K_{w'(t)}(x_{0},x_{1})...K_{w'(t)}(x_{2q},x_{0}))dx_{0}...dx_{2q}.
   \end{align*}
   
    It reduces the problem to the compact case, then we can use the method in \cite{ref10} to obtain the local index formula.
    \begin{proof}[Proof of Theorem \ref{rc}]
    Let $I=\{i_{1},i_{2},...,i_{|I|}\}\subset\{1,2,...,2q\}$  where $|I|$ is the length of $I$. Let $J$ be the complement of $I$ in $\{1,2,...,2q\}$. Similarly write $J=\{j_{1},j_{2},...,j_{|J|}\}$ where $|J|$ is the length of $J$.  We denote:$$\prod_{I}(x_{0},..,x_{2q})=c_{0}(x_{0})c_{j_{1}}(x_{j_{1}})...c_{j_{|J|}}(x_{j_{|J|}})(1-c_{i_{1}}(x_{i_{1}})...(1-c_{i_{|I|}}(x_{i_{|I|}})).$$
    Therefore, we can rewrite $R(t)$ as follows:
    $$R(t)=\sum_{|I|=1}^{2q}\int_{M^{2q+1}}\prod_{I}(x_{0},..,x_{2q})f(x_{0},...,x_{2q})\Tr(K_{w'(t)}(x_{0},x_{1})...K_{w'(t)}(x_{2q},x_{0}))dx_{0}...dx_{2q}.$$
    Next, we show that: for each $I$,
    $$\lim_{t\rightarrow 0}\int_{M^{2q+1}}\prod_{I}(x_{0},..,x_{2q})f(x_{0},...,x_{2q})\Tr(K_{w'(t)}(x_{0},x_{1})...K_{w'(t)}(x_{2q},x_{0}))dx_{0}...dx_{2q}=0.$$
    %由于f 是至多指数增长的
    Because $f$ has at most exponential growth,
    then there exist $C_{1},C_{2}\geq 0$ and $z_{0}\in M$ such that $$|f(x_{0},...,x_{2q})|\leq C_{1}e^{C_{2}(d(x_{0},z_{0})+d(x_{0},x_{1})+d(x_{1},x_{2})+...+d(x_{2q},x_{0}))}$$ for any $x_{i}\in M$.

    By Theorem \ref{g3}, there exist $C_{3},N, w>0$ such that:
    \begin{align*}
        &|\int_{M^{2q+1}}\prod_{I}(x_{0},..,x_{2q})f(x_{0},...,x_{2q})\Tr(K_{w'(t)}(x_{0},x_{1})...K_{w'(t)}(x_{2q},x_{0}))d\mu(x_{0})...d\mu(x_{2q})|\\
        &\leq \int_{M^{2q+1}}\prod_{I}(x_{0},..,x_{2q}) C_{1}e^{C_{2}(d(x_{0},z_{0})+d(x_{0},x_{1})+d(x_{1},x_{2})+...+d(x_{2q},x_{0}))}\\
        &\times \frac{C_{3}}{t^{N}}e^{-\frac{w}{t}(d(x_{0},x_{1})+d(x_{1},x_{2})+...+d(x_{2q},x_{0}))}d\mu(x_{0})...d\mu(x_{2q}).
    \end{align*}
    %利用分步积分，我们只要对以下两种积分进行估计就好了。

    Using integration by part, we only need to estimate the following two cases of integrations. 
    
    $(1)$ $\int_{M}e^{C_{2}(d(x_{i},x_{i+1}))}e^{-\frac{w}{t}(d(x_{i},x_{i+1})}dx_{i+1}.$

    Since  $M$ is a $C^{\infty}$- manifold with an action of almost connected Lie group $G$ such that the action is proper and co-compact, there exist $M_{1}, C_{4}>0$, such that for any $x\in M$ and $r\leq 0$, we have $\vol (B_{r}(x))\leq M_{1}e^{C_{4}r}$  where $\vol(B_{r}(x))$ is the volume of $B_{r}(x)$. 

    We denote $U_{x}(n\leq r\leq m)=\{y\in M| n\leq d(x,y)\leq m\}$. Therefore, we can decompose $\int_{M}e^{C_{2}(d(x_{i},x_{i+1}))}e^{-\frac{w}{t}(d(x_{i},x_{i+1})}dx_{i+1}$ as $$\int_{B_{1}(x_{i})}e^{C_{2}(d(x_{i},x_{i+1}))}e^{-\frac{w}{t}(d(x_{i},x_{i+1})}dx_{i+1}+\sum_{n=1}^{+\infty}\int_{U_{x_{i}}(n\leq r\leq n+1)}e^{C_{2}(d(x_{i},x_{i+1}))}e^{-\frac{w}{t}(d(x_{i},x_{i+1})}dx_{i+1}.$$

    Clearly, there exists $M_{2}>0$ independent of $x_{i}$, such that $$\int_{B_{1}(x_{i})}e^{C_{2}(d(x_{i},x_{i+1}))}e^{-\frac{w}{t}(d(x_{i},x_{i+1})}dx_{i+1}\leq M_{2}.$$

    For $\int_{U_{x_{i}}(n\leq r\leq n+1)}e^{C_{2}(d(x_{i},x_{i+1}))}e^{-\frac{w}{t}(d(x_{i},x_{i+1})}dx_{i+1}$, we have:
    \begin{align*}
        &\int_{U_{x_{i}}(n\leq r\leq n+1)}e^{C_{2}(d(x_{i},x_{i+1}))}e^{-\frac{w}{t}(d(x_{i},x_{i+1})}dx_{i+1}\\
        &\leq e^{C_{2}(n+1)}e^{-\frac{w}{t}(n)}\vol(U_{x_{i}}(n\leq r\leq n+1))\\
        &\leq e^{C_{2}(n+1)}e^{-\frac{w}{t}(n)}\vol(B_{n+1}({x_{i}}))\\
        &\leq e^{C_{2}(n+1)}e^{-\frac{w}{t}(n)}M_{1}e^{C_{4}(n+1)}\\
        &\leq M_{1}e^{-(n+1)(\frac{w}{t}-(C_{2}+C_{4}))}.
    \end{align*}
    Choose $t$ small enough, we have $M_{1}e^{-(n+1)(\frac{w}{t}-(C_{2}+C_{4}))}\leq M_{1}e^{-(n+1)(\frac{w}{2t})}$.
    Thus, there exists $M_{3}$ such that:
    \begin{align*}
        &\int_{M}e^{C_{2}(d(x_{i},x_{i+1}))}e^{-\frac{w}{t}(d(x_{i},x_{i+1})}dx_{i+1}\\
        &=\int_{B_{1}(x_{i})}e^{C_{2}(d(x_{i},x_{i+1}))}e^{-\frac{w}{t}(d(x_{i},x_{i+1})}dx_{i+1}+\sum_{n=1}^{+\infty}\int_{U_{x}(n\leq r\leq n+1)}e^{C_{2}(d(x_{i},x_{i+1}))}e^{-\frac{w}{t}(d(x_{i},x_{i+1}))}dx_{i+1}\\
        &\leq M_{2}+M_{3}e^{-\frac{w}{t}}.
    \end{align*}
    %然后我们来估计第二种积分
    $(2)$ $\int_{M}c_{i}(x_{i})e^{C_{2}(d(x_{i},x_{i+1}))}e^{-\frac{w}{t}(d(x_{i},x_{i+1})}(1-c_{i+1}(x_{i+1}))dx_{i+1}$

    Similarly, we have:
    \begin{align*}
        &\int_{M}c_{i}(x_{i})e^{C_{2}(d(x_{i},x_{i+1}))}e^{-\frac{w}{t}(d(x_{i},x_{i+1})}(1-c_{i+1}(x_{i+1}))dx_{i+1}\\
        &=\int_{B_{1}(x_{i})}c_{i}(x_{i})e^{C_{2}(d(x_{i},x_{i+1}))}e^{-\frac{w}{t}(d(x_{i},x_{i+1})}(1-c_{i+1}(x_{i+1}))dx_{i+1}\\
        &+\sum_{n=1}^{+\infty}\int_{U_{x_{i}}(n\leq r\leq n+1)}c_{i}(x_{i})e^{C_{2}(d(x_{i},x_{i+1}))}e^{-\frac{w}{t}(d(x_{i},x_{i+1}))}(1-c_{i+1}(x_{i+1}))dx_{i+1}\\
        &\leq 0+M_{3}e^{-\frac{w}{t}}=M_{3}e^{-\frac{w}{t}}.
    \end{align*}

    Since each $\prod_{I}(x_{0},..,x_{2q})$ contains $c_{i}(x_{i})(1-c_{i+1}(x_{i+1}))$ for some $i, 0\leq i\leq 2q$, there exists $M_{4}>0$ such that:
    \begin{align*}
        &\int_{M^{2q+1}}\prod_{I}(x_{0},..,x_{2q}) C_{1}e^{C_{2}(d(x_{0},z_{0})+d(x_{0},x_{1})+d(x_{1},x_{2})+...+d(x_{2q},x_{0}))}\\
        &\times \frac{C_{3}}{t^{N}}e^{-\frac{w}{t}(d(x_{0},x_{1})+d(x_{1},x_{2})+...+d(x_{2q},x_{0}))}dx_{0}...dx_{2q}\\
        &\leq \frac{M_{4}}{t^{N}}e^{-\frac{w}{t}}.
    \end{align*}

    Thus, we have:
    \begin{align*}
        &|\int_{M^{2q+1}}\prod_{I}(x_{0},..,x_{2q})f(x_{0},...,x_{2q})\Tr(K_{w'(t)}(x_{0},x_{1})...K_{w'(t)}(x_{2q},x_{0}))dx_{0}...dx_{2q}|\\
        &\leq \int_{M^{2q+1}}\prod_{I}(x_{0},..,x_{2q}) C_{1}e^{C_{2}(d(x_{0},z_{0})+d(x_{0},x_{1})+d(x_{1},x_{2})+...+d(x_{2q},x_{0}))}\\
        &\times \frac{C_{3}}{t^{N}}e^{-\frac{w}{t}(d(x_{0},x_{1})+d(x_{1},x_{2})+...+d(x_{2q},x_{0}))}dx_{0}...dx_{2q}\\
        &\leq \frac{M_{4}}{t^{N}}e^{-\frac{w}{t}}.
\end{align*}
    
    This implies that for each $I$,
    $$\lim_{t\rightarrow 0}\int_{M^{2q+1}}\prod_{I}(x_{0},..,x_{2q})f(x_{0},...,x_{2q})\Tr(K_{w'(t)}(x_{0},x_{1})...K_{w'(t)}(x_{2q},x_{0}))dx_{0}...dx_{2q}=0$$
    which completes the proof.
    
\end{proof}

Because of this theorem, we have 
   \begin{align*}
       &\lim_{t\rightarrow 0}\tau(f)(\Ind_{t}(D))\\
       &=\lim_{t\rightarrow 0}\int_{M^{2q+1}}c_{0}(x_{0})...c_{2q}(x_{2q})f(x_{0},...,x_{2q})\Tr(K_{w'(t)}(x_{0},x_{1})...K_{w'(t)}(x_{2q},x_{0}))dx_{0}...dx_{2q}.
   \end{align*}

%所以我们可以通过对$\int_{M^{2q+1}}c_{0}(x_{0})c_{1}(x_{1})...c_{2q}(x_{2q})f(x_{0},...,x_{2q})\Tr(K_{w'(t)}(x_{0},x_{1})...K_{w'(t)}(x_{2q},x_{0}))dx_{0}...dx_{2q}$ 做 Getzler resealing 来得到<f,\Ind(D)>的上同调公式。

To simplify notation, replace $c_{i}(x)$ by $c_{i}(x)^{2}$ for each $0\leq i\leq 2q$. Then the last equality is equal to:
\small{\begin{align*}
    &\int_{M^{2q+1}}c_{0}^{2}(x_{0})c_{1}^{2}(x_{1})...c_{2q}^{2}(x_{2q})f(x_{0},...,x_{2q})\Tr(K_{w'(t)}(x_{0},x_{1})...K_{w'(t)}(x_{2q},x_{0}))dx_{0}...dx_{2q}\\
    &=\int_{M^{2q+1}}f(x_{0},...,x_{2q})\Tr(c_{0}(x_{0})K_{w'(t)}(x_{0},x_{1})c_{1}(x_{1})...c_{2q}(x_{2q})K_{w'(t)}(x_{2q},x_{0})c_{0}(x_{0}))dx_{0}...dx_{2q}.
\end{align*} }

We shall conveniently use the same notation as in \cite{ref10}. Thus, we denote $\mathbb{D}=D\circ\gamma$ where $D$ is the equivariant Dirac operator over $E$ and $\gamma$ is the $\mathbb{Z}_{2}$-grading operator. Thus, $\mathbb{D}$ becomes skew adjonit such that $\mathbb{D}^{*}=-\mathbb{D}$ and $\mathbb{D}^{2}=-D^{2}$. Because of this, we can rewrite $W'(t)$ as:
%最好使用引用
\begin{align*}
    W'(t)&=\begin{pmatrix}
e^{-t^{2}D^{-}D^{+}}&e^{-\frac{t^{2}}{2}D^{-}D^{+}}(\frac{1-e^{-t^{2}D^{-}D^{+}}}{D^{-}D^{+}})^{\frac{1}{2}}D^{-}\\
e^{-\frac{t^{2}}{2}D^{+}D^{-}}(\frac{1-e^{-t^{2}D^{+}D^{-}}}{D^{+}D^{-}})^{\frac{1}{2}}D^{+}&-e^{-t^{2}D^{+}D^{-}}
\end{pmatrix}\\
&=(e^{t^{2}\mathbb{D}^{2}}+e^{\frac{1}{2}t^{2}\mathbb{D}^{2}}w(-t^{2}\mathbb{D}^{2})t\mathbb{D})\gamma.
\end{align*}
%这里有个符号不会打。。。。
We also denote: 
\small{\begin{align*}
    &\tau(f)(c_{0}W'(t)c_{1},c_{1}W'(t)c_{2},...,c_{2q}W'(t)c_{0})\\
    &:=\int_{M^{2q+1}}f(x_{0},...,x_{2q})\Tr(c_{0}(x_{0})K_{w'(t)}(x_{0},x_{1})c_{1}(x_{1})...c_{2q}(x_{2q})K_{w'(t)}(x_{2q},x_{0})c_{0}(x_{0}))dx_{0}...dx_{2q}.
\end{align*} }

Then, we choose $f\in C^{2q}_{v,anti}(M)^{G}$. Since Theorem \ref{rc} allows us to modify $M$ outside a sufficiently large relatively compact domain, without loss of generality, let $f=f_{0}\otimes f_{1}\otimes...\otimes f_{2q}$ where $f_{i}\in C^{\infty}_{c}(M)$.  Then we have:
\begin{align*}
    &\tau(f)(c_{0}W'(t)c_{1},c_{1}W'(t)c_{2},...,c_{2q}W'(t)c_{0})\\
    &=\Tr(c_{2q}W'(t)c_{0}f_{0}[c_{0}W'(t)c_{1}, f_{1}]...[c_{2q-1}W'(t)c_{0}, f_{2q}]).
\end{align*}

Therefore, the quantity to be computed is:\small{
$$\lim_{t\rightarrow 0}\tau(f)(\Ind_{t}(D))=\lim_{t\rightarrow 0}\Str(c_{2q}(e^{t^{2}\mathbb{D}^{2}}-e^{\frac{1}{2}t^{2}\mathbb{D}^{2}}w(-t^{2}\mathbb{D}^{2})t\mathbb{D})c_{0}f_{0}[c_{0}W'(t)c_{1}, f_{1}]...[c_{2q-1}W'(t)c_, f_{2q}])
$$}
where $\Str=\Tr\circ\gamma$. Now, we use Getzler’s symbolic calculus to find this limit as in \cite{ref10}. 
\begin{remark}
    If we choose a representative of the higher index $\Ind_t(D)$ that has finite propagation, i.e., the Schwartz kernel is properly supported, then Theorem \ref{rc} is not needed. 
    Otherwise, the theorem is necessary. 
    Even if one uses the expansion 
\[    f=\sum_{\mu=0}^{\infty}f_{\mu}^{1}\otimes f_{\mu}^{2}\otimes...\otimes f_{\mu}^{n} \quad \text{where} \quad f_{\mu}^{i}\in C^{\infty}_{c}(M)
\] 
as in the compact case \cite{ref10}, in our noncompact setting, one needs to show the absolute convergence of \[\langle f,\Ind_{t}(D)\rangle=\sum_{\mu=0}^{\infty}\langle f_{\mu},\Ind_{t}(D)\rangle
\] 
with respect to $t$ and that will require Theorem \ref{rc}.
    %如果选择properly support的指标时，Theorem \ref{rc} 不需要使用。但是如果选择并非properly support的指标时，
\end{remark}

%这里需要说明，我们之后的证明使用了CM在\cite{ref10}中的证明方法。区别在于我们多了截断函数，这在非紧的情况下是有必要的。这与getzler rescaling 是用来证明局部指标公式有关，具体的会在证明之后的remark里提到。
We recall some basic definitions originally introduced in \cite{ref10}.
\begin{definition}[\cite{ref10}]
    Let $\mathscr{A}(\mathbb{D})$ be the algebra generated (in a strictly algebraic sense) by $\mathscr{A}=C^{\infty}(M)$, $\gamma$, $(\lambda+\mathbb{D}^{2})^{-1}$, $\lambda\in \mathbb{C}\backslash[0,+\infty)$ and operators of the form $u(-\mathbb{D}^{2})$ with $u$ a Schwartz function on $[0,+\infty)$ which admits a holomorphic extension to a complex neighborhood of $[0,+\infty)$.
\end{definition}
The notion of asymptotic orders for elements of  $\mathscr{A}(\mathbb{D})$ is recalled as follows.
\begin{definition}[\cite{ref10}]
 If $A$ belongs to the subalgebra $\mathscr{A}_{\diff}(\mathbb{D})$ generated by $\mathscr{A}$, $\gamma$ and $\mathbb{D}$, we let $A(t)$ be the operator obtained by replacing $\mathbb{D}$ with $t\mathbb{D}$, $t > 0$, in the expression of $A$. Define the asymptotic order of $A$ as the total Getzler order of $A(t)$, assigning to $t$ the order $-1$. In particular, $\mathbb{D}^{2}$ has asymptotic order 0. We have the same asymptotic order $0$ for any function of $-\mathbb{D}^{2}$. Finally, we extend the notion of asymptotic order to the whole $\mathscr{A}(\mathbb{D})$ in the usual fashion. Let $\mathscr{A}_{k}^{r}(\mathbb{D})$ means the subspace of $\mathscr{A}(\mathbb{D})$ consisting of operators of Getzler order $r$ and asymptotic order $k$. 

\end{definition}
 The following result is the “Fundamental Lemma” of Getzler’s symbolic calculus. For readers' convenience, we list it here.
 %介绍$A\in \mathscr{A}_{0}^{r}(\mathbb{D})$
 \begin{lemma}[\cite{ref10}]
     $(1)$ Let $A\in \mathscr{A}_{0}^{r}(\mathbb{D})$. Then, if $A\in \op(\mathscr{S}^{r}(E))$,
     $$\sigma_{t^{-1}}(A(t))=a+O(t), \quad O(t)\in \mathscr{S}^{r-1}(E)$$
     where $\sigma_{t^{-1}}$ is the rescaled symbol \cite{ref12} . Here $a$ is called the asymptotic symbol of $A\in \mathscr{A}_{0}(\mathbb{D})$, denoted by $\sigma_{0}(A)$.
     %解释渐进象征

     $(2)$ If $A,B\in \mathscr{A}_{0}^{r}(\mathbb{D})$, then one has $\sigma_{0}(AB)=\sigma_{0}(A)\ast\sigma_{0}(B)$. Here $\ast$ denotes the multiplication  of Getzler symbols, defined by the rule:
    $$(a\ast b)(x,\xi)=e^{-\frac{1}{4}R(\frac{\partial}{\partial\xi},\frac{\partial}{\partial\eta})}a(x,\xi)\wedge b(x,\eta)|_{\xi=\eta}$$
    where $R\in \wedge^{2}(M)\otimes \wedge^{2}(M)$ is the curvature tensor in \cite{ref12}.
     \end{lemma}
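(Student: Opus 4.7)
The plan is to prove parts (1) and (2) simultaneously by reducing everything to a computation on the five types of generators of $\mathscr{A}(\mathbb{D})$ and then exploiting the composition rule for Getzler symbols, since the existence of $\sigma_0$ on products is what forces the $\ast$-product formula, and conversely the composition formula gives closure of $\sigma_0$ under multiplication. Throughout, I work in Riemannian normal coordinates centered at a point $x_0\in M$ together with a synchronous orthonormal frame of $E$ trivializing the Clifford and twisting connections to first order, which is the natural setting for Getzler's rescaled symbol $\sigma_{t^{-1}}$.

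First I would establish (1) on the differential subalgebra $\mathscr{A}_{\diff}(\mathbb{D})$. For $f\in C^{\infty}(M)$ the operator is independent of $t$, so $\sigma_{t^{-1}}(f(t))=f(x_0)$ exactly and $\sigma_0(f)=f$. For the grading $\gamma$, Getzler rescaling identifies the Clifford generators with the exterior algebra generators $dx^i\wedge-\iota_{dx^i}$, yielding a well-defined symbol of top Getzler order. For the Dirac operator, the key computation, essentially Getzler's original asymptotic calculation, shows that after rescaling, $\sigma_{t^{-1}}(t\mathbb{D})$ converges as $t\to 0$ to $\sum_i dx^i\wedge(\partial_i+\tfrac14 R_{ij}x^j+F^{E/S})$ modulo $\mathscr{S}^{r-1}(E)$-valued terms of order $t$, with $R$ the Riemannian curvature tensor and $F^{E/S}$ the twisting curvature. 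For the remaining generators $(\lambda+\mathbb{D}^2)^{-1}$ and $u(-\mathbb{D}^2)$, one first computes the rescaled symbol of the resolvent via the symbolic identity $\sigma_0((\lambda+t^2\mathbb{D}^2)^{-1})=(\lambda+\sigma_0(t\mathbb{D})^2)^{-1}$ obtained by expanding the formal Neumann series and using the composition rule (to be proved next), and then extends to Schwartz-class functional calculus using the Helffer--Sj\"ostrand formula
\[
u(-\mathbb{D}^2)=\frac{1}{2\pi i}\oint_\Gamma u(\lambda)(\lambda+\mathbb{D}^2)^{-1}\,d\lambda,
\]
justifying the interchange of $\sigma_0$ with the contour integral by dominated convergence, using Schwartz decay of $u$ together with the uniform decay of the resolvent symbol along $\Gamma$.

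Next I would prove the composition formula, which is both the content of (2) and the engine extending (1) to the full algebra. For $A,B\in\mathscr{A}_0^r(\mathbb{D})$, the operator product $A(t)B(t)$ is pseudodifferential with classical symbol given by the Moyal product $\sigma_{t^{-1}}(A(t))\#\sigma_{t^{-1}}(B(t))$. Expanding the $\#$-product in normal coordinates and tracking the scaling in $t$, only finitely many terms survive as $t\to 0$: these are precisely the contractions $\partial_\xi^\alpha a\wedge\partial_\eta^\alpha b$ weighted by powers of the curvature tensor $R$, coming from the second-order correction in the Clifford algebra relations under rescaling. Resumming these contributions as a formal Gaussian integral yields the factor $e^{-\tfrac14 R(\partial_\xi,\partial_\eta)}$, which can alternatively be recognized as the Mehler kernel for the rescaled twisted harmonic oscillator $\sigma_0(\mathbb{D}^2)$; this identification makes the independence on the choice of $x_0$ and frame manifest since $R$ transforms tensorially.

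The main obstacle is the uniform control of the error term $O(t)\in\mathscr{S}^{r-1}(E)$ when one moves beyond the differential generators to the nonlocal operators $(\lambda+\mathbb{D}^2)^{-1}$ and $u(-\mathbb{D}^2)$: one must verify, uniformly in $\lambda$ along the Helffer--Sj\"ostrand contour, that the remainder after subtracting the asymptotic symbol lies in the correct Getzler symbol class, with bounds integrable against the almost-analytic extension of $u$. This rests on the full Getzler pseudodifferential asymptotic calculus, and is the step where the Schwartz condition and holomorphic extension hypothesis on $u$ is essential. Once these uniform bounds are in place, iterating the composition rule gives $\sigma_0$ on all of $\mathscr{A}_0(\mathbb{D})$ together with the multiplicativity $\sigma_0(AB)=\sigma_0(A)\ast\sigma_0(B)$, completing both parts of the lemma.
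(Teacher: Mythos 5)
You should note first that the paper contains no proof of this statement at all: it is quoted verbatim from \cite{ref10} as the ``Fundamental Lemma'' of Getzler's symbolic calculus, introduced with the sentence that it is listed ``for readers' convenience.'' So there is no in-paper argument to compare your proposal against; what you have written is an attempted reconstruction of the proof in the cited reference, and it has to be judged on its own terms.

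As such a reconstruction, your outline follows the standard Getzler/Connes--Moscovici route, but as written it is a roadmap with genuine gaps rather than a proof. First, there is a circularity: you compute $\sigma_0\bigl((\lambda+t^2\mathbb{D}^2)^{-1}\bigr)$ ``using the composition rule (to be proved next),'' and then prove the composition rule (2) by invoking the pseudodifferential (Moyal) calculus for the product $A(t)B(t)$ --- but membership of the resolvent and of $u(-\mathbb{D}^2)$ in the rescaled calculus with controlled remainders is exactly what part (1) is supposed to deliver, so the order of the argument must be: establish the rescaled symbol calculus (composition with remainder estimates in the Getzler-filtered symbol classes) for differential operators, then obtain the resolvent by a parametrix construction, not by a ``formal Neumann series,'' which does not converge in any symbol topology. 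Second, the step you yourself flag as ``the main obstacle'' --- uniform control in $\lambda$ along the contour of the remainder in the class $\mathscr{S}^{r-1}(E)$, integrable against $u(\lambda)$ --- is precisely the analytic content of the lemma for the generators $u(-\mathbb{D}^2)$, and deferring it means the lemma is not proved for the operators actually used in Section 6 (e.g.\ $e^{\mathbb{D}^2}$, $w(-\mathbb{D}^2)$). Finally, a conventions mismatch: in the calculus used here one has $\sigma_{t^{-1}}(t\mathbb{D})=it^{-1}\xi$ with no curvature correction; the terms $\tfrac14 R_{ij}x^j$ and the twisting curvature that you attach to the symbol of $t\mathbb{D}$ belong to Getzler's rescaled \emph{operator} (equivalently they surface in $\sigma_{t^{-1}}(t^2\mathbb{D}^2)$ and in the definition of $H$), so your sketch conflates the rescaled operator limit with the rescaled symbol, which would propagate errors into the composition computation in (2).
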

    The following results from \cite{ref10} will be needed in the calculations. For the convenience of readers, we list them here.
    \begin{align*}
        &(1) \sigma_{t^{-1}}(t\mathbb{D})=it^{-1}\xi;\\
        &(2) \sigma_{t^{-1}}(t^{2}\mathbb{D}^{2})=-|\xi|^{2}+Q+\frac{1}{4}t^{2}\tau(R);\\
        &(3)\sigma_{t^{-1}}([t\mathbb{D},f])(x,\xi)=df,\quad f\in \mathscr{A};\\
        &(4) \sigma_{t^{-1}}([t^{2}\mathbb{D}^{2},f])(x,\xi)=2it\langle df,\xi\rangle+t^{2}\Delta f;\\
        &(5)\sigma_{t^{-1}}([t^{2}\mathbb{D}^{2},f]t\mathbb{D})(x,\xi)=-2t\langle df,\xi\rangle\xi+O(t),\quad O(t)\in \mathscr{S}^{2}(E);\\
        &(6)\sigma_{t^{-1}}((\lambda+t^{2}\mathbb{D}^{2})^{-1})(x,\xi)=((\lambda-H(x,\xi))^{-1}+O(t),\quad O(t)\in \mathscr{S}^{-3}(E);\\
        &(7)\sigma_{t^{-1}}(u(-t^{2}\mathbb{D}^{2}))(x,\xi)=u(H)(x,\xi)+O(t),\quad O(t)\in \mathscr{S}^{-\infty}(E),\quad u(-t^{2}\mathbb{D}^{2})\in  \mathscr{A}(\mathbb{D});
    \end{align*}
    where $Q=\nabla_{W}^{2}$ is the equivariant curvature of $W$ ($E=W\otimes S$) induced by the equivariant curvature of $E$,$\tau(R)$ is the scalar curvature of $M$ and $H=H(x,\xi)=|\xi|^{2}-\frac{1}{2}R(\xi,\frac{\partial}{\partial\xi})-\frac{1}{16}R\wedge R(\frac{\partial}{\partial\xi},\frac{\partial}{\partial\xi})-Q$.

 Denote $A=c_{2q}u(-\mathbb{D}^{2})\mathbb{D}c_{0}$, $A^{j}=c_{j-1}u(-\mathbb{D}^{2})[\mathbb{D},f_{j}]c_{j}$ and $B^{j}=c_{j-1}[u(-\mathbb{D}^{2}),f_{j}]\mathbb{D}c_{j}$ for $j=1,2,...,2q$. %我们这里可以假设f_{j}具有紧支集，由于有截断函数。 
 We can write:
 \begin{align*}
     \Pi:=&(c_{2q}e^{\mathbb{D}^{2}}c_{0}-A)(c_{0}[e^{\mathbb{D}^{2}},f_{1}]c_{1}+A^{1}+B^{1})(c_{1}[e^{\mathbb{D}^{2}},f_{2}]c_{2}-A^{2}-B^{2})...\\
     &...(c_{2q-2}[e^{\mathbb{D}^{2}},f_{2q-1}]c_{2q-1}+A^{2q-1}+B^{2q-1})(c_{2q-1}[e^{\mathbb{D}^{2}},f_{2}]c_{2q}-A^{2q}-B^{2q}).
 \end{align*}
 %类似于\cite{ref10}
The following terms of $\Pi$ will contribute to the asymptotic symbol:
\begin{align*}
    &(1)T:=(-1)^{q}c_{2q}e^{\mathbb{D}^{2}}c_{0}f_{0}A^{1}...A^{2q};\\
    &(2)T_{j}:=(-1)^{q}c_{2q}e^{\mathbb{D}^{2}}c_{0}f_{0}A^{1}...A^{j-1}B^{j}A^{j+1}...A^{2q}, j=1,...,2q;\\
    &(3)Z_{j}:=(-1)^{q+j}Af_{0}A_{1}...A_{j-1}c_{j-1}[e^{\mathbb{D}^{2}},f_{j}]c_{j}A^{j+1}..A^{2q},j=1,...,2q.
\end{align*}

Since we do the symbol calculation on the non-compact manifold, the Getzler’s fundamental trace formula we used is:
\begin{proposition}
\label{gz}
    Let $P$ be a smoothing operator with compact support over $E$, Then for all $t>0$,
    $$\Str(P)=\frac{1}{({2\pi})^{n}}\int_{T^{*}M}\Str(\sigma_{t}(P)(x,\xi))d\xi dx$$
\end{proposition}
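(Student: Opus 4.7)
The plan is to derive the identity from the elementary formula for the supertrace of a smoothing operator with continuous kernel, combined with a local computation relating the Schwartz kernel on the diagonal to the Getzler-rescaled symbol, and the algebraic fact that the Clifford supertrace annihilates every Clifford monomial except the top one.

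First I would reduce the statement to a pointwise identity on the diagonal. Since $P$ is a smoothing operator whose Schwartz kernel $K_{P}$ has compact support in $M\times M$, one has
\[
\Str(P)=\int_{M}\Str(K_{P}(x,x))\,dx,
\]
with $\Str$ on the right-hand side denoting the fibrewise supertrace on $\End(E_{x})$. It therefore suffices to establish, for each $x\in M$ and each $t>0$, the pointwise identity
\[
\Str(K_{P}(x,x))=\frac{1}{(2\pi)^{n}}\int_{T^{*}_{x}M}\Str(\sigma_{t}(P)(x,\xi))\,d\xi.
\]

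Next I would work in Riemannian normal coordinates centred at $x$ together with a synchronous trivialisation of $E$ near $x$. In these coordinates $P$ is represented by its full symbol $p(x,\xi)$, and the standard Fourier-inversion formula for pseudodifferential operators yields $K_{P}(x,x)=(2\pi)^{-n}\int_{T^{*}_{x}M}p(x,\xi)\,d\xi$. Writing $E=W\otimes S$ as in the preceding subsection, the Clifford supertrace on $\End(S_{x})$ vanishes on every Clifford monomial of degree strictly less than $n=\dim M$ and equals $(-2i)^{n/2}$ on the top monomial $c^{1}\cdots c^{n}$ (tensored with the ordinary trace on $\End(W_{x})$). Consequently only the top Clifford component of $p(x,\xi)$ can contribute to $\Str(K_{P}(x,x))$, and the same holds with $p$ replaced by $\sigma_{t}(P)$.

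Finally, I would check that replacing $p(x,\xi)$ by the Getzler rescaled symbol $\sigma_{t}(P)(x,\xi)$ leaves the fibre integral unchanged. By construction, Getzler's rescaling simultaneously sends the Clifford generators $c^{i}$ to $t^{-1/2}\hat{c}^{i}$ and rescales the cotangent variable $\xi\mapsto t^{-1}\xi$, with a compensating $t$-power determined by the Getzler order of $P$. Performing the change of variables $\xi\mapsto t\xi$ in the fibre integral produces a Jacobian $t^{n}$ which cancels the $t^{-n/2}$ coming from rescaling the top Clifford monomial $c^{1}\cdots c^{n}$ together with the remaining $t^{-n/2}$ built into the definition of $\sigma_{t}$; the lower Clifford components drop out because they lie in the kernel of $\Str$. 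Assembling the three steps gives the proposition. The main obstacle is precisely this $t$-power bookkeeping: one must verify that the Jacobian of the $\xi$-rescaling exactly balances the factors attached to the Clifford rescaling, which is the content of Getzler's original local index theorem \cite{ref12}.
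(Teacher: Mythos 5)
Your proposal is correct and follows essentially the same route as the paper's Appendix C: reduce to $\Str(P)=\int_{M}\Str(K_{P}(x,x))\,dx$, recover $K_{P}(x,x)$ from the symbol by Fourier inversion in normal coordinates (the paper implements this step rigorously with a cutoff near the diagonal and a mollifier), and then note that $\int_{T^{*}M}\Str(\sigma_{t}(P))\,d\xi\,dx$ is independent of $t$. Your $t$-power bookkeeping for the last step is the justification the paper leaves implicit, and it is sound up to the usual convention-dependence in the Getzler rescaling.
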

\begin{remark}
    Since we consider the smoothing operator with compact support, the proof of Proposition \ref{gz} is similar to the classical one in \cite{ref12}. For reader's convenience, we give the proof in Appendix $C$.
\end{remark}
We are now ready to begin the essential calculations. We will show that
\begin{theorem}
\label{mthm}
    \begin{align*}
    &\lim_{t\rightarrow 0}\Str(c_{2q}(e^{t^{2}\mathbb{D}^{2}}-e^{\frac{1}{2}t^{2}\mathbb{D}^{2}}w(-t^{2}\mathbb{D}^{2})t\mathbb{D})c_{0}f_{0}[c_{0}W'(t)c_{1}, f_{1}]...[c_{2q-1}W'(t)c_, f_{2q}])\\
    &=\frac{(-1)^{n/2-q}}{(2\pi i)^{2q-n/2}}\frac{q!}{(2q)!}\int_{M}c_{0}\hat{A}_{G}(M)\wedge \ch_{G}^{0}(\pi!([\pi^{*}(E^{+}),\pi^{*}(E^{-1}),\sigma(D)]))\wedge f_{0}\wedge df_{1}...\wedge df_{2q}.
\end{align*}
\end{theorem}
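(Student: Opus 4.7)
The plan is to follow the strategy of Connes–Moscovici \cite{ref4} and Moscovici–Wu type rescaling, adapted to our (noncompact) equivariant setting by means of the cutoffs $c_{i}$ introduced in Theorem~\ref{rc}. First, using the decomposition
\[
W'(t)=\bigl(e^{t^{2}\mathbb{D}^{2}}+e^{\tfrac{1}{2}t^{2}\mathbb{D}^{2}}w(-t^{2}\mathbb{D}^{2})t\mathbb{D}\bigr)\gamma,
\]
I would expand each commutator $[c_{j-1}W'(t)c_{j},f_{j}]$ as a sum of a "main" commutator of the heat operator, a term of type $A^{j}=c_{j-1}u(-\mathbb{D}^{2})[\mathbb{D},f_{j}]c_{j}$, and a residual $B^{j}$-term, after using the Duhamel formula
\[
[u(-t^{2}\mathbb{D}^{2}),f_{j}]=\int_{0}^{1}e^{(1-s)t^{2}\mathbb{D}^{2}}\,[t^{2}\mathbb{D}^{2},f_{j}]\,e^{st^{2}\mathbb{D}^{2}}\,ds
\]
to split the heat commutators. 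Multiplying out, the product $\Pi$ becomes a long sum; a Getzler-order count (the entire expression must be of asymptotic order $\geq 0$ for its trace to survive as $t\to 0$) discards every monomial except the three families $T$, $T_{j}$, $Z_{j}$ listed just before the theorem.

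Next, on each coordinate chart supported in $\supp(c_{j})$ I would compute the asymptotic (rescaled) symbol $\sigma_{0}$ of each surviving term using the Fundamental Lemma and the list of computed symbols (1)--(7). The cutoffs $c_{j}$ play no role in this rescaled limit beyond ensuring compact support, so that Proposition~\ref{gz} applies; one gets
\[
\lim_{t\to 0}\Str(\Pi)=\frac{1}{(2\pi)^{n}}\int_{T^{*}M}\Str\bigl(\sigma_{0}(T)+\sum_{j}\sigma_{0}(T_{j})+\sum_{j}\sigma_{0}(Z_{j})\bigr)(x,\xi)\,d\xi\,dx.
\]
Each asymptotic symbol is essentially a product of Gaussian factors in $\xi$ (coming from $e^{-H(x,\xi)}$) wedged with one-forms of type $df_{j}$ (from $[\mathbb{D},f_{j}]$) and the curvature/twisting two-forms $R$ and $Q$. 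Integrating the Gaussian in $\xi$ by Mehler's formula produces $\det^{1/2}\bigl(R/2/\sinh(R/2)\bigr)$, i.e.\ the $\hat{A}$-genus of $M$, wedged with $\Str\exp(Q)$, the Chern character of the twisting bundle.

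At this stage, the terms $T$, $T_{j}$, $Z_{j}$ conspire: the $T_{j}$ reproduce the terms obtained by naively Duhamel-differentiating $e^{t^{2}\mathbb{D}^{2}}$ across the product, while the $Z_{j}$ contributions (which involve $\mathbb{D}$ and the function $w$) assemble, via the identity $\exp(x)+\exp(x/2)w(-x)x=\exp(x)$ used symbol-wise, into exactly the correction needed so that the whole sum equals a single "Mehler integral." This is the combinatorial heart of the argument, and is the main obstacle: one must keep careful track of signs (the $(-1)^{q+j}$ factors), the factor $q!/(2q)!$ (which comes from evaluating the iterated Duhamel integral on the simplex), and the rescaling factor $t^{2q-n/2}$ which cancels against the $t^{n}$ from the symbol rescaling and supplies the $(-2\pi i)^{n/2-2q}$ prefactor. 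The final step is to recognize the resulting integrand as $c_{0}\,\hat{A}_{G}(M)\wedge \ch_{G}^{0}\bigl(\pi!([\pi^{*}E^{+},\pi^{*}E^{-},\sigma(D)])\bigr)\wedge f_{0}\,df_{1}\wedge\cdots\wedge df_{2q}$, using Theorem~\ref{err} to identify the Thom-pushforward of the symbol class of $D$ with the equivariant Chern character of $E$ divided by the $\hat{A}$-class of the tangent bundle, which then cancels the spinorial contribution already present in the heat-kernel symbol calculation.
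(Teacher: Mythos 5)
Your proposal follows essentially the same route as the paper's proof: isolate the three surviving families $T$, $T_{j}$, $Z_{j}$, compute their asymptotic Getzler symbols from the listed identities (1)--(7), apply the trace formula of Proposition~\ref{gz} together with the Gaussian (Mehler) integration in $\xi$, and identify ${\det}^{1/2}\bigl(\tfrac{R/2}{\sinh(R/2)}\bigr)$ and $\Str(e^{Q})$ with $\hat{A}_{G}(M)$ and $\ch_{G}^{0}(\pi!([\pi^{*}E^{+},\pi^{*}E^{-},\sigma(D)]))$. One small correction: the identity you invoke for assembling the $Z_{j}$ terms, $\exp(x)+\exp(x/2)w(-x)x=\exp(x)$, is garbled (it would force the second summand to vanish); in the paper the $T_{j}$ and $Z_{j}$ contributions are instead handled by the contour-integral/resolvent manipulation $[u(-\mathbb{D}^{2}),f_{j}]=\frac{1}{2\pi i}\int_{C}u(\lambda)[(\lambda+\mathbb{D}^{2})^{-1},f_{j}]\,d\lambda$ (which, unlike your Duhamel formula, works for the general Schwartz function $w$ and not only for the heat semigroup), and the constants $\beta_{q}$, $\delta_{q}$, $q!/(2q+1)!$ are evaluated and summed directly to yield $q!/(2q)!$.
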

\begin{proof}
   $(1)$ Computation of the contribution of $T$.\\
   We have:
   $$\sigma_{0}(T)=(-1)^{q}c_{2q}\ast\sigma_{0}(u(-\mathbb{D}^{2}))\ast c_{0}\ast f_{0}\ast c_{0}\ast \sigma_{0}(u(-\mathbb{D}^{2}))\ast df_{1}\ast c_{1}\ast...\ast c_{2q-1}\ast\sigma_{0}(u(-\mathbb{D}^{2}))\ast df_{2q}\ast c_{2q}.
   $$

   Since $\sigma_{0}([u(-\mathbb{D}^{2}),f])=0$ and $\sigma_{0}([u(-\mathbb{D}^{2}),[\mathbb{D},f]])=0$ for any $f\in \mathscr{A}$, one can permute $u(-\mathbb{D}^{2})$ with $c_{i}$ and $[D,f_{i}]$ without changing the asymptotic symbol of $T$.

   Thus, we have:
  \small{ \begin{align*}
       \sigma_{0}(T)&=(-1)^{q}c_{2q}\ast\sigma_{0}(e^{\mathbb{D}^{2}})\ast c_{0}\ast f_{0}\ast c_{0}\ast \sigma_{0}(u(-\mathbb{D}^{2}))\ast df_{1}\ast c_{1}\ast...\ast c_{2q-1}\ast\sigma_{0}(u(-\mathbb{D}^{2}))\ast df_{2q}\ast c_{2q}\\
       &=(-1)^{q}c_{2q}\sigma_{0}(e^{\mathbb{D}^{2}}u(-\mathbb{D}^{2})^{2q})\wedge c_{0}f_{0}\wedge c_{0}(df_{1})c_{1}\wedge...\wedge c_{2q-1}(df_{2q})c_{2q}.
   \end{align*} }
   Since $c_{i}(x)=1$ for any $x\in \supp(c_{0})$ where $i\geq1$,
   we have:
   $$\sigma_{0}(T)=(-1)^{q}c_{0}\sigma_{0}(e^{\mathbb{D}^{2}}u(-\mathbb{D}^{2})^{2q})\wedge f_{0}\wedge df_{1}\wedge...\wedge df_{2q}.
   $$
   Then by the  similar proof in \cite{ref10} and Proposition \ref{gz}, we have:
   \begin{align*}
       \lim_{t\rightarrow 0}\Str(T(t))&=(2\pi)^{-n}\int_{T^{*}M}\tr_{s}(\sigma_{0}(T))dxd\xi\\
       &=\beta_{q}(-1)^{q}(2\pi i)^{n/2}\int_{M}c_{0}\det\left(\frac{R/2}{\sinh{R/2}}\right)^{1/2}\wedge \Str(e^{Q})\wedge f_{0}\wedge df_{1}...\wedge df_{2q},
   \end{align*}
   where
   $$\beta_{q}=\int_{1}^{2}...\int_{1}^{2}(1+s_{1}+...+s_{q})^{-q}ds_{1}...ds_{q}.
   $$
  $(2)$  Computation of the contribution of  $\sum_{j=1}^{j=2q}T_{j}$. 
  
  As in \cite{ref10}, we have:
  $$T_{j}=(-1)^{q}c_{2q}e^{\mathbb{D}^{2}}c_{0}f_{0}A^{1}...A^{j-1}c_{j-1}\left(\frac{1}{2\pi i}\int_{C}u(\lambda)[(\lambda+\mathbb{D}^{2})^{-1},f_{j}]d\lambda\right)\mathbb{D}c_{j}A^{j+1}...A^{2q}$$
  where $C$ is a suitable contour in $\mathbb{C}$, oriented counterclockwise. Since:
  $$[(\lambda+\mathbb{D}^{2})^{-1},f_{j}]=-(\lambda+\mathbb{D}^{2})^{-1}[\mathbb{D}^{2},f_{j}](\lambda+\mathbb{D}^{2})^{-1},
  $$
  we have:
  \begin{align*}
      T_{j}&=(-1)^{q+1}c_{2q}e^{\mathbb{D}^{2}}c_{0}f_{0}A^{1}...A^{j-1}c_{j-1}\left(\frac{1}{2\pi i}\int_{C}u(\lambda)(\lambda+\mathbb{D}^{2})^{-1}[\mathbb{D}^{2},f_{j}]\mathbb{D}(\lambda+\mathbb{D}^{2})^{-1}c_{j}d\lambda\right)\\
      &\times A^{j+1}...A^{2q}.
  \end{align*}
  Since $\sigma_{0}([(\lambda+\mathbb{D}^{2})^{-1},f])=0$ for any $f\in \mathscr{A}$, we can exchange $(\lambda+\mathbb{D}^{2})^{-1}$ and $c_{j}$ without changing the asymptotic symbol of $T_{j}$.

  Because of this, let
  \begin{align*}
      T'_{j}&=(-1)^{q+1}c_{2q}e^{\mathbb{D}^{2}}c_{0}f_{0}A^{1}...A^{j-1}c_{j-1}\frac{1}{2\pi i}\int_{C}u(\lambda)(\lambda+\mathbb{D}^{2})^{-1}[\mathbb{D}^{2},f_{j}]\\
&\times\mathbb{D}c_{j}(\lambda+\mathbb{D}^{2})^{-1}d\lambda A^{j+1}...A^{2q}.
\end{align*}
We have: 
$$\lim_{t\rightarrow 0}\Str (T_{j}(t))=(2\pi )^{-n}\int_{T^{*}M}\tr_{s}(\sigma_{0}(T_{j}))dxd\xi=(2\pi )^{-n}\int_{T^{*}M}\tr_{s}(\sigma_{0}(T'_{j}))dxd\xi=\lim_{t\rightarrow 0}\Str (T'_{j}(t)).
$$
Then we take advantage of the fact that only the supertrace is needed as in \cite{ref10} and replace $T'_{j}$ by
\begin{align*}
    \tilde{T_{j}}&=\frac{(-1)^{q+1}}{2\pi i}\int_{C}(-1)^{2q+j}c_{j}(\lambda+\mathbb{D}^{2})^{-1}A^{j+1}...A^{2q}c_{2q}e^{\mathbb{D}^{2}}c_{0}f_{0}A^{1}\\
&...u(\lambda)(\lambda+\mathbb{D}^{2})^{-1}[\mathbb{D}^{2},f_{j}]\mathbb{D}d\lambda 
\end{align*}
thus bringing the two resolvents on the same side of $[\mathbb{D}^{2},f_{j}]$. When we compute $\sigma_{0}(\tilde{T_{j}})$ we can pass the first $\sigma_{0}((\lambda+\mathbb{D}^{2})^{-1})$ over the intermediate terms till it reaches the second $\sigma_{0}((\lambda+\mathbb{D}^{2})^{-1})$. Thus, we get:
\begin{align*}
    \sigma_{0}(\tilde{T_{j}})&=(-1)^{q+1+2q-j}c_{j}\wedge c_{j}(df_{j+1})c_{j+1}\wedge...\wedge c_{2q-1}(df_{2q})c_{2q}\wedge c_{0}f_{0}\wedge c_{0}(df_{1})c_{1}\wedge...\wedge c_{j-1}(df_{j})c_{j}\\
    &\ast \sigma_{0}(e^{\mathbb{D}^{2}}u^{2q-1}(-\mathbb{D}^{2})u'(-\mathbb{D}^{2}))\ast \sigma_{0}([\mathbb{D}^{2},f_{j}]\mathbb{D}).
\end{align*}
Then similarly as in \cite{ref10}, we have:
   \begin{align*}
       \lim_{t\rightarrow 0}\Str(T_{j}(t))&=\int_{T^{*}M}\tr_{s}(\sigma_{0}(\tilde{T_{j}}))dx d\xi\\
       &=-\frac{1}{2q}\delta_{q}(-1)^{q}(2\pi i)^{-n/2}\int_{M}c_{0}\det\left(\frac{R/2}{\sinh{R/2}}\right)^{1/2}\wedge \Str(e^{Q})\wedge f_{0}\wedge df_{1}...\wedge df_{2q}
   \end{align*}
   where $\delta_{q}=\int_{1}^{2}...\int_{1}^{2}(s_{1}+...+s_{q})(1+s_{1}+...+s_{q})^{-(q+1)}ds_{1}...ds_{q}$.

 $(3)$  Computation of the contribution of  $\sum_{j=1}^{j=2q}Z_{j}$.

 The $Z_{j}$ calculations are similar 
to $T_{j}$. Therefore, we have:
\begin{align*}
       \lim_{t\rightarrow 0}\Str(T_{j}(t))=\frac{q!}{(2q+1)!}(-1)^{q}(2\pi i)^{-n/2}\int_{M}c_{0}\det\left(\frac{R/2}{\sinh{R/2}}\right)^{1/2}\wedge \Str(e^{Q})\wedge f_{0}\wedge df_{1}...\wedge df_{2q}.
   \end{align*}

Finally, we have:
\begin{align*}
    &\lim_{t\rightarrow 0}\Str(c_{2q}(e^{t^{2}\mathbb{D}^{2}}-e^{\frac{1}{2}t^{2}\mathbb{D}^{2}}w(-t^{2}\mathbb{D}^{2})t\mathbb{D})c_{0}f_{0}[c_{0}W'(t)c_{1}, f_{1}]...[c_{2q-1}W'(t)c_, f_{2q}])\\
    &=\lim_{t\rightarrow 0}\left(\Str(T(t))+\Str(\sum T_{i}(t))+\Str(\sum Z_{j}(t))\right)\\
    &=(-1)^{n/2-q}(2\pi i)^{n/2-2q}\frac{q!}{(2q)!}\int_{M}c_{0}\det\left(\frac{R/2}{\sinh{R/2}}\right)^{1/2}\wedge \Str(e^{Q})\wedge f_{0}\wedge df_{1}...\wedge df_{2q}.\\
    &=(-1)^{n/2-q}(2\pi i)^{n/2-2q}\frac{q!}{(2q)!}\int_{M}c_{0}\Hat{{A}}_{G}(M)\wedge\ch_{G}^{0}(\pi!([\pi^{*}(E^{+}),\pi^{*}(E^{-}),\sigma(D)]))\wedge f_{0}\wedge df_{1}...\wedge df_{2q}.
\end{align*}

The last equation holds since $\det^{1/2}\left(\frac{R/2}{\sinh{R/2}}\right)$ is just $G$-invariant $\Hat{A}$-genus $\Hat{{A}}_{G}(M)$ in our notation, and $\Str(e^{Q})$ is equal to $\ch_{G}^{0}(\pi!([\pi^{*}(E^{+}),\pi^{*}(E^{-}),\sigma(D)]))$ since $\pi!([\pi^{*}(E^{+}),\pi^{*}(E^{-}),\sigma(D)])=[W^{+}]-[W^{-}]$. 
\end{proof}
Summarizing the preceding result, we complete the proof of Theorem \ref{mthm2}.
\section{Higher index formula and Pairing between $K$-theory and cohomology}
In this section, we aim to use Theorem \ref{mthm2} to obtain a higher index formula and define a pairing between $H^{even}_{DR}(G/H)^{G}$ and $K_{0}(C^{*}(G/H), E)^{G})$ where $H^{even}_{DR}(G/H)^{G}$ is the even part of $G$-invariant de Rham cohomology and $C^{*}(G/H), E)^{G}$ is the equivariant Roe algebra.
\subsection{Higher index formula }
 For a Lie group $G$, the space of smooth homogeneous group $k$-cochains given by
 \begin{align*}
     C^{k}_{\diff}(G):=\{&c:G^{k+1}\rightarrow \mathbb{C} \text{ smooth}|\\
     &c(gg_{0},gg_{1},...,gg_{k})=c(g_{0},g_{1},...,g_{k}), \forall~g,g_{0},...,g_{k}\in G\}
 \end{align*}
 equipped with the differential $d:C^{k}_{\diff}(G)\rightarrow C^{k+1}_{\diff}(G)$ 
 $$d(c)(g_{0},...,g_{k+1}):=\sum_{i=0}^{k+1}(-1)^{i}c(g_{0},...,g_{i-1},g_{i+1},...,g_{k+1})$$
 gives rise to the group cohomology, denoted by $H_{\diff}^{*}(G)$. Under the notation above, let 
 \begin{align*}
     C^{k}_{\diff,\lambda}(G):=\{&c:G^{k+1}\rightarrow \mathbb{C} \text{ smooth}|\\   &c(gg_{0},gg_{1},...,gg_{k})=c(g_{0},g_{1},...,g_{k}),\\
     &c(g_{0},...,g_{k})=(-1)^{k}c(g_{k},g_{0},...,g_{k-1}), \text{ for all }g,g_{0},...,g_{k}\in G\}
 \end{align*}
 and
  \begin{align*}
     C^{k}_{\diff,anti}(G):=\{&c:G^{k+1}\rightarrow \mathbb{C} \text{ smooth }|\\
     &c(gg_{0},gg_{1},...,gg_{k})=c(g_{0},g_{1},...,g_{k}),\\
     &c(g_{0},...,g_{k})=\sign(\sigma)c(g_{\sigma(0)},g_{\sigma(1)},...,g_{\sigma(k)}),\\
     &\text{ for all }g,g_{0},...,g_{k}\in G\text{ and }\sigma\in S_{k+1}\}.
\end{align*}
It can be checked that $(C^{*}_{\diff,\lambda}(G),d)$ and $(C^{*}_{\diff,anti}(G),d)$ are subcomplexs of  $(C^{*}_{\diff}(G),d)$. We denote by $H_{\diff,\lambda}^{*}(G)$ and $H_{\diff,anti}^{*}(G)$  the corresponding cohomology.

Moreover, fix $z_{0}\in M$, for any $g\in G$, and let $L(g):=d(z_{0},gz_{0}))$. There exists a natural subcomplex of the $(C^{*}_{\diff}(G),d)$:
\begin{definition}
Let $C_{\diff,pol}^{q}(G)$ be the 
vector space of all $G$-equivariant smoothing functions from $G^{q+1}$ to $\mathbb{C}$ with at most polynomial growth, that is, for each $c\in C_{\diff, pol}^{q}(G)$, there exists $m\in \mathbb{N}$ such that $$\sup(|c(g_{0},...,g_{q})(1+L(g_{0})+L(g_{1})+...+L(g_{q}))^{-m}|)< +\infty.$$
Thus, $(C^{*}_{\diff,pol}(G),d)$ is a subcomplex of $(C^{*}_{\diff}(G),d)$.
The corresponding cohomology is denoted by $H_{\diff,pol}^{*}(G)$. 
\end{definition}

Similarly, we have:
 \begin{definition}
Let $C_{\diff,pol}^{q}(M)^{G}$ be the 
vector space of all $G$-equivariant smoothing functions from $M^{q+1}$ to $\mathbb{C}$ with at most polynomial growth, that is, for each $\psi\in C_{\diff, pol}^{q}(M)^{G}$, there exists $m\in \mathbb{N}$ such that $$\sup(|\psi(x^{0},...,x^{p})(1+d(x_{0},x_{1})^{2}+d(x_{1},x_{2})^{2}+...+d(x_{p},x_{0})^{2})^{-m}|)< +\infty.$$
A coboundary homomorphism $d:C_{\diff,pol}^{q}(M)^{G}\rightarrow{C_{\diff,pol}^{q}(M)^{G}}$ is defined by the formula 
$$d(\psi)(x^{0},...,x^{p+1})=\sum_{i=0}^{q+1}(-1)^{i}\psi(x^{0},...,x^{i-1},x^{i+1},...,x^{q+1})$$
where $\psi\in C_{\diff,pol}^{q}(M)^{G}$. The corresponding cohomology is denoted by $H_{pol}^{*}(M)^{G}$. 
\end{definition}

 We denote by $(C_{pol,\lambda}^{*}(M)^{G},d)$ and $(C_{pol,anti}^{*}(M)^{G},d)$ subcomplices of $(C_{pol}^{*}(M)^{G},d)$, and $H_{pol,\lambda}^{*}(M)^{G}$, $H_{pol,anti}^{*}(M)^{G}$ the corresponding cohomology respectively. If we remove the restrictions on growth conditions, we denote the corresponding cohomology by $H^{*}(M)^{G}$.

Then, assuming that $G$ has finitely many connected components and satisfies the RD condition with respect to the length function $L(g):=d(z_{0},gz_{0})$, we recall the definition of the pairing between $Z_{\diff, pol}^{*}(G)$ and  $K_{0}(C^{*}(M, E)^{G})$ in \cite{pp2}(5B). 

Let $b\in C^{m}_{\diff}(G)$, for $a_{0},...,a_{m} \in C_{c}^{\infty}(G)$, define
{\small $$\tau_{c}^{G}(a_{0},...,{a_{m}}):=\int_{G^{m}}b(e,g_{1},g_{1}g_{2},...,g_{1}...g_{m})a_{0}((g_{1}...g_{m})^{-1})a_{1}(g_{1})...a_{m}(g_{m})dg_{1}...dg_{m}$$}
and for $k_{0},...,k_{m}\in A_{G}^{c}(M,E)$, define
\begin{align*}
    \tau_{c}^{M}(k_{0},...,{k_{m}}):=\int_{G^{m}}\int_{M^{m+1}}&c(x_{0})...c(x_{m})\tr(k(x_{0},g_{1}x_{1})...k(x_{m},(g_{1}...g_{m})^{-1}x_{0}))\\
    &\cdot b(e,g_{1},g_{1}g_{2},...,g_{1}...g_{m})dg_{1}...dg_{m}d\mu(x_{0})...d\mu(x_{m}).
\end{align*}

When $c\in Z_{\diff,pol}^{m}(G)$, the homomorphism $$\langle\tau_{c}^{M},.\rangle:K_{0}(A^{c}_{G}(M,E))\rightarrow\mathbb{C}$$
extends to a homomorphism
$$\langle\tau_{c}^{M},.\rangle:K_{0}(A^{\infty}_{G}(M,E)\cong K_{0}(C^{*}(M,E)^{G})\rightarrow\mathbb{C}$$
where $C^{*}(M, E)^{G}$ is the equivariant Roe algebra defined as the closure of bounded
G-equivariant operators on $L^{2}(M, E)$ that are locally compact and of finite propagation.
Let
$$A^{\infty}_{G}(M,E):=(H_{L}^{\infty}(G)\hat{\otimes}\Psi^{-\infty}(N,E|_{N}))^{K\times K}$$
where
$$H^{\infty}_{L}(G):=\{f\in L^{2}(G):~g\mapsto (1+L(g))^{k}f(g)\in L^{2}(G)~for~all~k\}$$
and $\Psi^{-\infty}(N,E|_{N})$ denotes the algebra of kernels of smoothing operators on $E|_{N}$.

Then, we explain the relation between the $S^{u}_{G}(M,E)$ and $A^{\infty}_{G}(M,E)$.
In fact, for suitable $u>0$, we have $S^{u}_{G}(M,E)\subset A^{\infty}_{G}(M,E)$. 
Recall that in \cite{pp2}(1A), Piazza and Posthuma defined 
\begin{align*}
A^{\exp}_{G}(M;E):=\{k\in C^{\infty}(M\times M,E\boxtimes E^{*})^{G}:&\sup_{x,y\in M}||\exp(qd(x,y)\nabla_{x}^{m}\nabla_{y}^{n}k(x,y)||<C_{q}\\
& \text{ for all }q,m,n\in \mathbb{N}\}
\end{align*}
and because $M\cong G\times_{K}N$, $A^{\exp}_{G}(M;E)$ can be decomposed as 
$$(A^{\exp}(G)\hat{\otimes}\Psi^{-\infty}(N,E|_{N}))^{K\times K},$$
where 
$$A^{\exp}(G):=\{f\in C^{\infty}(G):\sup_{g\in G}|\exp(qL(g))Df(g)|<+\infty, \text{for all }q,D\in U(\mathfrak{g})\}.$$

Similarly, \begin{align*}
S^{u}_{G}(M;E):=\{k\in C^{\infty}(M\times M,E\boxtimes E^{*})^{G}|&\sup_{x,y\in M}||\exp{(u\cdot d(x,y))}\nabla_{x}^{m}\nabla_{y}^{n}k(x,y)||<C_{m,n}\\
& \text{for all }m,n\in \mathbb{N}\}.
\end{align*} can be decomposed as
$$(A^{\exp,u}(G)\hat{\otimes}\Psi^{-\infty}(N,E|_{N}))^{K\times K},$$
where 
$$A^{\exp,u}(G):=\{f\in C^{\infty}(G):\sup_{g\in G}|\exp(uL(g))Df(g)|<+\infty, \text{for all }D\in U(\mathfrak{g})\}.$$

Since the Lie group $G$ has at most exponential growth, when $u$ is big enough, we have: $A^{\exp,u}(G)\subset H^{\infty}_{L}(G)$. Therefore,  $S^{u}_{G}(M,E)\subset A^{\infty}_{G}(M,E)$ holds for suitable $u>0$. 

Then, as we have shown in the last section, for any $u>0$, there exists $t_{u}>0$, and for any $0<t\leq t_{u}$, we have $\Ind_{t}(D)\in S^{u}_{G}(M, E)$. We claim that:
\begin{proposition}
\label{prop7.2}
    There exists $t_{u}>0$, for any $0<t\leq t_{u}$, $\Ind_{t}(D)\in A^{\infty}_{G}(M,E)$.
\end{proposition}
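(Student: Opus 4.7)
The plan is to deduce the proposition by chaining together two inclusions that have essentially been established in the preceding discussion: the kernel estimate $\Ind_{t}(D) \in S^{u}_{G}(M,E)$ for small $t$, and the algebra inclusion $S^{u}_{G}(M,E) \subset A^{\infty}_{G}(M,E)$ valid once $u$ is large enough. The first ingredient is precisely the remark following Theorem~\ref{g3}: for each prescribed $u>0$, the exponential decay estimate on the Schwartz kernel of $\gamma f_{t}(D)+g_{t}(D)$ together with the analogous estimates on the covariant derivatives yields $t_{u}>0$ such that $\Ind_{t}(D)\in S^{u}_{G}(M,E)$ whenever $0<t\leq t_{u}$.

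For the second ingredient, I would fix the decomposition $M\cong G\times_{K}N$ and use the identification
\[
S^{u}_{G}(M;E)\cong (A^{\exp,u}(G)\,\hat{\otimes}\,\Psi^{-\infty}(N,E|_{N}))^{K\times K}
\]
recalled in the text. Because $G$ is almost connected and the length function $L(g)=d(z_{0},gz_{0})$ grows linearly in the word-length (up to a constant), and because $G$ has at most exponential volume growth, one can choose $u_{0}>0$ large enough so that for every $u\geq u_{0}$ and every $k\in\mathbb{N}$,
\[
\int_{G}(1+L(g))^{2k}\,e^{-2uL(g)}\,dg<\infty,
\]
which shows that any smooth function on $G$ satisfying $|Df(g)|\leq C_{D}e^{-uL(g)}$ for all invariant differential operators $D$ automatically lies in $H^{\infty}_{L}(G)$. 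This gives $A^{\exp,u}(G)\subset H^{\infty}_{L}(G)$ for such $u$, hence $S^{u}_{G}(M;E)\subset A^{\infty}_{G}(M,E)$.

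Concretely, I would fix $u=u_{0}$ as above, apply Theorem~\ref{g3} (and its remark on derivatives) to obtain the corresponding $t_{u_{0}}>0$, and then for every $0<t\leq t_{u_{0}}$ conclude
\[
\Ind_{t}(D)\in S^{u_{0}}_{G}(M,E)\subset A^{\infty}_{G}(M,E).
\]
The only real point to check carefully is the quantitative choice of $u_{0}$; the growth-rate constant $r_{M}$ already appearing in the pairing estimates, together with the exponential volume bound $\vol(B_{r}(x))\leq M_{1}e^{r_{M}r}$ used in the proof of Theorem~\ref{rc}, provides the explicit threshold. This is the step I expect to be the main obstacle, since one must verify not only that the function itself decays fast enough against $(1+L(g))^{k}$ but also that all invariant derivatives retain this decay — but both of these are precisely what the derivative bounds in the remark following Theorem~\ref{g3} supply. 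Once the threshold $u_{0}$ is fixed, the rest of the argument is a direct application of the two inclusions.
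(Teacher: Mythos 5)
Your proposal is correct and follows essentially the same route as the paper: the paper deduces the proposition from exactly the two inclusions you cite, namely the remark after Theorem~\ref{g3} giving $\Ind_{t}(D)\in S^{u}_{G}(M,E)$ for $0<t\leq t_{u}$, and the decomposition $S^{u}_{G}(M;E)\cong (A^{\exp,u}(G)\hat{\otimes}\Psi^{-\infty}(N,E|_{N}))^{K\times K}$ together with $A^{\exp,u}(G)\subset H^{\infty}_{L}(G)$ for $u$ large, which the paper justifies by the exponential volume growth of $G$. Your explicit integrability condition $\int_{G}(1+L(g))^{2k}e^{-2uL(g)}\,dg<\infty$ is a slightly more careful spelling-out of the threshold that the paper merely asserts.
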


Therefore, for any $c\in Z^{m}_{\diff, pol}(G)$, $\langle\tau^{M}_{c},\Ind_{t}(D)\rangle$ is well defined for suitable $t$. Next, we explain the relation between 
$\langle\tau^{M}_{c},\Ind_{t}(D)\rangle$ and $\tau(f_{c})(\Ind_{t}(D))$ which is useful to obtain the higher index formula.

Indeed, we have:
\begin{proposition}
\label{prop10.4}
    Let $a\in C^{m}_{\diff}(G)$ and $k_{0},...,k_{m}\in C^{\infty}(M\times M, E\boxtimes E^{*})^{G}$. If both $\tau^{M}_{a}(k_{0},...,{k_{m}})$ and $\tau(f_{a})(k_{0},...,{k_{m}})$ are well defined where $$f_{a}(x_{0},...,x_{m}):=\int_{G^{m+1}}c(g_{0}^{-1}x_{0})...c(g_{m}^{-1}x_{m})a(g_{0},...,g_{m})dg_{0}...dg_{m},$$
    then
    $$\tau^{M}_{a}(k_{0},...,{k_{m}})=\tau(f_{a})(k_{0},...,{k_{m}}).$$

\end{proposition}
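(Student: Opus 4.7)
The strategy is to start from the right-hand side $\tau(f_a)(k_0,\ldots,k_m)$, unfold the defining integral for $f_a$, and perform a sequence of changes of variables that transforms it into the defining expression for $\tau^M_a(k_0,\ldots,k_m)$. The hypothesis that both sides are well-defined supplies the absolute convergence needed to apply Fubini freely throughout. After interchanging integrations, the right-hand side becomes
\[
\int_{M^{m+1}}\!\int_{G^{m+1}} c(x_0)\, c(g_0^{-1}x_0)\cdots c(g_m^{-1}x_m)\, a(g_0,\ldots,g_m)\,\tr\bigl(k_0(x_0,x_1)\cdots k_m(x_m,x_0)\bigr)\, dg_0\cdots dg_m\, d\mu(x_0)\cdots d\mu(x_m).
\]

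The key manipulations are then: first, use left-$G$-invariance of the cochain, $a(g_0,\ldots,g_m)=a(e,g_0^{-1}g_1,\ldots,g_0^{-1}g_m)$, and substitute $h_i:=g_0^{-1}g_i$ for $i\ge 1$, so that by left-invariance of Haar measure $dg_i=dh_i$; second, apply the $G$-invariance of the Riemannian measure on $M$ to perform the substitution $x_i=g_0\, y_i$ simultaneously at every slot; finally, invoke $G$-equivariance of the kernels together with cyclicity of the trace to cancel the conjugations $\pi_{g_0}$ that appear at each kernel position, reducing the trace to $\tr\bigl(k_0(y_0,y_1)\cdots k_m(y_m,y_0)\bigr)$. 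The outer cutoff $c(x_0)$ becomes $c(g_0 y_0)$, while $c(g_0^{-1}x_0)$ becomes $c(y_0)$ and $c(g_i^{-1}x_i)$ becomes $c(h_i^{-1}y_i)$. The only surviving dependence on $g_0$ is via $c(g_0 y_0)$, and by the defining normalization of a cutoff function for a proper cocompact action, $\int_G c(g\cdot x)\, dg=1$ for every $x\in M$ (a consequence, in the unimodular setting, of $\int_G c(g^{-1}x)\,dg=1$). Integrating out $g_0$ therefore yields
\[
\int_{G^{m}}\!\int_{M^{m+1}} c(y_0)\prod_{i=1}^{m}c(h_i^{-1}y_i)\, a(e,h_1,\ldots,h_m)\,\tr\bigl(k_0(y_0,y_1)\cdots k_m(y_m,y_0)\bigr)\, dh_1\cdots dh_m\, d\mu(y_0)\cdots d\mu(y_m).
\]

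It remains to recognize this as $\tau^M_a(k_0,\ldots,k_m)$. This is achieved by running the inverse substitution inside the definition of $\tau^M_a$: set $y_0=x_0$ and $y_i:=g_1\cdots g_i\, x_i$ for $i\ge 1$, under which left-invariance of Haar identifies $dg_1\cdots dg_m$ with $dh_1\cdots dh_m$ for $h_i=g_1\cdots g_i$, the cutoffs $c(x_i)$ become $c(h_i^{-1}y_i)$, and the $G$-equivariance of the kernels repackages the twisted trace $\tr\bigl(k_0(x_0,g_1x_1)\cdots k_m(x_m,(g_1\cdots g_m)^{-1}x_0)\bigr)$ into the straight composable trace $\tr\bigl(k_0(y_0,y_1)\cdots k_m(y_m,y_0)\bigr)$. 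The two expressions then coincide term by term, proving the identity.

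The main obstacle is the bookkeeping: one must carefully propagate $m+1$ cutoff factors through the substitutions and verify that the conjugations $\pi_g$ introduced at each kernel position by equivariance do cancel under cyclicity of the trace, rather than leaving residual modular factors. Unimodularity of $G$, which holds in all cases of interest in this paper, is precisely what is needed in the step that collapses $\int_G c(g_0 y_0)\,dg_0$ to $1$; in the non-unimodular setting a modular-function factor would obstruct the clean identification. The absolute convergence guaranteed by the well-definedness hypothesis is what licenses every use of Fubini along the way.
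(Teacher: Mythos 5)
Your argument is correct and is essentially the paper's proof run in the opposite direction: the paper starts from $\tau^M_a$, inserts the factor $\int_G c(g_0^{-1}x_0)\,dg_0=1$, and changes variables $u_i=g_0^{-1}g_1\cdots g_i$, $x_i\mapsto u_ix_i$ to reach $\tau(f_a)$, whereas you unfold $f_a$, make the same translations by $g_0$, and integrate $g_0$ out at the end. The ingredients — left-invariance of the cochain and of Haar measure, unimodularity for $d(g^{-1})=dg$, $G$-invariance of $d\mu$ and equivariance of the kernels, and Fubini justified by the well-definedness hypothesis — coincide with those in the paper, so this is the same proof.
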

\begin{proof}
To prove the proposition, we need to show that:
\begin{align*}
    \tau_{a}^{M}(k_{0},...,{k_{m}})=\int_{M^{m+1}}c(x_{0})f_{a}(x_{0},...,x_{m})\tr(k(x_{0},x_{1})...k(x_{m},x_{0})d\mu(x_{0})...d\mu(x_{m}).
    \end{align*}

    By definition, we have:
 {\small    \begin{multline}
 \label{5}
    \tau_{a}^{M}(k_{0},...,{k_{m}}):=\int_{G^{m}}\int_{M^{m+1}}c(x_{0})...c(x_{m})\tr(k_{0}(x_{0},g_{1}x_{1})...k_{m}(x_{m},(g_{1}...g_{m})^{-1}x_{0}))\\
    \cdot a(e,g_{1},g_{1}g_{2},...,g_{1}...g_{m})dg_{1}...dg_{m}d\mu(x_{0})...d\mu(x_{m}).
    \end{multline}
}
    
Since $\int_{G}c(g_{0}^{-1}x_{0})dg_{0}=1$, for any $x_{0}\in M$ and $a$ is $G$-invariant, $(\ref{5})$ can be written as:
{\small
\begin{multline}
\label{6}
\int_{G^{m+1}}\int_{M^{m+1}}c(g_{0}^{-1}x_{0})c(x_{0})...c(x_{m})\tr(k(x_{0},g_{1}x_{1})...k(x_{m},(g_{1}...g_{m})^{-1}x_{0}))\\
    \cdot a(g_{0}^{-1},g_{0}^{-1}g_{1},g_{0}^{-1}g_{1}g_{2},...,g_{0}^{-1}g_{1}...g_{m})dg_{0}dg_{1}...dg_{m}d\mu(x_{0})...d\mu(x_{m}).
\end{multline}
}
     Let $u_{0}=g_{0}^{-1}, u_{1}=g_{0}^{-1}g_{1},...,u_{m}=g_{0}^{-1}g_{1}...g_{m}$. Then, 
{\small   \begin{multline*}
(\ref{6})=\int_{G^{m+1}}\int_{M^{m+1}}c(u_{0}x_{0})c(x_{0})...c(x_{m})\tr(k_{0}(x_{0},u_{0}^{-1}u_{1}x_{1})...k(_{m}x_{m},u_{m}^{-1}u_{0}x_{0}))\\
    \cdot a(u_{0},u_{1},u_{2},...,u_{m})d(u_{0}^{-1})d(u_{0}^{-1}u_{1})...d(u_{m-1}^{-1}u_{m})d\mu(x_{0})...d\mu(x_{m})
     \end{multline*}
    \begin{align*}
=\int_{G^{m+1}}\int_{M^{m+1}}&c(x_{0})c(u_{0}^{-1}x_{0})...c(u_{m}^{-1}x_{m})\tr(k_{0}(x_{0},u_{1}x_{1})...k_{m}(u_{m}x_{m},x_{0}))\\
    &\cdot a(u_{0},u_{1},u_{2},...,u_{m})du_{0}du_{1}...du_{m}d\mu(x_{0})...d\mu(x_{m})
    \end{align*}
    $$=\int_{M^{m+1}}c(x_{0})f_{a}(x_{0},...,x_{m})\tr(k_{0}(x_{0},x_{1})...k_{m}(x_{m},x_{0}))d\mu(x_{0})...d\mu(x_{m}).$$ }
    The first and second equalities hold since $du_{0}^{-1}=du_{0}$, $d(u_{i}^{-1}u_{i+1})=du_{i+1}$, $d\mu(u_{j}x_{j})=d\mu(x_{j})$  and $k_{1},...,k_{m}$ are $G$-invariant.
    \end{proof}

    On the other hand, we claim that:
    \begin{proposition}
         When $a\in C^{q}_{\diff,anti}(G)$ has almost polynomial growth, we have $f_{a}\in C_{pol,anti}^{q}(M)^{G}$.
    \end{proposition}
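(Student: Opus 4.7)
I will verify that $f_a$ lies in $C^q_{pol,anti}(M)^G$ by checking three properties in turn: $G$-invariance, antisymmetry, and polynomial growth. Smoothness is immediate from differentiation under the integral, since the cutoff $c$ is smooth and compactly supported.

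The first two properties are formal consequences of the analogous properties of $a$. For $G$-invariance, the substitution $g_i = g h_i$ in each variable, combined with the left-invariance of Haar measure and the $G$-invariance $a(g g_0, \ldots, g g_q) = a(g_0, \ldots, g_q)$, cancels the diagonal action of $g$ in the integrand. For antisymmetry, given a permutation $\pi$ of $\{0,\ldots,q\}$, the substitution $g_j = h_{\pi^{-1}(j)}$ rearranges the cutoff factors $c(g_j^{-1} x_{\pi(j)})$ into $\prod_j c(h_j^{-1} x_j)$ and converts $a(g_0,\ldots,g_q)$ into $a(h_{\pi(0)},\ldots,h_{\pi(q)}) = \sign(\pi)\, a(h_0,\ldots,h_q)$, producing the required sign.

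The essential content is the growth estimate. I would fix $R > 0$ with $\supp(c) \subset B_R(z_0)$. On the support of $\prod_i c(g_i^{-1} x_i)$, each $g_i^{-1} x_i$ lies within distance $R$ of $z_0$, so using $G$-invariance of the Riemannian distance,
\[
L(g_i) = d(z_0, g_i z_0) \le d(z_0, x_i) + d(x_i, g_i z_0) = d(z_0, x_i) + d(g_i^{-1} x_i, z_0) \le d(z_0, x_i) + R.
\]
Combining this with the polynomial growth hypothesis $|a(g_0,\ldots,g_q)| \le C\bigl(1+\sum_i L(g_i)\bigr)^m$ and the normalization $\int_G c(g^{-1} x) \, dg = 1$ for every $x \in M$, one obtains
\[
|f_a(x_0,\ldots,x_q)| \le C\,\bigl(1 + qR + \textstyle\sum_{i=0}^{q} d(z_0, x_i)\bigr)^m.
\]

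The main obstacle is converting this bound, expressed in terms of distances to the basepoint, into one expressed in pairwise distances $d(x_i, x_{i+1})$ as the definition of $C^q_{\diff,pol}(M)^G$ requires. Here I would exploit the $G$-invariance already established: for any $g \in G$, one has $f_a(x_0,\ldots,x_q) = f_a(gx_0,\ldots,gx_q)$. Since $M/G$ is compact, one may choose $g$ so that $gx_0$ lies in a fixed compact fundamental domain, whence $d(z_0, gx_0) \le D$ for a constant $D$ independent of $x_0$. The triangle inequality then gives $d(z_0, gx_i) \le D + \sum_{j=0}^{i-1} d(x_j, x_{j+1})$, so that the preceding estimate upgrades to $|f_a(x_0,\ldots,x_q)| \le C'\bigl(1 + \sum_j d(x_j, x_{j+1})\bigr)^m$. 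The elementary inequality $(1+T) \le 2(1+T^2)$ then yields the required polynomial bound in $\sum_j d(x_j, x_{j+1})^2$, completing the verification.
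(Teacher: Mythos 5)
Your proof is correct and its core estimate is the same as the paper's: on the support of $\prod_i c(g_i^{-1}x_i)$ one bounds $L(g_i)\le d(z_0,x_i)+R$ and integrates out the cutoffs using $\int_G c(g^{-1}x)\,dg=1$. The paper reaches the same intermediate bound by expanding $(1+\sum_i L(g_i))^m$ multinomially and estimating each factor $\psi_{i,m_i}(x_i)\le (C+d(x_i,z_0))^{m_i}$ separately; your direct bound is cleaner but equivalent. The genuine difference is your final step: the definition of $C^q_{pol}(M)^G$ demands a bound in the \emph{pairwise} quantities $d(x_j,x_{j+1})^2$, whereas the estimate both of you first obtain is in the basepoint distances $d(z_0,x_i)$ (which can be large even when all $x_i$ coincide). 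You close this by invoking $G$-invariance of $f_a$ and cocompactness to translate $x_0$ into a fixed compact set and then apply the triangle inequality; the paper simply asserts at this point that the basepoint bound ``shows that $f_a$ has almost polynomial growth,'' leaving that reduction implicit. So your argument is, if anything, more complete. Two trivial slips to fix: the constant should be $(q+1)R$ rather than $qR$ since there are $q+1$ integration variables, and in the antisymmetry verification the correct substitution is $g_j=h_{\pi(j)}$ (so that $c(g_j^{-1}x_{\pi(j)})=c(h_{\pi(j)}^{-1}x_{\pi(j)})$ reindexes to $\prod_i c(h_i^{-1}x_i)$); as written, $g_j=h_{\pi^{-1}(j)}$ produces $x_{\pi^2(i)}$ in the cutoff factors.
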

    \begin{proof}
        It is clear that when $a\in C^{q}_{\diff,anti}(G)$, $f_{a}\in C_{anti}^{q}(M)^{G}$. Thus, to prove the claim, we only need to show that
        $a$ having almost polynomial growth implies that $f_{a}$ has almost polynomial growth.

        Recall that $a\in C^{q}_{\diff}(G)$  having almost polynomial growth means that there exists $m\in \mathbb{N}$ such that $$|a(g_{0},...,g_{q})|\leq(1+L(g_{0})+L(g_{1})+...+L(g_{q}))^{m},$$
        where $L(g_{i})=d(z_{0}.g_{i}z_{0})$ for some  $z_{0}\in M$.

        Thus, for any $x_{0},...,x_{q}\in M$, we have:
        \begin{align}
            |f_{a}(x_{0},...,x_{q})|&=|\int_{G^{q+1}}c(g_{0}^{-1}x_{0})...c(g_{q}^{-1}x_{m})a(g_{0},...,g_{q})dg_{0}...dg_{q}|\notag\\
          &\leq \int_{G^{m+1}}c(g_{0}^{-1}x_{0})...c(g_{m}^{-1}x_{m})(1+L(g_{0})+L(g_{1})+...+L(g_{q}))^{m}dg_{0}...dg_{m}\notag\\
&=\int_{G^{m+1}}c(g_{0}^{-1}x_{0})...c(g_{m}^{-1}x_{m})(1+d(z_{0},g_{0}z_{0})+...+d(z_{0},g_{q}z_{0}))^{m}dg_{0}...dg_{m}\label{a}
        \end{align}

        Moreover, for any $x\in M$ and $m\in \mathbb{N}^{+}$, let $$\psi_{i,m}(x):=\int_{G}c(g_{i}^{-1}x)d(z_{0},g_{i}z_{0})^{m}dg_{i}.$$
        Since (\ref{a}) can be rewritten as :
        $$\sum_{m_{0}+...m_{q}=m}\int_{G^{m+1}}c(g_{0}^{-1}x_{0})...c(g_{m}^{-1}x_{m})d(z_{0},g_{0}z_{0})^{m_{0}}...d(z_{0},g_{q}z_{0})^{m_{q}}dg_{0}...dg_{m}$$
        where $m_{i}$ runs over $\{0,1,2,...q\}$,
        we have:
        $$ |f_{a}(x_{0},...,x_{q})|\leq\sum_{m_{0}+...m_{q}=m}\psi_{0,m_{0}}(x_{0})...\psi_{q,m_{q}}(x_{q}).$$
        Since $d(,)$ is left-invariant, for any $g\in G$ and $x\in M$,
        we have:
        \begin{align*}
        d(z_{0},gz_{0})&=d(g^{-1}z_{0},z_{0})\\
        &\leq d(g^{-1}x,g^{-1}z_{0})+d(g^{-1}x,z_{0})\\
        &=d(x,z_{0})+d(g^{-1}x,z_{0}).
        \end{align*}
        On the other hand, for any $p\in \mathbb{N}^{+}$, there exists $C>0$ such that, $$c(g^{-1}x)d(g^{-1}x,z_{0})^{p}\leq C^{p}.$$
        This is because $\supp(c)$ is compact and for any $y\in M$, we have:
        $$c(y)d(y,z_{0})^{p}\leq (\sup_{z\in \supp(c)}d(z_{0},z))^{p}.$$

        Thus, we have:
        \begin{align*}
            \psi_{i,m}(x)&=\int_{G}c(g_{i}^{-1}x)d(z_{0},g_{i}z_{0})^{m}dg_{i}\\
            &\leq\int_{G}c(g_{i}^{-1}x)(C+d(x,z_{0}))^{m}dg_{i}\\
            &=(C+d(x,z_{0}))^{m}.
        \end{align*}
        Because of this, 
        \begin{align*}
            |f_{a}(x_{0},...,x_{q})|&\leq\sum_{m_{0}+...m_{q}=m}\psi_{0,m_{0}}(x_{0})...\psi_{q,m_{q}}(x_{q})\\
            &\leq \sum_{m_{0}+...m_{q}=m}(C+d(x_{0},z_{0}))^{m_{0}}... (C+d(x_{q},z_{0}))^{m_{q}}
        \end{align*}
        which shows that $f_{a}$ has almost polynomial growth.
    \end{proof}

    Because of the proposition above, for any $v>0$, we have $C^{q}_{pol}(M)^{G}\subset C^{q}_{v}(M)^{G}$. Thus, for any $a\in Z_{\diff,anti}^{2q}(G)$ and suitable $0<t\leq 1$, $\tau(f_{a})(\Ind_{t}(D))$ is well defined and following from Proposition \ref{prop7.2}, we have:
    $$\langle\tau_{a}^{M},\Ind_{t}(D)\rangle=\frac{(2q)!}{q!}\tau_{a}^{M}(\Ind_{t}(D))=\frac{(2q)!}{q!}\tau(f_{a})(\Ind_{t}(D)).$$

    Since $\langle\tau_{a}^{M},\Ind_{t}(D)\rangle$ is independent of $t$, we have $$\langle\tau_{a}^{M},\Ind_{t}(D)\rangle=\lim_{t\rightarrow 0}\langle\tau_{a}^{M},\Ind_{t}(D)\rangle=\lim_{t\rightarrow 0}\frac{(2q)!}{q!}\tau(f_{a})(\Ind_{t}(D)).$$
    Thus, Theorem \ref{mthm2} can be used to obtain a higher index formula:
    \begin{theorem}
    \label{thm7.6}
        Let $a\in Z_{\diff,anti}^{2q}(G)$  be a cocycle of polynomial growth and $\Ind_{t}(D)\in K_{0}(A^{\infty}_{G}(M,E))$. Then the following $C^{*}$-higher index formula holds:
        $$\langle\tau_{a}^{M},\Ind_{t}(D)\rangle=\frac{(-1)^{n/2-q}}{(2\pi i)^{2q-n/2}}\int_{M}c_{0}\Hat{{A}}_{G}(M)\wedge\ch_{G}^{0}(\pi!([\pi^{*}(E^{+}),\pi^{*}(E^{-}),\sigma(D)]))\wedge \omega_{a}.$$
    \end{theorem}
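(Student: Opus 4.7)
The plan is to assemble this higher index formula from pieces that have already been established in the excerpt, so the proof is essentially a bookkeeping exercise with one honest input: the local index formula of Theorem~\ref{mthm2}.

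First I would argue that $\langle\tau_{a}^{M},\Ind_{t}(D)\rangle$ is independent of the parameter $t$ (within the admissible range). This is because, by Proposition~\ref{prop7.2}, for $t$ sufficiently small $\Ind_{t}(D)\in S_{G}^{u}(M,E)\subset A_{G}^{\infty}(M,E)$, and the resulting $K$-theory class in $K_{0}(A_{G}^{\infty}(M,E))\cong K_{0}(C^{*}(M,E)^{G})$ coincides (as a class) with the higher index of $D$; since $\tau_{a}^{M}$ is a cyclic cocycle on $A_{G}^{\infty}(M,E)$ pairing with $K$-theory, the pairing depends only on the $K$-class and hence is constant in $t$.

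Next I would translate the pairing $\langle\tau_{a}^{M},\cdot\rangle$ into the pairing $\tau(f_{a})(\cdot)$ studied in Section~6. The preparatory work just before the theorem statement already does this: the hypothesis $a\in Z_{\diff,anti}^{2q}(G)$ of polynomial growth guarantees $f_{a}\in C_{pol,anti}^{2q}(M)^{G}\subset C_{v,anti}^{2q}(M)^{G}$ for every $v>0$, so $\tau(f_{a})(\Ind_{t}(D))$ is well-defined, and Proposition~\ref{prop10.4} together with the identification of $\tau_{a}^{M}$ with its antisymmetrization yields the explicit normalization
\[
\langle\tau_{a}^{M},\Ind_{t}(D)\rangle=\frac{(2q)!}{q!}\,\tau(f_{a})(\Ind_{t}(D)).
\]
Combining this with the $t$-independence from the previous step, I may replace the left-hand side by its limit as $t\to 0$:
\[
\langle\tau_{a}^{M},\Ind_{t}(D)\rangle=\lim_{t\to 0}\frac{(2q)!}{q!}\,\tau(f_{a})(\Ind_{t}(D)).
\]

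Finally I would invoke Theorem~\ref{mthm2} applied to $f=f_{a}$, which computes this limit as
\[
\frac{(-1)^{n/2-q}}{(2\pi i)^{2q-n/2}}\,\frac{q!}{(2q)!}\int_{M}c_{0}\,\hat{A}_{G}(M)\wedge\ch_{G}^{0}\bigl(\pi!([\pi^{*}(E^{+}),\pi^{*}(E^{-}),\sigma(D)])\bigr)\wedge\omega_{f_{a}}.
\]
The combinatorial prefactor $(2q)!/q!$ and the prefactor $q!/(2q)!$ cancel exactly, and by the convention $\omega_{a}:=\omega_{f_{a}}=(d_{1}\cdots d_{2q}f_{a})|_{\Delta}$ this delivers the desired identity.

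The only genuinely nontrivial ingredient is Theorem~\ref{mthm2} itself; once that is in hand, the remaining content of Theorem~\ref{thm7.6} is essentially the algebraic reconciliation of the two cyclic-cocycle normalizations (the factor $(2q)!/q!$) together with the $K$-theoretic $t$-independence. So I expect no real obstacle beyond being careful that (i)~the admissible range of $t$ is nonempty (guaranteed by Proposition~\ref{prop7.2}), and (ii)~the growth condition on $a$ matches the growth condition on $f_{a}$ required by both the pairing and Theorem~\ref{mthm2}, which was precisely the point of the preparatory lemmas preceding the statement.
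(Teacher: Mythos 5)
Your proposal is correct and follows essentially the same route as the paper: it establishes well-definedness of $\tau(f_{a})(\Ind_{t}(D))$ from the polynomial growth of $a$, converts $\langle\tau_{a}^{M},\cdot\rangle$ into $\frac{(2q)!}{q!}\tau(f_{a})(\cdot)$ via Proposition~\ref{prop10.4}, uses $t$-independence of the $K$-theory pairing to pass to the limit $t\to 0$, and then applies Theorem~\ref{mthm2} so that the prefactors $(2q)!/q!$ and $q!/(2q)!$ cancel. The only difference is cosmetic: you spell out the justification for $t$-independence slightly more explicitly than the paper does.
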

    
   \subsection{Pairing between $K$-theory and cohomology}
    Now we consider the case $M\cong G/H$ and define the pairing between $H^{even}_{DR}(G/H)^{G}$ and $K_{0}(A^{\infty}_{G}(G/H,E))$.

First, we recall the definition of the van Est map linking $H_{\diff}^{*}(G)$ and $H^{*}(M)^{G}$.

As in \cite{pp1} and \cite{pp2}, the van Est map from the smooth group cohomology $H_{\diff}^{*}(G)$ to the invariant cohomology $H^{*}_{DR}(M)^{G}$ is constructed as follows: given a smooth group cochain $a\in C^{k}_{\diff}(G)$, define the differential form:
$$\omega_{a}:=(d_{1}...d_{k}f_{a})|_{\Delta}
$$
where $d_{i}$ is the differential with respect to the $i$-th variable and the function $f_{a}\in C^{\infty}(M)$ is given by:
$$f_{a}(x_{0},...,x_{k}):=\int_{G^{k+1}}c(g_{0}^{-1}x_{0})...c(g_{k}^{-1}x_{k})a(g_{0},...,g_{k})dg_{0}...dg_{k}.$$
Thus, the van Est homomorphism can be defined as follows:
\begin{definition}
    The map $a\rightarrow \omega_{a}$ gives rise to a homomorphism from $H_{\diff}^{*}(G)$ to $H_{DR}^{*}(M)^{G}$ :
    \begin{align*}
        \Psi_{M}:H_{\diff}^{*}(&G)\rightarrow H^{*}_{DR}(M)^{G}\\
        &[a]\mapsto [\omega_{a}].
    \end{align*}
\end{definition}
\begin{remark}
     Proposition $2.5$ in \cite{pp1} shows that $\Psi_{M}$ is a homomorphism from $H_{\diff}^{*}(G)$ to $H^{*}(M)^{G}$. Similarly, $a\rightarrow \omega_{a}$ induces a homomorphism from $H_{\lambda}^{*}(G)$(or $H_{anti}^{*}(G)$) to $H^{*}(M)^{G}$. We  denote these homomorphisms by $\Psi_{M,\lambda}$ and $\Psi_{M,anti}$ respectively. When $M=G/H$ where $H$ is a maximal compact subgroup of $G$. Remark~$2.7$ in \cite{pp1} shows that $\Psi_{M}$ is an isomorphism.
\end{remark}

Following the above notation, we have:
\begin{proposition}
\label{prop1}
    For any $k,l\in \mathbb{N}$, 
    \begin{align*}
        &(1)f_{\epsilon(a)}=\epsilon(f_{a})\\
        &(2)\omega_{g}=\omega_{\epsilon(g)}
        \end{align*}
        hold for any $a\in C^{k}_{\diff}(G)$ and $g\in C^{l}_{\lambda}(M)^{G}$.
\end{proposition}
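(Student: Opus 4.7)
The plan is to verify both identities by unwinding the definitions and tracking how the antisymmetrization map $\epsilon$ (taking $c$ to $\frac{1}{(k+1)!}\sum_{\sigma \in S_{k+1}} \sign(\sigma)\, c\circ\sigma$) interacts with the two constructions $a\mapsto f_a$ and $f\mapsto \omega_f$.

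For part (1), I would substitute the definition of $\epsilon(a)$ directly into the integral formula for $f_{\epsilon(a)}$ and interchange the finite sum with the integral over $G^{k+1}$. In each summand indexed by $\sigma \in S_{k+1}$, I would then perform the change of variables $g_i \mapsto g_{\sigma^{-1}(i)}$, which is valid because the Haar measure is invariant under permutations of the factors. The key is that this substitution transfers the permutation from the $a$-argument to the cutoff product: $\prod_i c(g_i^{-1}x_i)$ becomes $\prod_i c(g_i^{-1} x_{\sigma(i)})$, while $a(g_{\sigma(0)},\ldots,g_{\sigma(k)})$ returns to $a(g_0,\ldots,g_k)$. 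Collecting terms, the result is exactly $\frac{1}{(k+1)!}\sum_\sigma \sign(\sigma)\, f_a(x_{\sigma(0)},\ldots,x_{\sigma(k)}) = \epsilon(f_a)(x_0,\ldots,x_k)$. This part is purely formal and presents no obstacle.

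For part (2), the structural fact I would aim to establish is the transformation law
\[
\omega_{g\circ\sigma} \;=\; \sign(\sigma)\,\omega_g \qquad \text{for every } \sigma \in S_{l+1},
\]
valid on cyclic cochains $g \in C^l_\lambda(M)^G$. Granting this, the conclusion is immediate by averaging with signs:
\[
\omega_{\epsilon(g)} \;=\; \frac{1}{(l+1)!}\sum_{\sigma \in S_{l+1}} \sign(\sigma)\,\omega_{g\circ\sigma} \;=\; \frac{1}{(l+1)!}\sum_{\sigma}\sign(\sigma)^2\,\omega_g \;=\; \omega_g.
\]
To prove the transformation law, I would split into two cases. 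For permutations that fix $0$, the statement is a direct consequence of the wedge-product structure in $(d_1\cdots d_l g)|_\Delta$: permuting the slots $1,\ldots,l$ in $g$ permutes the 1-form factors $d_i g$ and therefore introduces the sign of the permutation when they are wedged together after restriction to the diagonal.

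The main obstacle is the case of permutations that move $x_0$, because $x_0$ is the only argument not differentiated by $d_1\cdots d_l$. The trick here is to exploit the diagonal restriction: at $\Delta$, the tangential differential $d(h|_\Delta) = (d_0+d_1+\cdots+d_l)h|_\Delta$ is the total derivative of a function on $M$, so acting by $d_0$ on the diagonal is the same as $-\sum_{i\geq 1} d_i$ modulo differentials tangent to $\Delta$. Combined with the cyclic relation $g(x_0,\ldots,x_l)=(-1)^l g(x_l,x_0,\ldots,x_{l-1})$, which lets me move $x_0$ to any slot up to the appropriate sign, this reduces the case of a general $\sigma$ to the stabilizer-of-$0$ case already handled. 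This diagonal/cyclicity juggling is where the hypothesis $g\in C^l_\lambda(M)^G$ genuinely enters, and I expect it to be the most delicate step of the argument.
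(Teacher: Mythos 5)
Your part (1) is exactly the paper's argument: expand $\epsilon(a)$ under the integral, interchange the finite sum with the integration over $G^{k+1}$, and relabel the Haar variables so that the permutation migrates from the arguments of $a$ onto the points $x_i$; nothing to add there. For part (2), your overall strategy --- prove the transformation law $\omega_{g\circ\sigma}=\sign(\sigma)\,\omega_{g}$ for cyclic $g$ and then average over $S_{l+1}$ --- is sound and carries the same combinatorial content as the paper's proof, which instead works with elementary tensors $p=p_0\otimes\cdots\otimes p_l$, shows that full antisymmetrization collapses to the cyclic symmetrizer ($\omega_{\epsilon(p)}=\omega_{\lambda(p)}$), and then uses $\lambda(p)=p$ for cyclic $p$.

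The one genuinely shaky point is your treatment of permutations that move the slot $0$. The identity ``$d_0\equiv-\sum_{i\geq 1}d_i$ modulo differentials tangent to $\Delta$'' is the wrong tool: the proposition asserts an equality of differential forms on the nose, and an argument valid only modulo correction terms would prove something strictly weaker. More to the point, no manipulation of $d_0$ is needed, because cyclicity acts at the level of \emph{functions}, before any derivative is taken. Writing $c$ for the cyclic permutation with $g(x_0,\dots,x_l)=(-1)^l g(x_{c(0)},\dots,x_{c(l)})$, and noting $\sign(c)=(-1)^l$, the cyclicity hypothesis says precisely $g\circ c^{\,j}=\sign(c)^{j}\,g$ as functions on $M^{l+1}$. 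Any $\sigma\in S_{l+1}$ factors as $\sigma=\tau\circ c^{\,j}$ with $\tau$ in the stabilizer of $0$ (choose $j$ so that $c^{-j}(0)=\sigma^{-1}(0)$), whence $g\circ\sigma=\sign(c)^{j}\,(g\circ\tau)$ and the transformation law reduces to the stabilizer case you have already handled by reordering wedge factors. With this repair your proof is complete, and in fact slightly cleaner than the paper's in that it treats a general cyclic cochain directly rather than reducing ``for convenience'' to elementary tensors.
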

\begin{proof}
    For any $a\in C^{k}_{\diff}(G)$, we have:
    \begin{align*}
        f_{\epsilon(a)}(x_{0},...,x_{k})&=\int_{G^{k+1}}c(g_{0}^{-1}x_{0})...c(g_{k}^{-1}x_{k})\epsilon(a)(g_{0},...,g_{k})dg_{0}...dg_{k}\\
        &=\frac{1}{(k+1)!}\sum_{\sigma\in S_{k}}\int_{G^{k+1}}c(g_{0}^{-1}x_{0})...c(g_{k}^{-1}x_{k})a(g_{\sigma({0})},...,g_{\sigma({k})})dg_{0}...dg_{k}\\
        &=\frac{1}{(k+1)!}\sum_{\sigma\in S_{k}}\int_{G^{k+1}}c(g_{0}^{-1}x_{\sigma(0)})...c(g_{k}^{-1}x_{\sigma(k)})a(g_{0},...,g_{k})dg_{0}...dg_{k}\\
        &=\epsilon(f_{a})(x_{0},...,x_{k})
    \end{align*}
    which proves the first claim.

    Next, let $t:C^{l}(M)^{G}\rightarrow C^{l}(M)^{G}$ be the cyclic operator which means that for any $p\in C^{l}(M)^{G}$, we have:
    $$t(p)(x_{0},...x_{l})=(-1)^{l}p(x_{l},x_{0},...,x_{l-1}).$$
    Thus, let $\lambda:C^{l}(M)^{G}\rightarrow C^{l}(M)^{G}$ be defined by
    $$\lambda(p)=\frac{1}{l+1}\sum_{i=0}^{l}t^{l}(p)$$
     for any $p\in C^{l}(M)^{G}$. 

     Then, for convenience, let $p=p_{0}\otimes...\otimes p_{l}\in C^{l}(M)^{G}$, we have:
    \begin{align*}
        \omega_{\epsilon(p)}&=\frac{1}{(l+1)!}\sum_{\sigma\in S_{l+1}}\sign(\sigma)p_{\sigma(0)}dp_{\sigma(1)}\wedge...\wedge dp_{\sigma(l)}\\
        &=\frac{1}{l+1}\sum_{i=0}^{l}(-1)^{i}p_{i}dp_{0}\wedge...dp_{i-1}\wedge dp_{i+1}\wedge...\wedge dp_{l}\\
        &=\omega_{\lambda(p)}.
    \end{align*}
    Since $p=\lambda(p)\in C^{l}_{\lambda}(M)^{G}$, we have:
    $$\omega_{p}=\omega_{\lambda(p)}=\omega_{\epsilon(p)}.$$
    Therefore, we proved the second claim.
\end{proof}

On the other hand, there exists an isomorphism from  $H^{*}_{DR}(G/H)^{G}$ to $H_{\diff}^{*}(G)$ when $H$ is a maximal compact subgroup of $G$.:
\begin{definition}
    The map $\alpha\rightarrow J(\alpha)$ induces an isomorphism from   $H^{*}_{DR}(G/H)^{G}$ to $H_{\diff}^{*}(G)$:
    \begin{align*}
        J_{*}:H^{*}_{DR}(&G/H)^{G}\rightarrow H_{\diff}^{*}(G)\\
        &[\alpha]\mapsto [J(\alpha)].
    \end{align*}
\end{definition}
\begin{remark}
    As in \cite{pp1}, $J_{*}$ is the inverse of $\Psi_{G/H}$ and when $G/H$ is of non-positive sectional curvature, $J(\alpha)$ has almost polynomial growth.
\end{remark}

On the other hand, we have:
\begin{definition}
    The map $a\rightarrow f_{a}$ induces a homomorphism from $H_{\diff}^{*}(G)$ to $H^{*}_{DR}(G/H)^{G}$.
\end{definition}
Thus, we can construct a homomorphism from $H^{*}_{DR}(G/H)^{G}$ to $H_{pol,anti}^{*}(G/H)^{G}$ as follows:
\begin{definition}
    The map $\alpha\rightarrow f_{\epsilon\circ J(\alpha)}$ induced a homomorphism from $H^{*}_{DR}(G/H)^{G}$ to $H_{pol,anti}^{*}(G/H)^{G}$ when $G/H$ is of non-positive sectional curvature.
\end{definition}
Thus, using the homomorphism defined by Piazza and Posthuma in \cite{pp1} and \cite{pp2}, we can define a pairing between $H^{even}_{DR}(G/H)$ and $K_{0}(A^{\infty}_{G}(G/H, E))$:
\begin{definition}
    For any $[\alpha]\in H^{even}_{DR}(G/H)$ and $[p]-[q]\in K_{0}(A^{\infty}_{G}(G/H,E))$, we define:
    $$\langle[\alpha],[p]-[q]\rangle_{DR,G/H}:=\langle\tau^{G/H}_{\epsilon\circ J(\alpha)},[p]-[q]\rangle.$$
    Here $G$ has finitely many connected components, $H$ is a maximal compact subgroup of $G$ and $G/H$ is of non-positive sectional curvature.
\end{definition}
Similarly, there exists a pairing between $H^{even}_{DR}(G/H)$ and $K_{0}(C^{*}_{r}(G))$:
\begin{definition}
    For any $[\alpha]\in H^{even}_{DR}(G/H)$ and $[p]-[q]\in K_{0}(C^{*}_{r}(G))$, we define:
    $$\langle[\alpha],[p]-[q]\rangle_{DR,G}:=\langle\tau^{G}_{\epsilon\circ J(\alpha)},[p]-[q]\rangle.$$
    Here $G$ has finitely many connected components and satisfies the RD condition, $H$ is a maximal compact subgroup of $G$ and $G/H$ is of non-positive sectional curvature.
\end{definition}

\section{Equivalence of topological index and analytic index}
Let $D$ be a self-adjoint, $G$-equivariant, elliptic differential operator on a $\mathbb{Z}_{2}$-graded, $G$-equivariant vector bundle $E$ over $G/H$. In \cite{wang3}, the higher index $\Ind_{G}(D)\in K_{0}(C_{r}^{*}(G))$ is defined by using the analytic assembly map from the Baum-Connes conjecture. Since the $K$-homology class of $D$  can be identified with $[\pi^{*}(E^{+}),\pi^{*}(E^{-}),\sigma(D)]\in K_{G}^{0}(T^{*}(G/H))$ under Poincar\'e duality, we obtain a map:
\begin{align*}
    &\Ind_{A}:K_{G}^{0}(T^{*}(G/H))\rightarrow  K_{0}(C_{r}^{*}(G))\\
    &[\pi^{*}(E^{+}),\pi^{*}(E^{-}),\sigma(D)]\mapsto\Ind_{G}(D).
\end{align*}

On the other hand, we define the topological index map as follows:
\begin{align*}
    &\Ind_{T}:K_{G}^{0}(T^{*}(G/H))\rightarrow H^{even}_{DR}(G/H)^{G}\\
    &[\pi^{*}(E^{+}),\pi^{*}(E^{-}),\sigma(D)]\mapsto \pi!((\Hat{A}^{2}_{G}(G/H)))\wedge \ch_{G}^{0}([\pi^{*}(E^{+}),\pi^{*}(E^{-}),\sigma(D)])).
\end{align*}

We denote $H_{DR,even}(G/H)^{G}:=\Hom(H^{even}_{DR}(G/H)^{G},\mathbb{C})$. Then there exists a map $\PD:H^{even}_{DR}(G/H)^{G}\rightarrow H_{DR,even}(G/H)^{G}$ such that: for any $[\alpha],[\beta]\in H^{even}_{DR}(G/H)^{G}$, $$\PD([\alpha])([\beta])=\sum_{k=0}^{\frac{n}{2}}\frac{({-1})^{k}}{(2\pi i)^{2k-n}}\int_{G/H}c_{0}\alpha_{2k}\wedge\beta_{n-2k}$$
where $\dim(G/H)=n$, $\alpha_{2k}$ and $\beta_{n-2k}$ are the $2k$-degree component of $\alpha$ and ${(n-2k)}$-degree component of $\beta$ respectively.

In this section, we aim to prove the agreement between the topological index and analytic index which is stated   as follows:
\begin{theorem}
\label{mth}
The following diagram commutes:
\begin{equation*}
    \xymatrix{   K_{G}^{0}(T^{*}(G/H))\ar[r]^{\Ind_{A}}\ar[d]_{\Ind_{T}} &  K_{0}(C_{r}^{*}(G)) \ar[d]^{\tilde{\ch} }\\
         H^{even}_{DR}(G/H)^{G}\ar[r]^{PD}&H_{DR,even}(G/H)^{G}
                      }
\end{equation*}
where $\tilde{\ch}:K_{0}(C^{*}_{r}(G))\rightarrow H_{DR,even}(G/H)^{G}$ is defined by
$$\tilde{\ch}([p]-[q])(\alpha):=\langle\alpha,[p]-[q]\rangle_{DR,G}$$
for every $\alpha\in  H^{even}_{DR}(G/H)^{G}$ and $[p]-[q]\in K_{0}(C^{*}_{r}(G))$. Here $G$ has finitely many connected components and satisfies the RD condition, $H$ is a maximal compact subgroup of $G$ and $G/H$ is of non-positive sectional curvature.
\end{theorem}

To prove Theorem \ref{mth},  we need to show:
 Let $k\in \mathbb{N}$, for any $[\alpha]\in H^{2k}_{DR}(G/H)^{G}$, 
    $$\langle[\alpha],\Ind_{t}(D)\rangle_{DR,G/H}=\frac{(-1)^{k-n/2}}{(2\pi i)^{2k-n/2}}\int_{G/H}c_{0}\alpha\wedge\hat{A}_{G}(G/H)\wedge\ch_{G}^{0}(\pi!([\pi^{*}(E^{+}),\pi^{*}(E^{-}),\sigma(D)]))$$
    where $D$ is the Dirac operator on $E$.

 We will split the proof of the claim above into two lemmas.
    
    First, we explain the relation between $H^{*}_{\diff}(G)$ and  $H_{anti}^{*}(G)$.
\begin{lemma}
\label{anti}
    The anti-symmetric operator $\epsilon_{*}$ induces an isomorphism from  $H^{*}_{\diff}(G)$ to $H_{anti}^{*}(G)$.
\end{lemma}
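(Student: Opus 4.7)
The plan is to split $\epsilon_{*}$ into a surjectivity argument and an injectivity argument, each handled independently. Since $C^{*}_{\diff,anti}(G)\subset C^{*}_{\diff}(G)$ is a subcomplex, the inclusion $\iota$ is a chain map, and a routine combinatorial check shows $\epsilon$ is a chain map as well. For any antisymmetric $c$ one has $c(g_{\sigma(0)},\dots,g_{\sigma(k)})=\sgn(\sigma)\,c(g_{0},\dots,g_{k})$, so $\epsilon\circ\iota=\Id$, and therefore $\epsilon_{*}\circ\iota_{*}=\Id$ on cohomology. This already yields the surjectivity of $\epsilon_{*}$.

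For the injectivity I will invoke the van Est map. The paragraph preceding the lemma recalls that for $M=G/H$ the map $\Psi_{M}:H^{*}_{\diff}(G)\to H^{*}_{DR}(G/H)^{G}$ is an isomorphism (Remark~2.7 in \cite{pp1}), and the same formula $[a]\mapsto[\omega_{a}]$ on antisymmetric cocycles yields a homomorphism $\Psi_{M,anti}:H^{*}_{\diff,anti}(G)\to H^{*}_{DR}(G/H)^{G}$. The strategy is to establish the factorization $\Psi_{M}=\Psi_{M,anti}\circ\epsilon_{*}$ at the level of cohomology; the injectivity of $\epsilon_{*}$ then follows from the injectivity of $\Psi_{M}$, and combined with the surjectivity above we obtain the desired isomorphism.

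The heart of the argument is therefore the identity $[\omega_{a}]=[\omega_{\epsilon(a)}]$ in $H^{*}_{DR}(G/H)^{G}$ for each cocycle $a\in C^{k}_{\diff}(G)$. Setting $\phi_{i}(x):=c(g_{i}^{-1}x)$, the form $\omega_{a}(x)$ is the integral against $a(g_{0},\dots,g_{k})$ of $\phi_{0}\,d\phi_{1}\wedge\cdots\wedge d\phi_{k}$. For a transposition $\tau=(i,i{+}1)$ with $i\ge 1$, the change of variables $g_{i}\leftrightarrow g_{i+1}$ inside the integral directly gives $\omega_{\tau^{*}a}=-\omega_{a}$, while for $\tau=(0,1)$ the Leibniz identity $\phi_{0}\,d\phi_{1}+\phi_{1}\,d\phi_{0}=d(\phi_{0}\phi_{1})$ rewrites $\omega_{\tau^{*}a}+\omega_{a}$ as an exact form whose antiderivative is $G$-invariant (the $G$-invariance relying on $G$-invariance of $a$ together with left-invariance of the Haar measure). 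Since these adjacent transpositions generate $S_{k+1}$, we get $\omega_{\sigma^{*}a}\equiv\sgn(\sigma)\,\omega_{a}$ modulo $G$-invariant exact forms for every $\sigma$, and averaging,
\[
\omega_{\epsilon(a)}=\frac{1}{(k+1)!}\sum_{\sigma\in S_{k+1}}\sgn(\sigma)\,\omega_{\sigma^{*}a}\equiv\frac{1}{(k+1)!}\sum_{\sigma}\sgn(\sigma)^{2}\,\omega_{a}=\omega_{a}
\]
modulo $G$-invariant exact forms. The main technical point is checking that the antiderivatives really are $G$-invariant, so that the identity descends to $H^{*}_{DR}(G/H)^{G}$ rather than just to $H^{*}_{DR}(G/H)$; everything else is a bookkeeping exercise with signs of permutations.
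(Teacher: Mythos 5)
Your proof is correct, but it takes a genuinely different route from the paper's. The paper factors everything through the cyclic subcomplex $C^{*}_{\diff,\lambda}(G)$: it proves the \emph{exact} identity $\omega_{p}=\omega_{\epsilon(p)}$ for cyclic cochains (Proposition~\ref{prop1}), imports from \cite{pp1} that the inclusion $C^{*}_{\diff,\lambda}(G)\hookrightarrow C^{*}_{\diff}(G)$ is a quasi-isomorphism, deduces that $\epsilon_{*}:H^{*}_{\lambda}(G)\to H^{*}_{anti}(G)$ is bijective (injectivity from the factorization $\Psi_{G/H,\lambda}=\Psi_{G/H,anti}\circ\epsilon_{*}$ and bijectivity of $\Psi_{G/H}$; surjectivity from $\epsilon\circ i=\Id$ on antisymmetric cochains), and only then transfers the statement to $H^{*}_{\diff}(G)$ through the isomorphism $i_{*}$. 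You bypass the cyclic complex and the quasi-isomorphism citation entirely: your surjectivity argument is the same section trick $\epsilon\circ\iota=\Id$, but for injectivity you establish the \emph{cohomological} identity $[\omega_{a}]=[\omega_{\epsilon(a)}]$ for arbitrary group cochains by reducing to adjacent transpositions, where the swaps $(i,i+1)$ with $i\geq 1$ are handled by a change of variables in the Haar integral and a wedge sign, while $(0,1)$ produces an explicit exact correction $d\bigl(\int a\,\phi_{0}\phi_{1}\,d\phi_{2}\wedge\cdots\wedge d\phi_{k}\,dg\bigr)$ whose primitive you correctly check to be $G$-invariant via left-invariance of Haar measure and $G$-invariance of $a$. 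That last check is essential (the identity must hold in $H^{*}_{DR}(G/H)^{G}$, not merely in $H^{*}_{DR}(G/H)$) and you flag it appropriately; note also that the single-transposition relations hold for arbitrary cochains, not just cocycles, so the induction over a decomposition of $\sigma$ into adjacent transpositions is legitimate. What your approach buys is self-containedness — one fewer external input and no detour through $H^{*}_{\lambda}(G)$ — at the cost of a slightly longer direct computation; both arguments ultimately rest on the injectivity of the van Est map $\Psi_{G/H}$.
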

\begin{proof}
     First, we claim that the following diagram commutes:
     $$\begin{tikzcd}
H_{\lambda}^{*}(G)\arrow[r, "\epsilon_{*}"] \arrow[rd,"\Psi_{G/H,\lambda}"] & H_{anti}^{*}(G) \arrow[d, "\Psi_{G/H,anti}"]                              \\
                                           & H^{*}_{DR}(G/H)^{G}.  
\end{tikzcd}$$

For any $[a]\in H^{k}_{\diff,\lambda}(G)$, we have :$[\Psi_{G/H}\circ\epsilon(a)]=[\omega_{f_{\epsilon(a)}}]$.

Because of Proposition \ref{prop1}, we have:
$$[\Psi_{G/H}\circ\epsilon(a)]=[\omega_{f_{\epsilon(a)}}]=[\omega_{\epsilon(f_{a})}]=[\omega_{f_{a}}]=[\Psi_{G/H}(a)].$$
Thus, the above diagram commutes,
which shows the claim.

According to \cite{pp1}(5B), $C^{*}_{\diff,\lambda}(G)$ is a  quasi-isomorphic subcomplex of $C^{*}_{\diff}(G)$, and so the inclusion
     $i:C^{*}_{\diff,\lambda}(G)\rightarrow C^{*}_{\diff}(G)$ induces an isomorphism from $H_{\lambda}^{*}(G)$ to $ H_{\diff}^{*}(G)$. Since $\Psi_{G/H}:H^{*}_{\diff}(G)\rightarrow H^{*}_{DR}(G/H)^{G}$ is an isomorphism, 
     $\Psi_{G/H,\lambda}=\Psi_{G/H}\circ i_{*}$ is also an isomorphism. This implies that $\epsilon_{*}$ is injective because $\Psi_{G/H,\lambda}=\Psi_{G/H,anti}\circ\epsilon_{*}$.

     Since the following diagram commutes:
     $$\begin{tikzcd}
H_{anti}^{*}(G)\arrow[r, "i_{*}"] \arrow[rd,"\Id"] & H_{\lambda}^{*}(G) \arrow[d, "\epsilon_{*}"]                              \\
                                           & H^{*}_{anti}(G)  
\end{tikzcd}$$
 this means $\epsilon_{*}$ is also surjective. Thus, $\epsilon_{*}:H^{*}_{\lambda}(G)\rightarrow H^{*}_{anti}(G)$ is an isomorphism.

 Finally, since the following diagram commutes:
     $$\begin{tikzcd}
H_{\lambda}^{*}(G)\arrow[r, "i_{*}"] \arrow[rd,"\epsilon_{*}"] & H_{\diff}^{*}(G) \arrow[d, "\epsilon_{*}"]                              \\
                                           & H^{*}_{anti}(G),  
\end{tikzcd}$$
$\epsilon_{*}:H^{*}_{\diff}(G)\rightarrow H^{*}_{anti}(G)$ is an isomorphism, which completes the proof.
\end{proof}
Then, we prove the second lemma we need.
\begin{lemma}
\label{7.35}
    For any $[\alpha]\in H^{*}_{DR}(G/H)^{G}$, we have:
    $$[\alpha]=[\Psi_{G/H}\circ\epsilon\circ J(\alpha)].$$
\end{lemma}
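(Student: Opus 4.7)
The plan is to promote the commutative triangle from the proof of Lemma~\ref{anti}, namely $\Psi_{G/H,\lambda}=\Psi_{G/H,anti}\circ\epsilon_{*}$ on $H^{k}_{\lambda}(G)$, to an identity on the full cohomology $H^{k}_{\diff}(G)$, and then to apply it to the class $[J(\alpha)]$. Once this promotion is done, the lemma becomes a one-line consequence of the fact that $J_{*}$ inverts $\Psi_{G/H}$.

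First I would record two tautological compatibilities at the cochain level. The inclusion $i\colon C^{*}_{\diff,\lambda}(G)\hookrightarrow C^{*}_{\diff}(G)$ satisfies $\epsilon\circ i=\epsilon|_{C^{*}_{\diff,\lambda}(G)}$ and, by construction of the restricted van~Est map, $\Psi_{G/H}\circ i_{*}=\Psi_{G/H,\lambda}$. As recalled in the proof of Lemma~\ref{anti}, $i_{*}$ is an isomorphism, so for any $[a]\in H^{k}_{\diff}(G)$ there is a unique $[b]\in H^{k}_{\lambda}(G)$ with $i_{*}[b]=[a]$. Chaining the relations yields
\[
\Psi_{G/H,anti}(\epsilon_{*}[a])=\Psi_{G/H,anti}(\epsilon_{*}[b])=\Psi_{G/H,\lambda}[b]=\Psi_{G/H}(i_{*}[b])=\Psi_{G/H}[a],
\]
where the middle equality is exactly the triangle from Lemma~\ref{anti} applied to $[b]\in H^{k}_{\lambda}(G)$. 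This establishes $\Psi_{G/H}=\Psi_{G/H,anti}\circ\epsilon_{*}$ on all of $H^{k}_{\diff}(G)$.

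Now I would specialise to $[a]=[J(\alpha)]$. Since $J_{*}$ is the two-sided inverse of $\Psi_{G/H}$ (the Remark following the definition of $J$), $\Psi_{G/H}(J(\alpha))=\alpha$, and therefore
\[
\alpha=\Psi_{G/H}(J(\alpha))=\Psi_{G/H,anti}(\epsilon_{*}(J(\alpha)))=[\omega_{f_{\epsilon(J(\alpha))}}]=[\Psi_{G/H}\circ\epsilon\circ J(\alpha)],
\]
where the last equality uses that $\Psi_{G/H,anti}$ is induced by the same formula $a\mapsto[\omega_{f_{a}}]$ as $\Psi_{G/H}$, so the two agree on any antisymmetric representative. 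There is no genuine obstacle here: the argument is pure bookkeeping on top of Lemma~\ref{anti} and the inversion identity for $J$; the only point that wants a moment's care is the chain-level compatibility $\epsilon\circ i=\epsilon|_{C^{*}_{\diff,\lambda}(G)}$, which is tautological.
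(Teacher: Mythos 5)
Your proof is correct and follows essentially the same route as the paper's: both are diagram chases built from the commutation relations established in Lemma~\ref{anti} together with the fact that $J_{*}$ inverts $\Psi_{G/H}$. The paper phrases it as reversing a commutative square of isomorphisms (using $\Id=J_{*}\circ\Psi_{G/H}$ and $\Id=i_{*}\circ\epsilon_{*}$), while you instead extend the identity $\Psi_{G/H,anti}\circ\epsilon_{*}=\Psi_{G/H,\lambda}$ from $H^{*}_{\lambda}(G)$ to all of $H^{*}_{\diff}(G)$ via $i_{*}$; the two formulations are equivalent.
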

\begin{proof}
    First, we show that the following diagram commutes:
    $$\begin{tikzcd}
H_{\diff}^{*}(G)\arrow[r, "\Psi_{G/H}"]  & H_{DR}^{*}(G/H)^{G}\arrow[d, "J_{*}"]                              \\
        H_{anti}^{*}(G)\arrow[u, "i_{*}"]
                            &H^{*}_{\diff}(G).\arrow[l,"\epsilon_{*}"]
\end{tikzcd}$$
The above diagram commutes since $\Id=J_{*}\circ\Psi_{G/H} $ and $\Id=i_{*}\circ\epsilon_{*}$.

Since all homomorphisms appearing in the diagram above are  isomorphisms, the following diagram also commutes:
 $$\begin{tikzcd}
H_{DR}^{*}(G/H)^{G}\arrow[r, "J_{*}"]  & H_{\diff}^{*}(G)^{}\arrow[d, "\epsilon_{*}"]                              \\
        H_{\diff}^{*}(G)\arrow[u, "\Psi_{G/H}"]
                            &H^{*}_{anti}(G).\arrow[l,"i_{*}"] 
\end{tikzcd}$$

This implies that 
    $$[\alpha]=[\Psi_{G/H}\circ\epsilon\circ J(\alpha)]\quad \forall [\alpha]\in H^{*}_{DR}(G/H)^{G}$$
    and the proof is completed.
\end{proof}

After these preparations, we will prove the following theorem:

\begin{theorem}
\label{thm8.4}
    Let $k\in \mathbb{N}$. For any $[\alpha]\in H^{2k}_{DR}(G/H)^{G}$, we have:
    $$\langle[\alpha],\Ind_{t}(D)\rangle_{DR,G/H}=\frac{(-1)^{n/2-k}}{(2\pi i)^{2k-n/2}}\int_{G/H}c_{0}\alpha\wedge\hat{A}_{G}(G/H)\wedge\ch_{G}^{0}(\pi!([E^{+},E^{-},\sigma(D)])).$$
\end{theorem}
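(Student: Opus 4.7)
My plan is to derive Theorem~\ref{thm8.4} by direct substitution from Theorem~\ref{thm7.6}, using the chain of identifications from Section~8 (in particular Lemma~\ref{7.35}) to translate between the group cohomology and the invariant de~Rham cohomology of $G/H$. The argument is short once these ingredients are marshalled; the only substantive analytic point is a cohomology-invariance property of the cutoff integral.

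The first step is to unwind the definition of the pairing: one has
$$\langle[\alpha],\Ind_{t}(D)\rangle_{DR,G/H} = \langle \tau^{G/H}_{\epsilon \circ J(\alpha)}, \Ind_t(D)\rangle$$
by construction, so I need to identify the right-hand side. For this I would verify that the cocycle $a := \epsilon \circ J(\alpha)$ satisfies the hypotheses of Theorem~\ref{thm7.6}: it is antisymmetric and closed since $\epsilon$ and $J$ preserve these properties, and it has polynomial growth because $G/H$ is a Riemannian symmetric space of noncompact type, hence of non-positive sectional curvature, so $J(\alpha)$ has polynomial growth (by the remark following the definition of $J_*$), a property manifestly preserved by $\epsilon$.

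Applying Theorem~\ref{thm7.6} with $q = k$ then yields
$$\langle \tau^{G/H}_a, \Ind_t(D)\rangle = \frac{(-1)^{n/2-k}}{(2\pi i)^{2k-n/2}} \int_{G/H} c_0\, \hat{A}_G(G/H) \wedge \ch_G^0(\pi!([\pi^*E^+, \pi^*E^-, \sigma(D)])) \wedge \omega_a,$$
where $\omega_a = \Psi_{G/H}(a)$ by construction of the van~Est map. By Lemma~\ref{7.35} we have $[\omega_a] = [\Psi_{G/H} \circ \epsilon \circ J(\alpha)] = [\alpha]$ in $H^{2k}_{DR}(G/H)^G$, so once the cutoff integral is shown to descend to de~Rham cohomology in the last slot, $\omega_a$ may be replaced by $\alpha$; after commuting the even-degree factors this matches the sign and normalization in the desired formula.

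The one delicate point is precisely that cohomology invariance. Writing $\omega_a - \alpha = d\eta$ for some $\eta \in \mathcal{A}(G/H)^G$ and setting $\mu := \hat{A}_G(G/H) \wedge \ch_G^0(\pi!(\cdots))$, a closed $G$-invariant form, integration by parts reduces the problem to showing $\int_{G/H} (dc_0) \wedge \mu \wedge \eta = 0$. I would deduce this from the standard fact that, for a proper cocompact $G$-action, integration against a cutoff $c_0$ satisfying $\int_G c_0(g^{-1}x)\,dg = 1$ realizes the well-defined push-forward $\mathcal{A}(G/H)^G \to \mathbb{C}$, which is by construction insensitive to adding an exact $G$-invariant form in the integrand. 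This is the main (and essentially only) nontrivial technical step; everything else assembles directly from the theorems already established.
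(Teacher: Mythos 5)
Your proposal is correct and follows essentially the same route as the paper: unwind the pairing to $\langle\tau^{G/H}_{\epsilon\circ J(\alpha)},\Ind_t(D)\rangle$, apply the higher index formula of Theorem~\ref{thm7.6} with $q=k$, identify $\omega_{\epsilon\circ J(\alpha)}$ with $\Psi_{G/H}\circ\epsilon\circ J(\alpha)$, and invoke Lemma~\ref{7.35} to replace it by $\alpha$. The two points you flag explicitly --- the polynomial-growth hypothesis on $\epsilon\circ J(\alpha)$ and the cohomological invariance of the cutoff integral when substituting $\alpha$ for $\omega_a$ --- are left implicit in the paper's proof, and your treatment of them is sound.
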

\begin{proof}
 Indeed, we have:
    \begin{align*}
        &\langle[\alpha],\Ind_{t}(D)\rangle_{DR,G/H}=\langle\tau^{G/H}_{\epsilon\circ J(\alpha)},\Ind_{t}(D)\rangle\\
        =&\frac{(2k)!}{k!}\frac{(-1)^{n/2-k}}{(2\pi i)^{2k-n/2}}\frac{k!}{(2k)!}\int_{G/H}c_{0}\omega_{f_{\epsilon\circ J(\alpha)}}\wedge\hat{A}_{G}(G/H)\wedge\ch_{G}^{0}(\pi!([E^{+},E^{-},\sigma(D)]))\\
        =&\frac{(-1)^{n/2-k}}{(2\pi i)^{2k-n/2}}\int_{G/H}c_{0}\Psi_{G/H}\circ{\epsilon\circ J(\alpha)}\wedge\hat{A}_{G}(G/H)\wedge\ch_{G}^{0}(\pi!([E^{+},E^{-},\sigma(D)]))\\
        =&\frac{(-1)^{n/2-k}}{(2\pi i)^{2k-n/2}}\int_{G/H}c_{0}\alpha\wedge\hat{A}_{G}(G/H)\wedge\ch_{G}^{0}(\pi!([E^{+},E^{-},\sigma(D)])).
    \end{align*}
    The second equality holds because of Theorem \ref{mthm2} and by Lemma \ref{7.35} the last equality holds.
\end{proof}
  
    Therefore, we are ready to prove Theorem \ref{mth}.
\begin{proof}[Proof of Theorem \ref{mth}]
    Let $[\pi^{*}(E^{+}),\pi^{*}(E^{-}),\sigma(D)]\in K_{G}^{0}(T^{*}(G/H))$ and $[\alpha]\in H^{2k}_{DR}(G/H)^{G}$. Let $\mathcal{M}: K_{0}(C^{*}(M,E)^{G})\rightarrow K_{0}(C^{*}_{r}(G))$ be the Morita isomorphism. Then, $$\langle\tau^{G/H}_{\epsilon\circ J(\alpha)},\Ind_{t}(D)\rangle=\langle\tau^{G}_{\epsilon\circ J(\alpha)},\mathcal{M}\circ\Ind_{t}(D)\rangle.$$
    holds following from Proposition 5.7 in \cite{pp1}. Since $\Ind_{A}(D)=\mathcal{M}\circ\Ind_{t}(D)$ (independent of $t\in (0,1]$), we have:
    $$\langle[\alpha],\Ind_{A}(D)\rangle_{DR,G}=\langle\tau^{G}_{\epsilon\circ J(\alpha)},\Ind_{A}(D)\rangle=\langle\tau^{G/H}_{\epsilon\circ J(\alpha)},\Ind_{t}(D)\rangle=\langle[\alpha],\Ind_{t}(D)\rangle_{DR,G/H}.$$

    Then, applying Theorem \ref{thm8.4}, we have:
    \begin{align*}
        \langle[\alpha],\Ind_{A}(D)\rangle_{DR,G}&=\langle[\alpha],\Ind_{t}(D)\rangle_{DR,G/H}\\
        &=\frac{(-1)^{n/2-k}}{(2\pi i)^{2k-n/2}}\int_{G/H}c_{0}\alpha\wedge\hat{A}_{G}(G/H)\wedge \ch_{G}^{0}(\pi!([\pi^{*}(E^{+}),\pi^{*}(E^{-}),\sigma(D)]))\\
        &=\frac{({-1})^{k}}{(2\pi i)^{2k-n}}\int_{T^{*}(G/H)}c_{0}\alpha\wedge\hat{A}_{G}^{2}(G/H)\wedge\ch_{G}^{0}([\pi^{*}(E^{+}),\pi^{*}(E^{-}),\sigma(D)])\\
        &=\PD(\Ind_{T}(D))(\alpha).
    \end{align*}
    Therefore, we complete the proof.
\end{proof}
\begin{remark}
    If we identified $H^{*}(G/H)^{G}$ with $H^{*}(\mathfrak{g},H)$. Following Theorem~\ref{thm6.1} and Theorem~\ref{mth}, for any $\alpha\in H^{2k}(\mathfrak{g},H)$, we have:
    $$\langle[\alpha],\Ind_{A}(D)\rangle_{DR,G}=\frac{1}{(2\pi i)^{k}}\langle\alpha\wedge\hat{A}(\mathfrak{g},H)\wedge\ch_{CM}(\pi!(\sigma(D)),[V]\rangle$$
    where $[V]$is the fundamental class of $T^{*}_{[e]}(G/H)$.
   This is the higher generalization of \cite{Wang1} and an analytic proof of an example of \cite{ppt2}. 
\end{remark}
\appendix
\section{} 

\begin{proof}[Proof of Lemma \ref{c1}]
    
      Fix an $x_{0}\in M$ and let $\tilde{s}\in \Gamma(M, E)$ such that $\supp(\tilde{s})\subset B_{1}(x_{0})$. Since the group action is proper and cocompact, we only need to consider the special case of $B_{1}(x_{0})$. By the Sobolev embedding theorem and the elliptic estimate of $D$, there exist $C>0$ such that:
    $$||\tilde{s}||_{C^{0}(B_{1}(x_{0}))}\leq C\sum_{i=0}^{n}||D^{i}
    \tilde{s}||_{0,B_{1}(x_{0})}.$$
    For any $0<r\leq 1$, define $f(x)=\exp(-\frac{r}{r-d(x,x_{0})^{2}})$ when $d(x,x_{0})\leq r$ and $f(x)=0$ when $d(x,x_{0})>r$, . Thus, for any $s\in \Gamma(M,E)$ we have:
    $$|s(x_{0})|\leq e\cdot\sup_{x\in B_{r}(x_{0})}|f s(x)|\leq C\sum_{i=0}^{n}||D^{i}(f s)||_{0,B_{1}(x_{0})}.
    $$

    Let $\psi \in C^{\infty}_{c}(B_{r}(x_{0}))$ and $w_{k}(\psi)=\sup_{|\alpha|\leq k, x\in B_{1}(x_{0})}|\partial_{\alpha}\psi(x)|$. Thus, 
    \begin{align*}
        ||\psi s||_{0,B_{1}(x_{0})}&=\left(\int_{B_{1}(x_{0})}|\psi s(x)|^{2}dx\right)^{\frac{1}{2}}\\
        &\leq \sup_{x\in B_{1}(x_{0})}|\psi(x)|\cdot\left(\int_{B_{r}(x_{0})}| s(x)|^{2}dx\right)^{\frac{1}{2}}\\
        &=w_{0}(\psi)\cdot||s||_{0,B_{r}(x_{0})}.
    \end{align*}
    We may assume there exist $C_{m}>0$ such that:
    $$||D^{m}(\psi s)||_{0,B_{1}(x_{0})}\leq C_{m}\cdot w_{m}(\psi)\cdot\left(\sum_{i=0}^{m}||D^{i}s||_{0,B_{r}(x_{0})}\right)
    $$
    for any $s\in \Gamma(M,E)$, $0<r\leq1$ and $\psi\in C^{\infty}(M)$ such that $\supp(\psi)\subset B_{r}(x_{0})$.
    For $m+1$, we have:
    $$||D^{m+1}(\psi s)||_{0,B_{1}(x_{0})}\leq ||D^{m}[D,\psi]s||_{0,B_{1}(x_{0})}+||D^{m}\psi Ds||_{0,B_{1}(x_{0})}.
    $$
    Since $$||D^{m}\psi Ds||_{0,B_{1}(x_{0})}\leq C_{m}\cdot w_{m}(\psi)\cdot\left(\sum_{i=0}^{m+1}||D^{i}s||_{0,B_{r}(x_{0})}\right)$$
    and
   \begin{align*}
       ||D^{m}[D,\psi]s||_{0,B_{1}(x_{0})}&\leq \sum_{i=1}^{n}||D^{m}\cdot e_{i}\cdot(\partial_{i}\psi) s||_{0,B_{1}(x_{0})}\\
       &\leq B_{1}\sum_{i=1}^{n}||e_{i}\cdot(\partial_{i}\psi) s||_{m,B_{1}(x_{0})}\\
       &\leq B_{1}B_{2}\sum_{i=1}^{n}||(\partial_{i}\psi )s||_{m,B_{1}(x_{0})}\\
       &\leq B_{1}B_{2}B_{3}\cdot\left(\sum_{i=1}^{n}\sum_{j=0}^{m}||D^{j}(\partial_{i}\psi) s||_{0,B_{1}(x_{0})}\right)\\
       &\leq B_{1}B_{2}B_{3}B_{4}\cdot w_{m+1}(\psi)\cdot\left(\sum_{j=0}^{m}||D^{j}s||_{0,B_{r}(x_{0})}\right),
   \end{align*}
   the first inequality holds because $D$ is the equivariant Dirac operator. The second inequality holds because $D^{m}:L^{2}_{m}(B_{1}(x_{0}),E)\rightarrow L^{2}(B_{1}(x_{0}),E)$ is bounded. 
   
   The reason why the third inequality holds is that $e_{i}:L^{2}_{m}(B_{1}(x_{0}),E)\rightarrow L^{2}_{m}(B_{1}(x_{0}),E)$ is bounded and because of the elliptic estimate of $D$, the fourth inequality hold.
   Thus, there exists $C_{m+1}>0$ such that:$$||D^{m+1}(\psi s)||_{0,B_{1}(x_{0})}\leq C_{m+1}\cdot w_{m}(\psi)\cdot\left(\sum_{i=0}^{m+1}||D^{i}s||_{0,B_{r}(x_{0})}\right)
    $$
    for any $s\in \Gamma(M,E)$, $0<r\leq1$ and $\psi\in C^{\infty}(M)$ satisfying $\supp(\psi)\subset B_{r}(x_{0})$. Let $\psi=f=\exp(-\frac{r}{r-d(x,x_{0})^{2}})$, there exists $C_{1}>0$ such that $w_{k}(\psi)\leq \frac{C_{1}}{r^{k}}$ for any $0<r\leq 1$ and $1\leq k\leq n,k\in\mathbb{N}^{+}$. Then we complete the proof.
\end{proof}
\begin{proof} [Proof of Lemma \ref{c2}]
    Let $K(.,.)\in \Gamma(M\times{M},E\boxtimes E^{*})$ and $0<r\leq 1$. For any $y\in B_{r}(y_{0})$, there exists $u(y)\in E_{y}$ such that $|u(y)|=||K(x,y)||$ and $|k(x,y)s(y)|=||K(x,y)||^{2}$. Since $K(x,y)$ is smooth, we have $u\in C(B_{r}(y_{0}),E)$. There exists $\tilde{u}(y)\in E_{x_{0}}$ for any $y\in B_{r}(y_{0})$ such that $k(x,y)u(y)=||K(x,y)||^{2}\cdot\tilde{u}(y)$. Since $|\tilde{u}(y)|=1$, let $\tilde{u}(y)=\sum_{i=1}^{n}\tilde{u}_{i}(y)\cdot e_{i}$ where $\tilde{u}_{i}\in C(B_{r}(y_{0}))$ and $\{e_{i}\}$ is the orthonormal basis of $E_{x}$. Thus, we have:
    \begin{align*}
        \int_{B_{r}(y_{0})}||k(x,y)||^{2}dy&= \int_{B_{r}(y_{0})}||k(x,y)||^{2}\cdot |\tilde{u}(y)|^{2}dy\\
        &=\sum_{i=1}^{n}\int_{B_{r}(y_{0})}||k(x,y)||^{2}\cdot|\tilde{u}_{i}(y)|^{2}dy\\
        &\leq \sum_{i=1}^{n}\int_{B_{r}(y_{0})}||k(x,y)||^{2}\cdot|\tilde{u}_{i}(y)|dy.
    \end{align*}
    Since $\int_{B_{r}(y_{0})}||k(x,y)||^{2}dy\leq \sum_{i=1}^{n}\int_{B_{r}(y_{0})}||k(x,y)||^{2}\cdot|\tilde{u}_{i}(y)|dy$, there exists $1\leq i_{0}\leq n$ such that
    $$\int_{B_{r}(y_{0})}||k(x,y)||^{2}\cdot|\tilde{u}_{i_{0}}(y)|dy\geq\frac{1}{n}\int_{B_{r}(y_{0})}||k(x,y)||^{2}dy.
    $$
    Let $\hat{u}=\sgn(u_{i_{0}})u$, we have:
    \begin{align*}
        |\int_{B_{r}(y_{0})}k(x,y)\hat{u}(y)dy|&=\left(\sum_{i=1}^{n}\left(\int_{B_{r}(y_{0})}||k(x,y)||^{2}\cdot \sgn(u_{i_{0}})\tilde{u}(y)dy\right)^{2}\right)^{\frac{1}{2}}\\
        &\geq\int_{B_{r}(y_{0})}||k(x,y)||^{2}\cdot|\tilde{u}_{i_{0}}(y)|dy\\
        &\geq\frac{1}{n}\int_{B_{r}(y_{0})}||k(x,y)||^{2}dy.
    \end{align*}
    Therefore, we have:
    \begin{align*}
        \sup_{||s||=1, \supp(s)\subset B_{r}(y_{0})}|\int_{B_{r}(y_{0}}k(x,y)s(y)|&\geq \frac{|\int_{B_{r}(y_{0})}k(x,y)\hat{u}(y)dy|}{||\hat{u}||_{0,B_{r}(y_{0})}}\\
        &=\frac{|\int_{B_{r}(y_{0})}k(x,y)\hat{u}(y)dy|}{(\int_{B_{r}(y_{0})}||k(x,y)||^{2}dy)^{\frac{1}{2}}}\\
        &\geq \frac{1}{n}(\int_{B_{r}(y_{0})}||k(x,y)||^{2}dy)^{\frac{1}{2}}\\
        &=\frac{1}{n}||k(x,y)||_{0,B_{r}(y_{0})}.
    \end{align*}
\end{proof}
\begin{lemma}
\label{a1}
    Let $f$ be holomorphic in $\Omega=\{z\in \mathbb{C}||\im(z)|\leq 2w\}$ where $w>0$ and satisfy: for any $k\in \mathbb{N}^{+}$, there exists $M_{k}>0$ such that $|f(z)z^{k}|\leq M_{k}$ when $z\in \Omega$.
    Then for any $k\in \mathbb{N}^{+}$, there exists $p_{k}>0$ such that $|\hat{f}^{(k)}(s)|\leq p_{k}\exp(-w|s|)$ for any $s\in \mathbb{R}$.
\end{lemma}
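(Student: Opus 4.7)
The plan is to prove Lemma~\ref{a1} by the classical Paley--Wiener-type contour-shift argument for functions holomorphic in a horizontal strip with polynomial decay.

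First, I would express $\hat{f}^{(k)}(s)$ as a single Fourier integral. By the usual differentiation under the integral (justified by the hypothesis that $|f(z)z^{m}|\le M_{m}$ for every $m\ge 1$, which gives $|x|^{k}|f(x)|\le M_{k+2}/(1+x^{2})$ and hence integrable decay on the real line), one has
\[
\hat{f}^{(k)}(s)=\int_{\mathbb{R}}(-ix)^{k}f(x)e^{-isx}\,dx.
\]
The decay hypothesis also gives the same integrable bound on every horizontal line $\{\im z=a\}$ inside the strip $|\im z|\le 2w$, since $|(y+ia)^{k}f(y+ia)|\le M_{k+2}/|y+ia|^{2}$ whenever $|y+ia|$ is large. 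This will allow me to shift contour.

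Second, for $s>0$, I would apply Cauchy's theorem on the rectangle with vertices $\pm R$ and $\pm R-iw$ (lying inside $\Omega$ since $w<2w$). The integrand $(-iz)^{k}f(z)e^{-isz}$ is holomorphic in $\Omega$, and on the two vertical sides $|\re z|=R$ the integrand is dominated by $M_{k+2}R^{-2}\cdot e^{sw}\cdot w$, which tends to $0$ as $R\to\infty$. Letting $R\to\infty$ yields
\[
\hat{f}^{(k)}(s)=\int_{\mathbb{R}}(-i(y-iw))^{k}f(y-iw)\,e^{-is(y-iw)}\,dy.
\]
Since $|e^{-is(y-iw)}|=e^{-sw}$, taking absolute values gives
\[
|\hat{f}^{(k)}(s)|\le e^{-sw}\int_{\mathbb{R}}|y-iw|^{k}|f(y-iw)|\,dy \le p_{k}^{+}e^{-sw},
\]
where $p_{k}^{+}:=\int_{\mathbb{R}}|y-iw|^{k}|f(y-iw)|\,dy<\infty$ by the argument of the first step. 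For $s<0$ I would shift the contour in the opposite direction, from $\mathbb{R}$ to $\mathbb{R}+iw$, obtaining by the symmetric computation $|\hat{f}^{(k)}(s)|\le p_{k}^{-}e^{sw}$. Setting $p_{k}:=\max(p_{k}^{+},p_{k}^{-})$ yields the claimed bound $|\hat{f}^{(k)}(s)|\le p_{k}e^{-w|s|}$ for all $s\in\mathbb{R}$.

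There is no genuine obstacle here; the only technical point is confirming that the decay $|f(z)z^{m}|\le M_{m}$ throughout the strip, applied with $m=k+2$, is strong enough both to make the integrands on the shifted contours absolutely integrable and to kill the vertical pieces in the Cauchy rectangle as $R\to\infty$. Both bounds are immediate from the hypothesis, so the whole argument reduces to verifying these growth estimates and applying Cauchy's theorem.
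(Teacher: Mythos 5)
Your proposal is correct and follows essentially the same route as the paper's proof: write $\hat f^{(k)}(s)$ as the Fourier transform of $z^{k}f(z)$, use the strip-holomorphy and the polynomial decay hypothesis to shift the contour to $\im z=\mp w$ via Cauchy's theorem, and read off the factor $e^{-w|s|}$. Your version is slightly more explicit about the rectangle contour and the vanishing of the vertical sides, but the argument is the same.
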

\begin{proof}
    Since $$\hat{f}(s)=(2\pi)^{-1}\int_{-\infty}^{+\infty}e^{-isx}f(x)dx$$
    for any  $k\in \mathbb{N}^{+}$, we have:
    $$\hat{f}^{(k)}(s)=\widehat{x^{k}f}(s)=(2\pi)^{-1}\int_{-\infty}^{+\infty}e^{-isx}x^{k}f(x)dx.
    $$
    Let $g(z)=e^{-isz}f(z)z^{k}$, because $f$ is holomorphic in $\Omega=\{z\in \mathbb{C}||\im(z)|\leq 2w\}$,  $g$ is also holomorphic in $\Omega$. Since for any $k\in \mathbb{N}^{+}$, there exists $M_{k}>0$ such that $|f(z)z^{k}|\leq M_{k}$ when $z\in \Omega$, we have $\int_{-\infty}^{+\infty}|g(x+iy)|dx<+\infty$ for any $y\in \mathbb{R}, |y|\leq 2w$ and $\lim_{|z|\rightarrow+\infty}|g(z)|=0$.

    Because $g$ is  holomorphic in $\Omega$ and $\lim_{|z|\rightarrow+\infty}|g(z)|=0$, it follows from Cauchy's theorem that:
    $$\int_{-\infty}^{+\infty}e^{-isx}x^{k}f(x)dx=-\int_{-\infty}^{+\infty}e^{-is(x-iw)}(x-iw)^{k}f(x-iw)dx.
    $$
    %这里需要解释一下怎么用闭曲线积分
    Then we have $$|\hat{f}^{(k)}(s)|=\exp(-ws)|(2\pi)^{-1}\int_{-\infty}^{+\infty}e^{-isx}(x-iw)^{k}f(x-iw)dx|\leq c_{k}\exp(-ws)$$
    for some $c_{k}>0$.

    Similarly, $|\hat{f}^{(k)}(s)|\leq b_{k}\exp(ws)$ for some $b_{k}>0$. Therefore, there exist $p_{k}>0$ such that:
    $$|\hat{f}^{(k)}(s)|\leq p_{k}\exp(-w|s|)
    $$
    for any $s\in \mathbb{R}$.
\end{proof}
\begin{lemma}
\label{a2}
   The functions $f(x)=e^{-x^{2}}$, $g(x)=e^{-x^{2}}(\frac{1-e^{-x^{2}}}{x^2})^{\frac{1}{2}}x$ satisfy the conditions of Theorem \ref{g3}.
\end{lemma}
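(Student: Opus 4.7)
The plan is to reduce both statements to Lemma \ref{a1}, whose conclusion $|\hat{f}^{(k)}(s)|\leq p_{k}\exp(-w|s|)$ integrates at once to
\[
\int_{|s|\geq r}|\hat{f}^{(k)}(s)|\,ds\;\leq\;\frac{2p_{k}}{w}\exp(-wr),\qquad r\geq 0,
\]
which is precisely the bound demanded by Theorem \ref{g3}. So the real content is to verify the two hypotheses of Lemma \ref{a1} (holomorphic extension to some strip $\Omega=\{|\im z|\leq 2w\}$, together with polynomial-type bounds $|f(z)z^{k}|\leq M_{k}$) for each of the two functions $f(x)=e^{-x^{2}}$ and $g(x)=e^{-x^{2}}\bigl(\tfrac{1-e^{-x^{2}}}{x^{2}}\bigr)^{1/2}x$.

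For $f(x)=e^{-x^{2}}$ this is routine. The function $e^{-z^{2}}$ is entire, and on any horizontal strip $|\im z|\leq 2w$, writing $z=x+iy$, we have $|e^{-z^{2}}|=e^{y^{2}-x^{2}}\leq e^{4w^{2}}e^{-x^{2}}$. Since $e^{-x^{2}}|z|^{k}$ is bounded on the strip by a constant $C_{k}$ depending only on $k$ and $w$, we obtain $|e^{-z^{2}}z^{k}|\leq M_{k}$ with $M_{k}=C_{k}e^{4w^{2}}$, so Lemma \ref{a1} applies for any $w>0$.

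The function $g$ is the delicate one. The plan is in three steps. First, observe that the quotient $\varphi(z):=(1-e^{-z^{2}})/z^{2}$ is entire (the simple zero of $1-e^{-z^{2}}$ at $z=0$ cancels $z^{2}$, giving $\varphi(0)=1$), and $\varphi(x)>0$ for all real $x$. The other zeros of $1-e^{-z^{2}}$ occur at $z^{2}=2\pi i k$ with $k\in\Z\setminus\{0\}$, whose imaginary parts have modulus at least $\sqrt{\pi}$. By continuity and compactness on bounded slabs, for all sufficiently small $w>0$ the function $\varphi$ is nonvanishing on the strip $\Omega=\{|\im z|\leq 2w\}$ and moreover stays in a simply connected open subset of $\C\setminus\{0\}$ (for instance in the right half-plane, once $w$ is small, since $\varphi$ is close to the positive real axis near $\R$). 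This allows us to pick a holomorphic square root $\psi(z):=\varphi(z)^{1/2}$ on $\Omega$ extending the positive real branch, and hence to define $g(z)=e^{-z^{2}}\psi(z)z$ holomorphically on $\Omega$.

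Second, I will bound $\psi$ on the strip. As $|x|\to\infty$ on $\Omega$ we have $|e^{-z^{2}}|=e^{y^{2}-x^{2}}\to 0$, hence $\varphi(z)=z^{-2}(1+o(1))$, so $|\psi(z)|\leq C|z|^{-1}$ outside some large compact set of $\Omega$; on the remaining compact part, $\psi$ is continuous and hence bounded. Combining, there is a constant $C'$ such that $|\psi(z)\,z|\leq C'$ on all of $\Omega$. Therefore
\[
|g(z)z^{k}|\;=\;|e^{-z^{2}}|\,|\psi(z)z|\,|z|^{k}\;\leq\;C'e^{4w^{2}}e^{-x^{2}}|z|^{k}\;\leq\;M_{k}',
\]
so the hypotheses of Lemma \ref{a1} are satisfied. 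Third, I apply Lemma \ref{a1} to both $f$ and $g$ with this $w$, yielding the pointwise bounds $|\hat{f}^{(k)}|,|\hat{g}^{(k)}|\leq p_{k}e^{-w|s|}$, and integrate as displayed above to conclude.

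The main obstacle is the second step: producing a holomorphic square root for $\varphi$, which requires both nonvanishing of $\varphi$ on $\Omega$ and a simply-connected image avoiding $0$. Once the correct small $w>0$ is fixed using the explicit zero set of $1-e^{-z^{2}}$ and the positivity of $\varphi$ on $\R$, everything else is a direct application of Lemma \ref{a1}.
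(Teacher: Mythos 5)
Your proposal is correct and, at the level of overall strategy (reduce both functions to Lemma \ref{a1}, after extending them holomorphically to a thin strip), follows the same plan as the paper; but the key technical step is handled by a genuinely different argument. The paper fixes the branch $\zeta\mapsto|\zeta|^{1/2}e^{i\arg(\zeta)/2}$ cut along $(-\infty,0]$ and shows directly that $\varphi(z)=(1-e^{-z^{2}})/z^{2}$ can only land on the cut when $\im(|z|^{4}\varphi(z))=0$, which via the elementary inequality $e^{a^{2}-b^{2}}\le 1+|a^{2}-b^{2}|$ forces $(\mathrm{Re}\,z)^{2}\le(\im z)^{2}$; combined with $\varphi(0)=1$ this excludes the cut on a thin strip. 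You instead place $\varphi(\Omega)$ inside the right half-plane and take the principal square root there. Both routes work, and yours has the advantage of also making explicit two steps the paper suppresses: the verification of the growth hypothesis $|g(z)z^{k}|\le M_{k}$ of Lemma \ref{a1} (the paper only establishes holomorphy), and the integration of the pointwise bound $|\hat f^{(k)}(s)|\le p_{k}e^{-w|s|}$ into the integral condition of Theorem \ref{g3}. Two small repairs: the zero of $1-e^{-z^{2}}$ at $0$ has order two, not one (which is precisely why it cancels $z^{2}$); and your justification that $\varphi(\Omega)$ avoids the left half-plane only covers bounded slabs, so for $|\mathrm{Re}\,z|$ large you should add that $\mathrm{Re}(z^{-2})=((\mathrm{Re}\,z)^{2}-(\im z)^{2})/|z|^{4}>0$ on the thin strip and dominates the exponentially small correction $e^{-z^{2}}/z^{2}$ --- this is the same asymptotic you already invoke to bound $|\psi(z)z|$, so no new idea is needed.
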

\begin{proof}
    It is clear that $f(x)=e^{-x^{2}}$ satisfies the conditions of Theorem~\ref{g3}. For $g(x)=e^{-\frac{1}{2}x^{2}}(\frac{1-e^{-x^{2}}}{x^{2}})^{\frac{1}{2}}x$, let $u(z)=(\frac{1-e^{-z^{2}}}{z^{2}})$ and $w=|z|^{\frac{1}{2}}\cdot e^{\frac{\arg(z)}{2}}$, and we have $g(z)=e^{-\frac{1}{2}z^{2}}w( u(z))\cdot z$.

    %说明g（z）满足速降条件。
    Let $z=a+bi$ where $a,b\in \mathbb{R}$. We have:
    $$u(z)=(\frac{1-e^{-z^{2}}}{z^{2}})=\frac{1}{|z|^{4}}\cdot(1-e^{-(a^{2}-b^{2})}\cdot e^{-2abi})\cdot(a^{2}-b^{2}-2abi).
    $$
    Since $w$ is an analytical branch of the multivalue function $z^{\frac{1}{2}}$ which is holomorphic in $\mathbb{C}\backslash(-\infty,0]$. Since $w(u(z))$ is holomorphic if $u(z)\notin (-\infty,0]$. We will study when $u(z)\leq 0$.
    If $u(z)\leq 0$, we have $\im(|z|^{4}u(z))=0$. 
    %我们考虑两种情况ab=0，或ab不等于0

    If $ab=0$, then $a=0$ or $b=0$. If $a=0$, $u(z)=\frac{e^{b^{2}-1}}{b^{2}}>0$ and if $b=0$ we have $u(z)=\frac{1-e^{-a^{2}}}{a^{2}}>0$.

    When $ab\neq 0$, $\im(|z|^{4}u(z))=0$ implies that 
    \begin{align*}
        1-e^{-(a^{2}-b^{2})}\cos{(2ab)}&=(a^{2}-b^{2})\frac{\sin{(2ab)}}{2ab}\cdot e^{-(a^{2}-b^{2})}\\
        e^{(a^{2}-b^{2})}&=\cos{(2ab)}+(a^{2}-b^{2})\frac{\sin{(2ab)}}{2ab}.
    \end{align*}

    Thus, we have $e^{(a^{2}-b^{2})}\leq 1+|a^{2}-b^{2}|$ which implies that $(a^{2}-b^{2})\leq 0$. Because of this, we claim that $u(z)\leq 0$ implies that $(a^{2}-b^{2})\leq 0$. Since $u(0)=1$, then $w(u(z))$ is  holomorphic in a neighborhood of $0$. Therefore, there exists $w>0$ such that $w(u(z))$ is holomorphic in $\Omega=\{z|~|\im z|\leq 2w\}$. Because of this, $g(z)$ is also holomorphic in $\Omega=\{z|~|\im z|\leq 2w\}$.
\end{proof}

\section{} 
\begin{proof} [Proof of Lemma~\ref{l4}]
Note that we have the identity:
\begin{equation}
\label{1}
    \exp(\frac{1}{2}J^{t}\omega J)=\sum_{p=0}^{\frac{n}{2}}\frac{1}{p!}(\sum_{i<j}w_{ij}J_{i}J_{j})^{p}.
\end{equation}
Because for any $p$, $$\frac{1}{p!}(\sum_{i<j}w_{ij}J_{i}J_{j})^{p}=\sum\omega_{i_{1}i_{2}}w_{i_{3}i_{4}}...\omega_{i_{2p-1}i_{2p}}\cdot J_{i_{1}}...J_{i_{2p}}$$
where $i_{1},...,i_{2p}$  runs over $1,2,3,...,2p$ and $i_{2q-1}<i_{2q}$ for any $1\leq q\leq p, q\in \mathbb{N}^{+}$.

Thus,
\begin{equation}
\label{2}
    \frac{1}{p!}(\sum_{m=1}^{n}c_{k}\omega_{km}J_{m})\cdot(\sum_{i<j}w_{ij}J_{i}J_{j})^{p}=\sum_{m=1}^{n}c_{k}\omega_{km}J_{m}\cdot \omega_{i_{1}i_{2}}w_{i_{3}i_{4}}...\omega_{i_{2p-1}i_{2p}}\cdot J_{i_{1}}...J_{i_{2p}}.
\end{equation}

We claim that only the term satisfying $k,m\notin\{i_{1},..., i_{2p}\}$ will remain in the sum on the right hand side of (\ref{2}).

It is clear that if $m\in \{i_{1},..., i_{2p}\}$ then $$\omega_{km}J_{m}\cdot \omega_{i_{1}i_{2}}w_{i_{3}i_{4}}...\omega_{i_{2p-1}i_{2p}}\cdot J_{i_{1}}...J_{i_{2p}}=0.$$

When $k\in \{i_{1},..., i_{2p}\}$, without loss of generality, let $k=i_{1}$, we need to consider:
$$\omega_{km}J_{m}\cdot \omega_{ki_{2}}\omega_{i_{3}i_{4}}...\omega_{i_{2p-1}i_{2p}}\cdot J_{k}J_{i_{2}}...J_{i_{2p}}$$
and since $m$ runs over $\{1,2,...,n\}$ there exists another term:
$$\omega_{ki_{2}}J_{i_{2}}\cdot \omega_{km}\omega_{i_{3}i_{4}}...\omega_{i_{2p-1}i_{2p}}\cdot J_{k}J_{m}...J_{i_{2p}},$$
so that the two cancel each other. Thus, we have proved our claim.

Let $I_{p}\subset\{1,2,...,n\}$ such that $k\notin I_{p}$ and $|I_{p}|=2p$. We denote:
$$Q_{I_{p}}^{k}=\sum c_{k}\omega_{km}J_{m}\cdot \omega_{i_{1}i_{2}}w_{i_{3}i_{4}}...\omega_{i_{2p-1}i_{2p}}\cdot J_{i_{1}}...J_{i_{2p}}, $$
where $m,i_{1},...,i_{2p}$  runs over $I$ and $i_{2q-1}<i_{2q}$ for any $1\leq q\leq p, q\in \mathbb{N}^{+}$.

After wedging $J_{k}$ on the left of $Q_{I_{p}}^{k}$, we have:
$$J_{k}\wedge Q_{I_{p}}^{k}=\sum c_{k}\omega_{km}J_{k}J_{m}\cdot \omega_{i_{1}i_{2}}w_{i_{3}i_{4}}...\omega_{i_{2p-1}i_{2p}}\cdot J_{i_{1}}...J_{i_{2p}}=c_{k}\Pf(\omega_{I_{p}\cup \{k\}})J^{I_{p}\cup \{k\}}.$$

Thus, we have $Q_{I_{p}}^{k}=c_{k}\epsilon(k,I_{p})\Pf(\omega_{I_{p}\cup \{k\}})J^{I_{p}}$.

Because the equation (\ref{2}) can be written as $\sum_{I_{p}}Q_{I_{p}}^{k}$ where $I_{p}$ runs over all subset of $\{1,2,...,n\}$ with length $2p+1$, we have:
\begin{align*}   (\sum_{k=1}^{n}c_{k}\omega(J)_{k})\exp(\frac{1}{2}J^{t}\omega{J})&=\sum_{p=0}^{\frac{n}{2}}\sum_{k=1}^{n}(\sum_{m=1}^{n}c_{k}\omega_{km}J_{m})\cdot\frac{1}{p!}(\sum_{i<j}w_{ij}J_{i}J_{j})^{p}\\
  &=\sum_{p=0}^{\frac{n}{2}}\sum_{k=1}^{n}\sum_{I_{p}}Q_{I_{p}}^{k}\\
&=\sum_{p=0}^{\frac{n}{2}}\sum_{k=1}^{n}\sum_{I_{p}}c_{k}\epsilon(\{k\},I_{p})\Pf(\omega_{I_{p}\cup\{k\}})J^{I_{p}}\\
&=\sum_{k=1}^{n}\sum_{I}c_{k}\epsilon(\{k\},I)\Pf(\omega_{I\cup\{k\}})J^{I}.
\end{align*}

In the last equation, we remove the restriction on the length of sets, we allow $I$ runs over all subsets of $\{1,2,3...n\}$. Therefore, we complete the proof.
\end{proof}

\section{}
\begin{proof}[Proof of Proposition~\ref{gz}]
Let $\exp:TM\rightarrow M$ be the exponential map and $\alpha\in C^{\infty}_{c}(M\times M)$ satisfy the assumptions: 
\begin{itemize}
    \item $\supp(\alpha)\subset\supp(P)$;
    \item $\exp^{-1}$ is a diffeomorphism in a neighborhood of $\supp(\alpha)$; 
    \item $\alpha=1$  in a neighborhood of the diagonal in $\supp(P)\times\supp(P)$.
\end{itemize}
    For any $x,y\in \supp(\alpha)$, denote the parallel translation from $x$ to $y$ along the geodesic between $x$ and $y$ by $\tau_{E}(x,y)$ and let $\alpha_{E}(x,y)=\alpha(x,y)\cdot \tau_{E}(x,y)$.
Let $\rho\in C^{\infty}_{0}(\mathbb{R}^{n})$ satisfy:
    \begin{itemize}
        \item $\rho(x)=1$, $|x|\leq\frac{1}{2}$ and $\rho(x)\geq 0$;
        \item $\int_{R^{n}}\rho(x)dx=1$.
    \end{itemize}
Thus, for any $x\in \supp(P)$, we have:
    \begin{align*}
        P(\epsilon^{-n}\rho(\epsilon^{-1}\exp^{-1}_{x}(y))\cdot \alpha_{E}(y,x))|_{x}&=\int_{M}k_{P}(x,z)\epsilon^{-n}\rho(\epsilon^{-1}\exp^{-1}_{x}(z))\cdot \alpha_{E}(z,x))dz\\
        &=\int_{T_{x}M}k_{P}(x,\exp_{x}(v))\epsilon^{-n}\rho(\epsilon^{-1}v)\cdot \alpha_{E}(\exp_{x}(v),x))\cdot\det(d\exp_{x})|_{v}dv\\
        &=\int_{T_{x}M}k_{P}(x,\exp_{x}(\epsilon v))\rho(v)\cdot \alpha_{E}(\exp_{x}(\epsilon v),x))\cdot\det(d\exp_{x})|_{\epsilon v}dv.
    \end{align*}
Thus, for any $x\in \supp(P)$, 
\begin{align*}
    &\lim_{\epsilon\rightarrow 0}\Str(P(\epsilon^{-n}\rho(\epsilon^{-1}\exp^{-1}_{x}(y))\cdot \alpha_{E}(y,x))|_{x})\\
    &=\lim_{\epsilon\rightarrow 0}\Str(\int_{T_{x}M}k_{P}(x,\exp_{x}(\epsilon v))\rho(v)\cdot \alpha_{E}(\exp_{x}(\epsilon v),x))\cdot\det(d\exp_{x})|_{\epsilon v}dv)\\
    &=\Str(K_{P}(x,x)).
\end{align*}
Since $\supp(P)$ is compact, we have:
\begin{align*}
    \Str(P)&=\int_{M}\Str(k_{P}(x,x))dx\\
    &=\int_{M}\lim_{\epsilon\rightarrow 0}\Str(P(\epsilon^{-n}\rho(\epsilon^{-1}\exp^{-1}_{x}(y))\cdot \alpha_{E}(y,x))|_{x})\\
    &=\lim_{\epsilon\rightarrow 0}\int_{M}\Str(P(\epsilon^{-n}\rho(\epsilon^{-1}\exp^{-1}_{x}(y))\cdot \alpha_{E}(y,x))|_{x}).
\end{align*}
Let $\hat{\rho}$ be the Fourier transform of $\rho$. Then:
\begin{align*}
    \Str(P)&=\lim_{\epsilon\rightarrow 0}\int_{M}\Str(P(\epsilon^{-n}\rho(\epsilon^{-1}\exp^{-1}_{x}(y))\cdot \alpha_{E}(y,x))|_{x})\\
    &=\lim_{\epsilon\rightarrow 0}(2\pi)^{-n}\int_{T^{*}(M)}\Str(P(e^{i\langle\exp_{x}^{-1}(y),\xi\rangle}\hat{\rho}(\epsilon\xi)\alpha_{E}(y,x))|_{x}d\xi dx\\
    &=\lim_{\epsilon\rightarrow 0}(2\pi)^{-n}\int_{T^{*}(M)}\hat{\rho}(\epsilon\xi)\Str(\sigma(P)(x,\xi))d\xi dx\\
    &=\int_{T^{*}M}\Str(\sigma(P)(x,\xi))d\xi dx.
\end{align*}
Since for any $t>0$, $\int_{T^{*}M}\Str(\sigma_{t}(P)(x,\xi))d\xi dx=\int_{T^{*}M}\Str(\sigma(P)(x,\xi))d\xi dx$, we have:
$$\Str(P)=\int_{T^{*}M}\Str(\sigma_{t}(P)(x,\xi))d\xi dx.$$
Therefore we complete the proof.
\end{proof}
\bibliography{main}{}
\bibliographystyle{amsplain}  

\end{document}